\documentclass[12pt]{amsart} %{article}
\title[Pure extension of the theta divisor]{Pure extension of the theta divisor over the moduli space of abelian varieties}

\usepackage{amsmath, amssymb, mathrsfs, amsthm}% amsfonts, url,, tikz, wrapfig, newclude, }
\usepackage{mathtools} % for \coloneqq (and presumably other stuff!)
\usepackage{hyperref}
\usepackage{float}
\usepackage[all]{xy}
\usepackage{tikz}
\usetikzlibrary{arrows,matrix,positioning}
\tikzstyle{dmatrix}=[matrix of math nodes,row sep=2.5em, column sep=2.5em,
text height=1.5ex, text depth=0.25ex]
\usepackage{graphicx}

\usepackage{enumitem}

\usepackage[left=3.5cm,top=3.5cm,right=3.5cm]{geometry}

\usepackage{tikz-cd} 

\usepackage[english]{babel}
\usepackage[%
    sorting=anyt,
    maxnames=10,
    sortcites=true,doi=false,
    firstinits=true,hyperref,backend=bibtex]{biblatex}
    \renewbibmacro{in:}{%
      \ifentrytype{article}{}{\printtext{\bibstring{in}\intitlepunct}}}
\usepackage[babel]{csquotes}
\usepackage{guit}
\DeclareFieldFormat
[article,inbook,incollection,inproceedings,patent,thesis,unpublished]
  {title}{{#1\isdot}}
\DeclareFieldFormat{pages}{#1}
\usepackage{cancel}
\AtEveryBibitem{%
\ifentrytype{book}{
    \clearfield{url}%
    \clearfield{pages}
    \clearfield{isbn}
    \clearfield{urldate}%
    \clearfield{review}%
    \clearfield{series}%%
}{}
\ifentrytype{article}{
    \clearfield{url}%
    \clearfield{issn}
    \clearfield{doi}
    \clearfield{isbn}
    \clearfield{urldate}%
    \clearfield{review}%
}{}
\ifentrytype{collection}{
    \clearfield{url}%
    \clearfield{urldate}%
    \clearfield{review}%
    \clearfield{series}%%
}{}
\ifentrytype{incollection}{
    \clearfield{url}%
    \clearfield{urldate}%
    \clearfield{review}%
    \clearfield{series}%%
}{}
}

\newcommand{\on}[1]{\operatorname{#1}}
\newcommand{\bb}[1]{{\mathbb{#1}}}

\newcommand{\ca}[1]{{\mathcal{#1}}}
\newcommand{\cat}[1]{\textbf{#1}}
\newcommand{\op}{\mathsf{op}}

\newcommand{\Span}[1]{\left<#1\right>}

\newcommand{\lra}{\longrightarrow}
\newcommand{\hra}{\hookrightarrow}

\newcommand{\iso}{\stackrel{\sim}{\lra}}

\theoremstyle{definition}
\newtheorem{definition}{Definition}[section]

\theoremstyle{plain}% default
\newtheorem{proposition}[definition]{Proposition}
\newtheorem{lemma}[definition]{Lemma}
\newtheorem{theorem}[definition]{Theorem}
\newtheorem{corollary}[definition]{Corollary}

\newtheorem*{theorem*}{Theorem}

\theoremstyle{remark}

\newtheorem{remark}[definition]{Remark}

\newtheorem{example}[definition]{Example}

\renewcommand{\bar}{\overline}

\thanks{The authors are very grateful for the hospitality of the Mathematisches Forschungsinstitut Oberwolfach, where part of this work was conducted. A.\,M.\,Botero was funded by the Deutsche Forschungsgemeinschaft (DFG, German Research Foundation) – Project-ID 491392403 – TRR 358. J.~I.~Burgos was partially supported by grants
  PID2022-142024NB-I00 and CEX2023-001347-S funded by
  MICIU/AEI/10.13039/501100011033. D.~Holmes was funded by the European Union (ERC, EAGL, 101169685). Views and opinions expressed are however those of the author(s) only and do not necessarily reflect those of the European Union or the European Research Council. Neither the European Union nor the granting authority can be held responsible for them. }
\author[Botero]{Ana Mar\'ia Botero}
\address{Bielefeld University \\ Universit\"atsstra{\ss}e 25  \\ 133615 Bielefeld  \\ Germany }
\email{\href{mailto:abotero@math.uni-bielefeld.de}{abotero@math.uni-bielefeld.de}}

\author[Burgos Gil]{Jos\'e Ignacio Burgos Gil}
\address{ICMAT \\ Nicolás Cabrera 13-15 \\ 28049 Madrid  \\ Spain }
\email{\href{mailto:burgos@icmat.es}{burgos@icmat.es}}

\author[Holmes]{David Holmes}
\address{Leiden University \\ Einsteinweg 55  \\ 2333 CC Leiden  \\ The Netherlands }
\email{\href{mailto:holmesdst@math.leidenuniv.nl}{holmesdst@math.leidenuniv.nl}}

\author[de Jong]{Robin de Jong}
\address{Leiden University \\ Einsteinweg 55  \\ 2333 CC Leiden  \\ The Netherlands }
\email{\href{mailto:rdejong@math.leidenuniv.nl}{rdejong@math.leidenuniv.nl}}

\newcounter{nootje}
\def\Z{\mathbb{Z}}
\def\R{\mathbb{R}}

\def\Q{\mathbb{Q}}

\newcommand{\trop}{\mathsf{trop}}
\newcommand{\ghost}{\overline{\mathsf{M}}}
\newcommand{\M}{\mathsf{M}}
\newcommand{\gp}{\mathsf{gp}}

\newcommand{\Vor}{\mathsf{Vor}}

\renewcommand{\val}{\mathsf{val}}

\newcommand{\LOG}{{\sf{log}}}

\newcommand*\isom{\xrightarrow{\sim}}
\newcommand{\divisor}{\operatorname{div}}
\newcommand{\ord}{\operatorname{ord}}

\newcommand{\Hom}{\operatorname{Hom}}

\newcommand{\Spec}{\operatorname{Spec}}

\newcommand{\E}{\operatorname{E}}

\newcommand{\Sk}{\operatorname{Sk}}

\newcommand{\logcbDiv}{\operatorname{log-C-b-Div}}
\newcommand{\logwbDiv}{\operatorname{log-W-b-Div}}

\newcommand{\sPL}{\operatorname{sPL}}

\newcommand{\lb}[1]{\mathcal{#1}} % line bundle
\newcommand{\llb}[1]{\mathfrak{#1}} % log line bundle
\newcommand{\tlb}[1]{\mathsf{#1}} % tropical line bundle

\def\opn#1#2{\def#1{\operatorname{#2}}}
\opn\Vor{Vor}

\def\qq{\mathbb{Q}}

\def\rr{\mathbb{R}}

\def\zz{\mathbb{Z}}
\def\cc{\mathbb{C}}

\def\E{\mathbb{E}}

\def\ll{\mathcal{L}}

\def\pp{\mathcal{P}}

\def\aa{\mathcal{A}}
\def\oo{\mathcal{O}}

\def\hh{\mathcal{H}}

\def\Im{\mathrm{Im}\,}

\def\id{\mathrm{id}}

\def\an{\mathrm{an}}

\def\eps{\epsilon}

\def\Vor{\mathrm{Vor}}

\def\h{\mathrm{h}}

\def\trop{\mathrm{trop}}

\def\Hdg{\mathrm{Hdg}}

\begin{document}

%\maketitle

\begin{abstract}	
A theta divisor on the universal principally polarised abelian variety can be extended to a compactification either by taking the Zariski closure, or by taking the unique extension which is pure of weight 2. For the latter, following ideas of Yuan and Zhang, we need to pass to the category of adelic- or b-divisors. We show that the two choices of extension differ by a tropicalisation of the Riemann theta function. We prove an extension of Moret-Bailly's ``key formula'' that features the pure weight 2 extension of the theta divisor, and discuss various arithmetic applications, including a ``universal'' formula for the N\'eron--Tate height of a point. A key technical input is the systematic use of the theory of logarithmic abelian varieties due to Kajiwara, Kato, and Nakayama. 
\end{abstract}

\maketitle

\tableofcontents

\section{Introduction}

A \emph{theta divisor} on a principally polarised abelian variety $A$ is a symmetric ample divisor $\Theta$ representing the polarisation, in the sense that the polarisation is equal to the map 
\begin{equation} \label{eq:representing}
A \to A^\vee, \quad a \mapsto \ca O(T_a^*\Theta - \Theta). 
\end{equation}
The set of theta divisors for a fixed polarisation is a torsor under the 2-torsion $A[2]$, and each theta divisor is of \emph{pure weight 2}, by which we mean that, denoting by $[n]\colon A \to A$ the multiplication-by-$n$ map for any integer $n$, we have 
\begin{equation} \label{eq:pure}
\forall n \in \zz \, : \, [n]^*\Theta \sim n^2\Theta. 
\end{equation}
Here $\sim$ denotes linear equivalence.

\subsection{The universal abelian variety}

In this paper we study theta divisors over the moduli space of principally polarised abelian varieties (ppav), and their extendability as pure weight 2 objects over the boundary of a compactification. 

Let $g$ be a positive integer. Writing $A_g$ for the moduli stack over $\zz$ of ppav and writing $A_{g,1} \to A_g$ for the universal abelian variety, a first observation is that $A_{g,1}$ does \emph{not} carry a universal theta divisor (unless $g = 1$). Following Faltings--Chai in their book~\cite[\S IV.7]{fc}, we write $N_{g}$ for the stack of pairs of a principally polarised abelian variety and a theta divisor in the above sense. Then $N_g \to A_g$ is a torsor under $A_{g,1}[2]$, and this torsor is split if and only if $g =1$. 

We write $\pi\colon N_{g,1} \to N_g$ for the universal abelian variety over $N_g$. By construction, it carries a universal theta divisor $\Theta$, and also a universal symmetric rigidified line bundle $L$ representing the polarisation. The divisor $\Theta$ is a relative effective Cartier divisor. We can recover  $L$ from $\Theta$ by the formula 
\begin{equation}\label{eq:6}
L = \lb O(\Theta) \otimes \pi^*e^*\lb O(-\Theta),
\end{equation}
where $e$ is the unit section of $\pi$. Conversely, we can recover $\Theta$ from $L$ by observing that $\pi_*L$ is a line bundle on $N_g$, so locally a choice of generating section determines a divisor on $N_{g,1}$, and this divisor is independent of the choice of generating section. 

The purity property \eqref{eq:pure} of any theta divisor on a single ppav translates into a corresponding property for the universal rigidified line bundle~$L$:
\begin{equation} \label{eqn:purity}
\forall n \in \zz \, : \, [n]^*L = L^{\otimes n^2}.  
\end{equation}

Finally, let $\omega_g$ be the determinant of the Hodge bundle on $N_g$. We have the following ``key formula'' due to Moret-Bailly \cites{mb_fonct, mb} governing the variation of the theta divisor in families: 
\begin{equation} \label{eq:cle}
   L^{\otimes 8} \cong  \oo(8 \, \Theta)  \otimes \pi^* \omega_g^{\otimes -4} . 
   \end{equation}
It is known that the exponent~$8$ can not be improved, see \cite[Remark~I.5.2]{fc}.

\subsection{Extending over a compactification}

Our goal in this paper is to extend the above identities to suitable toroidal compactifications of the moduli stack $N_{g,1}$ over $\zz$. First of all, the determinant of the Hodge bundle $\omega_g$ extends naturally to any toroidal compactification of $N_g$, by pulling back the determinant of the sheaf of relative differentials along the zero section of the universal semiabelian scheme. In the following we write $\bar \omega_g$ for this natural extension of the Hodge bundle.

Throughout our introduction we denote by $\bar N_g$ the so-called Delaunay compactification of $N_g$, and by $\bar N_{g,1}$ the so-called mixed Delaunay--Voronoi compactification of $N_{g,1}$. We refer to Section~\ref{sec:choosing-decomposition} for their constructions, which are very similar to their classical counterparts for $A_g$ and $A_{g,1}$ found in \cite[\S VI.1]{fc}. The map $\pi$ extends uniquely to a toroidal map $\pi\colon \bar N_{g,1} \to \bar N_g$. 

Considering the theta divisor momentarily as a \emph{Weil} divisor on $N_{g,1}$, the simplest way to extend it to $\bar N_{g,1}$ is to take its Zariski closure; we denote this Zariski closure by~$\bar \Theta$.  On the other hand, if we focus on the purity property $[n]^*L = L^{\otimes n^2}$ from~\eqref{eqn:purity}, we are led to a rather different extension.

Indeed, observe that the multiplication-by-$n$ map $[n]\colon N_{g,1} \to N_{g,1}$ only defines a \emph{rational map} $\bar N_{g,1} \dashrightarrow \bar N_{g,1}$. It does extend though to a toroidal morphism from a suitable toroidal blowup:
\begin{equation*}
\begin{tikzcd}
  \bar N_{g,1}'  \arrow[rd, "{[n]}"] \ar[d] &  \\
\bar N_{g,1}\arrow[r, dashed, "{[n]}"]& \bar N_{g,1}.
\end{tikzcd}
\end{equation*}
The rational map $[n]\colon \bar N_{g,1}' \dashrightarrow \bar
N_{g,1}'$ is again not a morphism, but this can be resolved by a
further toroidal blowup; and so on ad infinitum. From this one may get the idea
of trying to extend the rigidified line bundle $L$ as a pure weight 2 object, as well as the
universal theta divisor $\Theta$ as it appears in the key formula, not over a fixed compactification $\bar N_{g,1}$, but rather over a \emph{limit} of a sequence of toroidal blowups of $\bar N_{g,1}$.

For the line bundle $L$, the above idea has been made precise by Yuan--Zhang \cite{yz}, using their theory
of \emph{adelic} divisors and \emph{adelic} line bundles. The resulting objects are not divisors or line bundles in the classical sense, but rather certain limits of sequences of such. Focusing instead on the theta divisor $\Theta$, an alternative framework, which captures the toroidal nature of the situation, is the theory of \emph{toroidal b-divisors}, due to the
first and second named author \cite{botero_burgos}. We upgrade here
this notion to the setting of algebraic stacks with log structure. 

A precise comparison of the two languages is given in Section~\ref{sec:log_b_divisors} (along with an extension of the adelic formalism of Yuan--Zhang to algebraic stacks with log structure), but for the purpose of this introduction we will not distinguish between them. One key observation is that any continuous conical function $f$ on the \emph{tropicalisation} of $\bar N_{g,1}$ (see Section~\ref{sec:proof_main}) induces an adelic or toroidal b-divisor $\on{div} f$, and that one can twist any adelic or  b-line bundle by such divisors~$\on{div} f$.

\subsection{Extending the theta divisor}

Extending the line bundle $L$ in either of these languages as a pure weight~$2$ object can be done in a direct and canonical manner, as we will see in Section~\ref{sec:proof_main}. For the purposes of the introduction we call the resulting pure object $L^{\mathrm{pure}}$. Thus we have
\begin{equation} \label{eq:pure_inv}  \forall n \in \zz \, : \, [n]^*L^{\mathrm{pure}} = L^{\mathrm{pure}, \otimes n^2}.  
\end{equation}
 In the adelic setting, this object is denoted $\bar L$ by Yuan--Zhang. They show the existence and uniqueness of an adelic extension of $L$ satisfying \eqref{eq:pure_inv}  in \cite[Theorem~6.1.3]{yz}. 
 We will offer a different approach using the notion of \emph{log b-line bundles}; this gives the adelic line bundle $\overline{L}$ of Yuan--Zhang an additional toroidal structure.

Extending the theta divisor $\Theta$ as a pure weight 2 object turns out to be more involved. 

To get it in the setting of adelic or toroidal b-divisors, we need the notion of \emph{tropical theta functions}.  Tropical theta functions were introduced by Mikhalkin--Zharkov \cite{tropi-theta} for tropical curves over valuation rings, essentially by tropicalising the power series defining the classical Riemann theta function to obtain a piecewise linear (PL) function on their skeleton. 

In Section~\ref{sec:tropical_RT} we generalise these constructions to include suitable tropical theta characteristics. This will
yield a PL function $\theta^{\mathrm{pl}}$ on a suitable cover of
the tropicalisation of $\bar N_{g,1}$ satisfying a tropical
cocycle condition. We also construct a variant $\theta^{\mathrm{sm}}$ of $\theta^{\mathrm{pl}}$,
which satisfies the same tropical cocycle condition, but is smooth
rather than PL. In essence $\theta^{\mathrm{sm}}$ is constructed by
interpolating the values of $\theta^{\mathrm{pl}}$ on lattice
refinements of $\bar N_{g,1}$. 

Since $\theta^{\mathrm{pl}}$ and
$\theta^{\on{sm}}$ satisfy the same tropical cocycle condition,
their difference 
\[ \theta^{\mathrm{inv}} \coloneqq \theta^{\mathrm{pl}}
- \theta^{\mathrm{sm}} \] is invariant under translations and descends
to give a continuous conical function 
on the tropicalisation of $\bar N_{g,1}$. By what we have said above, to
the continuous conical function  $\theta^{\mathrm{inv}}$ we can canonically
associate an adelic or toroidal b-divisor $\on{div}
  \theta^{\mathrm{inv}}$. 

We then consider the adelic or toroidal b-divisor
\[ \Theta^{\mathrm{pure}} \coloneqq \bar \Theta +  \on{div} \theta^{\mathrm{inv}} . \]
As we will see, the function $\theta^{\mathrm{inv}}$ is non-negative, which implies that one may view $\Theta^{\mathrm{pure}}$ as an \emph{effective} object in the category of adelic or toroidal b-divisors.

With the above notation, the following is our main result.

\begin{theorem*} 
\begin{itemize}
\item[(i)] The multiple $8 \, \bar \Theta$ of the Zariski closure $\bar \Theta$ of the theta divisor canonically has the structure of a Cartier divisor on $\bar N_{g,1}$. 
\item[(ii)] The adelic or b-divisor $8 \, \Theta^{\mathrm{pure}} = 8 \, \left( \bar \Theta +   \on{div} \theta^{\mathrm{inv}} \right)$ is pure of weight 2, in the sense that the associated adelic or b-line bundle $\oo(8 \, \Theta^{\mathrm{pure}})$ is equal to $L^{\mathrm{pure},\otimes 8}$, up to a pullback along the map $\pi$. More precisely, the key formula \eqref{eq:cle} extends as an isomorphism of adelic or b-line bundles
\begin{equation}
L^{\mathrm{pure}, \otimes 8} = \lb O(8 \, \Theta^{\mathrm{pure}}) \otimes \pi^*\bar \omega_g^{\otimes -4}
\end{equation}
over $\bar N_{g,1}$. 
\end{itemize}
\end{theorem*}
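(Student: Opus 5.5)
The plan is to work locally over the toroidal charts of $\bar N_{g,1}$ using Mumford's construction, as in Faltings--Chai \cite[\S III, \S VI]{fc}. Over the formal completion along a boundary stratum of $\bar N_g$, the universal semiabelian scheme is a degeneration with constant toric part. Its theta function can then be written, via the Fourier--Jacobi expansion, as a sum
\[ \theta(z) = \sum_{x \in X} a(x)\, \chi_x(z)\, \vartheta_x, \]
where $a(x)$ is a product of periods $q$ with exponents given by a quadratic form $x \mapsto \tfrac12 B(x,x) + \text{(characteristic term)}$.

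\textbf{Part (i).} On the mixed Delaunay--Voronoi compactification, Moret-Bailly's key formula (\ref{eq:cle}) holds on the interior, and the sheaf $L$ extends as a cubical line bundle on the relatively complete model. I would verify that $8\bar\Theta$ is Cartier by writing $\bar\Theta$ locally as $\on{div}$ of the Mumford theta series. In each Delaunay cone, the minimal-valuation terms of the series form a finite sum, the Delaunay cell, whose leading coefficient is a unit after raising to the 8th power. The factor 8 enters because the theta constants, i.e.\ the characteristic twists, are only defined up to 8th roots of unity, and because the relevant exponents lie in $\tfrac18\zz$ on the lattice refinement (half-integral characteristics squared). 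The conclusion is that $\theta^8$ divided by a monomial is a local equation, and this local equation is canonical.

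\textbf{Part (ii).} On the open part, $L^{\otimes 8} = \oo(8\Theta)\otimes \pi^*\omega_g^{-4}$. Both sides extend to $\bar N_{g,1}$ via the Zariski closure and $\bar\omega_g$; call the resulting extended line bundle $\bar L$. By part (i), on a given model we have
\[ \bar L^{\otimes 8} = \oo(8\bar\Theta)\otimes\pi^*\bar\omega_g^{-4}\otimes \oo(D), \]
where $D$ is a boundary divisor. The local computation identifies $D$: it is the divisor of the conical PL function $8(\theta^{\mathrm{pl}} - \phi_L)$, where $\phi_L$ is the tropical metric of $\bar L$ on that model. Passing to the b-setting, $L^{\mathrm{pure}} = \bar L \otimes \oo(\on{div}(\phi_L - \theta^{\mathrm{sm}}))$. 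This holds because $\theta^{\mathrm{sm}}$ satisfies the same cocycle as $\theta^{\mathrm{pl}}$, and it is the quadratic (weight 2 homogeneous) function on the tropical torus fibres: it scales by $n^2$ under $[n]$ and is exactly the tropicalisation of the pure extension. To justify this I would check invariance under $[n]$ and use uniqueness of pure extensions \cite[Theorem~6.1.3]{yz}. Combining the two identities gives
\[ L^{\mathrm{pure},\otimes 8} = \oo(8\bar\Theta + 8\on{div}(\theta^{\mathrm{pl}}-\theta^{\mathrm{sm}}))\otimes \pi^*\bar\omega_g^{-4}, \]
which is the claim, since $\theta^{\mathrm{pl}}-\theta^{\mathrm{sm}} = \theta^{\mathrm{inv}}$.

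\textbf{Main obstacle.} The hardest step is the precise tropical identification: showing that the order of vanishing of the Mumford series $\theta$ along each toroidal boundary divisor of every blowup equals $\theta^{\mathrm{pl}}$ evaluated at the corresponding ray. The difficulty is ruling out cancellation among minimal terms; this is where the characteristics and the 8th powers enter. I would first handle it on the Voronoi/Delaunay charts, using the unit-ness of leading coefficients, and then propagate it to all refinements by the conical nature of both sides and the log-abelian variety description of Kajiwara--Kato--Nakayama.
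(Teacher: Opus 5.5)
\textbf{Verdict: genuine gap.} The step carrying all the content of both (i) and (ii) is asserted, not proved.

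\textbf{What matches the paper.} Your reduction of part (ii) is the paper's own. Since $\phi$ and $\theta^{\mathrm{sm}}$ satisfy the same cocycle, $L^{\mathrm{pure}}$ is the Mumford extension twisted by $\on{div}(\phi-\theta^{\mathrm{sm}})$. It is pinned down by $[n]$-invariance and Yuan--Zhang uniqueness (Theorems~\ref{thm:main}, \ref{thm:theta-eq-cocycle-new}, \ref{thm:comparison_adelic_b_AV}). The final bookkeeping is that of Theorem~\ref{thm:adelic_key_restate}.

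\textbf{The gap.} Take $\phi_L=\theta^{\mathrm{pl}}$. Your claim $D=8\,\on{div}(\theta^{\mathrm{pl}}-\phi_L)=0$ is exactly Theorem~\ref{thm:bar_theta_def_equivalence}: the key-formula section $f$ of $\lb L_{-\theta^{\mathrm{pl}}}^{\otimes 8}\otimes\pi^*\omega_{G/\bar N_g}^{\otimes 4}$ has no zeros or poles along any boundary divisor. You justify this only by ``the local computation'', and the computation you describe cannot give it.
\begin{itemize}
\item At best, the Mumford theta series $\vartheta$ shows that $\on{div}\vartheta=\bar\Theta$ locally. Its tropicalisation is $\theta^{\mathrm{pl}}$, and distinct characters cannot cancel, so the cancellation you call the main obstacle is not where the difficulty lies.
\item Then $f/\vartheta^{8}=\pi^*h$ for some rational section $h$ of $\bar\omega^{\otimes 4}$ on the base, and $D=\pi^*\on{div}h$.
\item Your computation never sees $\bar\omega$ or the normalisation of Moret-Bailly's trivialisation. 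So $\on{div}h$, i.e.\ the boundary behaviour of the trivialisation of $(\pi_*L)^{\otimes 8}\otimes\omega^{\otimes 4}$, is left undetermined.
\end{itemize}
This is not formal. For $g=1$ and $\Theta=O$ it encodes the fact that the discriminant $\Delta$, which is the key-formula trivialisation of $\omega^{\otimes 12}$, vanishes to order exactly one at the cusp. In general it is the vanishing order $\tfrac18 q_a(\rho)$ of the theta constants. This is why the constant $-\tfrac18 q_a(\rho)$ is built into $\theta^{\mathrm{sm}}$, so that $\min\theta^{\mathrm{inv}}_\rho(a,-)=0$.

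\textbf{How the paper supplies it.} The input is Moret-Bailly's extension of the key formula over Moret-Bailly models (Theorem~\ref{thm:key_semi_abelian}). Theorem~\ref{thm:mumford} transports it to the Alexeev--Nakamura model. It shows that a section $s\in H^0(A,L)$ has the same order in $f_*\ca L_{\ca A}$ and in $g_*\ca L_{\ca P}$, both equal to $-\log\sup_{\Sk(A)}\|s\|_L$. This uses Propositions~\ref{prop:order_vanishing_MB}, \ref{prop:comparison_canonical_model_metric} and \ref{prop:vanishing_AN}, together with $\min\theta^{\mathrm{inv}}_\rho(a,-)=0$. The paper then handles the possible non-flatness of $\pi$ for $g\ge 5$ using the root stack $\bar N_g^n$, cohomology and base change, and flatness of $\widetilde\Theta$. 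Your ``canonical local equation'' would also have to face that issue. You need some such input: Moret-Bailly's degeneration theorem, or an explicit comparison of theta constants with $\bar\omega$ valid over $\zz$.

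\textbf{On the factor $8$.} Your explanation of the $8$ in (i) is off. $\theta^{\mathrm{pl}}$ and the exponents $a(w)$ are integral on lattice points. The $\tfrac18$-exponents you mention belong to $\theta^{\mathrm{sm}}$ and $\theta^{\mathrm{inv}}$, i.e.\ to the b-divisorial correction, not to $\bar\Theta$. The $8$ is inherited from the key formula: the canonical Cartier structure on $8\bar\Theta$ is $\on{div} f$.
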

In particular, as $\bb Q$-adelic line bundles we have for all $n \in \zz$ that
\[[n]^*\lb O(\Theta^{\mathrm{pure}}) = \lb O(n^2\Theta^{\mathrm{pure}})  \otimes \pi^*\bar \omega_g^{- \frac{n^2-1}{2}}. 
\]

In the case $g=1$, working over $\cc$, and with $\Theta$ given by the unit section, the pure object $8 \, \Theta^{\mathrm{pure}}$ already plays a key role in the work \cite{bkk} by Burgos--Kramer--K\"uhn, where it equals the b-divisor $\on{b-div}(\theta_{1,1}^8)$ up to a pullback from the base. Here $\theta_{1,1}$ is the unique Riemann theta function in degree one whose zero locus is given by the unit section. 

Away from characteristic two, the multiple $8 \, \bar \Theta$ in fact has a \emph{unique} structure of Cartier divisor on $\bar N_{g,1}$. The space $\bar N_{g,1}$ is not normal over characteristic two; it has two connected components over $\mathbb Z[1/2]$, and these meet in characteristic two. As such the map from Cartier divisors to Weil divisors may not be injective.

We will prove our main result in Section~\ref{sec:AV_theta}; see in particular Theorem~\ref{thm:bar_theta_def_equivalence} and Theorem~\ref{thm:adelic_key_restate}.

\subsection{Main tools}

We make systematic use of logarithmic geometry, in particular the theory of logarithmic abelian varieties due to Kajiwara--Kato--Nakayama, which gives  modular interpretations of all toroidal compactifications of moduli spaces of abelian varieties. In order to make our paper reasonably self-contained, we outline in Sections \ref{sec:background_log}--\ref{sec:toroidal_comp} the various definitions and constructions we will use, while trying to give plenty of motivation and examples. In places we mildly generalise known results, for example by working with theta level structures. 

One key technical definition is that of a \emph{log b-line bundle} on
a log scheme $X$. This is a pair $(\llb L, s)$ where $\llb L$ is a
\emph{log line bundle} on $X$ (in the sense of Molcho--Wise \cite{molcho-wise}), 
and $s$ is a \emph{conical section} of the
\emph{tropicalisation} of $\llb L$. In Section~\ref{sec:log_b_divisors} we construct a fully faithful functor from the category of log b-line bundles on $X$ to the category of adelic line bundles on $X$ in the sense of Yuan--Zhang \cite{yz}.

The machinery of log b-line bundles allows us on the one hand to bring in deep theory on log line bundles from the work of Kajiwara--Kato--Nakayama, and on the other hand to make things very explicit by giving precise formulae for the necessary conical functions. The tropical theta function with characteristics $\theta^{\mathrm{pl}}$ appears naturally as an admissible principal polarisation function in the theory of Kajiwara--Kato--Nakayama, and the pure object $L^{\mathrm{pure}}$ appears in our approach as the log b-line bundle $(\llb L,-\theta^{\on{sm}})$ where $\llb L$ is the universal log line bundle, and $\theta^{\on{sm}}$ is the smooth theta function introduced earlier.

Logarithmic base changes of $\bar N_{g,1} \to \bar N_g$ along test curves yield Alexeev--Nakamura models \cite{an} of degenerating abelian varieties. This allows us to bring in ideas from \cite{djs_canonical} using Berkovich analytic spaces to control precisely the orders of vanishing of rational sections.  One technical issue is that the log smooth morphism $\bar N_{g,1} \to \bar N_g$ is not known to be flat for $g \ge 5$, and hence logarithmic base changes need not be schematic base changes. We work around this by introducing a certain root stack $\bar N_g^n$ of $\bar N_{g}$ such that, after logarithmic base change to $\bar N_g^n$, the map $\bar N_{g,1} \to \bar N_g$ becomes flat, and further logarithmic and schematic base changes coincide.

\subsection{Applications} We present two applications of our main result, one of complex analytic, and one of arithmetic nature. First of all, we establish a connection between the normalised tropical theta function with characteristics $\theta^{\on{inv}}$ and the degeneration of the natural hermitian metric on $\oo(\Theta)$ over $\cc$, which is described in terms of a certain normalised version of the classical Riemann theta function with characteristics. See Corollary~\ref{cor:degeneration_theta}. Second, we present a ``universal'' formula for the N\'eron--Tate height of a point on a principally polarised abelian variety over a number field, see Theorem~\ref{univ_NT_formula}. It would be interesting to see to what extent our formula can be used to carry out effective calculations of N\'eron--Tate heights.

\subsection{Overview of the paper} We start in Section~\ref{sec:background_log} with background on logarithmic and tropical geometry. Then in Section~\ref{sec:LAV} we review log abelian varieties and their tropicalisation. In Section~\ref{sec:toroidal_comp} we review classical material on toroidal compactifications, and construct our toroidal compactifications of $N_{g,1}$. In Section~\ref{sec:proof_main} we will then introduce and study the pure extension $L^{\mathrm{pure}}$ of the universal polarising line bundle as a log b-line bundle over a toroidal compactification of~$N_g$. We also discuss here the intimately related pure extension of the Poincar\'e line bundle. We show in Section~\ref{sec:tropical_RT} that the pure extension $L^{\mathrm{pure}}$ can be described in concrete terms using tropical theta functions with characteristics. In Section~\ref{sec:log_b_divisors} we connect the language of log b-line bundles with the language of log b-divisors and log adelic divisors. In Section~\ref{sec:AV_theta} we conduct our study of the pure extension of the theta divisor and prove our main theorem. In Section~\ref{sec:arith_cases} we present our applications. \\

\noindent \textbf{Terminology.} Our notion of convex function will be that from analysis. In particular, for us, a minimum of countably many affine functions will be concave. This is a convention opposite to the one used in for example \cite{fc}. \\

\noindent \textbf{Acknowledgements.} 
  DH would like to thank Leo Herr for discussions around flatness, as well as Aitor Iribar Lopez and Jeremy Feusi for discussions around automorphisms of cone stacks and tropical biextensions. RJ would like to thank Farbod Shokrieh for many inspiring discussions related to the theme of this paper.

\section{Background on logarithmic and tropical geometry} \label{sec:background_log}

\subsection{Log schemes}
For the convenience of the reader we recall in outline some background in log geometry. The basic references are \cite{kato_log_str_of_FI} and \cite{oguslogbook}. 

For us \emph{monoids} are always commutative. A  monoid $P$ is \emph{sharp} if its units $P^\times$ are trivial, it is \emph{integral} if the natural map $P \to P^\gp$ to the associated group is injective, and it is \emph{saturated} if it is integral and if for $x \in P^\gp$ there exists an integer $n >0$ such that $nx \in P$, we have $x \in P$. 

\begin{definition}\label{def:log_scheme}
A log scheme is a pair $(X, \M_X, \alpha)$ where $X$ is a scheme,
$\M_X$ is a sheaf of monoids in the \'etale topology on $X$, and
$\alpha\colon \M_X \to \ca O_X$ is a morphism of sheaves of monoids,
where $\ca O_X$ is given the multiplicative monoid structure. We
require that $\alpha^{-1}\ca O_X^\times \to \ca O_X^\times$ is an
isomorphism (in particular $\M_X^\times=\alpha^{-1}\ca O_X^\times $). The \emph{ghost sheaf} on $X$ is the quotient $\ghost_X =
\M_X /\M_X^\times$.  
\end{definition}

\begin{example}
The \emph{trivial} log structure on $X$ is given by taking $\M_X = \ca O_X^\times$ together with its inclusion into $\ca O_X$. 
\end{example}

\begin{example}\label{eg:log_str_from_open}
Let $X$ be a scheme and $j\colon U \to X$ an open subscheme. Let $\M_X = \ca O_X \cap j_*\ca O_U^\times$ be the subsheaf of functions on $X$ which are units on $U$. Suppose that $U$ is the complement of a reduced simple normal crossings (snc) divisor $D$; then the stalk of $\ghost_X$ at a point is a free monoid of rank equal to the number of branches of $D$ through that point; we say $X$ has \emph{snc log structure}. 
\end{example}

\begin{example}\label{eg:log_str_toric}
Let $P$ be a monoid and $R$ a ring. The associated monoid ring is
$R[P]$, and $\on{Spec} R[P]$ has a natural log structure coming from
the map $P \to R[P]$. More precisely, there is a natural map $i$ from
the constant sheaf $\underline P$ associated to $P$ to the structure sheaf $\lb O_{\on{Spec} R[P]}$, and the log structure is given by the pushout $\underline P \oplus_{i^{-1} \lb O_{\on{Spec} R[P]}^\times} \lb O_{\on{Spec} R[P]}^\times$ in the category of sheaves of monoids on $\on{Spec} R[P]$. If $P$ is finitely generated, saturated, and
torsion-free (a \emph{toric monoid}) then $\on{Spec} R[P]$ is a toric
scheme over $\on{Spec} R$. In this case, if we take $j\colon U \to
\on{Spec} R[P]$ to be the inclusion of the open locus on which the log
structure is trivial, then this log structure is equal to that from
Example \ref{eg:log_str_from_open}. 
\end{example}

\begin{example}\label{eg:log_str_toroidal}
If $k$ is a field and $j\colon U \to X$ is a toroidal embedding over $k$, then $X$ carries a natural log structure which can be described in two different ways. First, applying Example \ref{eg:log_str_from_open} to the map $j$ equips $X$ with a log structure. Equivalently, by definition $X$ is \'etale locally modeled on toric varieties, and each of these toric varieties has a natural log structure by Example \ref{eg:log_str_toric}, which patch to a log structure on $X$. 
\end{example}
The above examples give the impression that log geometry is about boundary divisors, but the next example shows that the machinery is much more flexible, in a way which is critical to its application to moduli problems. 
\begin{example}\label{eg:log_str_pt}
Let $k$ be a field and $P$ a sharp monoid. Define a map of monoids on $X = \on{Spec} k$ by the formula 
\begin{equation}
\M_X = P \oplus k^\times \to k = \ca O_X \, ; \quad (a, u) \mapsto 0^au,
\end{equation}
where $0^a$ is $1$ if $a = 0 \in P$, and is 0 otherwise. Then $\M_X \to \ca O_X$ gives a log structure, with $\ghost_X = P$. 
\end{example}

\subsection{Morphisms of log schemes}

\begin{definition}\label{def:fs}
A morphism of log schemes $f\colon (X, \M_X)\to (Y, \M_Y)$ consists of a morphism of schemes $f\colon X \to Y$ together with a   morphism of sheaves of monoids $\M_Y \to f_*\M_X$ compatible with the maps to $\ca O_X$ and $\ca O_Y$. We say $f$ is \emph{strict} if the log structure on $X$ is pulled back from that on $Y$; more precisely, if $\M_X = f^{-1}\M_Y \oplus_{f^{-1} \ca O^\times_Y} \ca O^\times_X$. 
\end{definition}

\begin{example}
Let $k$ be a field, let $X$ be $\on{Spec} k$ with the log structure from Example \ref{eg:log_str_pt} with $P = \bb N$, and let $Y = \bb A^1_k$ with toric log structure; equivalently with log structure coming from the divisor $0$. Then there is a unique strict map $X \to Y$ sending $X$ to the origin. 
\end{example}

To minimise pathologies, we generally work with fine saturated (fs) log schemes. 
\begin{definition}
A log scheme $X$ is \emph{fs} if it admits a strict fppf surjection 
\begin{equation}
\bigsqcup_i U_i \to X
\end{equation}
and for each $i$ there is a strict morphism $U_i \to V_i$ where $V_i$ is a toric log scheme. 
\end{definition}
Equivalence to the more standard definitions follows from \cite[Prop I.1.3.5 and Theorem III.1.2.7 (4)]{oguslogbook}; in characteristic 0 we may replace fppf by \'etale. We say $X$ is \emph{log flat} if the maps $U_i \to V_i$ above can be chosen to be flat. 

From now on, by log scheme we always mean an fs log scheme (this holds
in the examples given above, assuming that in
Example \ref{eg:log_str_from_open} we take $U$ to be the complement of a
normal crossings divisor, and in Example \ref{eg:log_str_pt} we take $P$ a
toric monoid).  

We list those properties of morphisms of log schemes which will be particularly important for us. Let $f\colon X \to Y$ be a morphism of log schemes. 
\begin{enumerate}
\item we say $f$ is \emph{proper} if the underlying morphism of schemes is proper; 
\item we say $f$ is \emph{log \'etale} if it is \'etale-locally modelled (see \cite[Theorem 3.3.1]{oguslogbook} for details) on a morphism of toric varieties induced by a map $P \to Q$ of monoids which is injective and for which the cokernel of $P^\gp \to Q^\gp$ is finite and is of order invertible in $\ca O_X$; 
\item we say $f$ is \emph{log smooth} if it is \'etale-locally modelled on a morphism of toric varieties induced by a map $P \to Q$ of monoids which is injective and for which the order of the torsion subgroup of the cokernel of $P^\gp \to Q^\gp$ is invertible in $\ca O_X$. 
\end{enumerate}

\begin{remark}
\begin{enumerate}
\item Every log \'etale morphism is log smooth.
\item A strict morphism is log \'etale if and only if the underlying morphism of schemes is \'etale, and is log smooth if and only if the underlying morphism of schemes is smooth. 

\item Given a toric morphism of toric varieties such that the order of the cokernel of the associated lattice morphism is invertible, the associated map of log schemes is log smooth. In particular this holds for any toric map of toric varieties in characteristic 0, and also for any birational toric map of toric varieties over any base. In particular, toric blowups of toric varieties give examples of log
  \'etale maps which are not strict and not flat.  

\item If $X$ has snc log structure and $z$ is a closed stratum and
  $\tilde X$ is the blowup of $X$ along $z$, then the inverse image of
  the boundary of $X$ is again snc, and we equip $\tilde X$ with the
  corresponding snc log structure. The map $\tilde X \to X$ is then
  proper and log \'etale, and not strict unless $z$ was a divisor.  
\end{enumerate}
\end{remark}

\subsection{Log algebraic spaces and stacks}\label{sec:log_spaces_and_stacks}

We recall from \cite[\href{https://stacks.math.columbia.edu/tag/025Y}{Definition 025Y}]{stacks-project} that an algebraic space is a sheaf $\cat{Sch}^\op \to \cat{Set}$ for the \'etale topology whose diagonal is representable and which admits an \'etale cover by a scheme. There are (at least) two natural ways to generalise this to log schemes. The first is to equip the algebraic space with a log structure exactly as in Definition \ref{def:log_scheme}; indeed, this generalises to algebraic stacks. We will refer to these as \emph{algebraic spaces (or algebraic stacks) with log structure}. 

The second, more interesting, option is to consider functors 
$\cat{LogSch}^\op \to \cat{Set}$ which are sheaves for the strict \'etale topology, have diagonal representable by a log scheme, and which admit a \emph{log} \'etale cover by a log scheme. 

Every algebraic space with log structure gives naturally such a functor, but the converse is far from true; the key point is that log \'etale morphisms are not in general flat, so admitting a log \'etale cover by a log scheme is a much weaker condition than admitting an \'etale cover by a scheme. 

Log abelian varieties (see Section~\ref{sec:LAV}) are perhaps the key motivating examples of log algebraic spaces in the second sense. We discuss here a simpler example (which will also play an important role later). 

\begin{example}\label{eg:log_trop_tori}
Let $X$ be a log scheme. We define 
\begin{equation}
(\bb G_\LOG)_X\colon \cat{LogSch}_X^\op \to \cat{Set}; \quad T \mapsto \M_T(T)^\gp
\end{equation}
and 
\begin{equation}
(\bb G_\trop)_X\colon \cat{LogSch}_X^\op \to \cat{Set}; \quad T \mapsto \ghost_T(T)^\gp, 
\end{equation}
respectively the \emph{log line} and \emph{tropical line} over $X$. We claim that the
former is a log algebraic space in the second sense (but not in the
first sense). Indeed, consider the subfunctor $\bb P^1_X \subseteq
(\bb G_\LOG)_X$ sending $T$ to the set of those sections $m \in
\M_T(T)^\gp$ which are locally comparable to $0$:

\begin{equation*}
\begin{split}
\bb P^1_X\colon & \cat{LogSch}_X^\op \to \cat{Set}; \\ 
& T \mapsto \{m \in \M_T(T)^\gp: \exists U \sqcup V \to T: m|_U \in \M_U(U) \text{ and } m^{-1}|_V \in \M_V(V)\}.
\end{split}
\end{equation*}
Here $U \sqcup V \to T$ is a strict \'etale surjection. 
On the one hand, $\mathbb P^1_X$ is simply the projective line over $X$, with log structure given by pulling back the log structure from $X$ and adding in the $0$ and $\infty$ sections. On the other hand, the inclusion $\bb P^1_X \to (\bb G_\LOG)_X$ is a log \'etale morphism, locally modelled on subdividing the fan of a toric variety. 

We note that the natural map $\bb G_\LOG \to \bb G_\trop$ is a $\bb G_m$-torsor, in particular not \'etale; and $\bb G_\trop$ is not a log algebraic space in either sense. 
\end{example}

\subsection{Cone complexes and cone stacks}
We recall here definitions related to rational polyhedral cone complexes and cone stacks.
The definitions in this section are mostly taken from \cite{CCUW}.

A \emph{strongly convex rational polyhedral cone} is a pair $(N, \sigma)$ consisting
of a finitely generated free abelian group $N$ and a finite
intersection of halfspaces $\sigma$ in $N_{\R} = N \otimes_{\Z} \R$
defined over $\Q$, such that $\sigma$ contains no non-trivial linear subspaces of $N_{\R}$. The \emph{dimension} of $\sigma$ is the dimension of its affine span in $N_{\R}$. We say $\sigma$ is \emph{full-dimensional} if $\dim \sigma = \dim N_{\R}$.

Denote by $\on{RPC}$ the category of full-dimensional strongly convex rational
polyhedral cones together with $\Z$-linear morphisms. For simplicity, we refer to the objects of $\on{RPC}$ simply as \emph{cones}.

Recall that to any cone $(N, \sigma)$ one can associate a toric monoid $S_{\sigma} = \sigma^{\vee} \cap M$, where $M = N^{\vee}$ is the dual lattice, and $\sigma^{\vee} \subset M_{\R}$ is the cone dual to $\sigma$. Then $\sigma \mapsto S_{\sigma}$ defines an equivalence of categories between $\on{RPC}$ and the category of sharp toric monoids.

There is a topological realisation functor from $\on{RPC}$ to the category of topological spaces,
\[
\on{RPC} \longrightarrow \on{TOP}\; , \; (N, \sigma) \longmapsto \sigma,
\]
where $\sigma$ on the RHS is viewed as a topological space. 

A map $\tau \to \sigma$ in $\on{RPC}$ is a \emph{face map} if it induces an isomorphism of $\tau$ onto a face of $\sigma$. A face map is moreover \emph{proper} if it induces an isomorphism onto a proper face. 

\begin{definition}
A \emph{rational polyhedral cone complex $\Sigma$} consists of a collection of cones $\ca C = \left\{\sigma_{\alpha}\right\}$ in $\on{RPC}$  together with a collection of face morphisms $\ca F = \left\{\phi_{\alpha \beta} \colon \sigma_{\alpha} \to \sigma_{\beta}\right\}$, closed under composition, such that the following axioms are satisfied:
\begin{enumerate}
\item[(i)] for all $\sigma_{\alpha} \in \ca C$, the identity map $\id\colon \sigma_{\alpha} \to \sigma_{\alpha}$ is in $\ca F$. 
\item[(ii)] every face of a cone $\sigma_{\beta}$ is the image of exactly one face morphism $\phi_{\alpha \beta} \in \ca F$. 
\item[(iii)] there is at most one morphism $\sigma_{\alpha} \to \sigma_{\beta}$ between two cones $\sigma_{\alpha}$ and $\sigma_{\beta}$ in $\ca F$. 
\end{enumerate}
A map $f \colon \Sigma_1 \to \Sigma_2$ between two rational polyhedral cone complexes is a choice, for each $\sigma_{\alpha_1} \in \Sigma_1$, of a cone $\sigma_{\alpha_2} \in \Sigma_2$ and a morphism $f_{\alpha_1 \alpha_2} \colon \sigma_{\alpha_1} \to \sigma_{\alpha_2}$ which does not factor through any proper face map $\tau \to \sigma_{\alpha_2}$ 
 such that for any face map $\phi_{\alpha_1\beta_1} \in \ca F_1$ there exists a face map $\phi_{\alpha_2\beta_2}\colon \sigma_{\alpha_2} \to \sigma_{\beta_2}$ in $\ca F_2$ making the following diagram commute: 
\[
\begin{tikzcd}
\sigma_{\alpha_1}  \arrow[r, "f_{\alpha_1\alpha_2}"] \arrow[d, "\phi_{\alpha_1\beta_1}"]  &   \sigma_{\alpha_2} \arrow[d, "\phi_{\alpha_2\beta_2}"'] \\
\sigma_{\beta_1} \arrow[r, "f_{\beta_1\beta_2}"'] &   \sigma_{\beta_2}\\
\end{tikzcd}.
\]
\end{definition}
The category of rational polyhedral cone complexes is denoted by $\on{RPCC}$. For simplicity, we refer to the objects in $\on{RPCC}$ simply as \emph{cone complexes}. 

As before, we have a topological realisation functor 
\[
\on{RPCC} \longrightarrow \on{TOP}\; , \; \Sigma \longmapsto |\Sigma|,
\]
where $|\Sigma| \coloneqq \varinjlim_{\ca F}\sigma_{\alpha}$, the colimit taken in the category $\on{TOP}$. 
The topological space  $|\Sigma|$ is often referred to as the \emph{support of} $\Sigma$ (see e.\,g.\,\cite{KKMSD}). 

We have a full and faithful embedding 
\[
\on{RPC} \to \on{RPCC}
\] defined by associating to a cone $\sigma$ the cone complex consisting of all of its faces. This is again denoted by $\sigma$. 

A map of cone complexes $f \colon \Sigma_1 \to \Sigma_2$ is called \emph{strict} if all the maps $f_{\alpha_1\alpha_2} \colon \sigma_{\alpha_1} \to \sigma_{\alpha_2}$ are isomorphisms. We denote by $\on{RPCC}^{\rm{face}}$, resp.\,$\on{RPC}^{\rm{face}}$, the category of cone complexes, resp. cones, with strict morphisms. 

\begin{definition}
A \emph{cone stack} is a category fibered in groupoids $\tau \colon W \to \on{RPC}^{\rm{face}}$. A \emph{morphism of cone stacks} $(W, \tau) \to (W', \tau')$ is a functor $\varphi \colon W \to W'$ and for all $\alpha \in W$ a map $\varphi_{\alpha} \colon \tau(\alpha) \to \tau'(\varphi(\alpha))$ such that the following conditions are satisfied:
\begin{enumerate}
\item[(i)] for all $\alpha \to \beta$ in $W$, the diagram
\[
\begin{tikzcd}
\tau(\alpha)  \arrow[r, "\varphi_\alpha"] \ar[d]& \tau'(\varphi(\alpha)) \ar[d]\\
\tau(\beta) \arrow[r, "\varphi_\beta"]  &   \tau'(\varphi(\beta)) \\
\end{tikzcd}
\]
commutes.
\item[(ii)] for all $\alpha$ in $W$, the image of the map $\varphi_\alpha \colon \tau(\alpha) \to \tau'(\varphi(\alpha))$ is not contained in any proper face.
\end{enumerate}
\end{definition}
\begin{remark}
For $\tau \colon W \to \on{RPC}^{\rm{face}}$ to be a category fibered in groupoids means that the following axioms are satisfied:
\begin{enumerate}
\item[(i)] for all $\alpha \in W$ and for any face map $\gamma \to \tau(\alpha)$ there exists $u \colon \beta \to \alpha$ in $W$ such that $\tau(\beta) = \gamma$ and $\tau(u) = \gamma \to \tau(\alpha)$.
\item[(ii)] if $u \colon \eta \to \alpha$ and $\nu \colon \beta \to \alpha$ are maps in $W$ such that the map $\tau(\eta) \to \tau(\alpha)$ factors through $\tau(\beta) \to \tau(\alpha)$, then there exists a unique $\omega \colon \eta \to \beta$ such that $u = \nu \circ \omega $ and $\tau(\omega)$ is the face map $\tau(\eta) \to \tau(\beta)$.

\end{enumerate}
\end{remark}
\begin{remark} A cone complex can be seen as a particular case of a cone stack by taking $W$ the face poset of the complex.
\end{remark}

\begin{remark}
The reader may be surprised by the lack of mention of a Grothendieck topology in the above definition of cone stack; this is because a cone admits no covers in the face topology, see \cite[Proposition 2.3]{CCUW}.
\end{remark}

\subsection{Tropicalisation} \label{sec:tropicalisation}

Tropicalisations are often seen as purely combinatorial objects, but can also be given the structure of algebraic stacks with log structure. While making things somewhat more technical, this has the great advantage that a log scheme and its tropicalisation live in the same category, and there is a natural morphism between them. The main reference for this section is again \cite{CCUW}. 

Let $R$ be a ring.
If $N$ is a lattice with dual $M$, and if $(N, \sigma)$ is a strongly convex rational polyhedral cone, then
$\sigma^\vee \cap M$ is a toric monoid, with associated toric variety
$X_\sigma \coloneqq \on{Spec} R[\sigma^\vee \cap M]$. The dense torus
is $T_\sigma\coloneqq \on{Spec} R[(\sigma^\vee \cap M)^\gp]$,

and the quotient 
\begin{equation}
\ca A_\sigma \coloneqq [X_\sigma/T_\sigma]
\end{equation}
is an algebraic stack with log structure, known as the \emph{Artin
  cone} associated to~$\sigma$. This is an algebraic stack of
dimension 0, and its $R$-points are naturally in bijection with the
torus orbits in $X_\sigma$.

The construction of the Artin cone can be globalised, in the sense that to a cone stack $\Sigma$ there is naturally associated an algebraic stack $\ca A_\Sigma$ with log structure, the \emph{Artin fan} of $\Sigma$. This construction induces an equivalence between cone stacks and suitable 0-dimensional algebraic stacks with log structure. Because of this equivalence, after this subsection we will use the notations $\Sigma$ and $\ca A_\Sigma$ interchangeably. 

In the affine toric case, there is a natural map $X_\sigma \to \ca A_\sigma$, which we call the \emph{tropicalisation map}, and $\ca A_\sigma$ is the \emph{tropicalisation} of $X_\sigma$. This naturally extends to toric varieties; if $F$ is a fan then there is a natural tropicalisation map $X_F \to \ca A_F$, which is a strict smooth map of stacks with log structure. 

For $X$ an algebraic stack with log structure, we define a \emph{tropicalisation} to be any pair of a cone stack $\Sigma$ and a strict flat surjective map $X \to \ca A_\Sigma$. Tropicalisations are not unique, and making `good' choices is an interesting problem in general.

Here surjectivity is imposed to avoid situations where the tropicalisation is much more complicated than the log scheme itself, and flatness leads to many technical simplifications; for example, subdivisions (defined in the next section) are always birational, and the locus in $X$ where the log structure is trivial is dense open. 

\begin{remark} \label{rem:clemens}
If $X$ has snc log structure, then there is a natural choice of
tropicalisation given by the Clemens complex; this has one ray for
each boundary divisor in $X$, one $2$-dimensional cone for each
connected component of the intersection of two boundary divisors,
etc. In this case the tropicalisation map is strict, surjective, and
smooth. However, this is not the only choice of tropicalisation, and
indeed we will make different choices for our log compactifications of
the moduli spaces $N_g$ and $N_{g,1}$, see Section~\ref{sec:universal_Ng}, in particular Example \ref{eg:example_genus_one}. 
\end{remark}

\subsection{Subdivisions}\label{sec:subdivisions}

Let $X$ be an algebraic stack with log structure. A \emph{subdivision} of $X$ is a proper representable log \'etale monomorphism $X' \to X$ which is a universal surjection for base changes to integral log schemes (see \cite{Hu-Schimpf}). The base-change of a subdivision is a subdivision. 

If $\Sigma$ is an Artin fan then by \cite[\S 4.3]{abramovich2017boundedness} subdivisions of $\Sigma$ are naturally in bijection with subdivisions of the associated cone stack; these are just surjective maps which are subdivisions on each cone in the sense of \cite{fulton_toric}. In this case subdivisions are always birational.

In particular, if $f\colon X \to \Sigma$ is a tropicalisation then a
subdivision of $\Sigma$ induces a subdivision of $X$ by base-change
along $f$. Such a subdivision is again birational (as the flat
pullback of a birational map). 

The key example to keep in mind is the subdivision of the fan of a toric variety, yielding a birational modification of the toric variety itself.

\subsection{Piecewise linear functions and divisors} \label{sec:piecewise_linear}

\subsubsection{Strict piecewise linear functions}
Let $X$ be an algebraic stack with log structure. The \emph{sheaf of strict piecewise linear (sPL) functions} on $X$ is the sheaf $\ghost_X^\gp$, also denoted $\on{sPL}_X$. A \emph{strict piecewise linear (sPL) function} on $X$ is a global section of $\ghost_X^\gp$. We denote the group of sPL functions on $X$ by $\on{sPL}(X)$. Thus we have $\on{sPL}(X) = H^0(X, \ghost_X^\gp)$.

\begin{example} If $\Sigma$ is an Artin fan then an sPL function on $\Sigma$ is the same as a continuous function on the associated cone stack which is linear on each cone. 
\end{example}

\begin{definition} \label{def:line_bundle_for_sPL}
From the exact sequence 
\begin{equation}\label{eq:basic_log_exact_seq}
1 \to \ca O_X^\times \to \M_X^\gp \to \ghost_X^\gp \to 0
\end{equation}
we see that any $\beta \in \ghost_X^\gp(X)$ induces a line bundle $\ca O_X(\beta)$ on $X$; namely, the inverse images of local restrictions of $\beta$ that lift to $\M_X^\gp$ form an $\ca O_X^\times$-torsor. Then we take the associated line bundle. Note that there is a choice of sign in passing from $\ca O_X^\times$-torsors to line bundles; we choose to glue in the $\infty$ section, so that non-negative sPL functions correspond to effective divisors. The corresponding isomorphism class is obtained by applying the connecting homomorphism $H^0(X, \ghost_X^\gp) \to H^1(X, \ca O_X^\times)$. 
\end{definition}

\begin{definition}\label{def:sPL_to_div}
Suppose that the log structure on $X$ is trivial on some dense open $U \hra X$ (this will usually be the case for us), then the line bundle $\ca O(\beta)$ comes with a canonical trivialisation on $U$, inducing in turn a Cartier divisor $\on{div}\beta$ on $X$, supported outside $U$. Putting this together, we have a natural map 
\begin{equation}
\on{div}\colon \on{sPL}(X) \to \on{Div}(X),
\end{equation}
taking non-negative sPL functions to effective divisors. We can tensor both sides with $\bb Q$ or $\bb R$ to obtain maps taking rational- or real-valued sPL functions to divisors with rational or real coefficients.
\end{definition}
 If $X$ is a toric variety, this all reduces to the standard correspondence between PL functions on the fan and torus-equivariant Cartier divisors. However, unlike in the toric case, in general not every divisor on $X$ will be linearly equivalent to one coming from an sPL function, for example if the log structure on $X$ is trivial and not every divisor on $X$ is principal. 

\subsubsection{Piecewise linear and conical functions}

In this section we define general piecewise linear functions and conical functions on algebraic stacks with log structure, generalising the natural notions on (the fans of) toric varieties. 

If $X$ is an atomic log scheme\footnote{Informally this means that $X$ is very small; the formal definition is in \cite[Definition 2.2.4]{abramovich-wise-birat}, examples are affine toric varieties, and log schemes whose underlying scheme is strictly henselian local. } then a PL function on $X$ is defined to be a continuous PL function on the cone $\ghost_X^\vee$ (we do not require it to be linear on any particular subdivision, but we do require the existence of a finite subdivision such that it is strict, i.e. linear on each cone). A \emph{conical function} on $X$ is defined to be a function $f$ from the rational points of the cone $\ghost_X^\vee$ to $\bb R$ with the property that $f(\lambda x) = \lambda f(x)$ for all $\lambda \in \bb Q_{\ge 0}$ and rational points $x$ in $\ghost_X^\vee$ (i.e., homomorphisms $x\colon \ghost_X \to \bb Q_{\ge 0}$). We say that a given conical function is \emph{continuous} if there exists a continuous function on the real points of the cone whose restriction to the rational points is the given function. 

If $Y \to X$ is a map of atomic log schemes then there is a natural pullback map on PL and (continuous) conical functions. 
If $X$ is an algebraic stack with log structure, and $U = \bigsqcup_i U_i \to X$ is a strict smooth
cover by atomic log schemes, and 
\begin{equation}\label{eq:cone_groupoid}
V=\bigsqcup_j V_j \to U \times_X U
\end{equation} is a smooth cover of the fibre product by atomic log schemes, then we may define a PL/conical/continuous conical function on $X$ to be a PL/conical/continuous conical function on each $U_i$ compatible with pullback to the $V_j$. In this way we define the sheaves:
\begin{itemize}
\item $\on{sPL}_X$ of strict piecewise linear functions (required to be linear on the cones of $X$, and taking \emph{integral} values on lattice points); 
\item $\on{PL}_X$ of piecewise linear functions (required to be linear on the cones of some subdivision of $X$, and required to take \emph{rational} values on lattice points);
\item $\on{Con}_X$ of real-valued conical functions; 
\item $\on{CCon}_X$ of real-valued continuous conical functions. 
\end{itemize}
All these notions are independent of the choices of covers $U$ and $V$. There are natural pullback maps on these sheaves. We have inclusions
\begin{equation}
\on{sPL}_X \to \on{PL}_X \to \on{CCon}_X \to \on{Con}_X. 
\end{equation}

We can give a more concrete combinatorial interpretation of these functions. We fix covers $U$ and $V$ as above, yielding a diagram of atomic log schemes 
\begin{equation}
\bigsqcup_j V_j \rightrightarrows \bigsqcup_i U_i. 
\end{equation}
This induces a diagram of cones 
\begin{equation}\label{eq:cone_diagram}
\bigsqcup_j \ghost^\vee_{V_j} \rightrightarrows \bigsqcup_i \ghost^\vee_{U_i}, 
\end{equation}
where all the maps are face inclusions because the maps $V_j \to U_i$ are strict. We let $T$ be the colimit of the diagram of topological realisations in the category of topological spaces; the pair consisting of $T$ and the diagram \eqref{eq:cone_diagram} is a \emph{generalised cone complex} in the sense of \cite[\S 2.6]{ACP}. We say a real-valued function on $T$ is continuous conical/conical/PL/sPL if it is so after pullback to each cone $\ghost^\vee_{U_i}$. Since functions on the colimit are exactly functions on the $\ghost^\vee_{U_i}$ whose pullbacks to the $\ghost^\vee_{V_j}$ agree, we deduce
\begin{lemma}
There are natural bijections
\begin{equation*}
\begin{split}
\sPL(X) &\to \sPL(T)\\
\on{PL}(X) &\to \on{PL}(T)\\
\on{Con}(X) &\to \on{Con}(T)\\
\on{CCon}(X)& \to \on{CCon}(T).\\
\end{split}
\end{equation*}
\end{lemma}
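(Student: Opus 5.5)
The lemma is essentially a reformulation of the definitions, and I will prove it by unwinding both sides with respect to the fixed covers $U = \bigsqcup_i U_i \to X$ and $V = \bigsqcup_j V_j \to U\times_X U$. By definition, an element of $\on{PL}(X)$ (and likewise for $\sPL$, $\on{Con}$, $\on{CCon}$) is a family $(f_i)_i$ of functions of the given type on the cones $\ghost^\vee_{U_i}$ whose two pullbacks to each $\ghost^\vee_{V_j}$, along the maps of \eqref{eq:cone_diagram}, agree. Independence of the choice of covers was already asserted, so I may work with these covers throughout. On the other side, $T = \varinjlim |\cdot|$ of \eqref{eq:cone_diagram} in $\on{TOP}$, and a function on $T$ is of a given type exactly when its pullback to each $\ghost^\vee_{U_i}$ is. So the map $\on{PL}(X) \to \on{PL}(T)$ sends $(f_i)$ to the function it induces on the colimit, and the inverse sends $F$ to $(F|_{\ghost^\vee_{U_i}})$.

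The order of steps is as follows.

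\begin{enumerate}
\item I recall the universal property of the colimit: set-theoretic functions $T \to \bb R$ correspond bijectively to families $(f_i)$ of functions on the realisations $|\ghost^\vee_{U_i}|$ whose pullbacks along the two parallel maps to each $|\ghost^\vee_{V_j}|$ coincide. Continuity on $T$ corresponds to continuity of each $f_i$, since $T$ carries the colimit topology.
\item I check that "type" is detected on the $U_i$. This holds by the very definition given for $T$. The one compatibility to verify is that the pullback of an sPL/PL/conical/continuous conical function along a face map $\ghost^\vee_{V_j} \to \ghost^\vee_{U_i}$ is again of the same type. This is clear because face maps are restrictions to faces and are compatible with lattices, so rational and integral points, linearity on cones, and existence of subdivisions are all preserved.
\item I conclude that the two constructions are mutually inverse, and that they are natural with respect to pullback and refinement of covers.
\end{enumerate}

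The only subtle point concerns conical and continuous conical functions, which are defined only on rational points. For these I take the colimit of the sets of rational points, or equivalently regard such a function on $T$ as a function on the image of the rational points. I must then check that continuity, meaning existence of a continuous real extension, glues. This works because the extension on a cone is unique, by density of the rational points, so the extensions on the $U_i$ automatically agree on the $V_j$ and glue on $T$.

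I expect this continuity-gluing step to be the main (mild) obstacle; everything else is formal.
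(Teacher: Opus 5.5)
Your proposal is correct and takes essentially the same route as the paper: functions on the colimit $T$ are exactly families of functions on the cones $\ghost^\vee_{U_i}$ whose two pullbacks to the $\ghost^\vee_{V_j}$ agree, and each function type is by definition tested cone by cone. Your extra care with rational points and with gluing continuous extensions (face maps preserve rationality, and a continuous extension from the dense rational points is unique) spells out a point the paper leaves implicit.
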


In particular suppose that $X = \ca A_\Sigma$ for some cone stack $\Sigma$. One may then choose the $U_i$ to correspond to the cones of $\Sigma$ and the $V_j$ to correspond to the pairwise intersections of cones (which are again cones). Then $T$ is the colimit in the category of topological spaces of the diagram $\Sigma$, and the functions on $\ca A_\Sigma$ are the functions on $T$. 

Given a cone stack $\Sigma$ and a map $X \to \ca A_\Sigma$ then we can write down sPL (respectively PL, or conical, or continuous conical) functions on $X$ by writing them down on $\Sigma$ and pulling back. We do not claim that every such function on $X$ arises in this way, but this approach will be enough to construct all the examples we need. 

We next prove some basic properties of the above spaces of functions. 
Let $X$ be an algebraic stack with log structure.
\begin{lemma}
If $X$ is noetherian and $f$ is a PL function on $X$ then there exist a positive integer $n$ and a subdivision $X' \to X$ such that the pullback of $nf$ to $X'$ is strict PL. 
\end{lemma}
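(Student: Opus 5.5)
The plan is to work locally, using quasi-compactness to pass to finitely many pieces, and then to glue the resulting subdivisions. Since $X$ is noetherian, we can choose a strict smooth cover $U=\bigsqcup_{i=1}^r U_i \to X$ by finitely many atomic log schemes. Likewise we choose a cover $V=\bigsqcup_{j=1}^s V_j \to U\times_X U$ by finitely many atomic log schemes. This gives the generalised cone complex $T$ of \eqref{eq:cone_diagram}, which has finitely many cones $\sigma_i=\ghost^\vee_{U_i}$ glued along the face maps coming from the $\ghost^\vee_{V_j}$. By the preceding lemma, $f$ is a PL function on $T$. By definition, on each $\sigma_i$ there is a finite subdivision $\Sigma_i$ on whose cones $f$ is linear, with rational coefficients since $f$ takes rational values on lattice points.

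The key step is to produce a single subdivision of $T$ that is compatible with all the gluing maps. The restriction of $\Sigma_i$ to a face need not agree with the subdivision induced from another cone, and the automorphisms coming from the $V_j$ may not preserve $\Sigma_i$. To fix this, we take the common refinement on each $\sigma_i$ of the following: $\Sigma_i$, the images of all the $\Sigma_k$ under the finitely many face identifications, and their translates under the finitely many self-gluings. One can do this without iterating, because $f$ is intrinsic. We take the coarsest decomposition of each cone into the maximal loci of linearity of $f$. Refining this to a fan, for instance by intersecting with the hyperplanes cut out by the finitely many linear pieces, gives a subdivision canonically determined by $f$. It is therefore preserved by every face map and automorphism of the diagram. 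Its restriction to a face is the corresponding decomposition of the face, since the linearity loci of $f|_\tau$ are the intersections with $\tau$. Because everything is finite, this yields a subdivision $T'$ of $T$, compatible with the diagram, on every cone of which $f$ is linear. This compatibility is the main point to get right. I expect the obstacle to be making precise that the canonical refinement by linearity domains is stable under the groupoid, including stacky automorphisms, and that faces are handled consistently.

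Next we realise $T'$ geometrically. By the correspondence of \S\ref{sec:subdivisions}, a compatible family of subdivisions of the cones $\ghost^\vee_{U_i}$ defines subdivisions $U_i'\to U_i$. These are obtained by base change along the charts $U_i \to \ca A_{\sigma_i}$, and they agree on the $V_j$. Subdivisions descend along strict smooth covers, being determined by their tropical data, so the $U_i'$ glue to a subdivision $X'\to X$ whose cones are the cones of $T'$.

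Finally, on each cone of $X'$ the function $f$ is linear with rational coefficients. Only finitely many cones occur, so we can take $n$ to be a common denominator of all the finitely many linear forms. Then $nf$ is linear on each cone with integral values on lattice points, and these linear pieces are compatible on overlaps. Hence $nf$ defines a section of $\ghost_{X'}^\gp$, that is, a strict PL function on $X'$.
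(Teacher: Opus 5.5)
Your outer structure matches the paper's outline: finitely many atomic charts, a subdivision of $T$ compatible with the gluing, descent, then clearing denominators. The descent and common-denominator steps are fine. The gap is exactly the step you flag. You assert that no iteration is needed because a refinement of each cone built from $f|_\sigma$ alone restricts on every face $\tau$ to the refinement built from $f|_\tau$. This is false.

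Take the fan in $\mathbb{R}^3$ with maximal cones $\sigma_1=\mathrm{cone}(e_1,e_2,e_3)$ and $\sigma_2=\mathrm{cone}(e_1,e_2,-e_3)$, meeting in $\tau=\mathrm{cone}(e_1,e_2)$. Let $f=\max(0,\,z-|x-y|)$ on $\sigma_1$ and $f=0$ on $\sigma_2$; this is a continuous PL function with integral values on lattice points. The linear pieces of $f$ on $\sigma_1$ are $0$, $z-x+y$ and $z+x-y$. So your hyperplane refinement of $\sigma_1$ contains the plane $x=y$, which cuts the relative interior of $\tau$ along the ray through $(1,1,0)$. Your refinement of $\sigma_2$ is trivial, because $f|_{\sigma_2}=0$. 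The two do not agree on $\tau$, so the subdivisions of the two charts differ on their overlap and do not descend. The underlying issue is that a hyperplane $\{\ell_a=\ell_b\}$ between pieces of $f$ living away from $\tau$ can still meet the interior of $\tau$, even when $f|_\tau$ is linear.

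No per-cone recipe can repair this. Let $C$ be any $3$-dimensional cone of a subdivision of $\sigma_1$ having $\tau$ as a face. Near $(1,1,0)$ the cone $C$ coincides with the half-space $z\ge 0$, so it contains $(1,1,\epsilon)$ and $(1,1-\delta,\epsilon)$ for small $0<\epsilon<\delta$, where $f$ equals $\epsilon$ and $0$ respectively. But linearity of $f$ on $C$ together with $f|_\tau=0$ forces $f=cz$ on $C$, a contradiction. Hence every subdivision of $\sigma_1$ on which $f$ is linear subdivides $\tau$. Therefore $\sigma_2$ must be subdivided as well, although $f$ is already linear there. The subdivision of a cone genuinely depends on $f$ on neighbouring cones, so information must be propagated across shared faces.

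The paper does this by using that on a noetherian $X$ there is a single global subdivision refining any finite collection of local subdivisions. To stay combinatorial, you can instead induct on the dimension of the cones of $T$:
\begin{enumerate}
\item For a cone $\tau$, assume you have already fixed compatible, automorphism-stable subdivisions of its proper faces, each refining the restriction of every $\Sigma_k$ along every face map into $\sigma_k$.
\item Cone that boundary subdivision from an automorphism-invariant interior ray, for instance the sum of the primitive ray generators.
\item Take the common refinement with all restrictions $\Sigma_k|_\tau$ and their images under the finitely many automorphisms of $\tau$.
\end{enumerate}
Since the boundary subdivision already refines each $\Sigma_k|_{\partial\tau}$, the result restricts to it on $\partial\tau$. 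Finiteness of $T$ makes the procedure terminate.
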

\begin{proof}
Locally this is clear (by definition any PL function on a cone becomes strict upon some multiplication and subdivision). Since $X$ is noetherian  we can find a global subdivision refining any finite collection of local subdivisions. 
\end{proof}

\begin{lemma}\label{lem:PL_approx}
Suppose that $X$ is noetherian, then $\on{PL}(X)$ is dense in $\on{CCon}(X)$ where the latter is equipped with the topology of uniform convergence. 
\end{lemma}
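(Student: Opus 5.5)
The plan is to reduce to a single cone, apply the classical density of rational PL functions there, and glue with a partition of unity. Let $f \in \on{CCon}(X)$ and $\epsilon>0$. We look for $h \in \on{PL}(X)$ with $\sup|f-h|<\epsilon$. By the preceding lemma identifying functions on $X$ with functions on the generalised cone complex $T$, it suffices to work on $T$. Since $X$ is noetherian, we may take the strict smooth cover $U=\bigsqcup_{i=1}^r U_i$ to be finite, and $V$ finite as well, so $T$ is a colimit of finitely many cones $\sigma_i=\ghost_{U_i}^\vee$ glued along faces.

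Because $f$ is conical, it is determined by its restriction to a compact slice. On each $\sigma_i$ we fix a rational linear function $\ell_i$ that is strictly positive on $\sigma_i\setminus\{0\}$. We want these to be compatible on overlaps; the simplest way is to pass to a subdivision (or use the max of pulled-back choices) on which a single strict PL function $\ell$ exists that is positive away from the vertices. Such an $\ell$ can be obtained, for example, as a sum of the generators of $\ghost$, i.e.\ $\ell(x)=\sum x(m_k)$ over generators of each stalk, which is compatible with face maps. Then $K=\{\ell=1\}$ is a compact polyhedral complex, and uniform approximation of $f$ on $T$ is understood as uniform approximation on $K$, with the approximants extended conically. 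The topology of uniform convergence on conical functions must be read this way, since otherwise only $0$ is bounded.

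On $K$, a finite union of rational polytopes glued along faces, the restriction $f|_K$ is continuous. Rational piecewise affine functions on a compact rational polyhedral complex are dense in $C(K)$. We prove this by taking a fine rational triangulation of $K$ compatible with all face identifications and forming the linear interpolation of $f$ at its vertices, after perturbing the vertex values to rationals. Uniform continuity on the compact set $K$ bounds the error by the oscillation of $f$ on simplices plus the perturbation, so the error is less than $\epsilon$ once the mesh is small. Extending the interpolant conically gives a function that is linear on the cones over the simplices, hence strict on a subdivision. It takes rational values on lattice points because the vertices and values are rational; if necessary we rescale the vertices to lattice points on their rays. So it lies in $\on{PL}(T)=\on{PL}(X)$.

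The main obstacle is compatibility across the gluing. The triangulation and the normalising function $\ell$ must respect the identifications in the diagram $\bigsqcup V_j \rightrightarrows \bigsqcup U_i$, including self-gluings and automorphisms of the kind present in cone stacks. To handle this, I would first triangulate each face so as to be invariant under the finitely many gluing maps, working from low dimensions upward, for instance via a barycentric subdivision, which is canonical and therefore automatically compatible. Finiteness, which comes from the noetherian hypothesis, ensures that this process terminates.
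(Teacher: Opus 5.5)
Your route differs from the paper's. The paper applies the lattice version of the Stone--Weierstrass theorem on the generalised cone complex $T$. PL functions are stable under $\min$, $\max$ and rational linear combinations, so the paper only has to check that they separate points. It does this by noting that two points of a cone not on a common ray eventually lie in cells of an iterated barycentric subdivision that meet only in $0$.

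You instead prove density by hand: you normalise to a compact slice $K=\{\ell=1\}$ and interpolate $f$ linearly on fine triangulations of $K$ that are compatible with the gluing. This is more constructive. It produces explicit approximants, with error bounded by the oscillation of $f$ on the simplices plus the rational perturbation, and it avoids Stone--Weierstrass. The price is that you must produce a globally compatible normalising function and compatible triangulations of small mesh. Both arguments rest on the same two inputs: finiteness of the cover and barycentric subdivision.

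The one step that fails as written is your example of $\ell$. The sum $\ell(x)=\sum_k x(m_k)$ over the generators of the stalk is not compatible with face maps. Take $P=\langle a,b,c,d \mid a+b=c+d\rangle$ and the face $F=\langle a,c\rangle$. The quotient $P/F\cong\bb N$ is generated by $\bar b=\bar d$. The corresponding ray of $\Hom(P,\bb R_{\ge 0})$ has primitive vector $x_0$ with $x_0(a)=x_0(c)=0$ and $x_0(b)=x_0(d)=1$. At $x_0$ your recipe gives $2$ computed on the big cone and $1$ computed on the face.

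The fix is the one you gesture at later.
\begin{itemize}
\item Take the barycentric subdivision in which the new ray of each face is spanned by the sum of the primitive generators of that face's rays. This subdivision is simplicial and is preserved by every face map and automorphism.
\item Define $\ell$ to be linear on each cone of this subdivision and equal to $1$ on the primitive generator of each of its rays. This is a well-defined rational PL function on $T$, positive away from the cone points.
\item Then $K$ is a finite simplicial complex on which the gluing maps act by affine simplicial isomorphisms.
\item Hence the iterated affine barycentric subdivisions of $K$ are automatically compatible with the gluing. They have mesh tending to $0$ in each cone's Euclidean metric, and they refine the linearity domains of $\ell$.
\item That last property is needed: it is what makes the conical extension of an affine function on a simplex of $K$ linear on the cone over that simplex.
\end{itemize}
With this replacement your argument goes through.
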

\begin{proof}
This follows from the Lattice Stone-Weierstrass Theorem applied to the
generalised cone complex $T$. To apply this theorem have we to check that
PL functions separate points; this follows from the fact that if $p$
and $q$ are points of a cone which do not lie on a common ray  then there exists an integer $n$ such
that $p$ and $q$ lie in cells $\sigma_p$ and $\sigma_q$ of the $n$-th
iterated barycentric subdivision, with the property that $\sigma_p\cap
\sigma_q = \{0\}$.

\end{proof}

\subsection{Log b-divisors}\label{sec:log_divisors-on_stacks}

In this section we consider an algebraic stack with log structure $X$ which is log flat (for
example, toroidal). This implies that the largest open set $U \hookrightarrow X$ where the log structure is trivial, is dense in $X$, and that any subdivision of $X$ is birational, in fact an isomorphism over $U$. 

Following Definition~\ref{def:sPL_to_div} any sPL
function $\alpha$ on $X$ induces a Cartier divisor $\divisor \alpha$
on $X$, supported outside $U$. If $\alpha$ is any PL function on $X$
then it induces a Cartier divisor on any subdivision $X' \to X$ on
which $\alpha$ is strict.

We discuss how these divisors relate as the subdivision varies. We let $R(X)$ denote the set of all subdivisions of $X$, with morphisms over $X$,  and $R_1(X)$ the subset of all subdivisions all of whose cones are unimodular (which means they are generated by a subset of an integral lattice basis; equivalently, the associated affine toric variety is smooth).

Any map of subdivisions $X_2 \to X_1$  is proper and birational, so Cartier divisors can be pulled back and Weil divisors can be pushed forward. 
If $X' \to X$ is a subdivision, we write $\operatorname{Ca-Div}(X')$ for the group of $\bb Q$-Cartier $\bb Q$-divisors on $X'$, where any component which meets $U$ is required to have integral coefficient. This follows~\cite[\S 2.2.3]{yz}. Similarly we write $\operatorname{W-Div}(X')$ for the group of $\bb R$-Weil divisors on $X'$ where any component which meets $U$ is required to have integral coefficient. 

The following definition generalises the notion of toroidal b-divisors for toroidal embeddings over an algebraically closed field of characteristic zero from \cite{botero_burgos}.

\begin{definition} \label{def:log_b_Cartier}
We define the set of \emph{log Cartier b-divisors} as the colimit of Cartier divisors over subdivisions, and the set of \emph{log Weil b-divisors} as the limit of Weil divisors over subdivisions, i.e.,
\[
\logcbDiv(X) \coloneqq \varinjlim_{X' \in R(X)}\operatorname{Ca-Div}(X')
\]
and 
\[
\logwbDiv(X) \coloneqq \varprojlim_{X' \in R(X)}\operatorname{W-Div}(X').
\]
\end{definition}

The maps $\on{div}$ are compatible with pullback of Cartier divisors, in the sense that for all maps $X_2 \to X_1$ in $R(X)$ the following square commutes: 
\begin{equation}
\begin{tikzcd}
\sPL(X_2)  \arrow[r, "\on{div}"] &   \on{Ca-Div}(X_2) \\
\sPL( X_1) \ar[u] \arrow[r, "\on{div}"] &   \on{Ca-Div}( X_1). \ar[u]
\end{tikzcd}
\end{equation}
Here the vertical arrows are given by pullback. 
This induces a map 
\begin{equation}
\on{div} \colon\on{PL}(X) \longrightarrow \logcbDiv(X). 
\end{equation}

The maps $\on{div}$ are compatible with pushforward of Weil divisors in the case that $X_1$ and $X_2$ are both unimodular, in the sense that the following square commutes: 
\begin{equation}
\begin{tikzcd}
\sPL(X_2) \ar[d] \arrow[r, "\on{div}"] &   \on{W-Div}(X_2) \ar[d]\\
\sPL( X_1)  \arrow[r, "\on{div}"] &   \on{W-Div}( X_1). 
\end{tikzcd}
\end{equation}
Here the left vertical arrow takes an sPL function on $X_2$ to the unique $\sPL$ function on $X_1$ with the same values on the rays (this exists since $X_1$ is unimodular).

Given a conical function $c$ on $X$, i.e., a global section of $\on{Con}_X$, for any $X' \in R_1(X)$, we can define a real-valued sPL function $c_{X'}$ on $X'$ by interpolating the values of $c$ on the rays of $X'$, inducing an element of $\on{W-Div}( X')$. The compatibility with pushforward implies that these divisors assemble to form an element of $\logwbDiv(X)$. In this way we define an additive group homomorphism 
\begin{equation}\label{eq:con_to_div}
\divisor\colon \on{Con}(X) \longrightarrow \logwbDiv(X)
\end{equation}
from the set of conical functions on $X$. 
\begin{lemma}\label{lem:con_injective}
We write $\on{Div}(U)$ for the group of Weil divisors on $U$ with integral coefficients. Then the map 
\begin{equation*}
\on{Div}(U) \oplus \on{Con}(X) \to \logwbDiv(X)
\end{equation*}
induced by Zariski closure of divisors and by the map $\divisor$ from \eqref{eq:con_to_div}, is injective. If $X$ is log smooth then it is an isomorphism.  

\end{lemma}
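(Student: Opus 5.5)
We prove injectivity and surjectivity separately.

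\emph{Injectivity.} Suppose $(D, c)$ maps to zero in $\logwbDiv(X)$. Restrict the image to $U$. Every subdivision $X' \to X$ is an isomorphism over $U$. On $U$ the divisor $\divisor c$ restricts to zero, since on each $X' \in R_1(X)$ the divisor $\divisor c_{X'}$ is supported on the boundary $X' \setminus U$. The Zariski closure of $D$ restricts to $D$. Hence $D = 0$.

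It remains to see that $\divisor c = 0$ forces $c = 0$. Fix a rational point $x$ of some cone $\ghost^\vee_{U_i}$; by conicality we may scale it to a primitive lattice point. Choose $X' \in R_1(X)$ in which the ray through $x$ is a ray of the subdivision. In the noetherian case, a star subdivision at $x$ followed by a unimodular resolution does this. Since $X$ is log flat, the tropicalisation is flat, so each ray $\rho$ of $X'$ corresponds to a nonempty boundary divisor $D_\rho$ of $X'$. The multiplicity of $\divisor c_{X'}$ along $D_\rho$ is exactly the value of $c_{X'}$ at the primitive generator of $\rho$. To check this, pull back along a strict smooth chart to an affine toric model: there $\divisor$ of a linear function on a unimodular cone is the usual torus-invariant divisor, with coefficient equal to the value on the ray generator. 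By construction $c_{X'}$ interpolates $c$ on rays, so $c(x)$ is this coefficient, which is zero. Hence $c$ vanishes at all rational points.

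\emph{Surjectivity when $X$ is log smooth.} Let $(D_{X'})_{X'}$ be a compatible family of Weil divisors. Write $D_X$ on $X$ itself, or on some unimodular $X_0$ if $X$ is not unimodular. Split $D_X$ into components meeting $U$, whose restriction to $U$ gives $D \in \on{Div}(U)$, and boundary components. Replacing the family by its difference with the image of $D$, we may assume every $D_{X'}$ is supported on the boundary. Log smoothness, more precisely the strict smooth tropicalisation with local toric models, identifies the irreducible boundary divisors of any $X' \in R_1(X)$ with the rays of the cone complex of $X'$, up to the stacky identifications in $T$. So each $D_{X'}$ is the same thing as a real-valued function on the rays of $X'$.

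Define $c$ on a rational point $x$ as follows. Choose $X'$ in which the ray of $x$ appears. Let $a$ be the coefficient of $D_{X'}$ on the corresponding divisor, and set $c(x) = a \cdot \lambda$, where $x = \lambda \cdot (\text{primitive generator})$. This is well defined: any two such subdivisions have a common unimodular refinement. Pushforward along $X'' \to X'$ kills exceptional divisors and preserves the coefficients of divisors whose ray persists. The function $c$ is conical by construction, and $\divisor c$ recovers the family.

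\emph{Main obstacle.} The main obstacle is the correspondence between rays and boundary divisors in the stacky, non-snc setting. Irreducible components of the boundary may self-intersect, and several cones may be glued in $T$. I would handle this by working on the strict smooth atomic cover $U_i$ and descending along the $V_j$, using that divisors and conical functions both satisfy descent. Flatness of the tropicalisation gives nonemptiness and the correct multiplicity.
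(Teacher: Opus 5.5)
Your proposal follows the paper's route. Injectivity comes from $\divisor c$ being supported off $U$. Surjectivity in the log smooth case comes from splitting off the part meeting $U$ and reading a conical function off the boundary multiplicities, using that rational rays correspond to irreducible boundary divisors; the paper also works on an atomic chart and patches by injectivity. Your explicit check that $\divisor c = 0$ forces $c = 0$ spells out a step the paper leaves implicit.

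One correction to that step: the injectivity claim is for log flat $X$, where you have no strict smooth toric charts. So ``the multiplicity is exactly the value'' is false in general. The coefficient of $\divisor c_{X'}$ along a component of $D_\rho$ is $m\, c(v_\rho)$, where $v_\rho$ is the primitive generator of $\rho$ and $m \ge 1$ is the multiplicity of that component in the effective Cartier divisor cut out by the integral sPL function that is $1$ on $v_\rho$ and $0$ on the other rays. For example, the log structure on $\bb A^2$ generated by $xy^2$ gives multiplicities $1$ and $2$. Positivity of $m$ is all your argument needs.
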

\begin{proof}
For injectivity, note that divisors in the image of $\divisor$ are always supported in the complement of $U$. Assume that $X$ is log smooth. For surjectivity, we can decompose any log Weil b-divisor $\bb D$ into a summand $\bb D_B$ supported on the boundary, and a summand $\bb D_U$ consisting of divisors whose generic point is contained in $U$. Then $\bb D_U$ lies in the image of $\on{Div}(U)$. To show that $\bb D_B$ lies in the image of $\on{Con}(X)$ we may work locally (and then patch, by injectivity), assuming that $X$ is atomic with cone $\sigma$. As $X$ is log smooth, the irreducible components of the boundary divisors of models of $X$ correspond to unique rays in $\sigma$ with rational slope, and if a component $Y$ has multiplicity $m$ in $\bb D_B$ then we take the conical function to have value $m$ on the first lattice point on the ray corresponding to $Y$. This recipe (uniquely) determines a conical function on $\sigma$ which maps to $\bb D_B$. 
\end{proof}
\begin{remark} Without assuming that $X$ is log smooth, rays in $\sigma$ would still determine divisors in the boundary of models of $X$, but these divisors might not be irreducible, and the above argument (and indeed the surjectivity in the lemma) would fail. For example, one could take $X = \bb A^2$ where the log structure is trivial outside the coordinate axes and has constant rank~$1$ over the coordinate axes. Then $X$ is log flat but not log smooth (over the point with trivial log structure), and the union of the coordinate axes, which corresponds to the unique ray in the associated cone, is not irreducible.
\end{remark}
\begin{remark}\label{rem:phi_D}
Assuming $X$ is log smooth, if $D$ is a Weil divisor supported outside $U$ then Lemma~\ref{lem:con_injective} implies it comes from a unique PL function. We denote that function by $\phi_D$. 
\end{remark}

\subsection{Log b-line bundles}  \label{sec:b-line-bundles}

Let $X$ be an algebraic stack with log structure.
The purpose of this section is to define and study the notion of log b-line bundle on $X$. A key input will be the notion of log and tropical line bundles. When $X$ is log smooth, there is a natural map from log Weil b-divisors to isomorphism classes of log b-line bundles.

\begin{definition}  A \emph{log line bundle} on $X$ is an $\M_X^\gp$ torsor. A \emph{tropical line bundle} on $X$ is an $\ghost_X^\gp$ torsor. We write $\on{LogPic}(X)$ resp.\ $\on{TroPic}(X)$ for the groups of isomorphism classes of log resp.\ tropical line bundles. The exact sequence \eqref{eq:basic_log_exact_seq} induces an exact sequence 
\begin{equation}\label{eq:pic_to_logpic_seq}
\on{sPL}(X) \to \on{Pic}(X) \to \on{LogPic}(X) \to \on{TroPic}(X). 
\end{equation}
The maps $\on{Pic}(X) \to \on{LogPic}(X)$ and $\on{LogPic}(X) \to \on{TroPic}(X)$ are given by extension of scalars. Given a line bundle $\ca L$ on $X$, the \emph{logification} of $\ca L$, notation $\ca L^{\LOG}$, is the extension of scalars of $\ca L$ along the map $\oo_X^\times \to \M_X^\gp$.
Given a log line bundle $\llb L$ on $X$, the \emph{tropicalisation} of $\llb L$, notation $\llb L^\trop$, is the extension of scalars of $\llb L$ along the map $\M_X^\gp \to \ghost_X^\gp$.
\end{definition}

\begin{definition}\label{def:log-b-line-bundle}

Given a tropical line bundle $\tlb L$ on $X$, a \emph{conical section} of $\tlb L$ is a section of $\tlb L \otimes_{\sPL_X} \on{Con}_X$.  A \emph{log b-line bundle} on $X$ is a pair $(\llb L, c)$ where $\llb L$ is a log line bundle on $X$ and $c$ is a conical section of its tropicalisation $\llb L^\trop$.

An isomorphism of log b-line bundles consists of compatible isomorphisms between the log and tropical line bundles, where the latter preserves the conical sections. Tensor product is defined by tensoring the log line bundles and adding the conical sections. We write $\on{b-Pic}(X)$ for the group of isomorphism classes of log b-line bundles on $X$. 
\end{definition}

\begin{remark}
If $X$ is of the form $\ca A_\Sigma$, where $\Sigma$ is a cone stack, then a tropical line bundle $\tlb L$ admits a purely combinatorial interpretation as torsor in the star topology for the sheaf of sPL functions, cf.\ \cite[Section 2.4]{cavalieri_gross}. Similarly the sheaves of piecewise linear and conical functions have intuitive combinatorial meanings as sheaves in the star topology. 
\end{remark}

There is a natural map
\begin{equation} \label{eq:Pic_to_b-Pic}
\on{Pic}(X) \to \on{b-Pic}(X); \quad \lb L \to (\lb L^\LOG, 0)
\end{equation}
sending a line bundle $\lb L$ to the induced log line bundle $\lb L^{\LOG}$ together with the conical section 0 of the trivial tropical bundle, noting that the tropicalisation of $\lb L^{\LOG}$ is trivial by exactness of \eqref{eq:pic_to_logpic_seq}. 

\begin{lemma} The natural map $\on{Pic}(X) \to \on{b-Pic}(X)$ is injective. Its image is exactly the subgroup of log b-line bundles $(\ca L, c)$ where $c$ is an sPL section; in other words, a trivialisation of the associated tropical bundle. 
\end{lemma}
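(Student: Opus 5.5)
We work directly with the exact sequence \eqref{eq:basic_log_exact_seq} and its cohomology. For injectivity, suppose $\lb L \in \on{Pic}(X)$ maps to the trivial class in $\on{b-Pic}(X)$. Then there is an isomorphism of log b-line bundles $(\lb L^\LOG, 0) \cong (\M_X^\gp, 0)$. Such an isomorphism consists of an isomorphism of $\M_X^\gp$-torsors $\phi\colon \lb L^\LOG \to \M_X^\gp$, together with the induced isomorphism of trivial tropical torsors, which must carry the conical section $0$ to $0$.

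The tropicalisation $\lb L^{\LOG,\trop}$ comes with a canonical trivialisation, because $\lb L^\LOG$ is induced from an $\ca O_X^\times$-torsor. Write $\beta \in \sPL(X)$ for the section that $\phi^\trop$ sends the canonical trivialising section to. The requirement that $\phi^\trop$ respects the conical sections says $0 + \beta = 0$ in $\on{Con}(X)$, so $\beta = 0$. This is where we use that $\sPL_X \to \on{Con}_X$ is injective, which holds because sPL functions are determined by their values on the cones.

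Since $\beta = 0$, the map $\phi^\trop$ preserves the canonical trivialisations. Hence $\phi$ maps the sub-$\ca O_X^\times$-torsor $\lb L^\times \subset \lb L^\LOG$ (the preimage of the canonical trivialising section) onto the preimage of $0$ in $\M_X^\gp$, which is $\ca O_X^\times$. So $\lb L$ is trivial, and injectivity follows. Note that we must use the full isomorphism data here, not just the isomorphism class of the log line bundle, since $\on{Pic}\to\on{LogPic}$ has kernel the image of $\sPL(X)$. The conical-section condition is exactly what kills this ambiguity.

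For the image, one inclusion is immediate: $(\lb L^\LOG, 0)$ has an sPL section. Moreover, "sPL section" is invariant under isomorphism, because isomorphisms of tropical torsors shift sections by sPL functions.

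Conversely, let $(\llb L, c)$ have $c$ a section of $\llb L^\trop$ itself, i.e.\ a trivialisation of the $\ghost_X^\gp$-torsor. Take $\lb L$ to be the preimage of $c$ under $\llb L \to \llb L^\trop$. This is an $\ca O_X^\times$-torsor, because $\ca O_X^\times$ is the kernel of $\M_X^\gp \to \ghost_X^\gp$ and the map is surjective étale-locally. The inclusion $\lb L \subset \llb L$ induces an isomorphism $\lb L^\LOG \cong \llb L$, which we check locally where everything is trivial. Under this isomorphism the canonical section of $\lb L^{\LOG,\trop}$ goes to $c$, so $(\lb L^\LOG, 0) \cong (\llb L, c)$.

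The only delicate points are the bookkeeping of the sign convention from Definition~\ref{def:line_bundle_for_sPL} and the injectivity of $\sPL_X \to \on{Con}_X$. Neither should pose a real obstacle.
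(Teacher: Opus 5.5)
Your proof is correct and is essentially the paper's argument: the sPL function $\beta$ you extract from $\phi^\trop$ is the function $\alpha$ that the paper obtains from exactness of \eqref{eq:pic_to_logpic_seq}, and in both cases the conical-section condition forces it to vanish (your torsor-level version has the small merit of tracking the specific isomorphism rather than only isomorphism classes). Your treatment of the image, which the paper's proof leaves implicit, is the same construction the paper records in Definition~\ref{def:line_bundle_for_adm}.
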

\begin{proof} Suppose two line bundles $\lb L$ and $\lb L'$ become isomorphic as log line bundles, then from the exactness of \eqref{eq:pic_to_logpic_seq} there exists a PL function $\alpha$ and an isomorphism $\lb L \iso \lb L'(\alpha)\coloneqq \lb L' \otimes_{\lb O_X} \ca O_X(\alpha)$.  If $\lb L$ and $\lb L'$ become isomorphic as log b-line bundles then $0_{\ca L^\trop}  = 0_{{\ca L'}^\trop} + \alpha$, which implies that $\alpha = 0$. 
\end{proof}
\begin{remark} \label{rem:Pic_subdivisions} If $X' \to X$ is a subdivision, we likewise have a natural injective map $\on{Pic}(X') \to \on{b-Pic}(X)$, compatible with the map $\on{Pic}(X) \to \on{b-Pic}(X)$. Ranging over all subdivisions of $X$, we obtain the subgroup of log b-line bundles $(\ca L, c)$ where $c$ is a PL section.
\end{remark}
The construction in Definition~\ref{def:line_bundle_for_sPL} generalises to log line bundles.
\begin{definition} \label{def:line_bundle_for_adm}  Let $\llb L$ be a log line bundle on $X$ and $f$ an sPL section of its tropicalisation~$\llb L^\trop$. Then $f$ is a trivialisation, which implies that $\llb L$ is obtained by extension of scalars from a canonical $\oo_X^\times$-torsor. We write $\lb L_f$ for the associated line bundle. The log b-line bundles $(\llb L,f)$ and $(\lb L_f^{\LOG},0)$ are isomorphic, that is, the image of $\lb L_f$ in $\on{b-Pic}(X)$ is equal to $(\llb L,f)$.
\end{definition}

We also have a natural map  
\begin{equation} \label{eqn:con_to_b-pic}
\on{Con}(X) \to \on{b-Pic}(X); \quad c \mapsto (\lb O^\LOG, c). 
\end{equation}
\begin{remark} The map $\on{Con}(X) \to \on{b-Pic}(X)$ is \emph{not} in general injective, since there may exist sPL functions which map to zero in $\on{Pic}(X)$. If $c$ is any such function, then the b-line bundle $(\ca O^\LOG, c)$ is trivial. 
\end{remark}

We would like to put a topology of uniform convergence on the spaces of conical sections of a tropical line bundle, as follows.

\begin{definition}\label{def:conical_top}

Suppose $X$ is noetherian, and let $\tlb L$ be a tropical line bundle on $X$. Choose a cover of $X$ by finitely many
atomic log schemes $U_i$, with corresponding cones $\sigma_i$. Assume that the cover trivialises $\tlb L$, and choose a trivialising section $\ell_i$ on each $U_i$. Each
extremal ray $\rho$ of $\sigma$ has a first integral point $p_\rho$;
we let $H_i\subset \sigma_i$ be the convex hull of the $p_\rho$. We equip the space of global sections $\Gamma(X, \on{Con}_X \otimes_{sPL_X} \tlb L)$ with the
topology of uniform convergence for the restrictions of conical
sections, after subtracting $\ell_i$ to obtain functions, to the $H_i$. One checks that this does not depend on the
choice of finite cover $U_i$ or the choice of trivialising sections $\ell_i$. 
\end{definition}

We have the following generalization of Lemma~\ref{lem:PL_approx}.
\begin{lemma}\label{lem:PL_approx_bundle}
Suppose that $X$ is noetherian, then $\Gamma(X, \on{PL}_X \otimes_{sPL_X} \tlb L)$ is dense in $\Gamma(X, \on{Con}_X \otimes_{sPL_X} \tlb L)$ where the latter is equipped with the topology of uniform convergence described above. 
\end{lemma}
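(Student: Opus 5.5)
The idea is to reduce to Lemma~\ref{lem:PL_approx} by a partition-of-unity style argument using the finite atomic cover from Definition~\ref{def:conical_top}. Fix a finite cover of $X$ by atomic log schemes $U_i$ with cones $\sigma_i$, each trivialising $\tlb L$ via a section $\ell_i$. On overlaps, $\ell_i - \ell_j$ is an sPL function. Therefore a global conical section $s$ of $\tlb L$ is the same thing as a collection of conical functions $f_i = s - \ell_i$ on $U_i$ satisfying $f_i - f_j = \ell_j - \ell_i$ on overlaps. The section $s$ is PL exactly when all the $f_i$ are PL. If the statement is read for continuous conical sections (the only reading under which density can hold, as in Lemma~\ref{lem:PL_approx}), then the $f_i$ are continuous conical functions.

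The key step is to produce a single global PL section $s_0$ of $\tlb L$. Once we have it, $s - s_0$ is a global section of $\on{CCon}_X$. Lemma~\ref{lem:PL_approx} then gives PL functions $p_n \to s - s_0$ uniformly, and $s_0 + p_n$ are PL sections converging to $s$. Here we use that adding the PL section $s_0$ is continuous for the topology of Definition~\ref{def:conical_top}; this holds because subtracting $\ell_i$ is a translation on each chart.

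To build $s_0$ when one is not obviously available, I would approximate locally and glue. First choose PL functions $q_i$ approximating $f_i$ uniformly on $H_i$, which is possible by Lemma~\ref{lem:PL_approx} applied to each atomic $U_i$. The local sections $\ell_i + q_i$ fail to agree only by small PL discrepancies $q_i - q_j - (\ell_j - \ell_i)$ on overlaps. These can be glued with a PL partition of unity on the generalised cone complex $T$. Concretely, I would pick PL functions $\phi_i \ge 0$ on $T$ with $\phi_i$ vanishing outside the star of $\sigma_i$ and $\sum_i \phi_i$ equal to the conical ``height'' function $h$. Here $h$ is PL, positive off the origin, and equal to $1$ on the $H_i$; it can be obtained from barycentric subdivisions, as in the proof of Lemma~\ref{lem:PL_approx}.

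The obstacle is the gluing itself. Products of PL functions are not PL, so the naive formula $\sum_i \phi_i(\ell_i + q_i)/h$ is merely continuous conical. I would therefore avoid this route and argue via lattice operations instead. The Stone--Weierstrass argument in Lemma~\ref{lem:PL_approx} only needs closure under $\max$, $\min$, addition and rational scaling. The set of PL sections of $\tlb L$ is closed under $\max$ and $\min$, since these are computed chartwise after subtracting $\ell_i$ and commute with adding the sPL transition functions. It is also closed under adding PL functions. The Lattice Stone--Weierstrass theorem then applies to the affine lattice of PL sections, viewed as a torsor over the lattice $\on{PL}(X)$, provided the set of PL sections is nonempty. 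That reduces once more to the existence of $s_0$.

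For existence of $s_0$, I would pass to a subdivision where $\tlb L$ admits a PL section. Since $X$ is noetherian, one can take $s_0 = \max$ over a finite family of locally defined PL sections, each extended by a very negative affine function. Checking that such extensions exist chartwise is the delicate point I expect to require care.
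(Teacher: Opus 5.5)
Your reduction is the same as the paper's: once a single global PL section $s_0$ of $\tlb L$ exists, $s-s_0$ is a continuous conical function, and Lemma~\ref{lem:PL_approx} finishes the argument. You are also right that the statement must be read for continuous conical sections, and the paper's proof uses it that way. But the existence of $s_0$ is then the whole remaining content of the lemma, and your proposal does not establish it.

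\begin{itemize}
\item You abandon the partition-of-unity idea.
\item The lattice Stone--Weierstrass detour leads back to the need for $s_0$.
\item The final sketch is circular as written. ``Pass to a subdivision where $\tlb L$ admits a PL section'' is exactly the claim to be proved.
\item You cannot ``extend'' a local section $\ell_i+q_i$ to all of $X$ by adding a very negative function. Away from $U_i$ you have no section of the possibly non-trivial torsor $\tlb L$ to add that function to.
\end{itemize}

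A version using values $-\infty$ off the stars, with domination on the boundaries of the stars, might be made to work. But that is precisely the step you leave unchecked. Note also that approximating each $f_i$ separately is what creates the mismatch on overlaps that you then have to repair.

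The missing idea is to build $s_0$ directly from $s$, by interpolation rather than approximation.

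\begin{enumerate}
\item Pass to the barycentric subdivision $\tilde X\to X$. It is canonical and compatible with face maps (the maps $V_j\to U_i$ are strict, hence face inclusions). So it is a global subdivision of $X$, and all its cones are simplicial.
\item On each chart, evaluate $f_i=s-\ell_i$ at the first lattice points of the rays of the subdivided cone $\sigma_i$. Interpolate linearly on each simplicial subcone. This is well defined and agrees on common faces.
\item The difference $\ell_i-\ell_j$ is linear on each original cone, hence on each barycentric subcone. So it coincides with its own interpolation, and interpolation commutes with the transition functions.
\item Therefore the local sections $\ell_i+\operatorname{interp}(f_i)$ glue to a global section of $\tlb L$ that is linear on every cone of $\tilde X$, i.e.\ a PL section of $\tlb L$ on $X$.
\end{enumerate}

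If you want rational values at lattice points, first replace the ray values by rational ones. This can be done compatibly across charts because $\ell_i-\ell_j$ takes integer values at lattice points. Note also that this step uses only the values of $s$ on rays. Continuity of $s$ is needed only for the final approximation of $s-s_0$.
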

\begin{proof}
It suffices to prove that if $\tlb L$ admits a continuous conical section $s$, then it admits a PL section $\frak s$. Indeed, then $s - \frak s$ is a continuous conical function, and by Lemma \ref{lem:PL_approx} we can approximate it uniformly by a sequence of PL functions $\frak f_i$; then the sequence of PL sections $\frak s + \frak f_i$ converges uniformly to $s$. 

Choose a cover of $X$ by finitely many
atomic log schemes $U_i$, and let $V_j$ be a cover of $U \times_X U$ by finitely many atomic log schemes.
Let $\tilde X \to X$ be the barycentric subdivision; then each cone is simplicial. Since the maps $V_j \to U_i$ are strict they correspond to face inclusions, and hence the barycentric subdivision of $X$ simply corresponds to taking the barycentric subdivision of the cone of each $U_i$ and $V_j$. We restrict the conical section $s$ to the rays of the barycentric subdivision, then since this is simplicial there is a unique sPL section $\frak s$ on $\tilde X$  given by linearly interpolating between the rays. \end{proof}

\section{Log abelian varieties and their tropicalisation}\label{sec:LAV}

In this section we recall (and at one point very slightly generalise) the theory of log abelian varieties from the works of Kajiwara--Kato--Nakayama, in particular \cite{LAV2, LAV7}.  The basic starting point for this story is the classical problem of compactifying degenerating families of abelian varieties; it has long been known that one cannot in general construct a proper model with a group structure, and much work has been devoted to finding good models under more relaxed conditions. In particular, if one weakens the properness condition one obtains the theory of N\'eron models, and if one does not require a group structure then there is a large literature on various toroidal compactifications that can be built. The idea behind the works of Kajiwara--Kato--Nakayama on log abelian varieties is to weaken in a different direction, by building a model that is proper, and has a group structure, but is not a scheme (or algebraic space or algebraic stack); rather, it is a more exotic object, a \emph{log algebraic space in the second sense} as described in Section \ref{sec:log_spaces_and_stacks}. The full theory is quite technical, and we do not reproduce it here; rather, we hope to give enough details that a reader can follow our arguments, taking on trust many deep results from the works of Kajiwara--Kato--Nakayama.

\subsection{Log 1-motifs}\label{sec:log_motif}
Log 1-motifs are in some sense the building blocks of log abelian varieties; every log abelian variety over a geometric point is obtained by taking a quotient of a log 1-motif. We begin by setting up their definition.

\subsubsection{Log tori and semiabelian schemes}

Let $S$ be a log scheme. Recall from Section \ref{sec:log_spaces_and_stacks} the log algebraic spaces $\bb G_\LOG$ and $\bb G_\trop$, defined by $\bb G_\LOG(S) = \M_S^\gp(S)$ and $\bb G_\trop(S) = \ghost_S^\gp(S)$. Given a torus $T$, set $T_\LOG \coloneqq \Hom(\Hom(T, \bb G_m), \bb G_\LOG)$, a ``log torus''; for example, if $T = \bb G_m^n$, then $T_\LOG = (\bb G_\LOG)^n$. Note that we have a natural map $T \to T_\LOG$.

Now let $G$ be a semiabelian scheme over a log scheme $S$, and suppose that $G$ fits into an exact sequence 
\begin{equation}
1 \to T \to G \to B \to 0
\end{equation}
with $T$ a torus and $B$ abelian (in particular, $G$ has constant toric rank). We define $G_\LOG$ to be the pushout of $T_\LOG$ with $G$ over $T$, yielding a commutative diagram with exact rows
\begin{equation}
 %\begin{tikzcd}
 \xymatrix{
  1 \ar[r] & T \ar[r] \ar[d] & G \ar[r]  \ar[d] & B \ar@{=}[d]  \ar[r] & 1\\
  1 \ar[r] & T_\LOG \ar[r]  & G_\LOG \ar[r] & B  \ar[r] & 1. \\  
  }
%\end{tikzcd}
\end{equation}

We define $G_\trop = G_\LOG/G$. For example we have $(\bb G_m)_\trop = \bb G_\trop$.   
We define $X = \Hom(T, \bb G_m)$, the character lattice of $T$. Then $G_\trop = \Hom(X,\bb G_\trop)$.
Notice that $G_\trop = T_\trop$. Finally we have canonical identifications
\begin{equation} \label{eqn:describe_X}
X = \Hom(T, \bb G_m) = \Hom(T_\LOG, \bb G_\LOG) = \Hom(T_\trop, \bb G_\trop) = \Hom(G_\trop, \bb G_\trop).
\end{equation}

\subsubsection{Log 1-motifs}

\begin{definition}
A \emph{log 1-motif} over the log scheme $S$ is a triple $(Y, G, u)$ where
\begin{enumerate}
\item
$Y$ is a sheaf on $S$ of abelian groups, locally free of finite rank as $\bb Z$-module; 
\item $G$ is a semiabelian scheme over $S$, which is an extension of an abelian scheme $B$ over $S$ by a torus $T$ over $S$;
\item $u\colon Y \to G_\LOG$ is a homomorphism of sheaves of abelian groups. 
\end{enumerate}
\end{definition}
We generally write a log $1$-motif $(Y, G, u)$ as $[u\colon Y \to G_\LOG]$.  Associated to a log $1$-motif $[u\colon Y \to G_\LOG]$ over $S$ we have a natural ``monodromy pairing''
\begin{equation}\label{eq:trop_monodromy}
\Span{-,-}\colon X \times Y \to \bb G_\trop; \, (x, y) \mapsto x_\trop(y_\trop), 
\end{equation}
where $x_\trop\colon T_\trop \to \bb G_\trop$ is the induced map from $x$ and $y_\trop$ is the image of $y$ in $G_\trop = T_\trop$ via~$u$. 

A morphism of log 1-motifs is a morphism of triples in the natural way (we remark that any map $G_\LOG \to G'_\LOG$ is induced by a map $G \to G'$). 

\subsubsection{Dual log 1-motif}
Let $[u\colon Y \to G_\LOG]$ be a log 1-motif. Define $B^\vee$ to be the dual abelian variety to $B$, define $T^\vee = \Hom(Y, \bb G_m)$ to be the torus with character lattice $Y$, and set $1 \to T^\vee \to G^\vee \to B^\vee \to 1$ to be the dual semiabelian variety, defined as the sheafification of the group of isomorphism classes of pairs $(F, h)$ where $F$ is a $\bb G_m$-extension of $B$ in the category of sheaves of abelian groups, and $h\colon Y \to F$ is a group homomorphism lifting the composite $Y \to G_\LOG\to B$.  This comes with a natural map $u^\vee\colon X \to G_\LOG^\vee$, and we define the dual log 1-motif to be $[u^\vee\colon X \to G_\LOG^\vee]$.

\begin{definition}\label{def:polarisation_on_motif}
Following \cite[Definition 2.8]{LAV2}, a \emph{polarisation} on a log 1-motif $[u\colon Y \to G_\LOG]$ is a map of complexes
\begin{equation}
h\colon [u\colon Y \to G_\LOG] \to [u^\vee\colon X \to G_\LOG^\vee]
\end{equation}
satisfying the following four conditions, of which the first two will be particularly important for us: 
\begin{enumerate}
\item the induced map $B \to B^\vee$ is a polarisation on $B$; 
\item the induced map $h_{-1}\colon Y \to X$ is injective with finite cokernel;
\item for all non-zero sections $y$ of $Y$, the pairing $\langle h_{-1}(y), y\rangle$ is not trivial at any geometric point of $S$; 
\item the homomorphism $T_\LOG\to  T_\LOG^\vee$ induced by $h_0 \colon G_\LOG\to  G_\LOG^\vee$ coincides with the one induced by $h_{-1} \colon Y \to X$.
\end{enumerate}
The polarisation is called \emph{principal} if both maps $h_0$ and $h_{-1}$ are isomorphisms. 
\end{definition}

\subsubsection{Associated log and tropical abelian varieties}\label{sec:BM_and_the LAV}
Let $[u\colon Y \to G_\LOG]$ be a log 1-motif. We write $G_\LOG^{(Y)}\subseteq G_\LOG$ for the subsheaf of elements with \emph{bounded monodromy}\footnote{This condition is automatic if the log structure on the base $S$ is valuative; for example, if we are working over a discrete valuation ring with standard log structure. }: 
\begin{equation}
G_\LOG^{(Y)}(U) = \{f \in G_\LOG(U) : \text{ the induced function }\phi\colon X \to \bb G_\trop \text{ is bounded by  }Y\}, 
\end{equation}
where $f\colon U \to G_\LOG$ induces a map $X \to \bb G_\trop(U)$ by evaluation at the tropicalisation of $f$ using the equalities \eqref{eqn:describe_X}. Here a function $\phi\colon X \to \bb G_\trop$ is \emph{bounded by} $Y$ if for all $u \in U$ and $x \in X_u$ there exist $y$, $y' \in Y_u$ such that 
\begin{equation}
\langle x, y \rangle \mid \phi(x) \mid \langle x, y' \rangle, 
\end{equation}
where given a monoid $M$ and elements $m, n \in M$, we write $m \mid n$ if there exists $a \in M$ with $n = a + m$. 
A motivation for this condition in terms of deformation theory has been given by Molcho--Wise, see~\cite[\S 4.18]{molcho-wise}.

This brings us to our first example of a log abelian variety: the quotient $G_\LOG^{(Y)}/Y$. In fact every log abelian variety is fibrewise of this form: 
\begin{definition}\label{def:LAV}
Let $S$ be a log scheme.
A \emph{log abelian variety} $A/S$ is a sheaf of abelian groups for the strict \'etale topology  on the category of fs log schemes over $S$, such that 
\begin{enumerate}
\item for all $s \in S$ there exists a polarisable log 1-motif $[u\colon Y \to G_\LOG]$ such that the quotient $G_\LOG^{(Y)}/Y$ is isomorphic to $A_s$;
\item \'etale locally on $S$ there exist a semiabelian group scheme $G/S$, finitely generated free $\bb Z$-modules $X$ and $Y$ (viewed as constant sheaves on $S$), a bilinear form 
\begin{equation}
\langle -,-\rangle  \colon X \times Y \to \bb G_\trop ,
\end{equation}
and an exact sequence 
\begin{equation}
0 \to G \to A \to \Hom(X, \bb G_\trop)^{(Y)}/\bar Y \to 0, 
\end{equation}
where $\bar Y$ is the image of $Y$ in $\Hom(X, \bb G_\trop)^{(Y)}$. 
\end{enumerate}
\end{definition}
We remark that the log 1-motif in (1) above is unique up to unique isomorphism by \cite[Theorem 3.4]{LAV2}. 
\begin{definition}\label{def:trop_AV}
Given a log abelian variety $A/S$, the associated tropical abelian variety is the quotient $A_\trop \coloneqq A/G  = \Hom(X, \bb G_\trop)^{(Y)}/\bar Y = G^{(Y)}_\trop/\bar Y$. 
\end{definition}

\begin{remark}\label{rem:auxillary_data}
From a log abelian variety $A/S$ we obtain various auxiliary data. First, a semiabelian group scheme $G/S$ together with a strict open immersion $G \to A$, where the log structure on $G$ is strict over $S$, and whose formation is compatible with arbitrary base change. Secondly, we obtain locally constructible sheaves $\bar X$ and $\bar Y$ on the big strict \'etale site on $S$; i.e., they are functors $\cat{LogSch}^\op_S \to \cat{Ab}$ which are sheaves for the strict \'etale topology, and which are representable by strict \'etale algebraic spaces over $S$. Recall that the sheaf $\bar Y$ is defined as the image of $Y$ in $\Hom(X, \bb G_\trop)^{(Y)}$; similarly $\bar X$ is defined as the image of $X$ in $\Hom(Y, \bb G_\trop)^{(X)}$. Over each geometric point of $S$, the stalks of $\bar X$ and $\bar Y$ coincide with the groups $X$ and $Y$ of the associated log 1-motifs from Section \ref{sec:log_motif}. 
There is a canonical pairing $\bar X \times \bar Y \to \bb G_\trop$, and over each geometric point this specialises to the tropical monodromy pairing \eqref{eq:trop_monodromy}. 
\end{remark}

\begin{remark}\label{rem:rank_of_LAV}
The \emph{rank} of a log abelian variety $A/S$ is equivalently either the rank of $\bar X$, the rank of $\bar Y$, or the torus rank of the semiabelian part $G$; this is an upper-semi-continuous function on $S$. 
\end{remark}

\begin{example}
Every abelian variety is a log abelian variety, with $\bar X$ and $\bar Y$ the zero groups. 
\end{example}

\begin{example}
If $[u\colon Y \to G_\LOG]$ is a polarisable log 1-motif then $G_\LOG^{(Y)}/Y$ is a log abelian variety. 
\end{example}

\subsection{Polarisations}\label{sec:log_polarisations}

The theory of dual log abelian varieties is only developed pointwise,
so the theory of polarisations in that context is constructed  via
biextensions (in the sense of \cite[Expose VII, Definition 2.1]{SGA7I}). For consistency, we adopt the same approach in the
classical setting. 
\begin{definition}
A \emph{polarisation} on an abelian scheme $A\to S$ is a symmetric biextension $ P$ of $(A,A)$ by $\bb G_m$ such that, locally on $S$, there exists a relatively ample line bundle $ L$ on $A/S$ such that $ P$ is equal to the biextension $m^* L \otimes p_1^* L^\vee \otimes p_2^* L^\vee \otimes e^* L$. A polarisation induces \footnote{Classically, a polarisation is \emph{defined} to be this morphism, and the biextension $ P$ is constructed by pulling back the Poincar\'e biextension on $A \times A^\vee$ along $(\on{id}, \lambda)$.} a finite morphism $A \to A^\vee$, and the polarisation is \emph{principal} if this is an isomorphism. 
\end{definition}

If $[u\colon Y \to G_\LOG]$ is a log 1-motif with polarisation 
\begin{equation}
h\colon [u\colon Y \to G_\LOG] \to [u^\vee\colon X \to G_\LOG^\vee]
\end{equation}
then there is an induced map of associated log abelian varieties
\begin{equation}
h\colon G_\LOG^{(Y)}/Y \to (G_\LOG^\vee)^{(X)}/X. 
\end{equation}
Recalling that $A \coloneqq G_\LOG^{(Y)}/Y$ is a log abelian variety, the quotient $(G_\LOG^\vee)^{(X)}/X$ is the dual log abelian variety, denoted $A^\vee$; the polarisation is principal if and only if $A \to A^\vee$ is an isomorphism. 
By \cite[Theorem 7.4 (3)]{LAV2} there is a homomorphism 
\begin{equation}
A^\vee \to \ca Ext(A, \bb G_\LOG), 
\end{equation}
which we may compose with the polarisation to obtain a homomorphism 
\begin{equation}
A \to \ca Ext(A, \bb G_\LOG), 
\end{equation}
and hence by \cite[Proposition 2.3]{LAV5} a symmetric $\bb G_\LOG$-biextension on $(A, A)$. 
\begin{definition}\label{def:polarisation_on_LAV}
Following \cite[\S 1.3]{LAV5}, a \emph{polarisation} on a log abelian variety $ A\to S$ is a symmetric biextension $\llb P$ of $(A,A)$ by $\bb G_\LOG$ which on every geometric point comes from a polarisation of the corresponding log 1-motif in the sense described just above. We say $\llb P$ is \emph{principal} if it is so on every geometric fibre over~$S$. 
\end{definition}
In particular, if $A$ is a polarised log abelian variety then there is a natural map of locally constructible sheaves $\bar Y \to \bar X$, and if the polarisation is principal then this map is an isomorphism.

\subsection{Recovering a family of real tori} \label{sec:real_torus} \label{sec:Berk_anal}

The purpose of this section is to explain how to obtain families of real tori from log abelian varieties, and to make a connection with the related point of view of \emph{Berkovich analytification} of abelian varieties.

Let $(A/S, \lambda)$ be a polarised log abelian variety. Let $k$ be a field, and let $P$ be the log point whose underlying scheme is $\on{Spec} k$, and whose log structure is given by $\bb R_{\ge 0} \oplus k^*$ (see Example~\ref{eg:log_str_pt}). Fix a map $P \to S$, and pull back to get a polarised log abelian variety over $P$. 
Let $A_\trop$ be the associated tropical abelian variety. Let $G$ be the associated semiabelian variety over $P$, and let $T$ be the torus of $G$. Write $X=\Hom(T,\bb G_m)$ as usual.

\begin{lemma} \label{torus_from_log_AV}
$A_\trop(P) = \Hom_P(P, A_\trop)$ is naturally a real torus of dimension equal to the rank of $T$. 
\end{lemma}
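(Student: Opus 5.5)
We unwind the definition $A_\trop = \Hom(X, \bb G_\trop)^{(Y)}/\bar Y$ from Definition~\ref{def:trop_AV} and evaluate on the log point $P$. Since $\ghost_P = \bb R_{\ge 0}$, we have $\bb G_\trop(P) = \ghost_P(P)^\gp = \bb R$. The underlying scheme of $P$ is the spectrum of a field. To keep the topology simple, I would first pass to a finite separable extension of $k$ over which $T$ splits and $X$, $Y$ are constant; this is a strict étale base change, and it does not change the sheaf values up to the Galois descent that we track below. Then $\Hom(X,\bb G_\trop)(P) = \Hom(X,\bb R) = X^\vee_\bb R$. This is a real vector space of dimension $\operatorname{rank} X = \operatorname{rank} T$, by Remark~\ref{rem:rank_of_LAV}.

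Next I would check that the bounded monodromy condition is vacuous here. The monoid $\bb R_{\ge 0}$ is valuative: for any $a,b\in\bb R$, either $a\mid b$ or $b\mid a$, and every real number lies between two others. So for $\phi\in\Hom(X,\bb R)$ and $x\in X$ it suffices to find $y,y'\in Y$ with $\langle x,y\rangle\le\phi(x)\le\langle x,y'\rangle$. By condition (2) of Definition~\ref{def:polarisation_on_motif}, $h_{-1}\colon Y\to X$ is injective with finite cokernel. By condition (3), $\langle h_{-1}(y), y\rangle>0$ for all $y\neq 0$. Taking $y' = n\,y_0$ for a suitable $y_0$ with $\langle x,y_0\rangle\ne 0$ and $|n|$ large (and similarly for $y$) works. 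Such a $y_0$ exists because the pairing $X_\bb Q\times Y_\bb Q\to\bb R$ is nondegenerate: composed with $h_{-1}$ it is positive definite on $Y$, hence nondegenerate. If $\langle x,\cdot\rangle\equiv 0$, then $\phi(x)$ must be $0$. But nondegeneracy also shows $\langle x,\cdot\rangle\not\equiv 0$ for $x\neq 0$, so this case does not occur. Hence $\Hom(X,\bb G_\trop)^{(Y)}(P) = X^\vee_\bb R$.

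Then we identify $\bar Y(P)$ with the image of $Y\to X^\vee_\bb R$, $y\mapsto\langle\cdot,y\rangle$. This map is injective, and its image is a full-rank lattice. Indeed, the real pairing $X_\bb R\times Y_\bb R\to\bb R$ is a perfect pairing, because via $h_{-1}\otimes\bb R$ it becomes a positive definite symmetric form on $Y_\bb R$, the symmetry coming from the polarisation. A perfect pairing sends the rank-$r$ lattice $Y$ to a discrete cocompact subgroup of the $r$-dimensional space $X^\vee_\bb R$.

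The main obstacle is showing that $A_\trop(P)$ equals the naive quotient $X^\vee_\bb R/Y$, i.e.\ that taking sections on $P$ commutes with the sheaf quotient by $\bar Y$. Because $Y$ acts freely, we have the exact sequence $0\to\bar Y\to\Hom(X,\bb G_\trop)^{(Y)}\to A_\trop\to 0$. Its cohomology sequence on $P$ shows that $A_\trop(P)\supseteq X^\vee_\bb R/Y$, with cokernel contained in $H^1(P,\bar Y)$. If $X$ and $Y$ are not split, the obstruction is the Galois cohomology group $H^1(\on{Gal}, Y)$, which is finite for a finite image action. I would handle this as follows. Either assume, or reduce via a finite extension, to the case where $Y$ is constant, in which case $H^1=\Hom(\pi_1,Y)=0$ for a torsion Galois action. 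In the non-split case, the invariants $(X^\vee_\bb R)^{\Gamma}/Y^\Gamma$ together with the finite $H^1$ term still form a compact real Lie group. Its identity component is then a real torus, of dimension the rank of $T$ once split. So I would state the lemma after the splitting reduction, which I expect the paper does implicitly. Naturality is clear, since every step is functorial in $(A,\lambda)$ and $P$.
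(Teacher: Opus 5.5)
Your overall route is the paper's. The paper writes $A_{\trop}=T_{\trop}/Y$, identifies $T_{\trop}(P)=\Hom(X,\mathbb{R})$, and asserts that $Y$ is a lattice there because $Y\to X$ is an isomorphism over $\mathbb{Q}$. Two of your extra checks are correct refinements of points the paper passes over. Bounded monodromy is vacuous since $\mathbb{R}_{\ge 0}$ is valuative (this is the paper's footnote). Once $Y$ is constant, $H^1(P,\bar Y)=0$, so sections of the quotient are the quotient of sections.

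\textbf{The gap is the lattice claim.} Condition (3) of Definition~\ref{def:polarisation_on_motif} gives $\langle h_{-1}(y),y\rangle>0$ only for nonzero $y$ in the lattice $Y$. For a real-valued form this does not give positive definiteness on $Y_{\mathbb{R}}$. For example, $(y_1-\sqrt{2}\,y_2)^2$ is positive on $\mathbb{Z}^2\setminus\{0\}$, but then $Y$ maps onto a dense subgroup of a line in $\Hom(X,\mathbb{R})$ and the quotient is not a torus. Your $\mathbb{Q}$-nondegeneracy is enough for bounded monodromy, but not for this step.

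\textbf{The fix.} The missing input is that the log structure on $P$ is pulled back from the fs log scheme $S$. The pairing over $P$ is $h\circ b$, where $b$ is the $\ghost_{S,s}^{\gp}$-valued form at the image point $s$, with $b(y,y)\in\ghost_{S,s}$. The map $h\colon\ghost_{S,s}\to\mathbb{R}_{\ge 0}$ is local, because compatibility with the maps to $\mathcal{O}$ sends non-units to positive reals. Hence $h$ is strictly positive on the nonzero part of the real cone spanned by $\ghost_{S,s}$. So the radical of the positive semi-definite form $h\circ b$ is $\{y\in Y_{\mathbb{R}}: b(y,y)=0\}$. This set is the intersection of the radicals of the integral positive semi-definite forms $\ell\circ b$, with $\ell$ running over generators of the dual cone. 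It is therefore a rational subspace, and condition (3) forces it to be $0$. The paper's appeal to ``isomorphism over $\mathbb{Q}$'' skips this same point, so with this repair your write-up is more complete than the paper's.

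\textbf{On splitting.} The hypothesis that $X$ and $Y$ be constant is necessary, not a convenience. If $\Gamma$ acts nontrivially on $X$, then $T_{\trop}(P)=\Hom(X,\mathbb{R})^{\Gamma}$ has dimension strictly less than $\operatorname{rank}T$. For a rank-one torus split by a quadratic extension, $T_{\trop}(P)=0$ and $A_{\trop}(P)$ is finite. So your sentence suggesting that the identity component still has dimension $\operatorname{rank}T$ in the non-split case is inaccurate. The paper tacitly works in the split situation.
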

\begin{proof} We have seen that $A_\trop = T_\trop/ Y$. 

Now 
\begin{equation}
T_\trop(P) =\Hom(P, T_\trop) = \Hom(X, \bb R). 
\end{equation}
On the other hand, $Y$ is a subgroup of $T_\trop(P) = \Hom(X,\bb R)$, and the condition that the polarisation map $ Y \to  X$ be an isomorphism over $\qq$ ensures that $ Y $ is a lattice inside $\Hom(X, \bb R)$. 
\end{proof}
 
 Suppose for the remainder of this section that $S$ is the spectrum of a complete discrete valuation ring $R$ with residue field $k$, and let $P \to S$ be the natural map. In this case $G$ is the connected component of the N\'eron model of the generic fiber of $A/S$. Let $F$ be the fraction field of $R$ and suppose that $A_F$ is a polarised abelian variety over $F$.  Let $\mathbb{F}$ be the completion of an algebraic closure of $F$. In the next lines we will freely use theory and terminology related to Berkovich analytic spaces as developed in, e.g., \cite{berkovich}, \cite{gubler_tropical}, \cite{gubler_can_meas}. 
 
 Let $A^\an$ denote the Berkovich analytification of $A$ over $\mathbb{F}$. The space $A^\an$ naturally contains the real torus $A_\trop(P) = \Hom(X, \bb R)/Y$ constructed above as a subset. This subset is called the \emph{canonical skeleton} of $A^\an$. We denote the canonical skeleton of $A^\an$ by $\Sk(A)$. We have a canonical \emph{tropicalisation homomorphism} $\val \colon A^\an \to \Sk(A)$ and it turns out that $\val$ is a deformation retraction.  
 
 Let $\ca P \to S$ be an integral, projective flat model of $A_F$. Let $\widetilde{\bb F}$ denote the residue field of $\bb F$. There exists a canonical \emph{reduction map} $\mathrm{red}_{\ca P} \colon A^\an \to \ca P_{\widetilde{\bb F}}$. It is surjective, and if $\xi$ is a generic point of $\ca P_{\widetilde{\bb F}}$, there exists a unique $x \in A^\an$ such that $\mathrm{red}_{\ca P}(x) = \xi$. We call this point the \emph{Shilov point} corresponding to $\xi$.
 The space $A^\an$ also naturally contains the set $A(\mathbb{F})$ of $\mathbb{F}$-valued points of $A$.

\section{Toroidal compactifications} \label{sec:toroidal_comp}

The purpose of this section is to present toroidal compactifications of the moduli spaces needed for our arguments, using the language of logarithmic abelian varieties, which will be important for the proofs of our main results. We begin by recalling the classical (non-logarithmic) versions of the relevant moduli problems.

\subsection{Classical moduli problems}

Let $g$ be a positive integer.

\begin{definition}[$A_{g, n}$]
$A_g$ is the algebraic stack whose objects are principally polarised abelian schemes of relative dimension~$g$. 
Given $n \ge 0$, we define $A_{g,n}$ to have objects being tuples consisting of an object $(A/S, P)$ of $A_g$ together with $n$ sections of $A \to S$; in particular $A_{g, 0} = A_g$ and $A_{g,1}$ is the universal abelian variety over $A_g$. 
\end{definition}
Given an abelian scheme $A \to S$ we write $e \colon S \to A$ for the zero section.
\begin{definition}[$N_{g, n}$]\label{def:Ng}
$N_g$ is the algebraic stack whose objects are pairs $(A,  L)$ where
$A$ is a principally polarised abelian variety (ppav) and $  L$ is a
symmetric rigidified line bundle on $A$ representing the
polarisation. More formally, it is a tuple $(A/S,   P,   L, \epsilon ,
\iota)$ where $A/S$ is abelian, $  P$ is a principal polarisation, $
L$ is a relatively ample line bundle on $A/S$, $\epsilon\colon e^*  L
\to   O_S$ is an isomorphism, $\iota\colon [-1]^*  L \to   L$ is an
isomorphism which pulls back along $e$ to the identity, and we require
that $  P \isom m^*  L \otimes p_1^*  L^\vee \otimes p_2^*  L^\vee
\otimes e^*  L$ as rigidified line bundles (or equivalently as
biextensions) on $A \times A$. 

Given an integer $n \ge 0$, we define $N_{g,n}$ to have objects being tuples consisting of an object of $N_g$ together with $n$ sections of $A \to S$; in particular $N_{g, 0} = N_g$ and $N_{g,1}$ is the universal abelian variety over $N_g$. 
\end{definition}
\begin{remark}\label{rem:N_g_comments}
\begin{itemize}
\item[(i)] The stack $A_{g,n}$ from above should not be confused with the stack $A_{g,n}$ appearing in \cite[Definition 4.4]{fc}. In the latter, the ``$n$'' stands for a level structure. 
\item[(ii)] Equivalently, an object of $N_g$ is a pair of a ppav and a symmetric relative theta divisor (a flat relatively ample effective Cartier divisor which defines the principal polarisation).  First, given such a pair $(p\colon A \to S, \Theta)$ we define $  L =   O_A(\Theta) \otimes p^*e^*  O_A(\Theta)^\vee$. Conversely, given $  L$  we know that $p_*  L$ is invertible; locally on $S$ we can choose a trivialising section, yielding an effective Cartier divisor. A different choice of trivialising section will yield the same Cartier divisor, and hence these glue to a global divisor, the theta divisor. 

\item[(iii)] The condition that $  P \isom m^* L \otimes p_1^* L^\vee \otimes p_2^* L^\vee \otimes e^* L$ as rigidified line bundles on $A \times A$ may be equivalently replaced by the condition that $  L^{\otimes 2} \isom \Delta^*  P$ where $\Delta\colon A \to A \times A$ is the diagonal.

\item[(iv)] The forgetful morphism $N_g$ to $A_g$ is finite flat of degree $2^{2g}$, and is an fppf torsor under $A_{g,1}[2]$, the moduli of ppavs together with a 2-torsion point. The torsor is trivial if and only if $g=1$. 

\item[(v)] It is shown in \cite[Propositions 1.1.4 and 1.2.6]{mb_fonct} that $N_g$ has two irreducible components (which meet in characteristic two, and which correspond to the ``parity'' of the theta divisor), and that the set of global sections of $\bb G_m$ on each component is just $\{-1, +1\}$. 

\item[(vi)] By construction, the universal abelian variety $N_{g,1}$ carries a universal polarising line bundle $L$ and a universal theta divisor $\Theta$.

\end{itemize}
\end{remark}

\subsection{Logarithmic moduli problems} \label{sec:log_moduli}

The logarithmic moduli problems are almost identical to the classical
ones, except that we replace abelian varieties by log abelian
varieties, and $\bb G_m$-torsors by $\bb G_\LOG$-torsors. We warn the
reader that these moduli spaces are generally \emph{not} representable
by schemes or algebraic stacks; for this we must subdivide the moduli
problems, as explained in Section
\ref{sec:representable_subdivisions}. 

Let $g$ be a positive integer.
\begin{definition}
We define $A_g^\LOG$ to be the stack of principally polarised log abelian varieties (pplav), i.e., the fibred category over $\cat{LogSch}$ with objects $(A/S, \llb P)$, where
\begin{enumerate}
\item $A/S$ is a log abelian variety of relative dimension~$g$ (see Definition~\ref{def:LAV});
\item $\llb P$ is a principal polarisation on $A/S$ (see Definition~\ref{def:polarisation_on_LAV}). 
\end{enumerate}

Given $n \ge 0$, we define $A_{g,n}^\LOG$ to have objects being tuples consisting of an object of $A_g^\LOG$ together with $n$ sections of $A \to S$; in particular $A^\LOG_{g, 0} = A_g^\LOG$ and $A_{g,1}^\LOG$ is the universal log abelian variety over $A_g^\LOG$. 
\end{definition}

\begin{definition}\label{def:Nlog}
We define $N_g^\LOG$ to be the fibred category over $\cat{LogSch}$ with
objects $(A/S, \llb P, \llb L, \epsilon , \iota)$, where
\begin{enumerate}
\item $A/S$ is a log abelian variety of relative dimension~$g$;
\item $\llb P$ is a principal polarisation; 
\item $\llb L$ is a $\bb G_\LOG$-torsor on $A$; 
\item $\epsilon\colon e^*\llb L \to \bb G_\LOG$ is a trivialisation along the zero section;
\item $\iota\colon [-1]^*\llb L \to \llb L$ is an isomorphism which pulls back along $e$ to the identity;
\item\label{cond:rigi} we require that $m^*\llb L \otimes p_1^*\llb L^\vee \otimes p_2^*\llb L^\vee \otimes e^*\llb L$ is isomorphic, as rigidified log line bundles on $A \times A$, to $\llb P$. 
\end{enumerate}

Given $n \ge 0$, we define $N_{g,n}^\LOG$ to have objects being tuples consisting of an object of $N_g^\LOG$ together with $n$ sections of $A \to S$; in particular $N^\LOG_{g, 0} = N^\LOG_g$ and $N^\LOG_{g,1}$ is the universal log abelian variety over $N^\LOG_g$.
\end{definition}

Observe that $N^\LOG_{g,1}$ carries a universal $\bb G_\LOG$-torsor $\llb L$. Over $N_{g,1}$ it coincides with the universal polarising line bundle $  L$.

For $g \ge 2$ neither $A_g^\LOG$ nor $N_g^\LOG$ is representable by an algebraic stack with log structure, but this becomes true after suitable subdivision (as constructed in Section~\ref{sec:representable_subdivisions}). 

\begin{lemma}\label{2_tors_action}
The natural forgetful morphism $N_g^\LOG \to A_g^\LOG$ is a torsor under the 2-torsion $A^\LOG_{g,1}[2]$. Both $A_{g,1}[2]$ and $N_{g}$ are representable by finite flat algebraic spaces over $A_g$. 
\end{lemma}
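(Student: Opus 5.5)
To prove that $N_g^\LOG \to A_g^\LOG$ is an $A^\LOG_{g,1}[2]$-torsor, I would first write down the action. Given $(A/S,\llb P,\llb L,\epsilon,\iota)$ and a section $a \in A[2](S)$, set
\[ a\cdot \llb L \coloneqq T_a^*\llb L \otimes (\llb P|_{A\times\{a\}})^{\vee}. \]
Here $\llb P|_{A \times \{a\}}$ is the log line bundle on $A$ obtained by restricting the biextension along $A \times a$; it is rigidified along $e$. Classically this is just $T_a^*L\otimes L^{-1}$ twisted back to give another symmetric representative.

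The next step is to check that the new bundle still satisfies condition~\eqref{cond:rigi} and carries a rigidification and a symmetry.

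The biextension condition follows from the theorem of the cube in its biextension form: $m^*(-)\otimes p_1^*(-)^\vee\otimes p_2^*(-)^\vee$ kills pullbacks of fibres of a biextension up to a term pulled back from $S$. This term is removed by re-rigidifying.

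For the symmetry $\iota$, we use that $[-1]^*T_a^* = T_{-a}^*[-1]^* = T_a^*[-1]^*$ because $2a=0$, together with the symmetry of $\llb P$. The compatibility of $\iota$ at $e$ is the delicate point. Its obstruction is the sign $e_2$-type pairing $\llb P|_{(a,a)}$, which is trivialised because $2a=0$ and $\llb P$ is symmetric. I would handle this exactly as in the classical proof for $N_g\to A_g$, working with $\bb G_\LOG$ in place of $\bb G_m$.

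For the torsor property, I would show the action is free and transitive, locally in the strict étale topology.

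Given two objects $\llb L, \llb L'$ over the same $(A,\llb P)$, condition~\eqref{cond:rigi} gives an isomorphism of biextensions between $\Lambda(\llb L)$ and $\Lambda(\llb L')$. Hence $\llb M=\llb L'\otimes\llb L^\vee$ satisfies $m^*\llb M\cong p_1^*\llb M\otimes p_2^*\llb M$ as rigidified bundles, i.e.\ it is a rigidified primitive log line bundle. It therefore gives a point of $\ca Ext(A,\bb G_\LOG)$.

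I would then invoke the map $A^\vee\to\ca Ext(A,\bb G_\LOG)$ of \cite[Theorem 7.4]{LAV2}, and I would need it to be an isomorphism onto primitive torsors. Composing with the principal polarisation $A\isom A^\vee$, we get $\llb M\cong \llb P|_{A\times\{b\}}$ for a unique $b$. Symmetry of $\llb L$ and $\llb L'$ forces $[-1]^*\llb M\cong\llb M$, hence $-b=b$, so $b\in A[2]$. One then checks that $\llb L' \cong b\cdot\llb L$ compatibly with $\epsilon$ and $\iota$. Freeness follows from the uniqueness of $b$. Automorphisms of the data are controlled by the rigidification, so they do not obstruct.

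For the second sentence of the statement, restrict to the locus where the log structure is trivial. There $A^\LOG_{g}$ agrees with $A_g$, and $A_{g,1}[2]$ is the kernel of $[2]$ on the universal abelian scheme. This kernel is finite flat of degree $2^{2g}$ because $[2]$ is finite flat; this holds even in characteristic~2, and it is representable by a scheme over $A_g$. Then $N_g$ is an fppf torsor under a finite flat group algebraic space. Torsors under finite flat group schemes are representable by algebraic spaces by fppf descent of affine morphisms (Artin's theorem), and they are finite flat, since these properties are fppf local on the base and the torsor is trivial after an fppf cover.

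The main obstacle I expect is the transitivity step. It needs the identification of rigidified primitive $\bb G_\LOG$-torsors with $A^\vee$, and that result is only available pointwise or via \cite{LAV2,LAV5}. I would argue that the resulting map to $A[2]$ is defined globally by descent, using that the data are rigid.
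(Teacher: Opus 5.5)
Your action is the wrong one: as written it is trivial. Restrict the isomorphism in \eqref{cond:rigi} along $x\mapsto (x,a)$ and use the rigidification. This gives
\[
\llb P|_{A\times\{a\}} \cong T_a^*\llb L\otimes\llb L^\vee\otimes\pi^*(a^*\llb L)^\vee ,
\]
so $T_a^*\llb L\otimes(\llb P|_{A\times\{a\}})^\vee\cong\llb L\otimes\pi^*a^*\llb L$. After re-rigidifying, this is just $\llb L$ again; it is the theorem of the square. Consequently freeness fails. Also, the last step of your transitivity argument, $\llb L'\cong b\cdot\llb L$, would force $\llb L'\cong\llb L$.

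The paper's action is by translation: $a\cdot\llb L = T_a^*\llb L\otimes\pi^*(a^*\llb L)^\vee$, which is equivalently $\llb L\otimes\llb P|_{A\times\{a\}}$. With this action, condition \eqref{cond:rigi} is preserved, because $\llb P|_{A\times\{a\}}$ is primitive (or, as in the paper, by the theorem of the cube). Symmetry holds because the primitive torsor of a $2$-torsion point is symmetric. The compatibility of $\iota$ at $e$ is not an obstruction: any isomorphism $[-1]^*\llb L\cong\llb L$ can be normalised at $e$ by a section of $\bb G_\LOG$ on $S$.

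Once the action is corrected, your transitivity argument works and is essentially the paper's. The paper gets $\llb M^{\otimes 2}$ trivial from $\Delta^*\llb P\cong\llb L^{\otimes 2}\cong\llb L'^{\otimes 2}$ rather than from $[-1]^*\llb M\cong\llb M$. Both routes rely on the same identification of rigidified primitive $\bb G_\LOG$-torsors with $A^\vee$ that you flag as the obstacle.

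For the representability sentence, you only treat the locus where the log structure is trivial. That is the literal reading of the statement. However, the paper proves it by citing \cite[Proposition 18.1]{LAV4} because the lemma is used in the log setting. In the proof of Theorem~\ref{thm:N_g} it supplies the fact that $N_g^\LOG$ is relatively representable by finite algebraic spaces over $A_g^\LOG$. For that you need the $2$-torsion of the universal log abelian variety, $A^\LOG_{g,1}[2]\to A_g^\LOG$, to be representable, finite and flat. This is an input from Kajiwara--Kato--Nakayama about log abelian varieties, and it does not follow from finite flatness of the kernel of $[2]$ on the interior. Once you have it, your descent argument transfers finite flatness to the torsor $N_g^\LOG$ exactly as the paper does.
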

\begin{proof}
The theorem of the cube \cite[Theorem 2.2]{LAV5} implies that if a $\bb G_\LOG$-torsor $\llb L$ on a log abelian variety $A/S$ with principal polarisation $\llb P$ satisfies condition \eqref{cond:rigi} in Definition~\ref{def:Nlog} then the same holds for any translate of $\llb L$. In particular, $A_{g,1}[2]$ acts by translations on the set of symmetric $\llb L$ satisfying \eqref{cond:rigi}; in other words, there is an action of $A_{g,1}[2]$ on $N_{g}$. Another application of the theorem of the cube implies (using symmetry of $\llb L$) that $[2]^*\llb L \cong \llb L^{\otimes 4}$, and hence $\Delta^*\llb P \cong \llb L^{\otimes 2}$. The action of $A_{g,1}[2]$ is therefore transitive. As it is clearly free, we find that $N_g$ is an $A_{g,1}[2]$-torsor. 
The fact that $A_{g,1}[2]$ is a finite flat algebraic space is \cite[Proposition 18.1]{LAV4}. This property transfers to any torsor. 
\end{proof}

\subsection{Representable subdivisions of logarithmic moduli problems}\label{sec:representable_subdivisions}

\subsubsection{Representable subdivisions of $A_g^\LOG$}
In Sections \ref{sec:subdiv_N_g} and \ref{sec:universal_Ng} we will
construct representable subdivisions of $N_g^\LOG$ and
$N_{g,1}^\LOG$. For context we first recall (and slightly generalise)
the analogous constructions from \cite{LAV7} for $A_g^\LOG$. Fix a
positive integer $g$ and $W$ a free finitely generated abelian group of rank $g$. Let
$S(W)$ be the free abelian group of  symmetric integral valued bilinear forms
$W \times W \to \zz$. We write $S_{\qq}(W)=S(W)\otimes_{\zz}\qq$ and
$S_{\rr}(W)=S(W)\otimes_{\zz}\rr$.   

\begin{definition}[Admissible Fan] \label{def:admissible_fan}
An admissible fan $\Sigma$ of $S_\rr(W)$ is a set of strongly convex
rational polyhedral cones in $S_\rr(W)$
satisfying the following conditions:
\begin{enumerate}
    \item For $\sigma \in \Sigma$, every face of $\sigma$ is in $\Sigma$.
    \item For $\sigma, \tau \in \Sigma$, the intersection $\sigma \cap
      \tau$ is a face of $\sigma$. 
    \item $\Sigma$ is stable under the action of
      $\operatorname{Aut}(W)$. Here, $\alpha \in
      \operatorname{Aut}(W)$ acts on $S_\rr(W)$ by $b
      \mapsto b\left(\alpha^{-1}(\cdot),
        \alpha^{-1}(\cdot)\right)$. 
    \item The number of $\operatorname{Aut}(W)$-orbits in $\Sigma$ is finite.
    \item For any $\sigma \in \Sigma$, any element of $\sigma$ is positive semi-definite, i.e., $b(w, w) \geq 0$ for any $b \in \sigma$ and any $w \in W$.
    \item The union of the $\sigma \in \Sigma$ is exactly the space of positive semi-definite symmetric bilinear forms with rational radical. 
\end{enumerate}
\end{definition}

Recall the sheaves $\bar X$ and $\bar Y$ attached to a pplav (see Remark \ref{rem:auxillary_data}).
\begin{definition}[{\cite[Definition~1.3]{LAV7}}]\label{def:A_g_compatibility}
A pplav $(A/S, \llb P)$ is \emph{compatible with $\Sigma$} if
strict-\'etale locally on $S$ there exists a surjective homomorphism
of sheaves of abelian groups $f\colon W \to \bar Y$ such that for all
geometric points $s$ of $S$, there exists $\sigma \in \Sigma$ such
that for all monoid homomorphisms $h\colon \ghost_{S,s} \to \bb N$ (we
think of these as points in the dual cone), the composition 
\begin{equation}\label{eq:sigma_compatibility_LAV}
b\colon W \times W \xrightarrow{f \times f} \bar Y_s \times \bar Y_s \to  \bar X_s \times \bar Y_s \xrightarrow{\langle \cdot, \cdot \rangle} \ghost_{S,s}^{\text{gp}} \xrightarrow{h^{\text{gp}}} \mathbb{Z}
\end{equation}
lies in $\sigma\cap S(W)$; here the map $\bar Y_s \times \bar Y_s \to  \bar X_s
\times \bar Y_s$ is given by the identity on the second factor, and
the isomorphism $\bar Y \isom \bar X$ induced by the principal
polarisation on the first factor.

\end{definition}
\begin{remark}\label{rem:integral_redundant_A}
If we simply required that $b$ lies in $\sigma$ instead of $\sigma\cap S(W)$ this would be equivalent, as $b$ is evidently integral-valued. 
\end{remark}

\begin{definition}
Given an admissible fan $\Sigma$, we write $A_g^\Sigma$ for the full subcategory of $A_g^\LOG$ consisting of tuples where $(A/S, \llb P)$ is compatible with $\Sigma$. 
\end{definition}

\begin{theorem}\label{thm:A_g}
$A_g^\Sigma$ is a proper algebraic stack with log structure. It is log
smooth. It is moreover smooth if and only if $\Sigma$ is smooth, that
is, if all cones in $\Sigma$ are unimodular (i.e., they are generated
by part of a $\zz$-basis of $S(W)$).
\end{theorem}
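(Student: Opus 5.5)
The plan is to reduce Theorem~\ref{thm:A_g} to the main results of Kajiwara--Kato--Nakayama in \cite{LAV7}. There, for a fan $\Sigma$ satisfying conditions (1)--(6) of Definition~\ref{def:admissible_fan}, the moduli of pplav compatible with $\Sigma$ is shown to be represented by the toroidal compactification of Faltings--Chai associated to $\Sigma$. The phrase ``slightly generalise'' in the text refers to working with stacks and arbitrary admissible fans rather than a fixed level structure. Accordingly, I would first check that our Definition~\ref{def:A_g_compatibility} agrees with \cite[Definition~1.3]{LAV7}. By Remark~\ref{rem:integral_redundant_A}, integrality plays no role in this comparison.

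The argument then proceeds in steps.

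\begin{enumerate}[label=(\arabic*)]
\item Pass to a level-$n$ cover with $n\ge 3$, over $\mathbb Z[1/n]$. KKN prove that the corresponding subfunctor $A_{g,n}^{\Sigma}$ of pplav with level structure is represented by the Faltings--Chai toroidal compactification $\bar A_{g,n}^{\Sigma}$ \cite[\S IV]{fc}, equipped with its boundary log structure.
\item Recover $A_g^\Sigma$ as the quotient stack $[A_{g,n}^\Sigma/\mathrm{GSp}_{2g}(\mathbb Z/n)]$. Using two coprime levels $n$, these quotients glue over $\operatorname{Spec}\mathbb Z$. Since the forgetful map is a torsor for a finite group in the strict étale topology on the $\mathbb Z[1/n]$-locus, $A_g^\Sigma$ is an algebraic stack with log structure.
\item Properness follows from properness of $\bar A_{g,n}^\Sigma$ \cite[Theorem~IV.6.7]{fc}, which relies on condition (4) (finiteness modulo $\operatorname{Aut}(W)$). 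Alternatively, one can verify the valuative criterion directly: a pplav over the generic point of a DVR extends after a finite base change by semistable reduction. Condition (6) then guarantees that the induced monodromy form, which is positive semidefinite with rational radical, lies in some cone of $\Sigma$.
\item For the local structure, étale locally at a boundary point $s$ with cone $\sigma$ the stack is modelled by the Mumford construction. It is smooth over the toric variety $\operatorname{Spec}\mathbb Z[\sigma^\vee\cap S(W)^\vee]$ modulo the finite stabiliser $\Gamma_\sigma\subset \operatorname{Aut}(W)$ of $\sigma$. Concretely, the chart is a torus embedding of $T=S(W)^\vee\otimes\mathbb G_m$ (the ``$b$'' coordinates), fibred over a family of abelian schemes. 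The log structure is the toric one, so the stack is log smooth: the monoid $\sigma^\vee\cap S(W)^\vee$ is toric and the cokernel is free.
\item Smoothness holds exactly when the toric chart is smooth. For this we also need $\Gamma_\sigma$ to act without destroying regularity; working on a level cover, the action is free. The toric chart is smooth precisely when $\sigma$ is unimodular, which gives the final claim.
\end{enumerate}

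The main obstacle is the representability step, namely that compatibility with $\Sigma$ cuts $A_g^{\LOG}$ down to an honest algebraic stack. This is the deep content of \cite{LAV7}, which I would cite rather than reprove. What remains is to make sure the level-structure hypotheses there can be removed by descent. I would also check that the ``only if'' direction of smoothness holds in characteristics dividing $n$; for this I would switch to a different coprime level, since smoothness is étale local.
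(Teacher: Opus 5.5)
Your overall route is the paper's: use the level-structure theorem of \cite{LAV7}, cover $A_g^\Sigma$ using two coprime levels, inherit properness, and read smoothness off the toric local model. But step (2) is wrong as stated.

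The forgetful map $A^\Sigma_{g,n}\to A^\Sigma_g$ is \emph{not} a torsor under a finite group for the strict \'etale topology along the boundary. The reason is that a full level-$n$ structure on a degenerating log abelian variety needs $n$-th roots of the boundary coordinates in the log structure. Already for $g=1$ this is visible: over a log point with ghost monoid $\mathbb N$ generated by $q$, the $n$-torsion of $\mathbb G_{\mathrm{log}}/q^{\mathbb Z}$ is only $\mu_n$. The missing torsion points $q^{k/n}$ appear only after adjoining $q^{1/n}$.

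So the level map is representable and log \'etale of Kummer type, and its underlying map is ramified along the boundary: $X(n)\to\overline{\mathcal M}_{1,1}$ is $q^{1/n}\mapsto q$ at the cusps. Consequently $[A^\Sigma_{g,n}/\mathrm{GSp}_{2g}(\mathbb Z/n)]$ is not $A^\Sigma_g$ but a root stack over it. For $g=1$, the element $\bigl(\begin{smallmatrix}1&1\\0&1\end{smallmatrix}\bigr)$ fixes the cusp $\infty$ of $X(n)$ and multiplies $q^{1/n}$ by a primitive $n$-th root of unity. The quotient stack therefore acquires a cyclic stabiliser of order $n$ acting faithfully in the normal direction, whereas the cusp of $\overline{\mathcal M}_{1,1}$ has stabiliser $\{\pm 1\}$ acting trivially on $q$. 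So the descent in (2), as you set it up, produces the wrong stack.

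The paper makes no such quotient identification. It only uses that $A^\Sigma_g[p]\to A^\Sigma_g$ is representable by algebraic spaces with log structure and (log) \'etale away from $p$ \cite[Proposition 18.1]{LAV4}, and that two primes $p,q\ge 3$ give a cover; algebraicity and properness are deduced from that. You need to replace (2) by an argument of this kind: descent along a representable but non-strict log \'etale cover, which is not the same as quotienting the underlying space by a finite group. Alternatively, avoid levels altogether by citing the Faltings--Chai construction of the stack $\overline{A}_g$ over $\mathbb Z$ and identifying the functor it represents with $A^\Sigma_g$.

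The same misconception resurfaces in (5), where you change level because ``smoothness is \'etale local'' and ask $\Gamma_\sigma$ to act freely. Neither is needed. Your chart in (4) already presents $A^\Sigma_g$ itself locally as a stack quotient $[\text{chart}/\Gamma_\sigma]$, which is smooth iff the chart is. Moreover, over a base with trivial log structure, a log smooth point is regular iff its ghost monoid is free. This gives both directions of the smoothness criterion, including the `only if' direction that the paper's proof leaves implicit.
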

\begin{proof}
For a prime $p \ge 3$, the fact that the stack $A_g^\Sigma[p]$ of pplav with full level $p$ structure is a proper log smooth algebraic space away from $p$, is proven in \cite[Theorem 1.7]{LAV7}. By \cite[Proposition 18.1]{LAV4} the map $A_g^\LOG[p] \to A_g^\LOG$ is representable by algebraic spaces with log structure, and is \'etale away from $p$, hence the same holds for the map $A_g^\Sigma[p] \to A_g^\Sigma$. By choosing distinct primes $p$, $q \ge 3$ we obtain an \'etale cover of $A_g^\Sigma$ by algebraic spaces with log structure, hence $A_g^\Sigma$ is an algebraic stack with log structure, and it is proper since this holds away from $p$ and away from $q$. We finish by noting that a log smooth stack with unimodular cones is smooth since it is smooth-locally modelled on a smooth toric variety. 
\end{proof}

\begin{remark}\label{rem:cone-stack-Ag}
The cone complex $\Sigma$ carries a natural action of $\on{Aut}(W)$, and a quotient $\Sigma/\on{Aut}(W)$ in a suitable category might be called a \emph{ moduli space of tropical abelian varieties}, see for example \cite{brannetti2011tropical,brown2025bordifications,assaf2025tropicalizations}. 
\end{remark}

\subsubsection{Representable subdivisions of $N_g^\LOG$}\label{sec:subdiv_N_g}

We first generalize the definition of admissible cone decompositions, using freely ideas from \cite[\S IV.7]{fc}. 

Let $R$ be a ring, and let $G, H$ be two $R$-modules.
We say a map of sets $a\colon G \to H$  is \emph{quadratic} if the map $G \times G \to H; (x,y) \mapsto a(x+y) - a(x) - a(y) + a(0)$ is bilinear. If $H = R$ then we call $a \colon G \to R$ a \emph{quadratic function} on $G$. 

Given a free finitely generated abelian group $W$, and an element
$\rho \in W$, we define $Q_\rho(W)$ to be the set of $\zz$-valued quadratic
functions $a$ on $W$ satisfying $a(\rho - w)  = a(w)$, modulo
constants. This is a free finitely generated abelian group. If $\rho -
\rho' \in 2W$ then there is a canonical isomorphism
\begin{equation}\label{eq:iso-quadratic}
Q_\rho(W) \to Q_{\rho'}(W); \quad [a] \mapsto [x \mapsto a(x - (\rho' - \rho)/2)]. 
\end{equation} 

If $f\colon W \to W'$ is a surjection then pullback of functions induces an injective group homomorphism  
\begin{equation}\label{eq:Q_rho_pullback}
f^*\colon Q_{f(\rho)}(W') \to Q_\rho(W). 
\end{equation}

There is a natural map 
\begin{equation} \label{eq:natural_map}
Q_\rho(W) \to S(W); \quad [a] \mapsto [(x,y)  \mapsto a(x+y) - a(x) - a(y) + a(0)]. 
\end{equation}

\begin{lemma} \label{lem:iso_2}
After tensoring both sides over $\bb Z$ with $\bb Z[1/2]$, the map \eqref{eq:natural_map} becomes an isomorphism. Equivalently, the map \eqref{eq:natural_map} is injective, and has finite cokernel of order a power of $2$. 
\end{lemma}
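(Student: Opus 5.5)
The plan is to prove injectivity directly, and then to show that multiplication by a suitable power of $2$ on $S(W)$ lands in the image. Everything is computed in a basis, so fix a $\zz$-basis $e_1,\dots,e_g$ of $W$.

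For injectivity, suppose $[a]\in Q_\rho(W)$ maps to $0$ in $S(W)$. Then $a(x+y)-a(x)-a(y)+a(0)=0$ for all $x,y$, so $x\mapsto a(x)-a(0)$ is additive, i.e.\ $a(x)=\ell(x)+c$ with $\ell\in\Hom(W,\zz)$ and $c\in\zz$. The symmetry condition $a(\rho-w)=a(w)$ gives $\ell(\rho)-\ell(w)=\ell(w)$ for all $w$. Taking $w=0$ yields $\ell(\rho)=0$, and then $2\ell(w)=0$ forces $\ell=0$, since $\zz$ is torsion-free. Hence $a$ is constant, so $[a]=0$ in $Q_\rho(W)$.

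For the finite $2$-power cokernel, take $b\in S(W)$ and try to write $4b$ (or some power $2^kb$) as the image of a class. First I would try the candidate
\[
a(x)=2b(x,x)-2b(x,\rho).
\]
Its image under \eqref{eq:natural_map} is $(x,y)\mapsto 4b(x,y)$, since the linear term contributes nothing to the associated bilinear form. Next I would check the symmetry:
\[
a(\rho-x)=2b(\rho,\rho)-4b(\rho,x)+2b(x,x)-2b(\rho,\rho)+2b(x,\rho)=2b(x,x)-2b(x,\rho)=a(x).
\]
Moreover $a$ is $\zz$-valued and quadratic. Thus $4\cdot S(W)$ lies in the image, so the cokernel is killed by $4$. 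Since $S(W)$ is finitely generated, the cokernel is finite of order a power of $2$.

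Finally, tensoring with $\zz[1/2]$ preserves injectivity, because $\zz[1/2]$ is flat, and kills the $2$-power torsion cokernel. This gives the isomorphism after tensoring. The converse direction of the stated equivalence is formal: $Q_\rho(W)$ is free, so injectivity of the original map follows from injectivity after tensoring, and a cokernel $C$ with $C\otimes\zz[1/2]=0$ that is finitely generated is a finite $2$-group.

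No step here is really an obstacle. The only point needing care is checking the symmetry of the explicit candidate $a$, which is verified above. Whether the cokernel is exactly $2$- or $4$-torsion does not matter for the statement.
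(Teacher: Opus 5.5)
Your proof is correct and follows essentially the paper's route: the paper exhibits the preimage $a_b(w)=\tfrac12\bigl(b(w,w)-b(\rho,w)\bigr)$ with values in $\mathbb{Z}[1/2]$, and your integral candidate is exactly $4a_b$. (In fact $2a_b=b(w,w)-b(\rho,w)$ is already integral, so the cokernel is killed by $2$.) You additionally spell out injectivity and the equivalence of the two formulations, both of which the paper leaves implicit.
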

\begin{proof}
Given $b \in S(W)$, we can define a quadratic function $a_b\colon W \to \bb Z[1/2]$ by $a_b(w) = \frac{1}{2}(b(w,w)-b(\rho,w))$. We have $a_b \in Q_\rho(W)$, and $a_b$ maps to $b$ by \eqref{eq:natural_map}. 
\end{proof}
Lemma~\ref{lem:iso_2} also shows that we may as well define $Q_\rho(W)$ as the set of quadratic functions $a$ on $W$ satisfying $a(\rho - w)  = a(w)$ and $a(0)=0$. This is the definition we will use below. The condition $a(0)=0$ is motivated by the fact that we are dealing with \emph{rigidified} line bundles. 

Inside $Q_{\rho}(W)_{\rr}\coloneqq
  Q_{\rho}(W)\otimes_{\zz}\rr$ we have the cone $C_{\rho}(W)$  of
  quadratic functions whose associated symmetric
bilinear form is positive semi-definite with rational radical. We will denote
$C_{\rho}(W)_{\zz}=C_{\rho}(W)\cap Q_{\rho}(W)$ for the monoid of
lattice points inside $C_{\rho}(W)$. The pullback map (\ref
{eq:Q_rho_pullback}) restricts to a pullback 
$f^*\colon
C_{f(\rho)}(W')_{\zz} \to C_{\rho}(W)_{\zz}$.

\begin{definition}\label{def:N_admissible}
An \emph{$N_g$-admissible cone decomposition} consists
of, for every free abelian group $W$ of rank $\le g$, and every $\rho
\in W$, a locally-finite decomposition $\Sigma_{W, \rho}$ of
$C_\rho(W)$ into strongly convex rational polyhedral cones, such that the face of a cone in $\Sigma_{W, \rho}$ lies in $\Sigma_{W, \rho}$, such that the intersection of two cones is a face of both, and such that whenever $f\colon W' \to W$ is a surjection of free abelian groups of rank $\le g$, and $\rho' \in 2W'$, the pullback of the cone decomposition for $(W, \rho)$ along the maps from \eqref{eq:iso-quadratic} and \eqref {eq:Q_rho_pullback} is equal to the cone decomposition for $(W', f(\rho) + \rho')$. 
\end{definition}

\begin{remark}
\begin{enumerate}
\item
An $N_g$-admissible cone decomposition seems to consist of a large amount of data, but this is not really the case -- it is completely determined by a cone decomposition of $C_\rho(\bb Z^g)$ as $\rho$ ranges over a set of coset representatives for $\bb Z^g/2\bb Z^g$. 
\item
When we talk about smoothness of an $N_g$-admissible cone decomposition, we always mean with respect to the integral structure induced by the groups $Q_\rho(W)$. It is explained in \cite[\S IV.7]{fc} how to pull back an admissible cone decomposition for $A_g$ to produce an $N_g$-admissible cone decomposition, but note that this process will not in general take smooth decompositions to smooth decompositions. On the other hand, any $N_g$-admissible cone decomposition can be refined to a smooth one. 
\item Analogously to Remark \ref{rem:cone-stack-Ag}, from an $N_g$-admissible cone decomposition one can build a \emph{tropical moduli stack of abelian varieties with theta level structure} as a colimit in a suitable category of the cone complexes
  $\Sigma_{W, \rho}^\circ$ along the maps $f^*$, where $\Sigma_{W, \rho}^\circ$ denotes the open sub-cone-complex of  $\Sigma_{W, \rho}$ where all cones contained entirely in the boundary are  omitted. This is a disjoint union of two tropical Shimura varieties in the sense of \cite{assaf2025tropicalizations}. 
\end{enumerate}
\end{remark}

Next we want to generalise Definition \ref{def:A_g_compatibility} to this setting. First, given a tuple $(A/S, \llb P, \llb L, \epsilon, \iota)$ in $N_g^\LOG$, we construct a section of $\bar X/2\bar X$ where $\bar X$ is the sheaf from Remark \ref{rem:auxillary_data}. We do this on geometric points, leaving to the careful reader to check that it is compatible with specialisation and therefore leads to a global section. On a geometric fiber we have a semiabelian scheme $1 \to T \to G \to B \to 0$ and a map $c\colon Y \to T_\LOG$, with  $A = G_\LOG /Y$ (see Section \ref{sec:log_motif}). We pull back $\llb L$ to $T[2]$, obtaining a $\bb G_\LOG$-torsor on $T[2]$. Since $[-1]$ restricts to the identity on $T[2]$ the map $\iota$ can be seen as a section of $\bb G_\LOG$ over $T[2]$, and the relation between $\llb L$ and the biextension $\llb P$ means that it is actually a character, i.e., a group homomorphism $T[2] \to \bb G_\LOG$. Now for all positive integers $n$ we have
\begin{equation}
\Hom(T[n], \bb G_\LOG)  =\Hom(T_\LOG[n], \bb G_\LOG) = \Hom(T_\LOG, \bb G_\LOG)/n\Hom(T_\LOG, \bb G_\LOG),
\end{equation}
and recalling that $X = \Hom(T, \bb G_m) = \Hom(T, \bb G_\LOG)$ by \eqref{eqn:describe_X}, the homomorphism $T[2] \to \bb G_\LOG$ is exactly an element of $\bar X/2\bar X$ as required. 
\begin{definition}\label{def:L_to_rho}
Given $(A/S, \llb P, \llb L, \epsilon, \iota)$ in $N_g^\LOG$, we define $\kappa$ to be the section of $\bar X/2\bar X$ constructed above. 
\end{definition}

Now suppose that we are given a tuple $(A/S, \llb P, \llb L,\epsilon,\iota)$ in $N_g^\LOG$ and an $N_g$-admissible cone decomposition $\Sigma$ as in Definition~\ref{def:N_admissible}. 

\begin{definition}\label{def:N_g_compatibility}
We say that $(A/S, \llb P, \llb L)$ is \emph{compatible with $\Sigma$} if locally on $S$ there exists a surjective homomorphism of sheaves of groups $f\colon W \to \bar Y$ and an element $\rho \in W$ such that $\lambda(f(\rho)) \bmod 2\bar X = \kappa$ and 
such that for all geometric points $s$ of $S$, there exists $\sigma \in \Sigma_{W, \rho}$ such that for all monoid homomorphisms $h\colon \ghost_{S,s} \to \bb N$ (we think of these as points in the dual cone), the composition 
\begin{equation}\label{eq:sigma_compatibility_LAV_N}
b\colon W \times W \xrightarrow{f \times f} \bar Y_s \times \bar Y_s \xrightarrow{\lambda \times \text{id}} \bar X_s \times \bar Y_s \xrightarrow{\langle \cdot, \cdot \rangle} \ghost_{S,s}^\gp \xrightarrow{h^{\text{gp}}} \mathbb{Z}
\end{equation}
is such that $\frac12(b(w, w) - b(\rho, w))$ lies in $\sigma\cap
Q_{\rho}(W)$. Here $\lambda \colon \bar Y \to \bar X$ is the isomorphism induced from the principal polarisation.
\end{definition}

\begin{remark}In fact, the quadratic function $\frac12(b(w, w) - b(\rho, w))$ is automatically integral valued, so the condition that $\frac12(b(w, w) - b(\rho, w))$ lie in $\sigma\cap Q_{\rho}(W)$ may equivalently be replaced simply by requiring that $\frac12(b(w, w) - b(\rho, w))$ lies in $\sigma$. This takes a little more work to prove than the analogous statement for $A_g$ in Remark \ref{rem:integral_redundant_A}, but we take some time to explain this since it also sheds light on the shape of the condition.

The point is to see how a log abelian variety together with a symmetric polarising log line bundle induces a quadratic function on $\bar Y_s$ (whereas a polarisation induces a symmetric bilinear form on $\bar Y_s$). We work over a geometric point for simplicity, so that we may assume our log abelian variety $A$ has constant degeneration, hence admits a presentation as the quotient of a log 1-motif $[u\colon Y \to G_\LOG]$, and we set $X = \on{Hom}(G_\LOG, \bb G_\LOG)$ so that $A^\trop = \on{Hom}(X, \bb G_\trop)^{(Y)}/Y$ (see Definition \ref{def:trop_AV}); we set $G_\trop = \on{Hom}(X, \bb G_\trop)^{(Y)}$. Then the log biextension $\frak P$ on $A \times A$ induces a tropical biextension $\frak P^\trop$ on $A^\trop$, which pulls back to a tropical biextension on $G_\trop$, and the latter is \emph{uniquely} trivial by \cite[\S 5 and Proposition A.7]{wise_monodromy}. 

Since our tropical biextension is obtained by pullback from $A^\trop \times A^\trop$ it also comes with trivialisations along $G_\trop \times Y$ and $Y \times G_\trop$ (which agree on $Y \times Y$). Comparing these trivialisations yields a function $Y \times G_\trop \to \bb G_\trop$, which corresponds to an additive map $\lambda\colon Y \to X$. Composing with the standard pairing $Y \times X \to \bb G_\trop$ yields the bilinear form of \eqref{eq:sigma_compatibility_LAV_N}. Equivalently, a descent datum for $\frak P^\trop$ from $G_\trop \times G_\trop$ to $A^\trop \times A^\trop$ with respect to the unique biextension trivialisation of the pullback is given by a function $p\colon Y \times Y \times G_\trop \times G_\trop \to \bb G_\trop$, and $\lambda$ is determined by the formula $h(\lambda(y)) = p(y,0;0,h)$. 

Suppose now that we are given a rigidified log line bundle $\llb L$ on $A$ together with an isomorphism of biextensions
\begin{equation}
\delta(\llb L) \coloneqq m^*\llb L \otimes p_1^*\llb L^\vee \otimes  p_2^*\llb L^\vee \otimes e^*\llb L \to \llb P, 
\end{equation}
descending to an isomorphism $\delta(\llb L^\trop) \to \llb P^\trop$ on $A^\trop \times A^\trop$. Pulling back $\tlb L\coloneqq \llb L^\trop$ to $G_\trop$ and choosing a trivialisation yields a biextension trivialisation of $\delta(\tlb L)$ (which we have previously noted is unique, and indeed one can check directly that a different trivialisation of $\tlb L$ gives the same trivialisation of $\delta(\tlb L)$). After fixing such a trivialisation, the descent datum for $\tlb L$ to $A^\trop$ is given by a function $a\colon Y \times G_\trop \to \bb G_\trop$ satisfying the cocycle relation $a_{y+z}(h)=a_z(h) + a_y(h+z)$, and a small calculation yields the relation
\begin{equation}
\label{eq:p_vs_a}
p(y_1, y_2;h_1,h_2)
=
a(y_1 + y_2, h_1 + h_2) - a(y_1, h_1) - a(y_2, h_2) + a(0, 0). 
\end{equation}
A different trivialisation of the pullback of $\tlb L$ to $G_\trop$ necessarily differs by addition of a function $c\colon G_\trop \to \bb G_\trop$. Any such function is automatically linear, and changes the cocycle $a$ to $a(y,h) + c(u(y))$; in particular this is compatible with the relation \eqref{eq:p_vs_a}. Another small calculation shows that $a(0,h) = 0$, and that $q(y) \coloneqq a(y,0)$ defines a function $Y\to \bb G_\trop$ whose ``second difference'' is the bilinear form from before: 
\[
q(y_1+y_2)-q(y_1)-q(y_2) + q(0)
= \langle \lambda(y_1),y_2\rangle;
\]
 this makes $q$ into a quadratic function. Suppose that we in addition fix a symmetry isomorphism $\iota\colon [-1]^*\llb L \to \llb L$, and require that the trivialisation of the pullback of $\tlb L$ to $G_\trop$ is compatible with $\iota$; this means exactly that there exists $r \in Y$ such that $q(r - y) = q(y)$. In Definition \ref{def:N_g_compatibility} it is really the function $q\colon Y\to \bb G_\trop$ which we are requiring to take integer values, but this is automatic. 
\end{remark}

\begin{definition}
We write $N_g^\Sigma$ for the full subcategory of $N_g^\LOG$ whose objects are compatible with $\Sigma$. 
\end{definition}

With these definitions in hand, we claim:
\begin{theorem}\label{thm:N_g}
Fix an $N_g$-admissible cone decomposition $\Sigma$. Then the fibred category $N_g^\Sigma$ over $\cat{LogSch}$ whose objects are log abelian varieties of dimension~$g$ together with a principally-polarising rigidified symmetric line bundle which is compatible with $\Sigma$, forms a proper algebraic stack with log structure. It is smooth over $\bb Z[1/2]$ if and only if the cone decomposition $\Sigma$  is smooth. 
\end{theorem}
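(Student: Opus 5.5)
The plan is to reduce Theorem~\ref{thm:N_g} to Theorem~\ref{thm:A_g}, in the spirit of the proof of that theorem and of \cite[\S IV.7]{fc}. First I fix an admissible fan $\Sigma_A$ for $A_g$ in the sense of Definition~\ref{def:admissible_fan}. Via the natural maps \eqref{eq:natural_map}, which are isomorphisms after inverting $2$ by Lemma~\ref{lem:iso_2}, it pulls back to an $N_g$-admissible decomposition $\Sigma_A^N$. The pullback is automatically compatible with the maps \eqref{eq:iso-quadratic} and \eqref{eq:Q_rho_pullback}, since these commute with \eqref{eq:natural_map}. Refining $\Sigma_A$ if necessary, I may assume that $\Sigma$ and $\Sigma_A^N$ have a common refinement $\Sigma'$. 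The Aut$(W)$-finiteness in Definition~\ref{def:N_admissible}, used via the first remark after it, makes this a finite combinatorial refinement problem.

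\emph{Step 1 (the pulled-back decomposition).} By Lemma~\ref{2_tors_action}, $N_g^\LOG \to A_g^\LOG$ is a torsor under $A^\LOG_{g,1}[2]$. Restricting to $A_g^{\Sigma_A}$, the preimage of $A_g^{\Sigma_A}$ is exactly $N_g^{\Sigma_A^N}$. To see this, compare Definition~\ref{def:A_g_compatibility} with Definition~\ref{def:N_g_compatibility}. The quadratic function $\tfrac12(b(w,w)-b(\rho,w))$ lies in a cone of $\Sigma_A^N$ if and only if $b$ lies in the corresponding cone of $\Sigma_A$. The datum $\rho$ with $\lambda(f(\rho))\equiv\kappa$ can always be chosen locally, because $f$ is surjective and $\kappa$ is a section of $\bar X/2\bar X$ (Definition~\ref{def:L_to_rho}). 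Since $A_{g,1}[2]$ is representable, finite and flat by \cite[Proposition 18.1]{LAV4}, the torsor $N_g^{\Sigma_A^N} \to A_g^{\Sigma_A}$ is representable finite flat. So $N_g^{\Sigma_A^N}$ is a proper algebraic stack with log structure, and log smooth over $\bb Z[1/2]$, where the torsor is étale.

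\emph{Step 2 (change of decomposition).} For a refinement $\Sigma'$ of $\Sigma$, I show that $N_g^{\Sigma'} \to N_g^{\Sigma}$ is a subdivision in the sense of \S\ref{sec:subdivisions}: it is proper, log étale, and a monomorphism. This is the analogue of the corresponding statement for $A_g$ in \cite{LAV7}. Concretely, compatibility with a cone is a condition on the tropical monodromy pairing, and refining cones corresponds to the log modification of the base obtained by pulling back the subdivision of the tropical moduli cone stack. Applying this to $\Sigma' \to \Sigma_A^N$ shows that $N_g^{\Sigma'}$ is representable and proper, as a subdivision of $N_g^{\Sigma_A^N}$. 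It remains to descend representability and properness from $N_g^{\Sigma'}$ to $N_g^\Sigma$.

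\emph{Step 3 (descending to $\Sigma$; the main obstacle).} I expect this descent to be the main difficulty. The approach I would try first is to show that, étale locally on $N_g^{\Sigma'}$, the map to $N_g^\Sigma$ is modelled on the toric morphism $X_{\Sigma'} \to X_\Sigma$ of the toric charts defined by the cones of $Q_\rho(W)$. This means building local charts for $N_g^\Sigma$ directly, by Mumford's construction over the toric variety of a cone $\sigma\in\Sigma_{W,\rho}$, exactly as \cite{LAV7} does for $A_g$ but with $S(W)$ replaced by $Q_\rho(W)$.

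The log-abelian variety over such a chart comes from a log 1-motif with pairing valued in $\sigma^\vee$. The symmetric rigidified log line bundle comes from the quadratic function $q$ of the preceding remark. The charts glue by the modular interpretation, and properness follows from the valuative criterion via Step 2, because subdivisions are universally surjective.

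Finally, for smoothness: log smoothness holds over $\bb Z[1/2]$, because the local models are toric over a smooth base given by the abelian part. Hence $N_g^\Sigma$ is smooth over $\bb Z[1/2]$ if and only if every toric chart is smooth, that is, if and only if every cone is unimodular with respect to $Q_\rho(W)$.
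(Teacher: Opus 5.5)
\textbf{The gap is Step~3, which is where all the content of the theorem lies.} Steps~1--2 give you representability of $N_g^{\Sigma'}$ for a common refinement $\Sigma'$ of $\Sigma$ and an auxiliary fan $\Sigma_A^N$. However, representability does not descend along subdivisions: a refinement can be an algebraic stack while the coarser object is not. Compare $\bb P^1\to\bb G_\LOG$ in Example~\ref{eg:log_trop_tori}, or $A_g^{\Sigma}\to A_g^\LOG$ itself. So nothing about $N_g^\Sigma$ follows formally from Steps~1--2.

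Your first suggestion, that \'etale locally $N_g^{\Sigma'}\to N_g^\Sigma$ is modelled on $X_{\Sigma'}\to X_\Sigma$, presupposes charts on $N_g^\Sigma$, which is exactly what has to be proved. The fallback, constructing Mumford-type charts over the toric varieties of cones in $Q_\rho(W)$ and gluing them ``by the modular interpretation'', amounts to redoing the representability theorem of \cite{LAV7} in a new setting. As written it is a programme rather than an argument. In addition, surjectivity of $N_g^{\Sigma'}\to N_g^\Sigma$ only gives the existence half of the valuative criterion; separatedness needs its own argument.

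\textbf{The missing idea is that no auxiliary fan, and hence no descent, is needed: transfer $\Sigma$ itself to the $A_g$ side.} By Lemma~\ref{lem:iso_2}, the map $Q_\rho(W)\to S(W)$ is injective with cokernel of $2$-power order. Hence over $\bb R$ it identifies $C_\rho(W)$ with the positive semi-definite cone, and it carries $\Sigma_{W,\rho}$ to a decomposition $\Sigma'$ with the same cones; only the lattice changes, by a $2$-power index. In general $\Sigma'$ is invariant only under the stabiliser of $\rho$ modulo $2W$, not all of $\operatorname{Aut}(W)$, so the paper makes the comparison locally.

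Locally, $N_g^\Sigma$ is a root stack of $2$-power degree over $A_g^{\Sigma'}\times_{A_g^\LOG}N_g^\LOG$; the root stack accounts for the lattice change. This fibre product is representable by the $A_g$ theory together with Lemma~\ref{2_tors_action}. Root stacks of $2$-power degree preserve representability and, away from characteristic~$2$, log smoothness. Smoothness is then read off the toric local model, that is, unimodularity with respect to $Q_\rho(W)$.

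Properness is obtained directly rather than by descent: $N_g^\LOG$ is finite over the proper $A_g^\LOG$, and $N_g^\Sigma$ is a complete subdivision of it. Your Step~1 is essentially the special case $\Sigma=\Sigma_A^N$ of this comparison.

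One small correction to Step~1: over $\bb Z[1/2]$ the torsor $N_g^\LOG\to A_g^\LOG$ is log \'etale but not \'etale along the boundary. Already $X_1(2)\to X_1(1)$ in Example~\ref{eg:example_genus_one} ramifies at the cusp of width two. Your conclusion that $N_g^{\Sigma_A^N}$ is log smooth is unaffected.
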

\begin{proof}
From Lemma \ref{2_tors_action} we know $N_g^\LOG$ is relatively representable by finite algebraic spaces over $A_g^\LOG$. Now $A_g^\LOG$ is proper (by \cite[Theorem 1.6]{LAV7} for the case with level structure, or by Theorem \ref{thm:A_g} in general), hence $N_g^\LOG$ is proper. The same then holds for any subdivision, and it is clear from Definition \ref{def:N_g_compatibility} that an $N_g$-admissible cone decomposition is a (complete) subdivision. Representability and (log) smoothness can be checked locally, and locally $N_g^\Sigma$ is a root stack (of degree some power of $2$) over a fibre product $A_g^{\Sigma'}\times_{A_g^\LOG} N_g^\LOG$. Here we use the isomorphism of Lemma \ref{lem:iso_2} to transfer $\Sigma$ to a cone decomposition for $A_g$. These constructions preserve representability by algebraic stacks, and both smoothness and log smoothness outside characteristic 2. 
\end{proof}

\begin{example}[Comparison to modular curves]\label{eg:example_genus_one}

In this example we assume the characteristic not to be~$2$. In genus one it is well-known that $A_1 = Y_1(1)$ (the ``$j$-line'') can be compactified to a modular curve $X_1(1)$ whose coarse moduli space is $\bb P^1$. Moreover, the torsor $N_1$ over $A_1$ under the 2-torsion of the universal elliptic curve is trivialised by the subspace of odd theta characteristics (which give a section $A_1 \to N_1$); this is very different from what happens in higher genus. This gives us an identification between $N_1$ and the 2-torsion in the universal elliptic curve; we obtain $N_1 = Y_1(1) \sqcup Y_1(2)$. This has a unique smooth compactification $X_1(1) \sqcup X_1(2)$. 

In genus one the spaces $A_1^\LOG$ and $N_1^\LOG$ are already representable; no non-trivial subdivisions exist for dimension reasons. Moreover, they coincide with $X_1(1)$ and with $X_1(1) \sqcup X_1(2)$ respectively. In Figure \ref{fig:genus_1} we illustrate the tropical and logarithmic moduli spaces, using colour coding to demonstrate which boundary divisors correspond to which rays of the tropicalisation. We also include in our picture the Clemens complexes of these snc log schemes, to illustrate that the tropicalisation $N_1^\trop$ does \emph{not} coincide with the Clemens complex. 

We refer to \cite[Section~1.2]{mb_fonct} for more details, and the case that the characteristic is~$2$.

\begin{center}
\begin{figure}
\begin{tikzpicture}[scale = 1]
\draw[thick](0,0) to (3,0);
\draw[thick](2.75,0) node{$\bullet$};
\draw[thick](0,1) to (3,1);
\draw[thick,red](2.75,1) node{$\bullet$};
\draw (-0.6,0) node{$A_1^{\log}$};
\draw (-0.6,2.8) node{$N_1^{\log}$};
\draw [thick] plot [smooth, tension=1.5] coordinates { (3,2) (0,2) (2.75,3) (0,4)};
\draw[thick,blue](2.75,1.95) node{$\bullet$};
\draw[thick,blue](2.75,3) node{$\bullet$};
\draw[thick,->](1.5,0.8) to (1.5,0.2);
\draw (1,-1) node{\emph{log moduli spaces}};

\draw[thick] (4,0) node{$\longrightarrow$};
\draw[thick] (4,2.8) node{$\longrightarrow$};

\draw(5,3) node{$\bullet$};
\draw[thick,blue](5,3) to (6.5,3.5);
\draw[thick,blue](5,3) to (6.5,2.5);
\draw(5,2) node{$\bullet$};
\draw[thick,red](5,2) to (6.5,2);
\draw(5,0) node{$\bullet$};
\draw[thick](5,0) to (6.5,0);
\draw[thick,->](5.5,1.3) to (5.5,0.6);
\draw (5.7,-1) node{\emph{Clemens complex}};

\draw[thick] (7.5,0) node{$=$};
\draw[thick] (7.5,2.8) node{$\longrightarrow$};

\draw (8.5,3) node{$\bullet$};
\draw[thick,blue](8.5,3) to (10,3.5);
\draw[thick, violet](8.5,3) to (10,2.5);
\draw (8.5,0) node{$\bullet$};
\draw[thick](8.5,0) to (10,0);
\draw (10.8,0) node{$A_1^{\trop}$};
\draw (10.8,2.8) node{$N_1^{\trop}$};
\draw (10.5,-1) node{\emph{tropical moduli spaces}};
\draw[thick,->](9.2,1.7) to (9.2,0.6);

\end{tikzpicture}
\caption{Log and tropical moduli in genus one}
\label{fig:genus_1}
\end{figure}
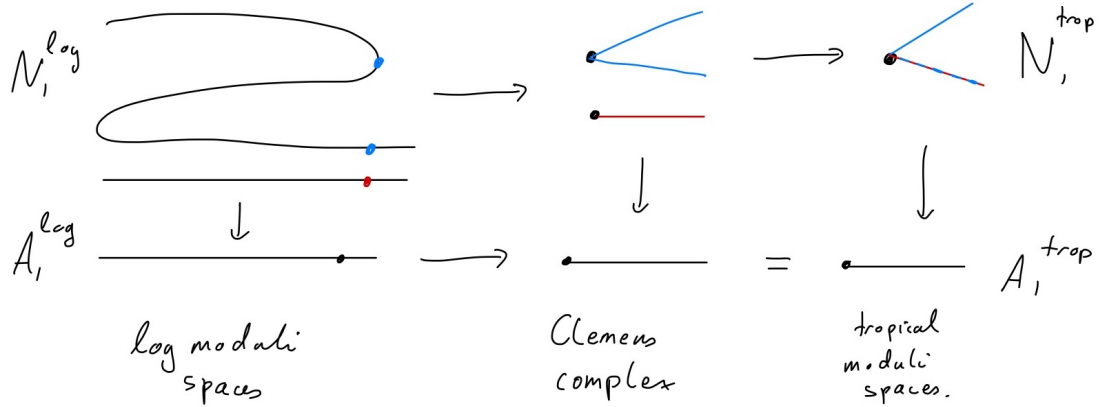
\end{center}

\end{example}

\subsubsection{A representable subdivision of $A_{g,1}^\LOG$}

Here we explain how to use the ideas from \cite{LAV7} to construct a toroidal compactification of $A_{g,1}$ as a subdivision of $A_{g,1}^\LOG$. This may be seen as a translation into the language of log abelian varieties of some of the constructions in \cite[Chapter VI]{fc}.

We let $\tilde S(W) =  S(W) \times \Hom(W, \zz)$.  We write $\tilde S_{\qq}(W) = \tilde
S(W) \otimes_{\zz}\qq $ and $\tilde S_{\rr}(W) = \tilde
S(W) \otimes_{\zz}\rr $.
An element 
$(b,l)\in \tilde S_{\rr}(W)$ is called \emph{positive semi-definite} if $b$
is positive semidefinite with rational radical and $l$ vanishes on the
radical of $b$.

\begin{definition}[Admissible Fan]\label{def:admissible_fan_tilde}
An admissible fan $\widetilde \Sigma$ in $\tilde S_\rr(W)$ is a set of
strongly convex rational  polyhedral cones in $\tilde S_\rr(W)$
satisfying the following conditions: 
\begin{enumerate}
    \item For $\sigma \in \widetilde \Sigma$, every face of $\sigma$ is in $\widetilde \Sigma$.
    \item For $\sigma, \tau \in \widetilde \Sigma$, the intersection
      $\sigma \cap \tau$ is a face of $\sigma$.
    \item $\widetilde \Sigma$ is stable under the action of
      $\operatorname{Aut}(W)\ltimes W$. Here, $\alpha \in
      \operatorname{Aut}(W)\ltimes W$ acts on $\tilde
      S_\rr(W)$ as follows: if $f \colon W \isom W$ then $f(b, l) =
      (b \circ f^{-1}, l \circ f^{-1})$, and if $w \in W$ then $w(b,
      l) = (b, v \mapsto l(v) + b(w,v))$.  
    \item The number of $\operatorname{Aut}(W)\ltimes
      W$-orbits in $\widetilde \Sigma$ is finite. 
    \item For any $\sigma \in \widetilde \Sigma$, any element of $\sigma$ is positive semi-definite. 
    \item For each positive semi-definite $(b,l)\in \tilde S_{\rr}(W)$
      there exists a unique $\sigma \in \widetilde \Sigma$ for which $(b, l)$
      is contained in the interior of $\sigma$. 
\end{enumerate}
\end{definition}

Let $\Sigma $ be an admissible fan in $S_{\rr}(W)$ and $\widetilde \Sigma
$ an admissible fan in $\tilde S_{\rr}(W)$. We say that $\tilde
\Sigma $ and $\Sigma $ are compatible if for every cone $\tilde \sigma
\in \widetilde \Sigma $ there exists a cone $\sigma \in\Sigma $ such that
the image of $\tilde \sigma $ under the projection $\tilde
S_{\rr}(W)\to S_{\rr}(W)$ is contained in $\sigma $.
We fix a pair of compatible  admissible fans $\widetilde \Sigma$ and
$\Sigma $ in $\tilde S_\rr(W)$ and $S_{\rr}(W)$. Below we will define what it means for a triple $(A/S, \llb P, a)$, where $(A/S, \llb P$) is a pplav and $a\colon S \to A$ a section, to be compatible with $\widetilde \Sigma$. Recall from Definition~\ref{def:LAV} that locally on $S$ we have an exact sequence 
\begin{equation}
0 \to G \to A \to \Hom(X, \bb G_\trop)^{(Y)}/\bar Y \to 0,
\end{equation}
where $G$ is the semiabelian part of $A$ and $\Hom(X, \bb G_\trop)^{(Y)}$ is those homomorphisms bounded by $Y$ in the sense of Section \ref{sec:BM_and_the LAV}. The map $a\colon S \to A$ induces by composition an $S$-point of $\Hom(X, \bb G_\trop)^{(Y)}/\bar Y$. Locally we can choose a lift to an $S$-point of $\Hom(X, \bb G_\trop)$, in other words, to a map $X \to \ghost_S^\gp$. The principal polarisation gives us an isomorphism $\lambda\colon Y \isom X$, and so composing gives a map $ a^* \colon Y \to \ghost_S^\gp$. We can use this to build, for any surjection $W \xrightarrow{f} Y $ and any monoid homomorphism $h\colon \ghost_{S,s} \to \bb N$,   a map 
\begin{equation}\label{eq:map_from_point}
W \xrightarrow{f} Y  \xrightarrow{a^*} \ghost^\gp_{S, s} \xrightarrow{h^\gp} \bb Z. 
\end{equation}

\begin{definition}[{\cite[\S 2.3]{LAV4}}]\label{def:univ_oompatibility}
We say \emph{$(A/S, \llb P, a)$  is compatible with $\widetilde \Sigma$} if for all geometric points $s$ of $S$, on some strict \'etale neighbourhood of $s$ there exists a surjection $f\colon W \to Y$ and a cone $\tilde \sigma \in \widetilde \Sigma$ such that for all monoid homomorphisms $h\colon \ghost_{S,s} \to \bb N$:
\begin{enumerate}
\item \label{item:1} the composition 
\begin{equation}\label{eq:quad_form}
b\colon W \times W \xrightarrow{f \times f} Y \times Y \xrightarrow{\lambda \times \text{id}} X \times Y \xrightarrow{\langle \cdot, \cdot \rangle} \ghost_{S,s}^\gp \xrightarrow{h^{\text{gp}}} \mathbb{Z}
%W \times W \xrightarrow{f \times f} \bar Y_s \times \bar Y_s \xrightarrow{\phi \times \text{id}} \bar X_s \times \bar Y_s \xrightarrow{\langle \cdot, \cdot \rangle} \ghost_{S,s}^\gp \xrightarrow{h^{\text{gp}}} \mathbb{Z}
\end{equation}
lies in the image of $\tilde \sigma$ in $S_\bb Q(W)$ (this is the same condition as in Definition \ref{def:A_g_compatibility}); 
\item writing $l\colon W \to \bb Z$ for the map defined
  in \eqref{eq:map_from_point}, the pair $(b, l)$ lies in $\tilde \sigma$. 
\end{enumerate}
\end{definition}
It is automatic that the composite function in
\eqref{eq:map_from_point} vanishes on the radical of the quadratic
form defined in \eqref{eq:quad_form}.  

\begin{remark} The above ``compatibility'' is the same as the condition in \cite[\S 2.3]{LAV4}.
\end{remark}

\begin{definition}
Given $\widetilde \Sigma$ as above, the category of triples $(A/S, \llb P, a)$ compatible with $\widetilde \Sigma$ is denoted $A_{g,1}^{\widetilde \Sigma}$. 
\end{definition}
If $\Sigma$ and $\widetilde \Sigma $ are compatible,  then
there is a natural map $A_{g,1}^{ \widetilde \Sigma} \to
A_g^\Sigma$ thanks to condition (\ref{item:1}) in Definition
\ref{def:univ_oompatibility}.

\begin{theorem}\label{thm:rep_of_A_g1}
Fix a compatible pair $\Sigma $ and $\widetilde \Sigma$. The morphism $A_{g,1}^{\widetilde \Sigma} \to A_g^\Sigma$ is representable by algebraic spaces, log smooth, proper, and over the interior coincides with the universal abelian variety. 
\end{theorem}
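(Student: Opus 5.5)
We reduce, exactly as in the proof of Theorem~\ref{thm:A_g}, to the case with full level structure, where the result is essentially a theorem of Kajiwara--Kato--Nakayama. For a prime $p \ge 3$ we form the fibre product
\[
A_{g,1}^{\widetilde\Sigma}[p] \coloneqq A_{g,1}^{\widetilde\Sigma}\times_{A_g^\Sigma} A_g^\Sigma[p] ,
\]
which is the moduli of triples $(A/S,\llb P,a)$ compatible with $\widetilde\Sigma$, together with a level $p$ structure. By \cite[Proposition 18.1]{LAV4} the map $A_g^\Sigma[p]\to A_g^\Sigma$ is representable by algebraic spaces with log structure and is \'etale away from $p$. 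Choosing two distinct primes $p,q\ge 3$ gives an \'etale cover of $A_g^\Sigma$. All four properties in the statement (representability by algebraic spaces, log smoothness, properness, and identification over the interior) are \'etale local on the target, so it suffices to verify them for $A_{g,1}^{\widetilde\Sigma}[p]\to A_g^\Sigma[p]$ over $\bb Z[1/p]$.

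Over $A_g^\Sigma[p]$, which is an algebraic space by \cite[Theorem 1.7]{LAV7}, we then invoke the construction of \cite[\S 2.3]{LAV4}, where the compatibility condition of Definition~\ref{def:univ_oompatibility} is taken from. The argument has three parts.

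\begin{enumerate}
\item \emph{Representability.} The universal log abelian variety $A^\LOG_{g,1}[p]\to A_g^\Sigma[p]$ is a log algebraic space in the second sense of Section~\ref{sec:log_spaces_and_stacks}. Imposing compatibility with $\widetilde\Sigma$ is a subdivision of this log space: locally on the base it is the subdivision of $\Hom(X,\bb G_\trop)^{(Y)}/\bar Y$ induced by the $Y$-periodic polyhedral decomposition that $\widetilde\Sigma$ gives in each fibre. Here $Y$ acts through the $W$-part of $\operatorname{Aut}(W)\ltimes W$, and condition (5) ensures the decomposition is well defined. Locally, this subdivision is \'etale-locally a Mumford-type relatively complete model $G_\LOG^{\widetilde\sigma}/Y$. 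So I would show it is representable by first passing to the $Y$-cover, which is a toric-type construction over $G$ determined by the fans $\widetilde\sigma$, and then taking the quotient by $Y$.
\item \emph{Log smoothness.} Log smoothness comes from the toric local models, since $G\to S$ is smooth and the fibres of the fan map are rational polyhedral. The lattice index condition is automatic because the map $\tilde S(W)\to S(W)$ is split surjective on lattices.
\item \emph{Properness.} I would use the valuative criterion for log schemes. A point over the generic point of a log trait extends uniquely because $\widetilde\Sigma$ covers all positive semi-definite $(b,l)$, by condition (6). Separatedness follows from the uniqueness of the cone containing a point in its interior.
\end{enumerate}

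Over the interior $A_g\subseteq A_g^\Sigma$, the sheaves $\bar X$ and $\bar Y$ are zero. Hence the compatibility condition is vacuous there, and the fibre is $A_{g,1}$ itself.

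The main obstacle is the quotient by $Y$ in the representability step. The action of $Y$ on the relatively complete model is free but not of finite type, so one must check that the quotient exists as an algebraic space and not merely as a formal or analytic object. I would first try to deduce this from Kajiwara--Kato--Nakayama's representability of subdivisions of $A^\LOG$, namely \cite[\S 2.3]{LAV4} with the finite-orbit condition (4). If that is insufficient, I would fall back on Mumford's construction as in \cite[Chapter VI]{fc}, comparing the resulting space with $A_{g,1}^{\widetilde\Sigma}$ via the universal property of the log moduli problem.
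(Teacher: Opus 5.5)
This is essentially the paper's route. The paper's proof consists only of the citation \cite[Theorem 8.1]{LAV4}, and your outline's one nontrivial step (representability of the quotient by $Y$) also rests on Kajiwara--Kato--Nakayama's representability of the subdivided universal log abelian variety, which is exactly what that theorem supplies, so no fallback to Mumford's construction is needed. The paper applies the theorem directly, without your level-$p$ reduction, so your sketches (1)--(3) are heuristics rather than part of the proof; note also that the $Y$-periodicity of the decomposition comes from condition (3) of Definition~\ref{def:admissible_fan_tilde}, not condition (5).
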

\begin{proof}
This follows from \cite[Theorem 8.1]{LAV4}. 
\end{proof}

\begin{remark}
In \cite[\S VI]{fc} the stack $A_{g,1}^{\widetilde \Sigma}$ is denoted $\bar Y$. 
\end{remark}

\subsubsection{A representable subdivision of $N_{g,1}^\LOG$}\label{sec:universal_Ng}

Assume $W$ and $\rho \in W$ are given as above. Given $a \in Q_\rho(W)$, we let $b_a$ denote the symmetric bilinear form associated to $a$ via the map
\begin{equation} 
Q_\rho(W) \to S(W); \quad [a] \mapsto [(x,y)  \mapsto a(x+y) - a(x) - a(y) + a(0)]. 
\end{equation}
That is, $b_a$ is the unique symmetric bilinear form $W \times W \to \zz$ satisfying $a(w) = b_a(w- \rho, w) /2$. 

We define $\tilde C_\rho(W)
\subset Q_\rho(W)_{\rr} \times \Hom(W, \rr)$  to be the subspace of
pairs $(a,l)$ such that $b_{a}$ is positive
semi-definite with rational radical, and $l$ vanishes on the radical
of $b_{a}$. We put $\tilde C_\rho(W)_{\zz}=\tilde C_\rho(W)\cap
(Q_\rho(W) \times \Hom(W, \bb \zz))$.

Given $ w\in W$ we define a translation
\begin{equation}\label{eq:W_translation}
\tilde C_\rho(W) \to \tilde C_\rho(W); \, (a,l) \mapsto (a, v \mapsto l(v) + b_a(v, w))
\end{equation}
where $b_a$ is the bilinear form associated to $a$. If $f \colon W \to W'$ is a surjection then we further define 
\begin{equation}\label{eq:Q_rho_pullback_univ}
f^*\colon \tilde C_{f(\rho)}(W') \to \tilde C_\rho(W); \, (a, l) \mapsto (a \circ f, l \circ f). 
\end{equation}
Both the translation and the pull-back map are compatible with the
integral structure.  

\begin{definition}\label{def:N_admissible_univ}
An \emph{$N_{g,1}$-admissible fan} consists of, for every free abelian
group $W$ of rank $\le g$, and every $\rho \in W$, a locally-finite
decomposition $\widetilde \Sigma_{W, \rho}$ of $\tilde C_\rho(W)$
 into strongly convex rational polyhedral cones, such that every face
 of a cone in $\widetilde \Sigma_{W, \rho}$ lies in $\widetilde \Sigma_{W,
   \rho}$, and such that the intersection of two cones is a face of
 both. We require that whenever $f\colon W \to W'$ is a surjection of
 free abelian groups of rank $\le g$, and $\rho' \in 2W'$,  the
 pullback of the cone decomposition for $(W, \rho)$ along the map
 (\ref{eq:Q_rho_pullback_univ}) is equal to the cone decomposition
 for $(W', f(\rho) + \rho')$. Moreover, given any $W$, $\rho$  and any
 $w \in W$, we require $\widetilde \Sigma_{W, \rho}$ to be invariant under
 the translation by $w$ of \eqref{eq:W_translation}.  
\end{definition}

\begin{example}
In Section \ref{sec:choosing-decomposition} we construct explicit $N_{g,1}$-admissible fans by using ``shifted'' analogues of Namikawa's mixed Voronoi-Delaunay decomposition.
\end{example}

\begin{definition}\label{def:Ng1_compatibility}
We define compatibility between an object of $N_{g,1}^\LOG$ and a cone decomposition $\widetilde \Sigma$ analogously to Definitions \ref{def:N_g_compatibility} and  \ref{def:univ_oompatibility}. We write $N_{g,1}^{\widetilde \Sigma}$ for the full subcategory of $N_{g,1}^\LOG$ consisting of those tuples compatible with $\widetilde \Sigma$. 
\end{definition}

Given an $N_{g}$-admissible cone decomposition $\Sigma $ and an
$N_{g,1}$-admissible cone decomposition $\widetilde \Sigma $ in $\tilde C_{\rho
}(W)$, we say that $\Sigma $ and $\widetilde \Sigma $ are compatible  if
for every $W$ and $\rho $ and every  cone $\tilde \sigma 
\in \widetilde \Sigma _{W,\rho }$ there exists a cone $\sigma \in\Sigma_{W,\rho } $ such that
the image of $\tilde \sigma $ under the projection $\tilde
C_{\rho}(W)\to C_{\rho}(W)$ is contained in $\sigma $. Given a pair $\Sigma, \widetilde \Sigma $ of
compatible admissible cone decompositions, there is a natural map
$N_{g,1}^{\widetilde \Sigma} \to N_g^\Sigma$. 

\begin{theorem} Let $\Sigma$ and $\widetilde \Sigma$ be a pair of
  compatible admissible cone decompositions. 
The morphism $N_{g,1}^{\widetilde \Sigma} \to N_g^\Sigma$ is representable
by algebraic spaces, log smooth, proper, and over the interior coincides with the
universal abelian variety. 
\end{theorem}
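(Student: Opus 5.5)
The plan is to reduce to the corresponding statement for $A_{g,1}^{\widetilde \Sigma'} \to A_g^{\Sigma'}$ (Theorem~\ref{thm:rep_of_A_g1}), in the same way Theorem~\ref{thm:N_g} was deduced from Theorem~\ref{thm:A_g}.

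\emph{Step 1: identify the fibres.} Forgetting $(\llb L, \epsilon, \iota)$ gives a map $N_{g,1}^\LOG \to A_{g,1}^\LOG$. By definition $N_{g,1}^\LOG = N_g^\LOG \times_{A_g^\LOG} A_{g,1}^\LOG$, since both are the universal log abelian variety over their respective bases.

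\emph{Step 2: transport the fans.} Use the isomorphism $Q_\rho(W)\otimes\bb Z[1/2] \cong S(W)\otimes \bb Z[1/2]$ of Lemma~\ref{lem:iso_2}, together with the identity on the $\Hom(W,\rr)$ factor. This identifies $\tilde C_\rho(W)$ with the positive semi-definite locus in $\tilde S_\rr(W)$, and the translation \eqref{eq:W_translation} with the $W$-action of Definition~\ref{def:admissible_fan_tilde}. Compatibility of the integral structures fails only by an index that is a power of $2$.

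\emph{Step 3: choose common refinements.} Refine to a pair $\Sigma', \widetilde\Sigma'$ of compatible admissible fans for $A_g$ and $A_{g,1}$ such that, after transport, the following hold. First, $\Sigma'$ refines every $\Sigma_{W,\rho}$ and $\widetilde\Sigma'$ refines every $\widetilde\Sigma_{W,\rho}$. Second, the finitely many orbit representatives (one for each class of $\rho$ in $W/2W$) are handled simultaneously. Third, $\mathrm{Aut}(W)\ltimes W$-invariance is preserved by refining orbit-representative cones and translating. Then, locally on $N_g^\Sigma$, the space $N_{g,1}^{\widetilde\Sigma}$ is obtained from $A_{g,1}^{\widetilde\Sigma'}\times_{A_g^{\Sigma'}} N_g^{\Sigma}$ in three steps: a root-stack construction of $2$-power degree, which accounts for the lattice index, then a log modification, which accounts for the refinement, then a descent.

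\emph{Step 4: deduce the properties.} Properness holds because $N_{g,1}^\LOG \to N_g^\LOG$ is the base change of the proper $A_{g,1}^\LOG \to A_g^\LOG$. Any full subcategory cut out by a complete subdivision is again proper over the base subdivision, since the compatibility conditions of Definition~\ref{def:Ng1_compatibility} describe a subdivision of the base change of $N_{g,1}^\LOG \to N_g^\LOG$ to $N_g^\Sigma$. Here the compatibility of $\Sigma$ with $\widetilde\Sigma$ is used to see that the cones of $\widetilde\Sigma$ map into cones of $\Sigma$. Representability by algebraic spaces comes from Theorem~\ref{thm:rep_of_A_g1}, because root stacks along the fibre direction and proper log modifications preserve relative representability. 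Log smoothness follows because the map is locally modelled on the toric morphism induced by the projection $\tilde C_\rho(W) \to C_\rho(W)$ restricted to cones. This is the combinatorial criterion for log smoothness, and it does not require flatness. Over the interior, where the log structure is trivial, $\bar X$ and $\bar Y$ vanish, so compatibility is vacuous and we recover $N_{g,1}$.

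\emph{Main obstacle.} The hardest step is characteristic $2$, where Lemma~\ref{lem:iso_2} gives only an isogeny of lattices and the root-stack comparison breaks down. There I would first try to argue directly with the KKN construction, reproving \cite[Theorem 8.1]{LAV4} with the cones $\tilde C_\rho(W)$ in place of $\tilde S(W)$. The point is that that proof uses only the local toric charts given by the cones together with the $W$-translation action. The alternative is to obtain representability from the finite flat map $N_{g,1}^\LOG \to A_{g,1}^\LOG$ supplied by Lemma~\ref{2_tors_action}, using the fact that the morphism $N_{g,1}^{\widetilde\Sigma} \to A_{g,1}^{\widetilde\Sigma'}$ is quasi-finite and proper.
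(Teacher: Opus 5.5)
Your overall strategy matches the paper's. Its proof just says the claim is local on $N_g^\Sigma$, and that locally the situation is isomorphic to that of Theorem~\ref{thm:rep_of_A_g1}. However, the mechanism you give for the representability step fails, because your refinement goes in the wrong direction.

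\textbf{Direction of the refinement.} You take $\widetilde\Sigma'$ \emph{finer} than the transported $\widetilde\Sigma$. That makes $N_{g,1}^{\widetilde\Sigma}$ the \emph{target} of your log modification. Representability does not descend from a subdivision to the coarser object. For example, the universal log abelian variety over $A_g^{\Sigma'}$ is not representable by algebraic spaces, although its subdivision $A_{g,1}^{\widetilde\Sigma'}$ is (compare $\bb P^1\to\bb G_\LOG$).

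For the same reason the fibre product $A_{g,1}^{\widetilde\Sigma'}\times_{A_g^{\Sigma'}}N_g^\Sigma$ is not defined. $N_g^\Sigma$ maps to $A_g^{\Sigma'}$ only when the transported $\Sigma$ refines $\Sigma'$, not conversely.

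\textbf{What is needed instead.} The comparison has to go the other way. Near a point of $N_g^\Sigma$ in the stratum of a cone $\sigma$, you need an $A_{g,1}$-type decomposition $\widetilde\Sigma'$ over a cone $\sigma'\supseteq\sigma$ such that the cones of $\widetilde\Sigma_{W,\rho}$ over the star of $\sigma$ are exactly the fibre products $\sigma\times_{\sigma'}\tilde\sigma'$.
\begin{itemize}
\item Then $N_{g,1}^{\widetilde\Sigma}$ is locally the fs base change of $A_{g,1}^{\widetilde\Sigma'}\to A_g^{\Sigma'}$ along $N_g^\Sigma\to A_g^{\Sigma'}$.
\item Representability, properness and log smoothness all pass to fs base changes.
\item This comparison must be made \'etale locally. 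Forcing a single global common refinement over all classes of $\rho$ is exactly what pushes you to the finer side.
\end{itemize}
Your fallback is the other sound route: rerun the Kajiwara--Kato--Nakayama argument directly with the cones of $\tilde C_\rho(W)$ and the $W$-translations. It is not specific to characteristic~$2$.

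\textbf{The root-stack step is also misplaced.} The power-of-$2$ index between $Q_\rho(W)$ and $S(W)$ lies entirely in the base factor. The fibre lattice is $\Hom(W,\zz)$ on both sides, and
\[ Q_\rho(W)\times_{S(W)}\bigl(S(W)\times\Hom(W,\zz)\bigr)=Q_\rho(W)\times\Hom(W,\zz) \]
is already the lattice of $\tilde C_\rho(W)$.
\begin{itemize}
\item Once you base change along $N_g^\Sigma\to A_g^{\Sigma'}$, no root stack is needed on the total space. The index has already been absorbed into $N_g^\Sigma$, as in the proof of Theorem~\ref{thm:N_g}.
\item A root stack taken along the fibre direction would introduce $\mu_{2^k}$-stabilisers in the fibres. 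It would therefore destroy representability by algebraic spaces rather than preserve it.
\item Consequently characteristic~$2$ is not the real obstacle for this relative statement. The content is the local existence of the $A_{g,1}$-chart described above.
\end{itemize}
Finally, your log smoothness argument via toric charts presupposes exactly that local description. Once you have it, log smoothness is simply inherited from Theorem~\ref{thm:rep_of_A_g1} by base change.
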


\begin{proof}
This can be checked locally on $N_g^\Sigma$, where it follows from Theorem \ref{thm:rep_of_A_g1} since locally these spaces are isomorphic. 
\end{proof}

\subsection{Extending the polarising line bundle}

\subsubsection{Admissible polarisation functions for $N_{g,1}$}\label{sec:universal-theta}

The next definition is a small variation on \cite[Definition VI.1.5]{fc}. We fix an $N_{g,1}$-admissible cone decomposition $\widetilde \Sigma$. 
\begin{definition}\label{def:Ng-adm-pola}
An \emph{$N_{g,1}$-admissible principal polarisation function $\phi $ with
  respect to $\widetilde \Sigma$} is a continuous function $\phi_{W,
  \rho}\colon \tilde C_\rho(W)\to \bb R$ for each finitely
generated free abelian group $W$ and element $\rho \in W$ such that  
\begin{enumerate}
\item 
$\phi_{W, \rho}$ has integral values in $\tilde C_\rho(W)_{\zz}$ and is
linear on each cone of $\widetilde \Sigma$;  
\item the $\phi_{W, \rho}$ are compatible with the isomorphisms $C_\rho(W) \isom C_{\rho'}(W)$ whenever $\rho-\rho' \in 2W$; 
\item $\phi_{W, \rho}$ is concave; 
\item given a surjection $f\colon W \to W'$, we have $\phi_{W,\rho} f^* = \phi_{W', f(\rho)}$, where $f^*$ is the pullback defined in \eqref{eq:Q_rho_pullback_univ};
\item given $w \in W$, the function $\tilde C_\rho(W) \to \bb R; (a, l) \mapsto \phi(a,l) - \phi(a,  l+ b_a(w, -))$ is equal to the function sending $(a, l)$ to $a(w)+l(w)$. Here $b_a$ is the symmetric bilinear form associated to $a$.
\end{enumerate}
\end{definition}
\begin{remark} \label{rem:extending_L} The condition that each $\phi_{W, \rho}$ has integral values in
  $\tilde C_\rho(W)_\zz$ implies that the polarisation function $\phi $
  defines, by applying ``Mumford's construction'', a line bundle $\overline{L}$ over $N_{g,1}^{\widetilde \Sigma}$ that
  extends the universal polarising line bundle $L$ over
  $N_{g,1}$. Note that, in \cite[Definition VI.1.5]{fc} it is only
  required that each $\phi_{W, \rho}$ has rational values with bounded
  denominators. This would imply that $\phi$ determines an
  extension of some power $L^{\otimes m}$ of $L$, or what is the same,
  an extension of $L$ as a $\Q$-line bundle. We require integer values
  because this will allow us to use the cohomology of the extended
  line bundle and because we will see in Section~\ref{sec:tropical_RT}
  that such functions exist. 
\end{remark}
\begin{remark} The term ``principal'' in the above definition refers to the fact that we
  are extending the universal line bundle $L$ that determines the
  principal polarisation.
\end{remark}

\begin{lemma}\label{lem:tropical-section}
Let $\phi$ be an $N_{g,1}$-admissible principal polarisation function with respect to $\widetilde \Sigma$. Then $-\phi$ is a section of the tropicalisation $\llb L^\trop$ of the universal log line bundle $\llb L$ on $N_{g,1}^{\widetilde \Sigma}$.
\end{lemma}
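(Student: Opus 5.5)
The plan is to check the claim locally over strict étale (atomic) neighbourhoods of $N_{g,1}^{\widetilde \Sigma}$ and then glue. Local sections of $\llb L^\trop$ will be described by the descent data from the Remark following Definition~\ref{def:N_g_compatibility}. The strategy is to compare that descent datum with the translation property (5) of Definition~\ref{def:Ng-adm-pola}.

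\textbf{Step 1: describe $\llb L^\trop$ over a geometric point.} Work near a geometric point $s$ of $N_g^\Sigma$. Choose a surjection $f\colon W\to \bar Y$ and $\rho\in W$ as in Definition~\ref{def:N_g_compatibility}. The log abelian variety is $G_\LOG^{(Y)}/Y$, and its tropicalisation is $A_\trop = G_\trop^{(Y)}/Y$. By the cited Remark, the pullback of $\tlb L=\llb L^\trop$ to $G_\trop$ is trivial, and we fix the trivialisation compatible with $\iota$. Then $\tlb L$ is the descent of the trivial torsor along the cocycle $a_y(h)$, whose quadratic part is $q(y)=a(y,0)$. Concretely, a section of $\tlb L\otimes_{\sPL}\on{Con}$ on the tropical abelian variety is a conical function $F$ on $G_\trop$ (equivalently, on the cone of pairs $(q,l)$ lying over the base cone) satisfying $F(h+u(y))=F(h)+a_y(h)$, or with the opposite sign depending on conventions.

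\textbf{Step 2: match the cocycle with property (5).} The cone of $N_{g,1}^{\widetilde\Sigma}$ at a point over $s$ maps, via $h\mapsto (b,l)$, into $\tilde C_\rho(W)$. Here the quadratic coordinate is $a=\frac12(b(w,w)-b(\rho,w))$, which is the pullback of $q$ along $f$. The point $l$ is the image in $\Hom(W,\bb Z)$ of a lift of the section as in \eqref{eq:map_from_point}. Translating the point by $y=f(w)\in Y$ sends $l$ to $l+b_a(w,-)$. This is exactly the translation \eqref{eq:W_translation}. I will then compute the cocycle evaluated at this point, expecting $a_y(h)=q(y)+\langle\lambda(y),h\rangle$, which in these coordinates reads $a(w)+l(w)$. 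Property (5) says $\phi(a,l)-\phi(a,l+b_a(w,-))=a(w)+l(w)$. This means $-\phi$ transforms under translation by $Y$ exactly by the cocycle, so $-\phi$ descends to a section of $\tlb L$ over the atomic neighbourhood. Property (1) ensures that the section is sPL, i.e.\ integral and linear on each cone of $\widetilde\Sigma$. It is therefore a genuine section of $\llb L^\trop$ and not merely a conical one.

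\textbf{Step 3: independence of choices and gluing.} Property (2) gives independence of the choice of $\rho$ modulo $2W$. Property (4) gives independence of the surjection $f$ and compatibility under specialisation to faces, where the rank drops. Changing the trivialisation on $G_\trop$ by a linear $c$ changes $l$ and the cocycle compatibly. Hence the local sections agree on overlaps $V_j$ and glue to a global section.

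The main obstacle is Step 2. One must pin down the sign and normalisation conventions precisely, including the choice of gluing at $\infty$ made in Definition~\ref{def:line_bundle_for_sPL}, the identification of $h$ with $l$ via $\lambda$, and the role of $a(0)=0$. This is needed to confirm that the cocycle is exactly $a(w)+l(w)$, so that the section is $-\phi$ rather than $\phi$. I would first verify the computation for $g=1$, rank $1$, where everything is explicit. I would then run the general computation using \eqref{eq:p_vs_a} together with the symmetry $q(r-y)=q(y)$.
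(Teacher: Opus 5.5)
Your route differs from the paper's and is sound in outline. The paper's proof does not use the descent description of $\llb L^\trop$ at all. It imports \cite[Prop.~1.6(2)]{LAV5}, which characterises sections of $(\llb L^{\otimes 2})^\trop$ in the classical case $\rho=0$ by the cocycle $b_a(w,w)+2l(w)$, and then counts square roots:
\begin{itemize}
\item $(\llb L^\trop)^{\otimes 2}$ has exactly $2^{\on{rk}W}$ square roots;
\item for each $\rho\in W/2W$, the integral cocycle $a(w)+l(w)$ doubles to $b_a(w,w)-b_a(\rho,w)+2l(w)$, which agrees with $b_a(w,w)+2l(w)$ up to the coboundary of the integral linear function $(a,l)\mapsto l(\rho)$, so each gives a square root;
\item these $2^{\on{rk}W}$ square roots are pairwise distinct, and the right one is singled out by $\lambda(f(\rho))\equiv\kappa$.
\end{itemize}

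You instead read the cocycle of $\llb L^\trop$ itself off the Remark after Definition~\ref{def:N_g_compatibility}. Your ``expected'' formula is in fact immediate there: putting $(y_1,y_2;h_1,h_2)=(y,0;0,h)$ in \eqref{eq:p_vs_a} and using $a(0,h)=0$ gives $a_y(h)=q(y)+h(\lambda(y))$. Property~(5) is then literally the descent condition for $-\phi$, characteristic included, with no square-root counting needed.

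What the paper's route buys is that the family-level statement comes from KKN rather than from a pointwise and rather informal remark. This includes the behaviour over atomic neighbourhoods of the universal object and the sign convention relating descent data to sections of the $\ghost^\gp$-torsor. In your route you must check that the Remark's analysis holds uniformly over such neighbourhoods, and you must fix the sign, as you say.

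One step you pass over too quickly is the claim that $\frac12(b(w,w)-b(\rho,w))$ ``is the pullback of $q$ along $f$''. This amounts to saying that the symmetry centre $r$ of $q$ satisfies $\lambda(r)\equiv\kappa \bmod 2X$. Here $r$ comes from the $\iota$-compatible trivialisation over $G_\trop$, while $\kappa$ is defined through the character $T[2]\to\bb G_\LOG$, so the two have unrelated origins. This is exactly the content of the paper's final sentence, which matches the square root indexed by $\rho'$ with $\rho$. If it failed, $-\phi$ would transform by the cocycle of a different square root of $(\llb L^\trop)^{\otimes 2}$. It deserves an explicit argument in either approach.
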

\begin{proof} 
In \cite[Prop. 1.6(2)]{LAV5}  a characterisation is given for a function $\tilde C(W) \to \bb R$  to give a section of the tropicalisation of $\llb L^{\otimes 2}$; it is the same as our definition of an $N_{g,1}$-admissible principal polarisation function, except that it restricts to the case $\rho = 0$ (in other words, it is the more classical notion of an $A_{g,1}$-admissible principal polarisation function, as found in \cite[VI.1.5]{fc}), and the cocycle $a(w)+l(w) = \frac12b_a(w, w) - \frac12b_a(\rho, w) + l(w)$ in (5) is replaced by $b_a(w, w) + 2l (w)$; in other words, it is multiplied by $2$ and the term $- \frac12b_a(\rho, w)$ is omitted. 

The bundle $(\llb L^\trop)^{\otimes 2}$ has exactly $2^{\on{rk} W}$ square roots, corresponding to the $2^{\on{rk} W}$ choices of $\rho' \in W/2W$. If we work with rational coefficients, the cocycle $b_a(w, w) + 2l (w)$ for $(\llb L^\trop)^{\otimes 2}$ is uniquely divisible by 2, yielding the cocycle $\frac12b_a(w, w)  + l(w)$. However, this cocycle does not take integral values (unless $\rho = 0$), and so it cannot be a cocycle for $\llb L^\trop$. However, the cocycle $a(w)+l(w) =\frac12b_a(w, w) - \frac12b_a(\rho, w) + l(w)$ does take integral values, hence defines a tropical line bundle on $N_{g,1}^{\widetilde \Sigma}$. If we double this cocycle it yields $b_a(w, w) - b_a(\rho, w) + 2l(w)$ which differs from $b_a(w, w) + 2l (w)$ by a global linear integral-valued function, hence these cocycle conditions define the same tropical line bundle; this shows that $\frac12b_a(w, w) - \frac12b_a(\rho, w) + l(w)$ is a cocycle for a square root of $(\llb L^\trop)^{\otimes 2}$. 

We know that there are $2^{\on{rk} W}$ square roots of $(\llb L^\trop)^{\otimes 2}$, and we have written down $2^{\on{rk} W}$ tropical line bundles that square to $(\llb L^\trop)^{\otimes 2}$; their cocycles differ by global linear functions that do not take integral values, hence these are genuinely distinct tropical bundles. It remains to verify that the cocycle condition for $\llb L^\trop$ is given by the choice $\rho = \rho'$, but this follows from the compatibility in the last line of Definition \ref{def:N_g_compatibility}. 
\end{proof}

\begin{definition} \label{def:line_bundle_for_adm_bis} Assume given an $N_{g,1}$-admissible principal polarisation function $\phi$. Following Definition~\ref{def:line_bundle_for_adm}, we write $\lb L_\phi$ for the inverse image of the global section of $\llb L^\trop$ determined by $\phi$ under the natural map $\llb L \to \llb L^\trop$. This is a line bundle on  $N_{g,1}^{\widetilde \Sigma}$. Over $N_{g,1}$ it coincides with the universal polarising line bundle~$L$.
\end{definition}

\begin{remark} The line bundle $\lb L_\phi$ is equal to the relatively ample invertible sheaf $\bar L$ on $N_{g,1}^{\widetilde \Sigma}$ obtained by applying ``Mumford's construction'' to $L$ and the admissible principal polarisation function $\phi$, as referred to in Remark~\ref{rem:extending_L}. 
\end{remark}

\section{Pure extensions} \label{sec:proof_main}
In this section we construct the promised pure weight~$2$ extension of the universal polarising line bundle on $N_{g,1}$ using the tropical theta function $\theta^{\rm{sm}}$, and relate it to the universal biextension. We continue with the setting and notation of Section~\ref{sec:toroidal_comp}. 

\subsection{Pure extension of the universal polarising line bundle}

Let $m$ be a positive integer. We have a ``multiplication-by-$m$'' map 
\[ [m]\colon \tilde C_{\rho}(W) \to \tilde C_{\rho}(W) \]
defined by $(a, l) \mapsto (a, ml)$ that respects the integral structure. 

Suppose we are given an $N_{g, 1}$-admissible cone decomposition
$\widetilde\Sigma$, and an associated admissible polarisation function
$\phi$ (see Definition~\ref{def:Ng-adm-pola}). We define
$\widetilde\Sigma_m$ to be the cone decomposition of $\tilde C_{\rho}(W)$
obtained by pulling back $\widetilde\Sigma$ along $[m]$ (i.e., taking the
inverse image of each cone; these again form an admissible cone
decomposition), and  taking the minimal common refinement with $\tilde
\Sigma$. We define
$[m]^*\phi = \phi \circ [m]$, which is an admissible principal
polarisation function for $\widetilde\Sigma_m$.  

Since $\widetilde\Sigma_m$ is a subdivision of $\widetilde\Sigma$, we obtain a birational map $\tau\colon N_{g,1}^{\widetilde\Sigma_m} \to N_{g,1}^{\widetilde\Sigma}$. Next we have the ``multiplication-by-$m$'' map 
\[
[m]\colon N_{g,1} \to N_{g,1}
\]
 sending $(A,  L, x) \mapsto (A,  L, mx)$. 
Let $\llb L$ be the universal log line bundle on $N_{g,1}^\LOG$ (see Section~\ref{sec:log_moduli}).
Recall the resulting line bundle $\lb L_\phi$ on $N_{g,1}^{\widetilde\Sigma}$ from Definition~\ref{def:line_bundle_for_adm_bis}.
\begin{lemma}
The map $[m]\colon N_{g,1} \to N_{g,1}$ extends uniquely to a map 
\begin{equation}\label{eq:mult_by_N}
[m]\colon N_{g,1}^{\widetilde\Sigma_m} \to N_{g,1}^{\widetilde\Sigma}. 
\end{equation}
The log line bundle $[m]^*\llb L$ has a continuous conical section given by $[m]^*\phi$, inducing a line bundle $\lb L_m$ on $ N_{g,1}^{\widetilde\Sigma_m}$. We have a natural isomorphism of line bundles $[m]^*(\lb L_\phi)  = \lb L_m$. 
\end{lemma}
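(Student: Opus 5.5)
The plan is to work in the logarithmic setting, where $[m]$ is already globally defined, and then cut down to the subdivisions. On the log moduli problem $N_{g,1}^\LOG$, multiplication by $m$ is the map $(A/S,\llb P,\llb L,\epsilon,\iota,x)\mapsto(A/S,\llb P,\llb L,\epsilon,\iota,mx)$. It is defined because a log abelian variety is a sheaf of abelian groups. So $[m]\colon N_{g,1}^\LOG\to N_{g,1}^\LOG$ exists, and it restricts to the classical $[m]$ over $N_{g,1}$.

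The first step is to check that this map sends objects compatible with $\widetilde\Sigma_m$ to objects compatible with $\widetilde\Sigma$. By Definition~\ref{def:univ_oompatibility} and Definition~\ref{def:Ng1_compatibility}, the point $x$ enters only through the linear map $l\colon W\to\bb Z$ of \eqref{eq:map_from_point}, which is built from a lift of the image of $x$ in $A_\trop$. The quadratic datum $a$ depends only on $(A,\llb P,\llb L)$. The tropicalisation $A\to A_\trop=G^{(Y)}_\trop/\bar Y$ is a homomorphism, so a lift for $mx$ is $m$ times a lift for $x$. Hence the pair attached to $mx$ is $(a,ml)=[m](a,l)$. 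Since the cones of $\widetilde\Sigma_m$ map into cones of $\widetilde\Sigma$ under $[m]$ (by construction as a refinement of $[m]^{-1}\widetilde\Sigma$), compatibility is preserved. The ambiguity in the choice of lift is translation by $Y$, which acts by \eqref{eq:W_translation}, and $\widetilde\Sigma$ is invariant under that action. This gives the morphism \eqref{eq:mult_by_N}.

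Uniqueness follows because $N_{g,1}^{\widetilde\Sigma_m}$ is log flat, so $N_{g,1}$ is schematically dense in it, and $N_{g,1}^{\widetilde\Sigma}$ is separated, being proper over $N_g^\Sigma$. I expect this tropical compatibility check, keeping track of the $\rho$-shift and the identification $\lambda$, to be the main (though mostly bookkeeping) obstacle.

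For the line bundle statement, the pullback $[m]^*\llb L$ has tropicalisation $[m]^*\llb L^\trop$. By Lemma~\ref{lem:tropical-section}, $-\phi$ is a section of $\llb L^\trop$ on $N_{g,1}^{\widetilde\Sigma}$. Pulling back sections along the induced map of cone stacks, which is $(a,l)\mapsto(a,ml)$ on $\tilde C_\rho(W)$, gives the section $-\phi\circ[m]=-[m]^*\phi$ of $[m]^*\llb L^\trop$. This section is linear on the cones of $\widetilde\Sigma_m$, hence sPL there. Definition~\ref{def:line_bundle_for_adm} therefore produces the line bundle $\lb L_m$, namely the inverse image of $[m]^*\phi$ in $[m]^*\llb L$.

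Finally, $\lb L_\phi$ is the $\oo^\times$-torsor inside $\llb L$ lying over the section given by $\phi$. Formation of this inverse image commutes with pullback of torsors along any log morphism. Hence $[m]^*\lb L_\phi$ is the inverse image in $[m]^*\llb L$ of the pulled-back section $[m]^*\phi$, which is $\lb L_m$. This gives the natural isomorphism, and it restricts to the identity $[m]^*L$ over $N_{g,1}$.
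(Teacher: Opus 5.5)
Your proposal is correct and follows the same route as the paper. Both arguments define $[m]$ on the log moduli problem, use $(a,l)\mapsto(a,ml)$ to check that objects compatible with $\widetilde\Sigma_m$ go to objects compatible with $\widetilde\Sigma$, and identify $[m]^*\lb L_\phi$ and $\lb L_m$ as the inverse image of the pulled-back section $[m]^*\phi$ in $[m]^*\llb L$, using that tropicalisation commutes with pullback; your uniqueness argument (schematic density of the interior from log flatness, plus separatedness of the target over $N_g^\Sigma$) spells out what the paper simply calls clear.
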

\begin{proof}
If $(A, \llb L, x)$ is compatible with $\widetilde\Sigma$ (in the sense of Definition~\ref{def:Ng1_compatibility}) then $(A, \llb L, mx)$ is compatible with $m\widetilde\Sigma$, showing that the map \eqref{eq:mult_by_N} exists. The uniqueness is clear. For the second part, tropicalisation commutes with pullback so there is a natural isomorphism
\begin{equation}
[m]^*(\llb L^\trop)\cong ([m]^*\llb L)^\trop, 
\end{equation}
and the two line bundles we want to identify are both obtained by pulling back $[m]^*\phi$ to the log line bundle $[m]^*\llb L$. 
\end{proof}
In the following we let $L$ denote the universal polarising line bundle on $N_{g,1}$.

\begin{lemma}\label{lem:log_biext}
Let $\llb L$ be the universal log line bundle on $N_{g,1}^\LOG$.
There is a unique isomorphism of log line bundles $[m]^*\llb L\isom \tau^* \llb L^{\otimes m^2}$ compatible with the isomorphism $[m]^*L  \isom  L^{\otimes m^2}$ on $N_{g,1}$. This induces an isomorphism $[m]^*(\llb L^\trop) \isom (\llb L^\trop)^{\otimes m^2}$. 
\end{lemma}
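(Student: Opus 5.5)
The plan is to imitate the classical proof of $[m]^*L\cong L^{\otimes m^2}$ for symmetric rigidified line bundles, carried out in the category of log abelian varieties. Existence will come from the theorem of the cube for log abelian varieties \cite[Theorem 2.2]{LAV5}; uniqueness will come from a rigidity statement for automorphisms of log line bundles.

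For existence, I would work over $N_{g,1}^{\widetilde\Sigma_m}$. There the universal log abelian variety $A$ is pulled back along $\tau$ and carries the point $mx$. Consider the rigidified symmetric $\bb G_\LOG$-torsor $\llb L$ on $A$. The theorem of the cube gives, exactly as in the proof of Lemma~\ref{2_tors_action}, a canonical isomorphism of rigidified log line bundles on $A$:
\[
[m]^*\llb L \cong \llb L^{\otimes \frac{m^2+m}{2}}\otimes [-1]^*\llb L^{\otimes \frac{m^2-m}{2}} .
\]
This is obtained inductively from $[n+1]^*\llb L\otimes[n-1]^*\llb L\cong [n]^*\llb L^{\otimes 2}\otimes \llb L\otimes[-1]^*\llb L$. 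Composing with the symmetry $\iota$ gives a rigidified isomorphism $[m]^*\llb L\cong \llb L^{\otimes m^2}$ on the universal log abelian variety. Evaluating it at the universal point, that is pulling it back along the section $x$, yields an isomorphism $[m]^*\llb L\isom\tau^*\llb L^{\otimes m^2}$ on $N_{g,1}^{\widetilde\Sigma_m}$. Here $\llb L$ on $N_{g,1}^\LOG$ is itself the pullback of the torsor along the tautological section. On the open part $N_{g,1}$ this construction is literally the classical one, so it agrees with the given isomorphism $[m]^*L\isom L^{\otimes m^2}$. If needed, I would rescale by the classical normalisation, which is a unit.

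For uniqueness, two such isomorphisms differ by an automorphism of the log line bundle $[m]^*\llb L$, that is, by a global section $u$ of $\M^\gp$ on $N_{g,1}^{\widetilde\Sigma_m}$ which equals $1$ on the dense open $N_{g,1}$. I would argue that such a section is trivial. Its image in $\ghost^\gp$ is an sPL function, and on a log flat (indeed log smooth) stack such a function is determined by its divisor, so it vanishes. A section of $\oo^\times$ that is $1$ on a dense open is $1$, since the stack is reduced. The density and reducedness inputs come from log smoothness of $N_{g,1}^{\widetilde\Sigma_m}$ over $N_g^\Sigma$, and of the latter over $\bb Z$ away from $2$. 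This is where I expect the main difficulty: over characteristic $2$ the space is not normal, although it is still reduced and flat with dense interior, which is all I need. The final statement, $[m]^*(\llb L^\trop)\isom(\llb L^\trop)^{\otimes m^2}$, then follows by pushing the isomorphism out along $\M^\gp\to\ghost^\gp$, using that tropicalisation commutes with pullback and tensor product.
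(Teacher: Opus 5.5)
Your proposal is correct and is essentially the paper's argument: existence comes from the theorem of the cube \cite[Theorem 2.2]{LAV5} applied to the symmetric rigidified $\llb L$, exactly as in the classical case, and the tropical statement follows by pushing out along $\M^\gp\to\ghost^\gp$. The paper leaves uniqueness implicit, and your density argument is a sound way to supply it: the discrepancy is a section $u$ of $\M^\gp$ equal to $1$ on $N_{g,1}$, its image in $\ghost^\gp$ has zero divisor and hence vanishes, and then $u=1$ because the space is log flat, hence flat over $\bb Z$, hence reduced with $N_{g,1}$ schematically dense. Your proposed rescaling by a unit is unnecessary, since the isomorphism produced by the cube is rigidified and therefore already agrees with the classical one on $N_{g,1}$.
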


\begin{proof}Using that $\llb L$ is symmetric and rigidified, the statement for $\llb L$ follows from the theorem of the cube \cite[Theorem 2.2]{LAV5} just as in the classical case. The tropical version is an immediate consequence. 
\end{proof}

\begin{definition} \label{def:dual_quadratic} Let $V$ be a finite dimensional real vector space. Let $b \colon V \times V \to \rr$ be a symmetric bilinear form. The \emph{quadratic form} associated to $b$ is the map $q \colon V \to \rr$ given by $v \mapsto b(v,v)$. 
We have an injective linear map $V/\on{rad}(b) \to V^\vee$ given by $v \mapsto b(v,-)$. Let $\on{co-rad}(b) \subset V^\vee$ be the space of linear functionals on $V$ that vanish on $\on{rad}(b)$. It is clear that $\on{co-rad}(b)$ is equal to the image of $V$ in $V^\vee$ under the map $v \mapsto b(v,-)$. By its identification with $V/\on{rad}(b)$, we have a canonical ``dual'' symmetric bilinear form $b^*$ on the subspace $\on{co-rad}(b) \subset V^\vee$. We write $q^*$ for the associated quadratic form on $\on{co-rad}(b)$. 
\end{definition}

Let $\rho \in W$.  We recall that $\tilde C_\rho(W)$ is the subset of $C_\rho(W) \times W_\rr^\vee$ consisting of pairs $(a,l)$ where $l$ vanishes on the radical of $b_a$.
\begin{definition} \label{def:eq_theta}  Let $q_a$ denote the quadratic form associated to $b_a$, that is $q_a(w)=b_a(w,w)$ for $w \in W$. We define
  $\theta_\rho^{\on{sm}} \colon\tilde C_\rho(W) \to \rr$  to be the
  smooth conical function given by 
\begin{equation}
\theta_\rho^{\on{sm}}(a,l) := -\frac{1}{2}q_a^*(l) + \frac{1}{2}
l(\rho) - \frac{1}{8}q_a(\rho)  , \quad a \in C_\rho(W)_\rr , \, l
\in \on{co-rad}(b_a) . 
\end{equation}
Here $q_a^*$ is the dual quadratic form of $q_a$, as in Definition~\ref{def:dual_quadratic}. We write $\theta^{\on{sm}}$ for the resulting function on $\bigsqcup_{\rho \in W} \tilde C_\rho(W)_\rr$. Note that $\theta_\rho^{\on{sm}}(a,b_a(\rho/2,-))=0$, which explains the insertion of the ``constant'' term $\frac{1}{8}q_a(\rho)$.
\end{definition}
Recall that we are given an $N_{g, 1}$-admissible cone decomposition $\widetilde\Sigma$.
\begin{lemma} \label{lem:cocycle_smooth} The continuous conical function $\theta^{\on{sm}}$ satisfies the cocycle condition
\[ \theta_\rho^{\on{sm}}(a, l) - \theta_\rho^{\on{sm}}(a, l + b_a(w,-))  = a(w) + l(w) , \quad w \in W ,  \, a \in C_\rho(W) , \, l \in \on{co-rad}(b_a) .\]
In particular, the continuous function $-\theta^{\on{sm}}$ is a conical section of the tropicalisation $\llb L^\trop$ of the universal log line bundle $\llb L$ on $N_{g,1}^{\widetilde \Sigma}$. 
\end{lemma}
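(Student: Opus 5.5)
The cocycle identity is a direct computation with the dual quadratic form, and the second assertion then follows by comparison with Lemma~\ref{lem:tropical-section}.

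\textbf{Step 1: expand the translated $\theta^{\on{sm}}$.} Fix $(a,l)\in\tilde C_\rho(W)$ with $l\in\on{co-rad}(b_a)$, and write $l=b_a(v,-)$ for some $v\in W_\rr$, unique modulo $\on{rad}(b_a)$. By Definition~\ref{def:dual_quadratic}, $q_a^*(l)=b_a(v,v)$. For $w\in W$ we have $l+b_a(w,-)=b_a(v+w,-)$, which again lies in $\on{co-rad}(b_a)$, so $\theta_\rho^{\on{sm}}$ is defined there. Expanding gives
\[
q_a^*(l+b_a(w,-)) = b_a(v,v)+2b_a(v,w)+b_a(w,w) = q_a^*(l)+2l(w)+q_a(w).
\]

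\textbf{Step 2: take the difference.} In $\theta_\rho^{\on{sm}}(a,l)-\theta_\rho^{\on{sm}}(a,l+b_a(w,-))$ the constant term $-\tfrac18q_a(\rho)$ cancels. What remains is
\[
\tfrac12\bigl(2l(w)+q_a(w)\bigr)-\tfrac12 b_a(w,\rho) = l(w)+\tfrac12 b_a(w,w)-\tfrac12 b_a(\rho,w).
\]
By the defining relation $a(w)=b_a(w-\rho,w)/2$, this equals $a(w)+l(w)$, which is the claimed cocycle identity.

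\textbf{Step 3: continuity and conicality.} $\theta^{\on{sm}}$ is homogeneous of degree one in $(a,l)$, since $q_a^*$ scales inversely with $a$ and linearly in $l^2$. The main obstacle is continuity at the boundary, where the radical of $b_a$ jumps. There I would argue that on a cone of $\widetilde\Sigma$ the function $q_a^*(l)$ is the supremum over $v$ of $2l(v)-b_a(v,v)$. This is a convex, lower semicontinuous function of $(a,l)$ (a perspective-type function). It is finite on $\tilde C_\rho(W)$, and the rationality of the radical reduces the question to finitely many strata, one for each radical. I expect this to give continuity on each closed cone, at least after restricting to the rational points as in the definition of $\on{CCon}$. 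The remaining pieces of $\theta_\rho^{\on{sm}}$ are polynomial, hence continuous. Compatibility with the pullbacks $f^*$ of \eqref{eq:Q_rho_pullback_univ} and with the identifications for $\rho-\rho'\in2W$ is checked directly from the formula; a shift of $\rho$ by $2w$ changes $\theta^{\on{sm}}$ by a cocycle-compatible term.

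\textbf{Step 4: conical section.} The proof of Lemma~\ref{lem:tropical-section} shows that sections of $\llb L^\trop$ on $N_{g,1}^{\widetilde\Sigma}$ are exactly collections of functions on the $\tilde C_\rho(W)$ that are compatible with $f^*$ and with the $\rho$-identifications, and whose translation defect under \eqref{eq:W_translation} is the cocycle $a(w)+l(w)$ of Definition~\ref{def:Ng-adm-pola}(5); the sign convention makes the section $-\phi$. The same description holds with $\on{sPL}$ replaced by $\on{Con}$ after tensoring, because the cocycle is sPL. Since $\theta^{\on{sm}}$ satisfies the same cocycle as $\phi$, the difference $\theta^{\on{sm}}-\phi$ is translation invariant and descends to a continuous conical function. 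Therefore $-\theta^{\on{sm}}=-\phi+(\phi-\theta^{\on{sm}})$ is a conical section of $\llb L^\trop$.
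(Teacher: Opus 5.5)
Your argument is correct and is essentially the paper's. You use the same expansion of $q_a^*(l+b_a(w,-))$ (you write $l=b_a(v,-)$ where the paper uses $b_a^*$ directly), and then the same cocycle description of sections of $\llb L^\trop$ from the proof of Lemma~\ref{lem:tropical-section}; that description applies to $-\theta^{\on{sm}}$ directly, so the detour through an admissible $\phi$, whose existence for an arbitrary $\widetilde\Sigma$ you do not justify, is unnecessary.

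Step~3 also goes beyond what the statement requires: continuity is presupposed there, and a conical section only needs a conical function. If you do want continuity, the missing ingredient is that a convex function finite on a polyhedral cone is upper semicontinuous on it (bound it by affine interpolation on simplices having the given point as a vertex). Together with lower semicontinuity, this gives continuity on each closed cone of $\widetilde\Sigma$, though not on all of $\tilde C_\rho(W)$.
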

\begin{proof} We compute
\begin{equation*}
\begin{split}
- \theta_\rho^{\on{sm}}(a,&  l + b_a(w,-))   + \theta_\rho^{\on{sm}}(a, l)\\
 & =  \frac12q_a^*(l + b_a(w,-)) - \frac12l(\rho) - \frac12b_a(w,\rho) - \frac12q_a^*(l) + \frac12l(\rho)\\
& = b_a^*(b_a(w,-),l) + \frac{1}{2}q_a^*(b_a(w,-)) -\frac{1}{2}b_a(w,\rho)\\
& =   l(w)  + \frac{1}{2} q_a(w)  -b_a(\rho/2,w)\\
& = a(w) + l(w).
\end{split}
\end{equation*}
The last statement follows the same argument as in the proof of Lemma~\ref{lem:tropical-section}. 
\end{proof} 

Let $\llb L$ be the universal log line bundle on $N_{g,1}^\LOG$. By Lemma~\ref{lem:cocycle_smooth} we have a canonical log b-line bundle $(\llb L, -\theta^{\on{sm}}) $ over  $N_{g,1}^{\widetilde\Sigma}$.
\begin{theorem} \label{thm:main}
For all $m \in \zz$ there is a unique isomorphism of log b-line bundles
\begin{equation}
[m]^*(\llb L, -\theta^{\on{sm}}) \cong (\llb L, -\theta^{\on{sm}} )^{\otimes m^2}
\end{equation}
over $N_{g,1}^{\widetilde\Sigma}$ compatible with the isomorphism $[m]^* L \cong L^{\otimes m^2}$ over $N_{g,1}$. In other words, the log b-line bundle $(\llb L, -\theta^{\on{sm}}) $ is a pure weight 2 extension of the rigidified line bundle~$L$ in the space of log b-line bundles over $N_{g,1}^{\widetilde\Sigma}$. 

\end{theorem}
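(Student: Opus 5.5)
The plan is to split a log b-line bundle isomorphism into its two components: an isomorphism of log line bundles, and a compatibility of conical sections under the induced isomorphism of tropical line bundles. For the log part, Lemma~\ref{lem:log_biext} gives a unique isomorphism $[m]^*\llb L \isom \tau^*\llb L^{\otimes m^2}$ extending $[m]^*L\cong L^{\otimes m^2}$ on $N_{g,1}$. Here $[m]$ and $\tau$ are the maps from $N_{g,1}^{\widetilde\Sigma_m}$. Since log b-line bundles on $N_{g,1}^{\widetilde\Sigma}$ are insensitive to passing to the subdivision $N_{g,1}^{\widetilde\Sigma_m}$ (conical sections live on the cone complex regardless of subdivision), this is the candidate isomorphism. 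It also induces the tropical isomorphism $[m]^*(\llb L^\trop)\isom(\llb L^\trop)^{\otimes m^2}$.

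Uniqueness comes first and is easy. Any two isomorphisms of log line bundles that agree over the dense open $N_{g,1}$ differ by a global section of $\M^\gp$ trivial on the interior. Using the uniqueness in Lemma~\ref{lem:log_biext}, they agree. The tropical isomorphism is determined by the log one, so the conical-section condition is only a property to be checked, not extra data.

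The main step is to verify that, under the tropical isomorphism, $[m]^*(-\theta^{\on{sm}})$ corresponds to $-m^2\theta^{\on{sm}}$. I would work locally on the cones $\tilde C_\rho(W)$, where $\llb L^\trop$ is described by the cocycle $a(w)+l(w)$ of Lemma~\ref{lem:cocycle_smooth}. The difficulty is to identify the induced tropical isomorphism explicitly in these coordinates. It is pinned down by comparing with an admissible polarisation function $\phi$: by the previous lemma $[m]^*\lb L_\phi=\lb L_m$ is the line bundle attached to $[m]^*\phi$. The classical isomorphism $[m]^*\bar L\cong \bar L^{m^2}$ over the interior therefore extends up to a divisor, whose PL function is $m^2\phi-\phi\circ[m]$ (on the tropical side the relevant $\theta^{\on{pl}}$-type function). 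Hence, in the trivialisation in which sections are functions satisfying the cocycle, the tropical isomorphism sends a section $s$ of $[m]^*\llb L^\trop$ to the section of $(\llb L^\trop)^{\otimes m^2}$ with the same underlying function. More precisely, $[m]^*$ of the cocycle $a(w)+l(w)$, namely $a(w)+ml(w)$ after accounting for the translation by $mw$ on the $l$-coordinate, matches $m^2$ times the cocycle up to a coboundary that is linear, coming from the rigidification. I expect checking this coboundary, including the $\rho$-terms, to be the main obstacle. I would handle it by the direct computation already used for $\phi$, namely property (5) of Definition~\ref{def:Ng-adm-pola}.

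Granting that, the identity reduces to a calculation on $\theta^{\on{sm}}$:
\[
\theta^{\on{sm}}_\rho(a,ml) = -\tfrac{m^2}{2}q_a^*(l)+\tfrac m2 l(\rho)-\tfrac18 q_a(\rho),
\]
and one checks that it differs from $m^2\theta^{\on{sm}}_\rho(a,l)$ by exactly the linear correction in $l$ and constant in $a$ found in the previous step. This correction vanishes when $\rho=0$, and in general it is absorbed by the $\rho$-shift of Definition~\ref{def:eq_theta}, whose constant was chosen so that $\theta^{\on{sm}}_\rho(a,b_a(\rho/2,-))=0$. Since this compatibility is a statement about sections, it holds once verified locally, and patches by uniqueness. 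Finally, negative $m$ follows from symmetry via $\iota$, and $m=0$ from the rigidification $\epsilon$.
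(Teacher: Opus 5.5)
Your route is the paper's. You take the unique log isomorphism of Lemma~\ref{lem:log_biext} and compute the tropical cocycle of $[m]^*\llb L\otimes\llb L^{\otimes -m^2}$ as $a(mw)+m^2l(w)-m^2(a(w)+l(w))=\tfrac12(m^2-m)\,b_a(w,\rho)$; your ``$a(w)+ml(w)$'' should read $a(mw)+m^2l(w)$. From this you identify the induced tropical isomorphism with the linear function $\tfrac12(m^2-m)\,l(\rho)$, normalised by the rigidification to vanish at $l=0$, and then compare conical sections.

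\textbf{The gap.} It is exactly the step you flag as the main obstacle. A direct computation gives
\[
-\theta^{\on{sm}}_\rho(a,ml)+m^2\,\theta^{\on{sm}}_\rho(a,l)=\tfrac12(m^2-m)\,l(\rho)+\tfrac{1-m^2}{8}\,q_a(\rho).
\]
After removing the linear correction, the term $\tfrac{1-m^2}{8}q_a(\rho)$ remains.
\begin{itemize}
\item It is independent of $l$, so it has zero cocycle. It is therefore a genuine nonzero section of the trivial tropical bundle, not a coboundary.
\item It is nonzero for $m^2\neq1$ whenever $\rho\notin\operatorname{rad}(b_a)$.
\item It is not ``absorbed'' by the normalisation $\theta^{\on{sm}}_\rho(a,b_a(\rho/2,-))=0$. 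That normalises at the $2$-torsion point $b_a(\rho/2,-)$. The isomorphism you must use is the one compatible with $[m]^*L\cong L^{\otimes m^2}$, hence with $\epsilon$, and it is normalised at the origin.
\end{itemize}
Your own $m=0$ step makes this visible. Since $[0]=e\circ\pi$, compatibility with $\epsilon$ forces the pulled-back conical section to be $0$. But $-\theta^{\on{sm}}_\rho(a,0)=\tfrac18q_a(\rho)$, which for $g=1$, $W=\zz$, $\rho=1$, $b_a=\ell$ equals $\ell/8$.

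\textbf{The paper's proof does not close this either.} It writes the difference of conical sections as a multiple of $l(\rho)$ only. It then uses the same rule, that a linear function vanishing at $l=0$ is the zero section, so the $q_a(\rho)$-term never appears there.

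To rescue $-\theta^{\on{sm}}$ itself you would have to show that conical sections of $\llb L^\trop$ are identified with functions on $\tilde C_\rho(W)$ so that the value at $l=0$ exceeds the $\epsilon$-value by $\tfrac18q_a(\rho)$. That normalisation conflicts with $-\theta^{\mathrm{pl}}$ being an integral section as in Lemma~\ref{lem:tropical-section}, since $\tfrac18q_a(\rho)$ is not integral in general.

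\textbf{A consistent repair.} Use the section $-\theta^{\on{sm}}_\rho-\tfrac18q_a(\rho)=\tfrac12q_a^*(l)-\tfrac12l(\rho)$ instead. It differs from $-\theta^{\on{sm}}$ by a function of $a$ alone, so it has the same cocycle and is still a conical section by Lemma~\ref{lem:cocycle_smooth}. It vanishes at $l=0$, and for it the discrepancy is exactly $\tfrac12(m^2-m)\,l(\rho)$. Your argument, including the uniqueness part, then goes through verbatim. Alternatively, keep $-\theta^{\on{sm}}$ and assert the isomorphism only up to the pullback from the base of the conical function $\tfrac{1-m^2}{8}q_a(\rho)$.
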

\begin{proof}
Since $q_a^*$ is quadratic, the difference of $[m]^*(\llb L, -\theta^{\on{sm}})$ and $(\llb L^{\otimes m^2}, -m^2 \theta^{\on{sm}})$ is equal to $([m]^*\llb L \otimes \llb L^{\otimes -m^2}, (a,l) \mapsto (m^2-m) l(\rho))$. Now $[m]^*\llb L \otimes \llb L^{\otimes -m^2}$ is trivial as log line bundle by Lemma \ref{lem:log_biext}. From the cocycle for $\llb L$ from Definition \ref{def:Ng-adm-pola}(5), 
we compute that the tropicalisation of $[m]^*\llb L \otimes \llb L^{\otimes -m^2}$ is given by the cocycle $$\left[a(mw) + ml(mw) - m^2(a(w) + l(w))\right] = \frac{1}{2}(m^2 - m)b_a(w,\rho);$$ we can verify that this is trivial either because it is the trivialisation of a trivial log line bundle, or because $\frac{1}{2}(m^2 - m)b_a(w,\rho)$ is independent of $l$. The function $(a,l) \mapsto \frac{1}{2}(m^2-m) l(\rho)$ is a global linear integral function whose associated cocycle is $\frac{1}{2}(m^2 - m)b_a(w,\rho)$, hence it determines a constant section of the trivial tropical bundle; it also vanishes at $l = 0$, hence it corresponds to the constant section $0$. This means that the difference $([m]^*\llb L \otimes \llb L^{\otimes -m^2}, (a,l) \mapsto \frac{1}{2}(m^2-m) l(\rho))$ is canonically trivial as log b-line bundle. 
\end{proof}

\begin{theorem} \label{thm:theta-eq-cocycle-new}  Let $\phi$ be any admissible principal polarisation function for $\widetilde\Sigma$, and set $c= \phi - \theta^{\on{sm}}$. For all $w \in W$ we have
\[ c(a, l + b_a(w,-))  = c(a, l), \quad a \in C_\rho(W), \, l \in \on{co-rad}(b_a), \]
so that $c$ is a continuous conical function on $N_{g,1}^{\widetilde \Sigma}$. 
We have an isomorphism of  log b-line bundles $(\llb L, -\theta^{\on{sm}})   \cong (\lb L_{-\phi}^{\LOG},0) \otimes (\oo,c)$ over $N_{g,1}^{\widetilde \Sigma}$. 
\end{theorem}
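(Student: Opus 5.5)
The plan is to compare the two cocycle identities directly, show that $c$ descends to a continuous conical function, and then read off the isomorphism of log b-line bundles from Definition~\ref{def:line_bundle_for_adm}.

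\emph{Translation invariance.} By Definition~\ref{def:Ng-adm-pola}(5), for every $w\in W$ we have
\[\phi(a,l)-\phi(a,l+b_a(w,-))=a(w)+l(w).\]
By Lemma~\ref{lem:cocycle_smooth}, $\theta^{\on{sm}}$ satisfies the same identity with the same right-hand side. Subtracting the two gives $c(a,l+b_a(w,-))=c(a,l)$ for all $w\in W$ and all $(a,l)\in\tilde C_\rho(W)$ (note that $l\in\on{co-rad}(b_a)$ is exactly the condition defining $\tilde C_\rho(W)$).

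\emph{Descent of $c$.} Both $\phi$ and $\theta^{\on{sm}}$ are homogeneous of degree one under positive scaling. For $\theta^{\on{sm}}$ this holds because $q_a^*$ scales inversely in $a$ and quadratically in $l$, so $q_a^*(l)$ is degree one in $(a,l)$, while the other two terms are visibly linear. Hence $c$ is conical. Continuity of $\phi$ is part of Definition~\ref{def:Ng-adm-pola}. Continuity of $\theta^{\on{sm}}$ on all of $\tilde C_\rho(W)$, including where $b_a$ degenerates, is the one point needing care. On each cone of $\widetilde\Sigma$ the radical of $b_a$ is constant on the relative interior, and as $a$ approaches a face, $q_a^*(l)$ stays bounded because $l$ is constrained to the co-radical. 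If this boundary behaviour is delicate, I will instead argue that $c$ is bounded on each fundamental domain for the $W$-translation action (by periodicity) and continuous there.

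Next, $c$ is compatible with the maps $f^*$ of \eqref{eq:Q_rho_pullback_univ} and the identifications for $\rho-\rho'\in 2W$. For $\phi$ this is conditions (2) and (4) of Definition~\ref{def:Ng-adm-pola}. For $\theta^{\on{sm}}$ it is a direct check from the formula: the shift of $\rho$ by $2w$ combined with \eqref{eq:iso-quadratic} corresponds to a translation $l\mapsto l+b_a(w,-)$. Since each cone of $N_{g,1}^{\widetilde\Sigma}$ is the image of a cone of $\widetilde\Sigma_{W,\rho}$ modulo these identifications and translations, $c$ defines a section of $\on{CCon}$ on $N_{g,1}^{\widetilde\Sigma}$.

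\emph{The isomorphism.} By Lemma~\ref{lem:tropical-section}, $-\phi$ is an sPL section of $\llb L^\trop$. So by Definition~\ref{def:line_bundle_for_adm} there is an isomorphism $(\llb L,-\phi)\cong(\lb L_{-\phi}^{\LOG},0)$, with the paper's notation $\lb L_{-\phi}$ for the line bundle the paper calls $\lb L_\phi$. Tensoring with $(\oo^{\LOG},c)$ adds $c$ to the conical section:
\[(\llb L,-\phi)\otimes(\oo^\LOG,c)=(\llb L,-\phi+c)=(\llb L,-\theta^{\on{sm}}).\]
This gives the stated isomorphism.

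I expect the main obstacle to be the descent and continuity step: carefully matching the $W$-translations and $\rho$-shifts defining the cone stack of $N_{g,1}^{\widetilde\Sigma}$ with the corresponding invariances of $\theta^{\on{sm}}$, and checking continuity at degenerate $b_a$.
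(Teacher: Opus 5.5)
Your proposal follows the paper's proof essentially verbatim: you subtract the cocycle identity of Definition~\ref{def:Ng-adm-pola}(5) from the one in Lemma~\ref{lem:cocycle_smooth} to get the $W$-invariance of $c$, and then use Definition~\ref{def:line_bundle_for_adm} to obtain $(\lb L_{-\phi}^{\LOG},0)\otimes(\oo,c)\cong(\llb L,-\phi+c)=(\llb L,-\theta^{\on{sm}})$. The paper does not re-prove continuity but takes $\theta^{\on{sm}}$ to be a continuous conical section from Lemma~\ref{lem:cocycle_smooth}, so you can simply cite that; your heuristic that $q_a^*(l)$ ``stays bounded'' does not by itself prove continuity, since boundedness is not continuity, the bound holds only on each closed cone of $\widetilde\Sigma$, and on $\tilde C_\rho(W)$ itself $q_a^*(l)$ is unbounded near degenerate $a$.
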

\begin{proof} Lemma~\ref{lem:cocycle_smooth} shows that $\phi$ and $\theta^{\on{sm}}$ satisfy the same cocycle condition (for the cocycle condition of $\phi$, refer to Definition~\ref{def:Ng-adm-pola}). The invariance property for their difference $c= \phi - \theta^{\on{sm}}$ is then clear.  

As to the second statement, referring to Definition~\ref{def:line_bundle_for_adm} we have isomorphisms of log b-line bundles
\[ (\lb L_{-\phi}^{\LOG},0) \otimes (\oo,c) \cong (\llb L,-\phi) \otimes (\oo,c) \cong (\llb L, -\phi +c) = (\llb L, - \theta^{\on{sm}}). \]
This finishes the proof of the theorem.
\end{proof}

\begin{remark}
The above theorem and its proof generalise naturally to any symmetric (hence pure weight 2) log line bundle and any section of its tropicalisation that is fibrewise pure of weight 2. 
\end{remark}

\subsection{The log b-Poincaré bundle}

Let $S$ be a scheme. On a ppav $A/S$, we recall that the Poincaré bundle $ P$ on $A \times_S A$ is a $\bb G_m$-biextension. We can rephrase this by saying that it comes with isomorphisms
\begin{equation}\label{eq:biext_isos}
\begin{split}
& P|_{A \times e} \isom \ca O_A, \;  P|_{e \times A} \isom \ca O_A, \; \\
&(m\times \on{id})^* P \isom pr_{13}^* P \otimes pr_{23}^* P,\; (\on{id}\times m)^*  P \isom pr_{12}^* P \otimes pr_{13}^* P, 
\end{split}
\end{equation}
and that these isomorphisms satisfy certain compatibilities corresponding to commutativity and associativity, see \cite[Exposés VII and VIII]{SGA7I}. Assume that $A/S$ admits a polarising rigidified line bundle $L$. Letting $\Delta \colon A \to A \times_S A$ denote the diagonal embedding, we have canonical isomorphisms 
\[ P = m^*L \otimes pr_{1}^* L^\vee \otimes pr_2^* L^\vee \, , \quad \Delta^*P = L^{\otimes 2}. \]

Suppose now that $A/S$ is a pplav over a log scheme~$S$, then by \cite{LAV5} it comes with a Poincaré log line bundle $\llb P$. This is a $\bb G_\LOG$-biextension, a property which can be rephrased by writing certain explicit isomorphisms as in \eqref{eq:biext_isos} and corresponding associativity and commutativity axioms. 

This motivates the following definition. 
\begin{definition} \label{def:log_Poinc}
Suppose that $\bar A/S$ is a representable subdivision of a pplav, and that $c$ is a conical section of the tropicalisation of the pullback $\llb P$ of the log Poincaré bundle, so that $(\llb P, c)$ is a log b-line bundle on $\bar A$. We define the structure of a \emph{biextension} on $(\llb P, c)$ to be four isomorphisms 
\begin{equation}\label{eq:log_biext_isos}
\begin{split}
&(\llb P, c)|_{\bar A \times e} \isom (\ca O_{\bar A},0), \; (\llb P, c)|_{e \times {\bar A}} \isom (\ca O_{\bar A},0), \; \\
&(m\times \on{id})^* (\llb P, c) \isom pr_{13}^*(\llb P, c) \otimes pr_{23}^*(\llb P, c),\; \\
&(\on{id}\times m)^* (\llb P, c) \isom pr_{12}^*(\llb P, c) \otimes pr_{13}^*(\llb P, c)
\end{split}
\end{equation}
compatible with the corresponding isomorphisms on $\llb P$ described above, and satisfying the same commutativity and associativity rules as in the classical case. 
\end{definition}
\begin{remark}\label{rk:biext}
A less conceptual but more concise way to phrase Definition~\ref{def:log_Poinc} would be to say that we require $\llb P$ to be a $\bb G_\LOG$-biextension and we require $c$ to satisfy the following equations:
\begin{equation}
\begin{split}
&c(p,0) = 0, \; c(0,q) = 0, \;  \\ 
&c(p+p',q) = c(p, q) + c(p', q), \; c(p, q + q') = c(p,q) + c(p, q'),
\end{split}
\end{equation}
where each equality holds on a suitable pullback of $\llb P$ to a suitable fibre power of $A$. This is what we will check in practice. 
\end{remark}

Let $g$ be a positive integer, let $W$ be a finitely generated free abelian group of rank $\le g$,  and let $\widetilde \Sigma_A$ be an $A_{g,1}$-admissible fan in $\tilde S_\bb Q(W)$ in the sense of Definition~\ref{def:admissible_fan_tilde}. Let $\widetilde \Sigma_N$ be an $N_{g,1}$-admissible fan with a map of cone complexes $\widetilde \Sigma_N \to \widetilde \Sigma _A$ (this can always be arranged by suitable refinements). This yields a proper, generically finite map 
\begin{equation}
\tau\colon N_{g,1}^{\widetilde \Sigma_N} \to A_{g,1}^{\widetilde \Sigma_A}. 
\end{equation}

The log Poincar\'e bundle $\llb P$ pulls back naturally to $A_{g,1}^{\widetilde \Sigma_A}\times_{A_{g}^{ \Sigma_A}}A_{g,1}^{\widetilde \Sigma_A}$. Recall that $N_{g,1}^{\widetilde \Sigma_N}$ carries a universal polarising log line bundle $\llb L$; these are related (after pulling back $\llb P$ to $ N_{g,1}^{\widetilde \Sigma_N}$) by the formula
\begin{equation}\label{eq:poincare_vs_polar}
\llb P = m^* \llb L \otimes p_1^*\llb L^\vee \otimes p_2^*\llb L^\vee \otimes e^*\llb L. 
\end{equation}

Recall from Lemma~\ref{lem:tropical-section} that if $\phi$ is any $N_{g,1}$-admissible polarisation function compatible with $\widetilde \Sigma_N$, then $-\phi$ is a section of $\llb L^\trop$, and $-\theta_\rho^{\on{sm}}(a,l) = \frac{1}{2}q_a^*(l) - \frac{1}{2} l(\rho) + \frac{1}{8}q_a(\rho) $ is a conical section of $\llb L^\trop$. We see using \eqref{eq:poincare_vs_polar} that
\begin{equation}
-\phi(l_1+l_2) + \phi(l_1) +  \phi(l_2) - \phi(0)
\end{equation}
is a global section of $\llb P^\trop$, and that 
\begin{equation}
\begin{split}
\frac12q^*_a(\tilde{l}) & - \frac12\tilde{l}(\rho) - \frac12q^*_a(l_1) + \frac12l_1(\rho) - \frac12q^*_a(l_2) + \frac12l_2(\rho) + \frac12q^*_a(0) \\
& = b_a^*(l_1,l_2),
\end{split}
\end{equation}
where $\tilde{l} \coloneqq l_1 + l_2$,
is a conical section of $\llb P^{\trop}$. Now the section $b_a^*(l_1,l_2)$ of $\llb P^{\trop}$ on $N_{g,1}^{\widetilde \Sigma_N}$ does not depend on $\rho$, and hence descends to a conical section of $\llb P^{\trop}$ on $A_{g,1}^{\widetilde \Sigma_A}$, which we denote simply by $b$ (indeed, it corresponds on the universal cover of the universal tropical abelian variety over $A_g^{\Sigma_A}$ to the universal symmetric bilinear form $b$). 

\begin{theorem}
The universal log b-line bundle $(\llb P, b)$ is a biextension. 
\end{theorem}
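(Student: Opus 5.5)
Following Remark~\ref{rk:biext}, I would split the claim into two parts. First, $\llb P$ is already a $\bb G_\LOG$-biextension by \cite{LAV5}, with its isomorphisms \eqref{eq:biext_isos} and the associativity and commutativity compatibilities. Second, it remains to check that the conical section $b$ is compatible with these isomorphisms. After tropicalising, the structure isomorphisms of $\llb P$ become isomorphisms of tropical line bundles, and what must be shown is that $b$ is carried to the trivial section $0$ on $\bar A\times e$ and $e\times\bar A$, and to the sum of sections under the two multiplication isomorphisms. Once these identities hold, the isomorphisms of log b-line bundles exist. They are unique lifts of the given isomorphisms of $\llb P$. The associativity and commutativity rules then follow automatically from those for $\llb P$, since an isomorphism of log b-line bundles is determined by its underlying isomorphism of log line bundles.

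To verify the identities, I would work locally on the cone $\tilde C_\rho(W)$, or on the corresponding universal cover of the tropical abelian variety. There the tropicalisation of $\llb P$ on fibre powers is trivialised by the unique biextension trivialisation of its pullback to $G_\trop\times G_\trop$, recalled in the remark after Definition~\ref{def:N_g_compatibility}. In this trivialisation, a section is a function of $(a,l_1,l_2)$, and the tropicalised structure maps act as addition of functions. I would do the computation on $N_{g,1}^{\widetilde\Sigma_N}$ using \eqref{eq:poincare_vs_polar}, where $b$ was defined as the second difference of $-\theta^{\on{sm}}$, and then descend to $A_{g,1}^{\widetilde\Sigma_A}$. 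Descent is possible because $b$ is independent of $\rho$ and the structure maps are compatible with $\tau$.

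The identities themselves are immediate from bilinearity of $b_a^*$ on $\on{co-rad}(b_a)$. Namely $b_a^*(l,0)=b_a^*(0,l)=0$, and
\[
b_a^*(l_1+l_1',l_2)=b_a^*(l_1,l_2)+b_a^*(l_1',l_2),
\]
and similarly in the second variable. Here the triple fibre products $\bar A\times\bar A\times \bar A$ are handled through their cones of triples $(a,l_1,l_2,l_3)$ with the same $a$.

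The main obstacle is not this computation but checking that the chosen trivialisations of $\llb P^\trop$ on the various fibre powers match the biextension isomorphisms. Concretely, the cocycles for translating $l_i$ by $b_a(w,-)$ must be exactly compatible with the maps $m\times\on{id}$ and the restrictions to $e$. I would handle this by first noting that $b$ genuinely satisfies the $\llb P^\trop$ cocycle: translating $l_1$ by $b_a(w,-)$ changes $b$ by $l_2(w)$, which is the cocycle of $\llb P^\trop$ obtained from Definition~\ref{def:Ng-adm-pola}(5) via \eqref{eq:poincare_vs_polar}. I would then compare $b$ with the honest sPL section $-\phi(l_1+l_2)+\phi(l_1)+\phi(l_2)-\phi(0)$, which comes from the classical biextension $\lb L_\phi$-data. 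Their difference is an invariant conical function built from $c=\phi-\theta^{\on{sm}}$ of Theorem~\ref{thm:theta-eq-cocycle-new}, so the biextension identities for $b$ reduce to those for that sPL section, which hold because it comes from an honest line bundle. The difference term $c(l_1+l_2)-c(l_1)-c(l_2)+c(0)$ is a plain function on which the structure maps act transparently.
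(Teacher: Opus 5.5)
Your first three paragraphs are the paper's argument. By Remark~\ref{rk:biext}, it suffices that $\llb P$ is a $\bb G_\LOG$-biextension and that $b$ satisfies $b(p,0)=b(0,q)=0$ and additivity in each variable, and this is exactly bilinearity of $b_a^*$. Your second paragraph also already settles what you later call the main obstacle. The trivialisation of $\llb P^\trop$ over $G_\trop\times G_\trop$ induced via \eqref{eq:poincare_vs_polar} from any trivialisation of $\llb L^\trop$ is the unique biextension trivialisation, and $b$ is by construction written in it. In that trivialisation the structure isomorphisms act as the identity on functions, so nothing further needs matching.

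Your fourth paragraph, however, rests on a false claim and should be dropped. The section $s_\phi(l_1,l_2)=-\phi(l_1+l_2)+\phi(l_1)+\phi(l_2)-\phi(0)$ does \emph{not} satisfy the biextension identities. Additivity in the first variable fails by minus the third difference of $\phi$, which is nonzero for piecewise linear $\phi$. For example, take $g=1$, $\rho=0$, $b_a(x,y)=2xy$ and $\phi=\theta^{\mathrm{pl}}$, i.e.\ $\phi(l)=\min_w\{w^2+lw\}$. Then $\phi(0)=\phi(1)=0$, $\phi(2)=-1$ and $\phi(3)=-2$, so $s_\phi(2,1)-2s_\phi(1,1)=-1\neq 0$.

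Coming from an honest line bundle on a subdivision of $\bar A\times\bar A$ gives no biextension structure. The multiplication maps only become morphisms after further subdivision, and nothing forces the pulled-back PL sections to agree; this failure is precisely why one passes to b-objects. Your decomposition $b=s_\phi+\bigl(c(l_1+l_2)-c(l_1)-c(l_2)+c(0)\bigr)$ therefore splits the bilinear $b$ into two non-bilinear pieces whose third differences cancel. It cannot reduce the identities for $b$ to identities for $s_\phi$. The identities hold for $b$ only because $\theta^{\on{sm}}$ is quadratic, which is the bilinearity the paper invokes.
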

\begin{proof}
This is immediate from Remark \ref{rk:biext}, and the fact that $b$ is bilinear. 
\end{proof}
\begin{corollary}
Let $m$ be a positive integer, and write $m\colon A_{g,1} \to A_{g,1}$ for the induced endomorphism. Over $A_{g,1}^{\widetilde \Sigma_A}$ we have natural isomorphisms of log b-line bundles
\begin{equation}
\begin{split}
(m,1)^*(\llb P, b) &\isom (\llb P, b)^{\otimes m},\\
(1,m)^*(\llb P, b) &\isom (\llb P, b)^{\otimes m},\\
(m,m)^*(\llb P, b) &\isom (\llb P, b)^{\otimes m^2}.
\end{split}
\end{equation}
 Let $\Delta \colon N_{g,1} \to N_{g,1} \times_{N_g} N_{g,1}$ denote the diagonal. Over $N_{g,1}^{\widetilde \Sigma_N}$ we have a natural isomorphism of log b-line bundles $\Delta^*(\llb P, b) \isom (\llb L,-\theta^{\on{sm}})^{\otimes 2}$.
\end{corollary}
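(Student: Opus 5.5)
The plan is to deduce every isomorphism in the corollary formally from the preceding theorem (that $(\llb P, b)$ is a biextension) together with the bilinearity of $b$ and the relation \eqref{eq:poincare_vs_polar}. I would follow Remark~\ref{rk:biext}: an isomorphism of log b-line bundles consists of an isomorphism of the underlying log line bundles together with an equality of the conical sections. So each claim splits into a log part and a tropical part, and I treat them separately.

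For the first isomorphism, $(m,1)^*(\llb P,b)\cong(\llb P,b)^{\otimes m}$, I argue by induction on $m$.
\begin{itemize}
\item The case $m=1$ is trivial.
\item For the inductive step, pull back the isomorphism $(m\times\on{id})^*(\llb P,b)\isom pr_{13}^*(\llb P,b)\otimes pr_{23}^*(\llb P,b)$ of Definition~\ref{def:log_Poinc} along $(x,y)\mapsto((m-1)x, x, y)$. This gives $(m,1)^*(\llb P,b)\cong((m-1),1)^*(\llb P,b)\otimes(\llb P,b)$.
\item One point needs care: $(m,1)$ is only a rational map on $A_{g,1}^{\widetilde\Sigma_A}\times A_{g,1}^{\widetilde\Sigma_A}$. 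It becomes a morphism after a subdivision, as in \eqref{eq:mult_by_N}. Log b-line bundles are insensitive to subdivision, since conical functions and log line bundles pull back along subdivisions. I would therefore interpret all pullbacks on a common refinement, just as in Theorem~\ref{thm:main}.
\end{itemize}
On the tropical side the content is simply $b(ml_1,l_2)=m\,b(l_1,l_2)$. On the log side it is the standard fact that a $\bb G_\LOG$-biextension satisfies $(m,1)^*\llb P\cong\llb P^{\otimes m}$, which comes from iterating the biextension structure. The second isomorphism $(1,m)^*$ is symmetric. The third follows by composing the first two: $(m,m)=(m,1)\circ(1,m)$, giving the factor $m\cdot m=m^2$, with $b(ml_1,ml_2)=m^2 b(l_1,l_2)$. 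Naturality holds because each isomorphism is built from the canonical biextension isomorphisms.

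For the diagonal statement I work over $N_{g,1}^{\widetilde\Sigma_N}$, pulling back along $\tau$. The steps are:
\begin{enumerate}
\item On log line bundles, pulling back \eqref{eq:poincare_vs_polar} along $\Delta$ gives
\[\Delta^*\llb P\cong[2]^*\llb L\otimes\llb L^{\otimes -2}\otimes e^*\llb L.\]
\item Use the rigidification $e^*\llb L\cong\bb G_\LOG$ and Lemma~\ref{lem:log_biext} with $m=2$ to get $\Delta^*\llb P\cong\llb L^{\otimes 2}$, compatibly with the classical isomorphism $\Delta^*P=L^{\otimes 2}$.
\item On conical sections, $\Delta^*b$ is $(a,l)\mapsto b_a^*(l,l)=q_a^*(l)$. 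The corresponding section for $(\llb L,-\theta^{\on{sm}})^{\otimes2}$ is $-2\theta^{\on{sm}}=q_a^*(l)-l(\rho)+\tfrac14 q_a(\rho)$.
\item These differ by $-l(\rho)+\tfrac14q_a(\rho)$.
\end{enumerate}

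Absorbing this discrepancy into the identification of the tropical bundles is the main obstacle. The relevant tool is the argument in the proof of Theorem~\ref{thm:main}, where a global linear function in $l$, here $-l(\rho)$ (compare $\tfrac12(m^2-m)l(\rho)$ at $m=2$, which is $l(\rho)$), is absorbed into the canonical trivialisation of $[2]^*\llb L\otimes\llb L^{\otimes-4}$ coming from Lemma~\ref{lem:log_biext}. The term $\tfrac14 q_a(\rho)$ should be matched with the tropicalisation of $e^*\llb L$, which is pulled back from the base and whose rigidification is normalised by $\theta^{\on{sm}}(a,b_a(\rho/2,-))=0$.

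What I would try first is to compute everything in cocycle form, as in the proof of Theorem~\ref{thm:main}. Concretely, I would check that the discrepancy $-l(\rho)+\tfrac14 q_a(\rho)$ is exactly the conical section that corresponds to $0$ under the canonical trivialisation of $\Delta^*\llb P\otimes\llb L^{\otimes-2}$ fixed by the rigidification. Once this holds, the natural isomorphism $\Delta^*(\llb P,b)\cong(\llb L,-\theta^{\on{sm}})^{\otimes2}$ follows. Uniqueness follows from compatibility with the isomorphism over the interior $N_{g,1}$, via the injectivity in Lemma~\ref{lem:con_injective}.
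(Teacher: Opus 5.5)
Your handling of the three bilinearity isomorphisms is fine, and it is what the paper intends. Iterate the biextension isomorphisms of $(\llb P,b)$ on a common subdivision; the tropical content is just $b(ml_1,l_2)=m\,b(l_1,l_2)$.

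The diagonal statement, however, is not proved. Its only non-formal step is that the discrepancy $-l(\rho)+\tfrac14 q_a(\rho)$ is the canonical trivialisation, and you leave this as something you ``would check''. The mechanism you suggest for the constant is also wrong. The rigidification of $\llb L$ lives at the zero section $l=0$, which is what every $[m]$ fixes, not at the theta characteristic $l=b_a(\rho/2,-)$. The normalisation $\theta^{\on{sm}}(a,b_a(\rho/2,-))=0$ plays no role.

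In fact, with the normalisation used in the proof of Theorem~\ref{thm:main}, your check fails at $l=0$. There the trivialisation compatible with the rigidification is the function vanishing at $l=0$. By the biextension structure, $(e,e)^*(\llb P,b)\cong(\lb O,0)$. On the other hand, $e^*(\llb L,-\theta^{\on{sm}})^{\otimes 2}=(\lb O,-2\theta^{\on{sm}}(a,0))=(\lb O,\tfrac14 q_a(\rho))$. The isomorphism $\Delta^*P\cong L^{\otimes 2}$ on $N_{g,1}$ respects rigidifications. So any extension of it would restrict along $e$ to an isomorphism $(\lb O,0)\cong(\lb O,\tfrac14 q_a(\rho))$ that is the identity over $N_g$.

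The $[2]$-trivialisation does absorb $-l(\rho)$. But $\tfrac14 q_a(\rho)=2\,e^*(-\theta^{\on{sm}})$ remains, and no choice of trivialisation removes it. Your claim is equivalent to the triviality of $e^*(\llb L,-\theta^{\on{sm}})$, and the proposal neither proves this nor reduces it to anything.

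The missing idea is to deduce the statement from purity rather than by bookkeeping. The section $b$ was defined as the image of $-\theta^{\on{sm}}$ under $m^*-p_1^*-p_2^*+e^*$ via \eqref{eq:poincare_vs_polar}. Writing $\mathbb{L}\coloneqq(\llb L,-\theta^{\on{sm}})$, you therefore get $\Delta^*(\llb P,b)\cong[2]^*\mathbb{L}\otimes\mathbb{L}^{\otimes -2}\otimes\pi^*e^*\mathbb{L}$. Theorem~\ref{thm:main} with $m=2$ turns $[2]^*\mathbb{L}$ into $\mathbb{L}^{\otimes 4}$. Pulling that isomorphism back along $e$, using $[2]\circ e=e$ and compatibility with the rigidified isomorphism on the interior, gives $e^*\mathbb{L}^{\otimes 3}\cong(\lb O,0)$. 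Hence $e^*\mathbb{L}$ is trivial.

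Be aware that this only relocates your leftover constant. The full difference $[m]^*(-\theta^{\on{sm}})-m^2(-\theta^{\on{sm}})$ is $\tfrac{m^2-m}{2}l(\rho)-\tfrac{m^2-1}{8}q_a(\rho)$, and the proof of Theorem~\ref{thm:main} keeps only the first term. A self-contained cocycle-level argument must therefore fix the constant term of the pure section compatibly with the rigidification. The rigidified solution of the cocycle and purity conditions is $\tfrac12 q_a^*(l)-\tfrac12 l(\rho)$. For that section your computation does close, with discrepancy exactly $-l(\rho)$.
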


\section{Tropical theta functions} \label{sec:tropical_RT}

The aim of this section is to give a concrete construction of some of the abstract objects defined in Section~\ref{sec:proof_main}. 

Let $g$ be a positive integer,  and consider the universal abelian variety $N_{g,1}$ over $N_g$. 
We will define a suitable $N_{g,1}$-admissible principal polarisation function $\theta^{\mathrm{pl}}$, called the \emph{$pl$-tropical theta function}. Its linearity locus induces an $N_{g,1}$-admissible cone decomposition, which we will denote by $\widetilde \Sigma^{{\mathrm{mDV}}}$.  It can be seen as an $N_g$-analogue of the classical mixed Delaunay--Voronoi decomposition over $A_g$. By the results in Section \ref{sec:universal_Ng} and Section \ref{sec:universal-theta}, we obtain a log smooth, proper algebraic space $N_{g,1}^{\mathrm{mDV}} = N_{g,1}^{\widetilde{\Sigma}^\mathrm{mDV}} $ containing $N_{g,1}$, and a section of the tropicalisation of the universal log line bundle on $N_{g,1}^{\mathrm{mDV}}$.

\subsection{Tropical theta functions with characteristics}

As in the previous section, let $W$ be a free abelian group of rank $\leq g$ and let $\rho \in
W$. Recall that we have the group $Q_\rho(W)$ of quadratic functions $a \colon W \to \zz$ satisfying $a(\rho-w)=a(w)$ and $a(0)=0$. We have the cone $C_{\rho}(W)$ inside $Q_\rho(W)_\rr$ of
quadratic functions $a$ whose associated symmetric
bilinear form $b_a$ is positive semi-definite with rational radical.  Recall that $b_a$ is the
unique symmetric bilinear form $b \colon W_\rr \times W_\rr \to \rr$ such that
$a(v) = b(v-\rho,v)/2 $ for $v \in W_\rr$. We write $q_a(v)=b_a(v,v)$ for the quadratic form determined by~$b_a$.

\begin{definition}[Tropical theta function with characteristic] \label{def:theta-char} We define the \emph{$pl$-tropical theta function} on $\tilde{C}_\rho(W)$ by
\begin{equation} \label{def:theta_pl}  \theta^{\mathrm{pl}}_\rho(a,l)  \coloneqq \min_{w \in W} \left\{ a(w) + l(w) \right\} , \quad a \in C_\rho(W) , \,  l \in \on{co-rad}(b_a) . 
\end{equation}
Note that it takes integer values on $\tilde C_\rho(W)_{\zz} $. 
The fact that $a$ is bounded below ensures that for each $l \in
\on{co-rad}(b_a)$ the minimum exists. 
We define $\theta^{\mathrm{pl}}$ as the resulting function on $\bigsqcup_{\rho \in W} \tilde{C}_\rho(W)$.
\end{definition}
\begin{remark} The function $\theta_\rho^{\mathrm{pl}} \colon \tilde{C}_\rho(W)\to \rr$ is conical. Moreover, as a minimum of affine functions arising from lattice-indexed terms, $\theta_\rho^{\mathrm{pl}}$ is piecewise affine, continuous, and concave.  The regions of linearity of $\theta_\rho^{\mathrm{pl}} $ are given by a variant of the classical mixed Delaunay--Voronoi decomposition, as we will see in Section~\ref{sec:choosing-decomposition}.
\end{remark} 

\begin{remark} The function $\theta^{\mathrm{pl}}$ is a tropical analogue of the \emph{Riemann theta function with theta characteristics} from the classical theory of analytic theta functions.
\end{remark}

\begin{example} \label{rem:usual_theta}
For $\rho=0$ we obtain the ``usual'' tropical Riemann theta function as in \cite{tropi-theta}:
\[ \theta_0^{\mathrm{pl}}(a,l)  =  \min_{w \in W} \left\{ \frac{1}{2}q_a(w) + l(w) \right\} , \quad a \in C_0(W), \,  l \in \on{co-rad}(b_a). \]
\end{example}

\begin{lemma} \label{lem:cocycle_theta} Let $\rho \in W$. The $pl$-tropical theta function with characteristic $\rho$ in \eqref{def:theta_pl} satisfies the following cocycle condition: for all $v \in W$, for all $a \in C_\rho(W)$ and for all $l \in \on{co-rad}(b_a)$ we have 
\[ -\theta_\rho^{\mathrm{pl}}(a, l + b_a(v,-)) + \theta_\rho^{\mathrm{pl}}(a,l) =  a(v) + l(v) . \]
\end{lemma}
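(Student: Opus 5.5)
The plan is to reindex the minimum defining $\theta_\rho^{\mathrm{pl}}(a, l + b_a(v,-))$ by the translation $w \mapsto w - v$ of $W$. This is a bijection of $W$, so the minimum is unchanged by it.

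First we expand $a(w-v)$. Since $a$ is quadratic with $a(0)=0$ and associated bilinear form $b_a$, the polarisation identity gives
\[ a(w-v) = a(w) + a(-v) - b_a(w,v). \]
Next we compute $a(-v)$ from the formula $a(x) = b_a(x-\rho,x)/2$:
\[ a(-v) = \tfrac12 b_a(-v-\rho,-v) = \tfrac12 q_a(v) + \tfrac12 b_a(\rho,v) = a(v) + b_a(\rho,v). \]

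For a general index $w$, the term in the minimum defining $\theta_\rho^{\mathrm{pl}}(a, l + b_a(v,-))$ is $a(w) + l(w) + b_a(v,w)$. We substitute $w = u - v$ and expand each piece:
\[ a(u-v) = a(u) + a(v) + b_a(\rho,v) - b_a(u,v), \]
\[ l(u-v) = l(u) - l(v), \]
\[ b_a(v,u-v) = b_a(v,u) - q_a(v). \]
Adding these, the $b_a(u,v)$ terms cancel, and the term becomes
\[ a(u) + l(u) + \bigl[a(v) + b_a(\rho,v) - l(v) - q_a(v)\bigr]. \]

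The bracketed constant simplifies using $q_a(v) = 2a(v) + b_a(\rho,v)$, which is just $a(v) = b_a(v-\rho,v)/2$ rearranged. This gives
\[ a(v) + b_a(\rho,v) - l(v) - q_a(v) = -a(v) - l(v). \]
Taking the minimum over $u \in W$ therefore yields
\[ \theta_\rho^{\mathrm{pl}}(a, l + b_a(v,-)) = \theta_\rho^{\mathrm{pl}}(a,l) - a(v) - l(v), \]
which is exactly the stated cocycle identity.

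It remains to check that the formula is applicable at the translated argument. We need $l + b_a(v,-) \in \on{co-rad}(b_a)$, which holds because $b_a(v,-)$ vanishes on $\on{rad}(b_a)$. The minimum exists there by the remark following Definition~\ref{def:theta-char}.

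I do not expect a real obstacle. The only care needed is in the sign bookkeeping for $a(-v)$, where the characteristic $\rho$ contributes the asymmetric term $b_a(\rho,v)$. The same computation also confirms, as a consistency check, that $\theta^{\mathrm{pl}}$ satisfies the same cocycle condition as $\theta^{\on{sm}}$ in Lemma~\ref{lem:cocycle_smooth}.
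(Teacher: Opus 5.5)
Your proposal is correct and follows essentially the paper's own proof: reindex the minimum by the translation $w \mapsto w-v$ and expand $a(w-v)$, $l(w-v)$ and $b_a(v,w-v)$, so that the $w$-dependent cross terms cancel and the constant $-a(v)-l(v)$ comes out. The only cosmetic difference is that the paper combines your two steps for $a(w-v)$ into the single identity $a(w-v) = a(w) - b_a(w - \tfrac{1}{2}v - \tfrac{1}{2}\rho, v)$.
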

\begin{proof}
A small calculation shows that for $v, w \in W$ we have
\[ a(w-v) = a(w) - b_a(w - \frac{1}{2}v - \frac{\rho}{2},v) . \]

With this we calculate 
\begin{align*}  \theta_\rho^{\mathrm{pl}}  & (a, l + b_a(v,-))  = \\
 & =  \min_{w \in W} \left\{ a(w) + l(w) +b_a(v,w)  \right\}   \\
  & =  \min_{w \in W} \left\{ a(w-v) + l(w-v)  +b_a(v,w-v) \right\}    \\
  & =  \min_{w \in W} \left\{ a(w) - b_a(w - \frac{1}{2}v - \frac{\rho}{2},v) + b_a(v,w) + l(w-v) - b_a(v,v)  \right\}   \\
  & =  \min_{w \in W} \left\{ a(w) - \frac{1}{2}b_a(v,v) + \frac{1}{2}b_a(v,\rho)  + l(w-v) \right\}  \\
 & =    \; \theta_\rho^{\mathrm{pl}}(a,l) - a(v) - l(v) . 
 \end{align*} 
\end{proof}
Let $\rho \in W$. Recall the function $\theta_\rho^{\on{sm}} \colon\tilde C_\rho(W) \to \rr$ from Definition~\ref{def:eq_theta}.
\begin{definition} \label{def:inv_theta}
We define the \emph{invariant tropical theta function with characteristic $\rho$} as the difference
\begin{equation}\label{def:inv-tropi-theta}
\theta_\rho^{\mathrm{inv}}(a, l)  \coloneqq \theta_\rho^{\mathrm{pl}}(a,l) - \theta_\rho^{\on{sm}}(a,l) , \quad a \in C_\rho(W) , \,  l \in \on{co-rad}(b_a) .
\end{equation}
We denote by $\theta^{\mathrm{inv}}$ the resulting function on $\bigsqcup_{\rho \in W} \tilde C_\rho(W)$. 
\end{definition}
It follows from Theorem~\ref{thm:theta-eq-cocycle-new} that $\theta^{\mathrm{inv}}$ is invariant under translations by elements $b_a(v,-)$ for $v \in W$. 
\begin{example} \label{ex:inv_rho_vanish} We continue Example~\ref{rem:usual_theta} and consider the case that~$\rho=0$. We have $\theta_0^{\on{sm}}(a,l) = -\frac{1}{2}q_a^*(l)$. For $l=b_a(v,-)$ with $v \in W_\rr$ we find
\[ \begin{split} \theta_0^{\on{inv}}(a,l) & = \min_{w \in W} \left\{ \frac{1}{2}q_a(w) + l(w) \right\} + \frac{1}{2} q_a^*(l) \\
 & = \min_{w \in W} \left\{ \frac{1}{2}q_a(w) + b_a(v,w) + \frac{1}{2}q_a(v) \right\} \\
 & = \frac{1}{2} \min_{w \in W} q_a(v+w) , 
\end{split} \]
that is $\theta_0^{\on{inv}}(a,l)$ is half the distance-squared, in the semi-norm given by $a$, from $v \in W_\rr$ to the nearest lattice point in $W$.
\end{example}

The above example generalises as follows to arbitrary~$\rho \in W$.
\begin{proposition}  \label{prop:trop_theta_new}  Let $\rho \in W$, let $a \in C_\rho(W)$, and let $l \in \on{co-rad}(b_a)$. The formula
\[  
\theta_\rho^{\mathrm{inv}}(a,l) = \frac{1}{2} \min_{w \in W} q_a^*(l + b_a(w-\rho/2,-))
 \]

holds. In particular, for each given $a \in C_\rho(W)$ we have 
\[
\theta^{\mathrm{inv}}_\rho(a,-)  =  \theta^{\mathrm{inv}}_0(a,-) \circ T_{b_a(\rho/2,-)},
\]
 where for all $m \in W_\rr^\vee$ we denote by $T_m \colon W_\rr^\vee \to W_\rr^\vee$ the translation by~$m$. For each given $a \in C_\rho(W)$, the function $ \theta_\rho^{\mathrm{inv}}(a,l)$ has minimum value equal to zero, and it is attained at $l=b_a(\rho/2,-)$.
\end{proposition}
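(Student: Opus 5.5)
The plan is a direct computation, modelled on Example~\ref{ex:inv_rho_vanish}. Fix $a \in C_\rho(W)$ and write $l \in \on{co-rad}(b_a)$ as $l = b_a(v,-)$ for some $v \in W_\rr$. This is possible by Definition~\ref{def:dual_quadratic}, and $v$ is unique modulo $\on{rad}(b_a)$. By construction $q_a^*(b_a(u,-)) = q_a(u)$ for all $u \in W_\rr$, and more generally $b_a^*(b_a(u,-),b_a(u',-)) = b_a(u,u')$. I will express both $\theta^{\mathrm{pl}}_\rho$ and $\theta^{\on{sm}}_\rho$ in terms of $v$ and then subtract.

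For $\theta^{\mathrm{pl}}_\rho$, use $a(w) = \frac12 b_a(w-\rho,w) = \frac12 q_a(w) - \frac12 b_a(\rho,w)$ and complete the square:
\[
a(w) + b_a(v,w) = \tfrac12 q_a\bigl(w + v - \rho/2\bigr) - \tfrac12 q_a(v - \rho/2).
\]
Hence $\theta^{\mathrm{pl}}_\rho(a,l) = \frac12 \min_{w\in W} q_a(w+v-\rho/2) - \frac12 q_a(v-\rho/2)$. For $\theta^{\on{sm}}_\rho$, Definition~\ref{def:eq_theta} gives
\[
\theta^{\on{sm}}_\rho(a,l) = -\tfrac12 q_a(v) + \tfrac12 b_a(v,\rho) - \tfrac18 q_a(\rho) = -\tfrac12 q_a(v-\rho/2).
\]
Subtracting, the constant terms cancel and
\[
\theta^{\mathrm{inv}}_\rho(a,l) = \tfrac12\min_{w\in W} q_a(v+w-\rho/2) = \tfrac12 \min_{w\in W} q_a^*\bigl(l + b_a(w-\rho/2,-)\bigr),
\]
which is the claimed formula. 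This is independent of the choice of $v$, since $q_a$ is invariant under translation by $\on{rad}(b_a)$.

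The translation statement then follows by comparing with the case $\rho = 0$. That case gives $\theta^{\mathrm{inv}}_0(a,l') = \frac12\min_w q_a^*(l' + b_a(w,-))$, and substituting $l' = l - b_a(\rho/2,-)$ recovers the formula above. So the correct sign of the translation is $\theta^{\mathrm{inv}}_\rho(a,l) = \theta^{\mathrm{inv}}_0(a, l - b_a(\rho/2,-))$. I must check this against the convention for $T_m$ in the statement: the minimum is attained at $l = b_a(\rho/2,-)$, so the translation in the statement should be read accordingly, and I would align the notation with this.

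Finally, the minimum claim. Since $q_a$ is positive semi-definite, every term is $\ge 0$, so $\theta^{\mathrm{inv}}_\rho \ge 0$. At $l = b_a(\rho/2,-)$, that is $v = \rho/2$, the term $w = 0$ gives $q_a(0) = 0$, so the minimum value $0$ is attained there. I expect no genuine obstacle. The only delicate points are the sign and normalisation bookkeeping in completing the square (the characteristic term $-\frac12 b_a(\rho,w)$ against the $\frac18 q_a(\rho)$ constant), and confirming that the representation of $l$ through $v$ is legitimate on the co-radical when $b_a$ is degenerate.
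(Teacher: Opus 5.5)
Your computation is correct and is essentially the paper's proof. The paper completes the square directly on the co-radical in terms of $q_a^*$, whereas you write $l=b_a(v,-)$ and complete the square in $q_a$; this is the same calculation. No reinterpretation of $T_m$ is needed, because $\theta^{\mathrm{inv}}_0(a,-)$ is invariant under $b_a(W,-)$ and $b_a(\rho/2,-)-\bigl(-b_a(\rho/2,-)\bigr)=b_a(\rho,-)$ with $\rho\in W$ (equivalently, replace $w$ by $w-\rho$ in the minimum). So your identity $\theta^{\mathrm{inv}}_\rho(a,l)=\theta^{\mathrm{inv}}_0(a,l-b_a(\rho/2,-))$ already gives the statement as written, with $T_m(l)=l+m$.
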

\begin{proof} We compute
\[ \begin{split}  \theta_\rho^{\mathrm{inv}} & (a,l)  = \min_{w \in W} \left\{ a(w) + l(w) + \frac{1}{2}q_a^*(l) -\frac{1}{2}l(\rho) + \frac{1}{8}q_a(\rho)    \right\} \\
& =  \min_{w \in W} \left\{ \frac{1}{2}q_a(w) - \frac{1}{2}b_a(\rho,w)  + l(w) + \frac{1}{2}q_a^*(l) -\frac{1}{2}l(\rho) + \frac{1}{8}q_a(\rho)   \right\} \\
& =  \frac{1}{2} \min_{w \in W} q_a^*(l + b_a(w,-) -b_a(\rho/2,-)). \end{split} \]

To see the last statement, note that $\theta^{\mathrm{inv}}_0(a,l) \ge 0$, and that $\theta^{\mathrm{inv}}_0(a,0) = 0$. 
\end{proof}

\begin{proposition} \label{prop:independence_mod_2} The function $ \theta_\rho^{\mathrm{inv}}$ only depends on the class of $\rho$ modulo $2\cdot W$. More precisely, we have $\theta^{\mathrm{inv}}_\rho(a,l) = \theta^{\mathrm{inv}}_{\rho'}(a',l)$ when $\rho - \rho' \in 2\cdot W$ and $a'(w) = a(w + (\rho-\rho')/2)$. As a result, the function $\theta^{\mathrm{inv}}$ descends to give a function on $\bigsqcup_{\overline{\rho} \in W/2\cdot W} \tilde C_\rho(W)$.
\end{proposition}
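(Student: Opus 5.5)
The plan is to use the closed formula of Proposition~\ref{prop:trop_theta_new} rather than the definitions of $\theta^{\mathrm{pl}}$ and $\theta^{\on{sm}}$ separately. Put $\delta = (\rho-\rho')/2 \in W$, so that $a'(w) = a(w+\delta)$. The first task is to confirm that $a'$ lies in $Q_{\rho'}(W)$ in the sense used here; this is exactly the canonical identification \eqref{eq:iso-quadratic}. Symmetry is a direct substitution:
\[
a'(\rho'-w) = a(\rho'-w+\delta) = a(\rho - (w+\delta)) = a(w+\delta) = a'(w).
\]
The condition $a'(0)=0$ holds only up to the additive constant $a(\delta)$, and this is harmless because $Q_\rho(W)$ was first defined modulo constants. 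I would say explicitly that we normalise $a'$ by subtracting $a(\delta)$. The associated bilinear forms agree, $b_{a'} = b_a$, since translating a quadratic function does not change its second difference. It follows that $q_{a'} = q_a$, $q_{a'}^* = q_a^*$ and $\on{co-rad}(b_{a'}) = \on{co-rad}(b_a)$, so the pair $(a',l)$ is a legitimate point of $\tilde C_{\rho'}(W)$.

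Next I apply Proposition~\ref{prop:trop_theta_new} to both sides:
\[
\theta^{\mathrm{inv}}_{\rho'}(a',l) = \tfrac12 \min_{w\in W} q_a^*\bigl(l + b_a(w-\rho'/2,-)\bigr).
\]
Since $\rho'/2 = \rho/2 - \delta$, this becomes $\tfrac12 \min_{w} q_a^*\bigl(l + b_a(w+\delta-\rho/2,-)\bigr)$. Because $\delta\in W$, the substitution $w\mapsto w-\delta$ is a bijection of $W$, and the minimum equals $\theta^{\mathrm{inv}}_\rho(a,l)$. The formula in Proposition~\ref{prop:trop_theta_new} involves only $b_a$, so the constant normalisation of $a'$ plays no role. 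For the same reason the statement is insensitive to the precise sign convention chosen for $\delta$.

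The step I expect to need the most care is the final ``descends'' statement. One has to check that the identification $(a,l)\mapsto(a',l)$ is the same one that the admissible decompositions and Definition~\ref{def:Ng-adm-pola}(2) use to glue $\tilde C_\rho(W)$ with $\tilde C_{\rho'}(W)$. In particular, the $l$-coordinate must not be shifted in that identification. I would verify this by comparing with \eqref{eq:iso-quadratic} and by noting that the $l$-coordinate represents the tropical point, which is independent of $\rho$.

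With that verified, $\theta^{\mathrm{inv}}$ is compatible with all the identifications, so it defines a function on $\bigsqcup_{\bar\rho\in W/2W}\tilde C_\rho(W)$. As a consistency check, I would also confirm that the minimum point $l = b_a(\rho/2,-)$ is not preserved under the gluing, while the minimum value $0$ is, and that this is consistent with the identity just proved.
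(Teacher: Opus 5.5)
Your proof is correct and is the paper's argument: $b_{a'}=b_a$ (so $q_{a'}^*=q_a^*$), then the closed formula of Proposition~\ref{prop:trop_theta_new} with $\rho'/2=\rho/2-\delta$, $\delta\in W$, and a reindexing of the minimum. Two of your side remarks should be corrected, though neither affects the argument. First, the normalisation $a'(0)=0$ is not irrelevant: that formula was derived under it, and for the unnormalised $a'$ one gets $\theta^{\mathrm{pl}}_{\rho'}(a',l)-\theta^{\on{sm}}_{\rho'}(a',l)=\theta^{\mathrm{inv}}_\rho(a,l)+a(\delta)$. Second, your worry about the gluing is unnecessary, because $\theta^{\mathrm{inv}}_{\rho'}(a',-)$ is invariant under $b_a(W,-)$, so a lattice shift of $l$ would be harmless; for the same reason the set of minimisers $b_a(\rho/2+W,-)$ is the same for $\rho$ and $\rho'$, not moved.
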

\begin{proof} Note that $b_a = b_{a'}$ and $q_a = q_{a'}$. For $\rho' - \rho = 2v$ with $v \in W$ we have  the equality $ b_a(\rho'/2,-) -b_a(\rho/2,-) =  b_a(v,-)$ in $W_\rr^\vee$. Now use the explicit formula in Proposition~\ref{prop:trop_theta_new}.
\end{proof}

\subsection{Choosing a decomposition}\label{sec:choosing-decomposition}

The goal of this section is to define an $N_{g,1}$-admissible cone decomposition induced from the tropical theta functions with characteristics defined in the previous section. It is a variation of the mixed Delaunay--Voronoi decomposition given by Namikawa in \cite[Section 9]{Na}, see also \cite[Section~1]{an}. 

As before, let $W$ be a free abelian group of rank $\leq g$, and let $\rho \in W$.
The following is a slight variation of the classical definition of a
Delaunay cell corresponding to a quadratic form, cf.\ \cite[Definition
1.2]{an}.

\begin{definition}
Fix an element $a \in C_{\rho}(W)$. Given $l \in W_{\R}^\vee$, we say that $y \in W$ is $l$-\emph{nearest} (with respect to $a$) if 
\[
\theta_\rho^{\mathrm{pl}}(a, l)  = a(y) + l(y).
\]
An $a$-\emph{Delaunay cell} is the closed convex hull in $W_\rr$ of all lattice elements in $W$ which are $l$-nearest for some given $l \in W_{\R}^\vee$.
\end{definition}

For a given $a$-Delaunay cell $D$ in $W_\rr$, a corresponding element $l \in W_{\R}^\vee$ is uniquely determined if and only if $D$ has maximal possible dimension. Furthermore, the collection of $a$-Delaunay cells constitutes a locally finite decomposition $\mathrm{Del}^W_\rho(a)$ of $W_{\R}$ into  convex lattice polytopes. By construction, the decomposition is invariant under translations by the lattice $W$. 

\begin{definition}
For a given $a$-Delaunay cell $D$ consider the set of all $l \in W_{\R}^\vee$ defining $D$. These form a locally closed convex polytope, whose closure is called the $a$-\emph{Voronoi} cell, denoted by $\widehat{D} = V(D)$. The collection of $a$-Voronoi cells constitutes the \emph{Voronoi cell decomposition} of $W_{\R}^\vee$. 
\end{definition}
\begin{remark} Voronoi polytopes are not lattice polytopes, unlike Delaunay polytopes. In fact,  Voronoi decompositions vary continuously with respect to $a$, unlike Delaunay decompositions.
\end{remark} 
\begin{remark}
As in the classical case, the $a$-Delaunay cells and the $a$-Voronoi cells are dual to each other in the sense of \cite[Lemma 1.5]{an}. 
\end{remark}

\begin{definition}\label{def:decomposition}
For $a \in C_{\rho}(W)$ we set $\sigma(a)^{\circ}$ to be the (open) cone defined by
\[
 \left\{a' \in C_{\rho}(W) \; | \; \mathrm{Del}^W_\rho(a) = \mathrm{Del}^W_\rho(a') \right\}
\]
and we define
\[
\sigma(a) \coloneqq \overline{\sigma(a)^{\circ}}.
\]
We further denote by $\Sigma_{\rho}(W)^{{\mathrm{Del}}}$ the collection 
\[
\Sigma_{\rho}(W)^{{\mathrm{Del}}} \coloneqq \left\{\sigma(a) \; | \; a \in C_{\rho}(W) \right\}.
\]
\end{definition}
The relative interior of $\sigma(a)$ is $\sigma(a)^{\circ}$, which justifies the notation.

\begin{proposition}\label{prop:adm-cone}
The sets $\Sigma_{\rho}(W)^{{\mathrm{Del}}}$, where $W$  ranges over all free abelian groups of rank $\leq g$ and all $\rho \in W$ form an $N_g$-admissible cone decomposition in the sense of Definition \ref{def:N_admissible}. We denote it by $\Sigma^{{\mathrm{Del}}}$. 
\end{proposition}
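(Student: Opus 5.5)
Reduce to the classical second Voronoi (Delaunay) decomposition of the cone of positive semi-definite forms with rational radical, transported along the isomorphism of Lemma~\ref{lem:iso_2}. Fix $W$ and $\rho$.

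\textbf{Step 1: the Delaunay decomposition depends only on $b_a$.} The affine functions $w \mapsto a(w)+l(w)$ are, by the identity $a(w)=\tfrac12 q_a(w)-\tfrac12 b_a(\rho,w)$, exactly the functions $\tfrac12 q_a(w) + l'(w)$ with $l' = l - b_a(\rho/2,-)$. Since $l$ ranges over all of $W_\rr^\vee$, so does $l'$. Hence the set of $l$-nearest points for $a$ coincides with the set of $l'$-nearest points for the quadratic form $\tfrac12 q_a$ in the classical sense of \cite[Definition 1.2]{an}. Therefore $\mathrm{Del}^W_\rho(a)$ equals the classical Delaunay decomposition $\mathrm{Del}(b_a)$. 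Consequently $\sigma(a)$ is the preimage, under the linear injection $Q_\rho(W)_\rr \to S_\rr(W)$ of \eqref{eq:natural_map}, of the classical second Voronoi cone $\sigma^{\mathrm{cl}}(b_a)$. This map identifies $C_\rho(W)$ with the cone of positive semi-definite forms with rational radical.

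\textbf{Step 2: the cone axioms.} Voronoi showed that the classical collection $\{\sigma^{\mathrm{cl}}(b)\}$ is a locally finite decomposition of that cone into strongly convex rational polyhedral cones, closed under faces and with face-wise intersections, and with finitely many $\mathrm{Aut}(W)$-orbits \cite[Section 9]{Na}, \cite{an}. These properties transfer through the rational linear isomorphism of Step 1. Rationality is preserved because the map is defined over $\zz$ and is an isomorphism over $\qq$, with cokernel of $2$-power order. This gives each $\Sigma_\rho(W)^{\mathrm{Del}}$ the required properties.

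\textbf{Step 3: compatibilities in Definition~\ref{def:N_admissible}.}

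For a change of characteristic $\rho'=\rho+2v$, the isomorphism \eqref{eq:iso-quadratic} sends $a$ to $a(\cdot - v)$. This preserves $b_a$, so by Step 1 it maps cones to cones.

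For a surjection $f\colon W'\to W$, pullback sends $a$ to $a\circ f$, with $b_{a\circ f}=f^*b_a$. It remains to check $\mathrm{Del}(f^*b)=f^{-1}$-pullback of $\mathrm{Del}(b)$, i.e.\ Delaunay cells of the degenerate form are preimages (cylinders over the radical direction) of cells for $b$ on $W$. This is the classical compatibility of second Voronoi cones with the rational boundary components, \cite[\S 9]{Na}. Since $f^*b$ determines $b$, the equivalence classes $\{a:\mathrm{Del}\text{ fixed}\}$ pull back to equivalence classes, and hence closures pull back to cones.

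\textbf{Expected obstacle.} The main difficulty is Step 3 in the degenerate case. For $b$ with nontrivial rational radical, one must be careful that the Delaunay decomposition is well defined and locally finite. The fix is to work on $W/\mathrm{rad}$, since a minimum over $W$ with $l\in\on{co-rad}(b_a)$ is invariant under the radical. One then matches cones across different radicals, making sure that faces of the closures $\sigma(a)$ land in the boundary strata correctly. I would handle this by reducing explicitly to the quotient lattice and citing the classical statement there, rather than redoing Voronoi's theory.
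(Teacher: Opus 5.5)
Your proof is correct and follows essentially the same route as the paper. Item (1) is reduced to the classical second Voronoi decomposition for $A_g$, and the compatibility with a surjection $f$ comes from observing that the nearest-point sets for $a\circ f$ are the $f$-preimages of those for $a$; the paper checks this directly from the definition rather than citing Namikawa. Your completing-the-square remark (via $l\mapsto l-b_a(\rho/2,-)$) shows that $\mathrm{Del}^W_\rho(a)$ is literally the classical Delaunay decomposition of $b_a$. This is a useful sharpening of the paper's ``follows as in the case of $A_g$'', and it makes the compatibility with the change of characteristic $\rho\mapsto\rho+2v$ immediate.
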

\begin{proof}
We need to show:
\begin{enumerate}
\item for each $W$ and $\rho$ as in the statement, the collection $\Sigma_{\rho}(W)^{{\mathrm{Del}}}$ forms a locally-finite decomposition of $C_{\rho}(W)$ into strongly convex rational polyhedral cones satisfying that the intersection of any two cones is a face of both.
\item let $f \colon W \to W'$ be a surjection and let $\rho' \in 2W'$. Recall the map
\[ f^*\colon \tilde C_{f(\rho)+\rho'}(W') \to \tilde C_\rho(W); \quad (a, l) \mapsto (a \circ f, l \circ f). \]
Then 
\[
\left(f^*\right)^{-1}\left(\Sigma_{\rho}(W)^{{\mathrm{Del}}}\right) = \Sigma_{f(\rho)+\rho'}(W')^{{\mathrm{Del}}}.
\]
\end{enumerate}
The first item follows as in the case of $A_g$ (see e.\,g.\,\cite[Theorem 9.9]{Na}, or \cite[Chapter VI, Remark~1.6~b)]{fc}). For the second item, we have to show that $a, a' \in \left(f^*\right)^{-1}(\sigma)$ for $\sigma \in \Sigma_{\rho}(W)^{{\mathrm{Del}}}$ if and only if $a, a' \in \sigma'$ for $ \sigma' \in \Sigma_{f(\rho) + \rho'}(W')^{{\mathrm{Del}}}$.

Assume that $a, a' \in \left(f^*\right)^{-1}(\sigma)$ for some $\sigma \in \Sigma_{\rho}(W)^{{\mathrm{Del}}}$. Then $a \circ f, a'\circ f \in \sigma$ which implies that the Delaunay decompositions $\mathrm{Del}^W_\rho(a \circ f)$ and $\mathrm{Del}^W_\rho(a' \circ f)$ coincide. Let then $l, l' \in W^\vee_{\R}$ be such that 
\begin{align}\label{eq:delaunay}
& \on{ConvHull}\left\{x \in W \; | \; \theta_\rho^{\mathrm{pl}}(a \circ f, l) = (a \circ f)(x) + l(x)\right\} \nonumber \\  &= \on{ConvHull}\left\{x' \in W \; | \; \theta_\rho^{\mathrm{pl}}(a' \circ f, l') = (a' \circ f)(x') + l'(x')\right\}.
\end{align}
Write $f^* \colon (W')^\vee \to W^\vee$ for the dual map (and also for the induced map after tensoring by $\R$). Set $\tilde{l} \coloneqq (f^*)^{-1}(l)$ and $\tilde{l}' \coloneqq (f^*)^{-1}(l')$ in $(W')^\vee_{\R}$.
Applying $f$ to \eqref{eq:delaunay} we obtain the equality of convex hulls
\begin{align}\label{delaunay2}
& \on{ConvHull}\left\{\tilde{x} \in W' \; | \; \theta_{f(\rho)}^{\mathrm{pl}}(a, \tilde{l}) = a (\tilde{x}) + \tilde{l}(\tilde{x})\right\} \nonumber \\  &= \on{ConvHull}\left\{\tilde{x}' \in W' \; | \; \theta_{f(\rho)}^{\mathrm{pl}}(a', \tilde{l}') = a' (\tilde{x}') + \tilde{l}'(\tilde{x}')\right\}.
\end{align}
Hence, by the isomorphism $Q_{f(\rho)}(W') \simeq Q_{f(\rho)+ \rho'}(W')$ in \eqref{eq:iso-quadratic} we conclude that 

\[
\mathrm{Del}_{f(\rho)+ \rho'}^{W'}(a) = \mathrm{Del}_{f(\rho)+ \rho'}^{W'}(a'),
\]
as we wanted to show. Finally, we note that the implications above are actually  equivalences, and we obtain the result. 
\end{proof}

We can now define an $N_{g,1}$-admissible cone decomposition by combining the $N_g$-admissible cone decomposition from Proposition \ref{prop:adm-cone} with the polyhedral decompositions of $W_{\R}^\vee$ induced by the Voronoi decomposition.

\begin{definition}\label{def:decomposition2}

Let $W$ and $\rho$ be as above. 
For a cone $\sigma \in \Sigma_{\rho}(W)^{{\mathrm{Del}}}$ and a Delaunay cell $D = \on{ConvHull}(x_1, \dotsc, x_{\ell})$ with $x_i \in W$, we define the mixed Delaunay--Voronoi cone $\widetilde{\sigma}(D)$ by
\[
 \left\{(a, l) \in \sigma \times W_{\R}^\vee \; | \; \frac{1}{2}b_a(x-x_i, x+x_i-\rho)+ l(x-x_i)  \geq 0 \; \forall  x \in W \; \forall i \in \{1, \dotsc, \ell\} \right\},
\]
where $b_a$ denotes the symmetric bilinear form associated to $a$.  The collection of mixed Delaunay--Voronoi cones $\left\{\widetilde{\sigma}(D)\right\}_{\sigma, D}$ as we vary over all $\sigma \in \Sigma_{\rho}(W)^{{\mathrm{Del}}}$ and all $a$-cells $D$ for $a \in \sigma^{\circ}$ forms a decomposition of $C_\rho(W) \times W_{\R}^\vee$. We denote it by $\widetilde{\Sigma}_{\rho}(W)^{{\mathrm{mDV}}}$.

\end{definition}
\begin{proposition}\label{rem:fiber-voronoi}
Consider the projection $p \colon C_{\rho}(W) \times W_{\R}^\vee \to C_{\rho}(W)$. For each cone $\sigma \in \Sigma_{\rho}(W)^{{\mathrm{Del}}}$ and each corresponding Delaunay cell $D$, the mixed Delaunay--Voronoi cone $\widetilde{\sigma}(D)$ is mapped onto $\sigma$. The fiber over $a \in \sigma^{\circ}$ is exactly the union of $a$-Voronoi cells $\widehat{D} = V(D)$ indexed by $x \in W $. 
\end{proposition}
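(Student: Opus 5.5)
The plan is to unwind the definition of $\widetilde\sigma(D)$ fibrewise over $C_\rho(W)$. Fix $a \in \sigma^{\circ}$, so that $\mathrm{Del}^W_\rho(a)$ is the Delaunay decomposition attached to $\sigma$ and $D = \on{ConvHull}(x_1,\dots,x_\ell)$ is one of its cells. I first rewrite the defining inequalities using the identity
\[ a(x) - a(x_i) = \tfrac12 b_a(x-x_i, x+x_i-\rho), \]
which follows directly from $a(v) = \tfrac12 b_a(v-\rho,v)$ by expanding. Then $(a,l)$ satisfies all the inequalities exactly when $a(x)+l(x) \ge a(x_i)+l(x_i)$ for all $x\in W$ and all $i$. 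In other words, every vertex $x_i$ of $D$ is $l$-nearest, i.e.\ $\theta^{\mathrm{pl}}_\rho(a,l) = a(x_i)+l(x_i)$ for every $i$.

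Next I identify the set of such $l$. By definition, the $l$ for which the set of $l$-nearest points spans exactly $D$ form a locally closed polytope with closure $V(D)$. If all $x_i$ are $l$-nearest, then $D$ is contained in the Delaunay cell $D_l$ determined by $l$. Conversely, $V(D)$ is the set of $l$ with $D \subseteq D_l$, by the Delaunay--Voronoi duality (\cite[Lemma 1.5]{an}): $l \in V(D)$ exactly when $D$ is a face of $D_l$. Hence the fibre of $\widetilde\sigma(D)$ over $a$ is the Voronoi cell $\widehat D = V(D)$. Because the decomposition is $W$-invariant, the lattice translates $D+x$ are also cells, and their Voronoi cells are the translates of $\widehat D$ by $b_a(x,-)$, using the cocycle of Lemma~\ref{lem:cocycle_theta}. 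This yields the description of the fibre as the union of $a$-Voronoi cells indexed by $x \in W$, matching how the statement indexes the cone family by translates. Here I must be careful to interpret $\widetilde\sigma(D)$ as the union over the $W$-orbit of $D$, consistent with the translation invariance required in Definition~\ref{def:N_admissible_univ}.

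For surjectivity onto $\sigma$, I note that each fibre over $a\in\sigma^\circ$ is nonempty, since $V(D)\neq\emptyset$ for every cell $D$. The cone $\widetilde\sigma(D)$ is closed and cut out by linear inequalities in $(a,l)$. Its projection therefore contains $\sigma^\circ$, and since $p(\widetilde\sigma(D))\subseteq\sigma\times\cdot$ by definition, the projection is contained in $\sigma$. To conclude surjectivity onto all of $\sigma = \overline{\sigma^\circ}$, I take a boundary point $a_0$ and a sequence $a_n\to a_0$ with $a_n \in \sigma^\circ$, and choose $l_n \in V_{a_n}(D)$ bounded. This is possible because Voronoi cells vary continuously with $a$: for example, take $l_n = $ the barycentric combination determined by equalising the $a_n$-values at the vertices. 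A limit point $l_0$ then satisfies the closed inequalities.

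The main obstacle is this last step, namely keeping $l_n$ bounded when $b_{a_0}$ degenerates. If $a_0$ has a nontrivial radical, then $V_{a_n}(D)$ may escape to infinity in the directions dual to the radical. I would first try $l=-\,b_{a}(\bar x,-)$ twisted to make the $a$-values of the $x_i$ equal. Failing that, I would project to $\on{co-rad}(b_{a_0})$ and use the pullback compatibility of Proposition~\ref{prop:adm-cone} to reduce to the quotient lattice, where $a_0$ is nondegenerate.
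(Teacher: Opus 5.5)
Your first two paragraphs are the paper's proof. The identity $a(x)-a(x_i)=\frac12 b_a(x-x_i,x+x_i-\rho)$ turns the inequalities defining $\widetilde\sigma(D)$ into ``every $x_i$ is $l$-nearest''. Hence for $a\in\sigma^{\circ}$ the fibre of $\widetilde\sigma(D)$ is the single closed cell $V(D)$, cut out by the inequalities indexed by $x\in W$.

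You should not then reinterpret $\widetilde\sigma(D)$ as a union over the $W$-orbit of $D$. That contradicts Definition~\ref{def:decomposition2} and the conclusion you had just reached. The resulting set is also not convex in general: for a full-dimensional $D$ each $V(D)$ is a point, so the union of its translates is an infinite discrete set in each fibre. So it could not be a cone of $\widetilde\Sigma_{\rho}(W)^{\mathrm{mDV}}$.

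The genuine gap is surjectivity $p(\widetilde\sigma(D))=\sigma$ at boundary points, which you leave open; the paper's one-line proof does not spell it out either. The boundedness problem you anticipate disappears if you make your ``equalising'' choice of $l$ with respect to a \emph{maximal} cell.

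\begin{itemize}
\item Note first that $b_a$ is positive definite on $\sigma^{\circ}$; this is implicit in writing $D$ as the convex hull of finitely many lattice points.
\item Pick a full-dimensional cell $D'\in\mathrm{Del}^W_\rho(a)$ having $D$ as a face. It exists because the Delaunay decomposition is a locally finite polyhedral subdivision of $W_\R$. Choose affinely independent vertices $y_0,\dots,y_r$ of $D'$, where $r=\on{rk}W$.
\item Let $l(a)\in W_\R^\vee$ be the unique solution of $l(y_j-y_0)=a(y_0)-a(y_j)$ for $1\le j\le r$. It depends linearly on $a$ on all of $Q_\rho(W)_\R$.
\item Since $\mathrm{Del}^W_\rho$ is constant on $\sigma^{\circ}$ and $V(D')$ is a single point, that point is $l(a)$ for every $a\in\sigma^{\circ}$.
\item By convexity of $a+l(a)$, every lattice point of $D'$ is $l(a)$-nearest, in particular every $x_i$. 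Thus $(a,l(a))\in\widetilde\sigma(D)$.
\item The defining inequalities are closed and linear in $(a,l)$. So they persist for $a\in\overline{\sigma^{\circ}}=\sigma$, and $a\mapsto(a,l(a))$ is a section of $p$ over all of $\sigma$.
\end{itemize}

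No sequences, no control of the whole cells $V_{a_n}(D)$, and no passage to the quotient lattice are needed. For a degenerate $a_0\in\sigma$, the inequalities with $x-x_i$ in the radical of $b_{a_0}$ automatically force $l(a_0)\in\on{co-rad}(b_{a_0})$.
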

\begin{proof} This follows from the equivalence
\[
a(x_i) + l(x_i) \leq a(x) + l(x) \Leftrightarrow \frac{1}{2}b_a(x-x_i, x+x_i-\rho)+ l(x-x_i)  \geq 0,
\]
where, again, $b_a$ is the symmetric bilinear form associated to $a$ as above. 
\end{proof}
\begin{proposition}\label{prop:decomposition-over}
Ranging over all free abelian groups $W$ of rank $\leq g$ and $\rho \in W$, the collection $\widetilde{\Sigma}^{{\mathrm{mDV}}}\coloneqq \left\{\widetilde{\Sigma}_{\rho}(W)^{{\mathrm{mDV}}}\right\}$ forms an $N_{g,1}$-admissible cone decomposition over $\Sigma^{{\mathrm{Del}}}$. Moreover, the $pl$-tropical theta function $\theta^{\mathrm{pl}}$ defines an $N_{g,1}$-admissible principal polarisation function with respect to $\widetilde{\Sigma}^{{\mathrm{mDV}}}$.  
\end{proposition}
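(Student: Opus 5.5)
The proof has two parts. First, $\widetilde\Sigma^{\mathrm{mDV}}$ satisfies the axioms of Definition~\ref{def:N_admissible_univ}. Second, $\theta^{\mathrm{pl}}$ satisfies conditions (1)--(5) of Definition~\ref{def:Ng-adm-pola}. The organising observation is that the cones $\widetilde\sigma(D)$ are exactly the maximal domains of linearity of $\theta^{\mathrm{pl}}_\rho$ over the Delaunay cones. By Proposition~\ref{rem:fiber-voronoi}, on $\widetilde\sigma(D)$ the minimum in \eqref{def:theta_pl} is attained at a vertex $x_i$ of $D$. Hence $\theta^{\mathrm{pl}}_\rho(a,l)=a(x_i)+l(x_i)$ there, and this is linear in $(a,l)$.

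For the first part, each $\widetilde\sigma(D)$ is cut out by the inequalities $\frac12 b_a(x-x_i,x+x_i-\rho)+l(x-x_i)\ge 0$, which are linear in $(a,l)$ with rational coefficients. After restricting to $\sigma$, only finitely many of these are needed, since $D$ is a bounded polytope and its neighbours in the Delaunay decomposition are finite up to translation. So $\widetilde\sigma(D)$ is a rational polyhedral cone. It is strongly convex because $\sigma$ is, and because the fibre over $a=0$ is $\{l: l(x-x_i)\ge0\ \forall x\}=\{0\}$.

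Face and intersection properties follow from the classical argument for the mixed Delaunay--Voronoi decomposition (\cite[Theorem 9.9]{Na}, \cite[\S1]{an}). The point is that faces of $\widetilde\sigma(D)$ correspond to pairs consisting of a face of $\sigma$ and a Delaunay cell containing $D$, via the duality \cite[Lemma 1.5]{an}. The decomposition covers $\tilde C_\rho(W)$ because the Voronoi cells cover $\on{co-rad}(b_a)$, and local finiteness follows from local finiteness of $\Sigma^{\mathrm{Del}}$ (Proposition~\ref{prop:adm-cone}).

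Compatibility with surjections $f\colon W\to W'$ and with the shift $\rho\mapsto\rho+\rho'$, $\rho'\in 2W'$, is proved exactly as in Proposition~\ref{prop:adm-cone}. For that shift, note that $w\mapsto w-\rho'/2$ identifies the minimisation problems. Translation invariance under \eqref{eq:W_translation} follows from the substitution $w\mapsto w-v$ used in Lemma~\ref{lem:cocycle_theta}: it sends the $l$-nearest set to the $(l+b_a(v,-))$-nearest set translated by $v$, and so maps $\widetilde\sigma(D)$ to $\widetilde\sigma(D+v)$.

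For the second part, condition (1) follows from the linearity observation above. Integrality holds because $a(x_i)\in\zz$ and $l(x_i)\in\zz$ at lattice points. Condition (3) holds because $\theta^{\mathrm{pl}}_\rho$ is a minimum of the linear functions $(a,l)\mapsto a(w)+l(w)$, hence concave. Condition (5) is Lemma~\ref{lem:cocycle_theta}. Condition (2) is the reindexing $w\mapsto w+(\rho-\rho')/2$, which is compatible with \eqref{eq:iso-quadratic}. For condition (4), we have $\min_{w\in W}\{a(f(w))+l(f(w))\}=\min_{w'\in W'}\{a(w')+l(w')\}$ because $f$ is surjective. Continuity on all of $\tilde C_\rho(W)$, including points where $b_a$ is degenerate, reduces to (4) applied to the projection $W\to W/\on{rad}(b_a)$, since $l$ vanishes on the radical.

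The main obstacle is the first part: proving that the $\widetilde\sigma(D)$ are genuine polyhedral cones that meet along common faces. This is where the classical Namikawa argument has to be imported, and I would check that the linear shift by $-\frac12 b_a(\rho,\cdot)$ leaves its combinatorics unchanged. The cleanest route is to write $a(w)=\frac12 q_a(w-\rho/2)-\frac18 q_a(\rho)$. Then the $\rho$-Delaunay decomposition is the classical Delaunay decomposition of the translated lattice $W-\rho/2$, and the classical proofs apply verbatim.
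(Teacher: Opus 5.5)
Your proposal follows the paper's proof. The cone-decomposition part is reduced to Proposition~\ref{prop:adm-cone}, Proposition~\ref{rem:fiber-voronoi} and the classical Namikawa argument. Items (1)--(5) of Definition~\ref{def:Ng-adm-pola} are checked by the same observations: linearity of $a(x_i)+l(x_i)$ on $\widetilde\sigma(D)$, integrality, concavity of a minimum of linear functions, surjectivity of $f$, and Lemma~\ref{lem:cocycle_theta}. The paper deduces (2) from Proposition~\ref{prop:independence_mod_2} rather than by reindexing, which makes no real difference. Your rewriting $a(w)=\frac12 q_a(w-\rho/2)-\frac18 q_a(\rho)$ is a good way to justify importing the classical combinatorics, which the paper leaves implicit.

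One auxiliary claim is wrong: Euclidean continuity of $\theta^{\mathrm{pl}}_\rho$ on all of $\tilde C_\rho(W)$ does not follow from (4), and in fact fails.
\begin{itemize}
\item Take $W=\zz$, $\rho=0$, $a(w)=\alpha w^2$. Then $\alpha w^2+2\sqrt{\alpha}\,w=\alpha(w+\alpha^{-1/2})^2-1$, so $\theta^{\mathrm{pl}}_0(\alpha,2\sqrt{\alpha})\in[-1,-1+\alpha/4]$. This tends to $-1$ as $(\alpha,2\sqrt\alpha)\to(0,0)$, whereas $\theta^{\mathrm{pl}}_0(0,0)=0$.
\item Property (4) only identifies the values on a degenerate stratum with the theta function of $W/\on{rad}(b_a)$. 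It gives no control when you approach that stratum from non-degenerate points, where infinitely many cones $\widetilde\sigma(D)$ accumulate.
\item The continuity that is actually needed is continuity on each closed cone of the decomposition. This is automatic from linearity of $\theta^{\mathrm{pl}}_\rho$ on each $\widetilde\sigma(D)$. It is also the sense in which the paper calls $\theta^{\mathrm{sm}}$ continuous, even though $\theta^{\mathrm{sm}}_0(\alpha,2\sqrt\alpha)=-1$ as well.
\end{itemize}
So drop that sentence and replace it with this observation.
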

\begin{proof}
The first part of the proposition follows from Proposition \ref{prop:adm-cone} and Proposition~\ref{rem:fiber-voronoi}.  
It remains to check items (1)--(5) in Definition~\ref{def:Ng-adm-pola}. As to item (1), we have seen above that the regions of linearity of $\theta^{\mathrm{pl}}$ define the $N_{g,1}$-admissible cone decomposition $\widetilde{\Sigma}^{{\mathrm{mDV}}}$. Proposition~\ref{prop:independence_mod_2} implies that item (2) holds. It is clear that item (3) is verified. From Definition~\ref{def:theta-char} one immediately checks that for all surjections $f \colon W \to W'$, for all $\rho \in W$, for all $a \in C_{f(\rho)}(W')$, and for all $l \in \Hom(W',\rr)$ we have the equality $\theta_{W,\rho}^{\mathrm{pl}}(a \circ f, l \circ f) = \theta^{\mathrm{pl}}_{W',f(\rho)}(a,l)$. This verifies item (4). Finally, the $W$-cocycle condition in item (5) is verified in Lemma~\ref{lem:cocycle_theta}. 
\end{proof}
\begin{definition} \label{def:comp_N} We denote by $N_g^{\on{Del}}$ the log smooth, proper algebraic stack corresponding to $\Sigma^{\on{Del}}$. 
Similarly, we denote by $N_{g,1}^{\mathrm{mDV}} $ the log smooth, proper algebraic stack corresponding to $\widetilde{\Sigma}^{{\mathrm{mDV}}}$. We have a natural structure map $\pi \colon N_{g,1}^{\mathrm{mDV}}  \to N_g^{\on{Del}}$.
\end{definition}
\begin{corollary} The function $-\theta^{\mathrm{pl}}$ is a section of the tropicalisation $\llb L^\trop$ of the universal log line bundle $\llb L$ on $N_{g,1}^{\mathrm{mDV}}$.
\end{corollary}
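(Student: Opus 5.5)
This follows directly by combining Proposition~\ref{prop:decomposition-over} with Lemma~\ref{lem:tropical-section}. First, Proposition~\ref{prop:decomposition-over} states that $\theta^{\mathrm{pl}}$ is an $N_{g,1}$-admissible principal polarisation function with respect to $\widetilde{\Sigma}^{\mathrm{mDV}}$: conditions (1)--(5) of Definition~\ref{def:Ng-adm-pola} hold. In particular, it takes integral values on the lattice points of $\tilde C_\rho(W)$ and is linear on each mixed Delaunay--Voronoi cone. Second, Lemma~\ref{lem:tropical-section}, applied with $\widetilde\Sigma = \widetilde\Sigma^{\mathrm{mDV}}$ and $\phi = \theta^{\mathrm{pl}}$, then gives that $-\theta^{\mathrm{pl}}$ is a section of $\llb L^\trop$ on $N_{g,1}^{\widetilde\Sigma^{\mathrm{mDV}}} = N_{g,1}^{\mathrm{mDV}}$ (Definition~\ref{def:comp_N}).

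For the reader's orientation, I would also spell out why the lemma applies. Over a cone of $N_{g,1}^{\mathrm{mDV}}$, an element of $\llb L^\trop$ is represented on the universal cover $\tilde C_\rho(W)$ by a function satisfying the $W$-equivariance prescribed by the descent cocycle $a(w)+l(w)$. That this is the correct cocycle is the content of the proof of Lemma~\ref{lem:tropical-section}, which matches the characteristic $\rho$ with $\kappa$ via Definition~\ref{def:N_g_compatibility}. Lemma~\ref{lem:cocycle_theta} shows that $\theta^{\mathrm{pl}}$ satisfies exactly this cocycle up to sign. Integrality and linearity on cones make the resulting section strict PL rather than merely conical. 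Compatibility with pullbacks along surjections $W\to W'$ and with changing $\rho$ by $2W$ (items (2) and (4)) makes the local sections glue across the cone stack.

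No step is a real obstacle, since everything is contained in the two cited results. The only point I would check carefully is the sign convention: a polarisation function $\phi$ gives the section $-\phi$. This is consistent with the convention in Lemma~\ref{lem:cocycle_smooth}, where $-\theta^{\on{sm}}$ is a conical section and $\theta^{\on{sm}}$ satisfies the same cocycle as $\theta^{\mathrm{pl}}$.
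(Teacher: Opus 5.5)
Your proposal is correct and matches the paper's proof: the paper also deduces the corollary by applying Lemma~\ref{lem:tropical-section} with $\phi=\theta^{\mathrm{pl}}$, which Proposition~\ref{prop:decomposition-over} shows is an $N_{g,1}$-admissible principal polarisation function for $\widetilde{\Sigma}^{\mathrm{mDV}}$. Your extra orientation remarks are consistent with the paper; note only that Lemma~\ref{lem:cocycle_theta} gives item (5) of Definition~\ref{def:Ng-adm-pola} exactly, with no sign adjustment needed.
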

\begin{proof} Given Proposition~\ref{prop:decomposition-over}, this is now a special case of Lemma~\ref{lem:tropical-section}.
\end{proof}

\section{Log b-line bundles and log adelic line bundles} \label{sec:log_b_divisors}

The aim of this section is to connect the language of log b-line bundles, as developed in Section~\ref{sec:b-line-bundles}, with the language of adelic line bundles, as developed by Yuan--Zhang in their book \cite{yz}. The current paper is partly motivated by understanding, from a more explicit point of view, the following special case of a result from loc.\ cit. 
\begin{theorem} (Yuan--Zhang \cite[Theorem~6.1.3]{yz}) \label{thm:YZ_invariant} Let $S$ be a flat and quasi-projective integral scheme over $\zz$. Let $\pi \colon A \to S$ be an abelian scheme, and let $L$ be a symmetric, rigidified line bundle on $A$. There exists an adelic line bundle $\overline{L}$ on $A$ extending $L$ and satisfying $[2]^*\overline{L} \cong \overline{L}^{\otimes 4}$. The adelic line bundle $\overline{L}$ is uniquely determined by the rigidification. Moreover, for all $m \in \zz$ we have an isomorphism $[m]^*\overline{L} \cong \overline{L}^{\otimes m^2}$ of adelic line bundles on $A$.
\end{theorem}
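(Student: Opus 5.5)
This is Yuan--Zhang's Tate limit argument, transplanted to adelic line bundles. Adelic line bundles on $A$ are completions of the groupoid of model line bundles on projective models of $A$ over $\zz$. The completion is for the boundary topology given by a fixed effective boundary divisor $D_0$ on a projective compactification of $S$. So the plan is to build $\overline{L}$ as a limit of pullbacks and exploit the contraction that $[2]^*$ induces on error terms.

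\emph{Setup.} Since $S$ is quasi-projective and flat over $\zz$, choose a projective model $\bar S$ with boundary divisor $D_0$. Also choose a projective model $\mathcal A$ of $A$ over $\bar S$ and a model line bundle $\mathcal L_0$ extending $L$, rigidified along the closure of the zero section (rigidification will enter through uniqueness only). The isomorphism $[2]^*L\cong L^{\otimes 4}$ on $A$, which is normalised by the rigidifications, gives a rational section $s$ of $\mathcal L_1\otimes\mathcal L_0^{\otimes -4}$, where $\mathcal L_1=[2]^*\mathcal L_0$ lives on a model $\mathcal A_1$ resolving $[2]$. This section is a unit on $A$. Hence its divisor is supported over $D_0$ and is bounded: $-cD_0\le \on{div}(s)\le cD_0$ for some $c$.

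\emph{Tate limit.} Define $\mathcal L_n=4^{-n}([2^n]^*\mathcal L_0)$ as $\qq$-line bundles on models $\mathcal A_n$. Then $\mathcal L_{n+1}-\mathcal L_n=4^{-n-1}[2^n]^*(\mathcal L_1-4\mathcal L_0)$. The boundary divisor $D_0$ is pulled back from the base, and $[2^n]$ is a morphism over $S$, so it preserves $D_0$. Therefore this difference is bounded by $4^{-n-1}cD_0$. The sequence is Cauchy, and converges to an adelic line bundle $\overline L$ extending $L$, with $[2]^*\overline L\cong\overline L^{\otimes 4}$ by shifting the index.

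\emph{Uniqueness and general $m$.} If $\overline L'$ is another such extension, their difference is an adelic line bundle $\overline M$ that is trivial on $A$, with $[2]^*\overline M=4\overline M$. Rigidification makes the difference a well-defined adelic divisor $E$, bounded by $\epsilon D_0$ for some $\epsilon$. Iterating gives $E=4^{-n}[2^n]^*E$, which is bounded by $4^{-n}\epsilon D_0$, so $E=0$. For arbitrary $m$, the bundle $\overline L_m:=m^{-2}[m]^*\overline L$ is again an extension of $L$ satisfying the $[2]$-relation, because $[m]$ commutes with $[2]$. By uniqueness $\overline L_m\cong\overline L$. The $\qq$-coefficients are then removed using the integral structure of the limit.

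\emph{Main obstacle.} Compatibility of models is the delicate step. Each $[2^n]$ is only a rational map on $\mathcal A$, and I must make sure the estimates by multiples of $D_0$ survive pullback to blowups. In the toroidal setting one would instead use Theorem~\ref{thm:main}; here I would use that $[2]$ is a morphism over $A$, together with the effectivity of pulled-back divisors supported on the preimage of $D_0$.
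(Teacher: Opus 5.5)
Your proposal is correct and is essentially Yuan--Zhang's own proof: Tate's limiting argument, which works because the boundary divisor is pulled back from a compactification of $S$ and is therefore $[2]$-invariant. The paper gives no other proof — it simply quotes \cite[Theorem~6.1.3]{yz}, and its explicit model $(\llb L,-\theta^{\on{sm}})$ from Theorem~\ref{thm:main} treats only the universal principally polarised case and still invokes this uniqueness.

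Three points should be made explicit. First, over $\zz$ your estimates must also bound the Green's functions; use a strictly effective arithmetic boundary divisor $\overline{D}_0$. Second, uniqueness must be among extensions whose isomorphism $[2]^*\overline L\cong\overline L^{\otimes 4}$ restricts on $A$ to the rigidification-normalised one; otherwise your identity $[2]^*E=4E$ only holds up to $\pi^*\widehat{\on{div}}(u)$ with $u\in\oo(S)^\times$, and $\overline L\otimes\oo\bigl(\tfrac13\pi^*\widehat{\on{div}}(u)\bigr)$ is then a second solution whenever $u$ is not a root of unity. Third, $m=0$ cannot be handled by dividing by $m^2$ and needs the separate, easy check $e^*\overline L\cong\oo$.
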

Using our approach we can give an ``explicit'' description of the pure adelic line bundle $\overline{L}$ from Theorem~\ref{thm:YZ_invariant} in the case that the rigidified line bundle $L$ is \emph{relatively ample}, and fiberwise defines a \emph{principal polarisation}. In fact, we can work over the stack $N_{g,1}$.

The first step is to extend the notions of adelic divisors and adelic line bundles as developed in \cite{yz} to so-called \emph{arithmetic stacks}. After that, we add a log structure, and introduce and study the notions of \emph{log adelic divisors} and \emph{log adelic line bundles} on log flat arithmetic stacks with log structure. It turns out that once we restrict to log b-divisors and log b-line bundles with \emph{continuous} conical part, we have natural associated log adelic divisors and log adelic line bundles.

We will see in Theorem~\ref{thm:comparison_adelic_b_AV} that the log b-line bundle $(\llb L, -\theta^{\on{sm}})$ from Theorem~\ref{thm:main} is continuous, and has associated log adelic line bundle essentially equal to Yuan--Zhang's pure adelic line bundle from Theorem~\ref{thm:YZ_invariant}. In Corollary~\ref{cor:specialize_Riemann} we give an ``explicit'' description of Yuan--Zhang's pure adelic line bundle in terms of the tropical theta function with characteristic that we constructed in Section~\ref{sec:tropical_RT}.

\subsection{Adelic divisors and adelic line bundles}\label{sec:adelic-stacks}

The contents of this section are inspired by the language of adelic divisors and adelic line bundles on quasi-projective varieties over a field or over $\zz$, as developed by Yuan--Zhang in \cite{yz}. We extend here their notions to the setting of \emph{arithmetic stacks}. In the next section we discuss the case of arithmetic stacks with log structure. 

Let $R$ be a field or Dedekind domain, and let $\Omega$ be a finite (possibly empty) set of injective ring morphisms from $R$ into $\bb C$. 

\begin{definition} \label{def:arithmetic_stack}
An \emph{arithmetic stack} over $R$ is a normal Deligne--Mumford stack $X/R$ whose structure morphism is flat, separated and of finite type, and such that $X \otimes_\bb Z \bb Q$ is smooth over $\bb Q$. 
\end{definition}
Given an arithmetic stack $X/R$ and an injection $\sigma \in \Omega$, we have an associated complex orbifold $X_\sigma = (X \otimes_{R,\sigma} \bb C)(\bb C)$. 

An \emph{arithmetic ($\Q$)-divisor} on an arithmetic stack $X/R$ is the datum of a ($\Q$)-Cartier divisor $D$ on $X$ and for each $\sigma \in \Omega$ a Green's function $g_{D_\sigma}$ for $D_\sigma$ on $X_\sigma$. Here by a Green's function on a complex orbifold we mean a continuous function on the coarse space minus the support of $D_\sigma$, which \'etale locally has logarithmic singularities along $D_\sigma$. 

Suppose now that $X/R$ is a \emph{proper} arithmetic stack. We fix an arithmetic  \emph{boundary divisor} for $X$, i.e., an arithmetic divisor $\overline{B}= (B,(g_{B_\sigma})_\sigma)$ on $X$ where $B$ is an effective relative Cartier divisor on $X$. 

Let $U=X\setminus |B|$. We denote by $\on{Div}(X)$ resp.\ $\on{Div}(U)$ the groups of arithmetic divisors on $X$ resp.\ $U$, and by $\on{Div}(X)_{\Q}$ resp.\ $\on{Div}(U)_{\Q}$ the groups of arithmetic $\Q$-divisors on $X$ resp.\ $U$. We define $\on{Div}(X,U)$ to be the fibre product of the natural map $\phi \colon \on{Div}(X)_{\Q} \to \on{Div}(U)_{\Q}$ with the natural map $\psi  \colon \on{Div}(U) \to \on{Div}(U)_{\Q}$, i.e.
\[
\on{Div}(X,U) \coloneqq \on{ker}\left(\phi - \psi \; : \; \on{Div}(X)_{\Q} \oplus \on{Div}(U) \to \on{Div}(U)_{\Q}\right).
\]
In other words, $\on{Div}(X,U)$ is the group of pairs $(D, D')$, where $D \in \on{Div}(X)_{\Q}$ and $D' \in \on{Div}(U)$ have equal images in $\on{Div}(U)_{\Q}$. An element $(D,D')$ of $\on{Div}(X,U)$ is called a \emph{$(\Q,\Z)$-arithmetic divisor on} $(X,U)$. 

By construction, there are projection maps 
\begin{equation}\label{eq:mixed-coef}
\on{Div}(X,U) \to \on{Div}(X)_{\bb Q}\; , \quad \on{Div}(X,U) \to \on{Div}(U).
\end{equation}

Abusing notation, we will abbreviate an element $(D, D')$ of $\on{Div}(X,U)$ as $D$, and then write $D|_U$ for $D'$, viewed as an integral divisor on $U$.

We denote by $\mathcal{R}(X, B)$ the category of all proper morphisms $\tau\colon \tilde X \to X$ with $\tilde X$ again an arithmetic stack over $R$ and which are isomorphisms over $U$ and with $\tau^{-1}U$ dense in $\tilde X$. 

We can pull back $(\Q,\Z)$-arithmetic divisors between objects of $\mathcal{R}(X,B)$, and the group of \emph{model adelic divisors} on $(X,B)$ is defined to be the colimit over $\mathcal{R}(X, B)$ of the spaces of $(\Q,\Z)$-arithmetic divisors with maps given by pullback. 

On the set of model adelic divisors one defines the so-called \emph{boundary topology} in analogy with \cite[Section~2.4.1]{yz}. The ball with radius $\eps \in \qq_{>0}$ around a given model adelic divisor $\overline{D} = (D,(g_{D_\sigma}))$ is given by those model adelic divisors $\overline{D'} = (D',(g_{D'_\sigma}))$ satisfying
\[ -\eps \, \overline{B} \le \overline{D} - \overline{D'} \le \eps \, \overline{B}  , \]
that is, we have the conditions
\[ -\eps \, B \le D - D' \le \eps \, B , \quad \textrm{and} \quad  \forall \sigma \in \Omega \, : \, -\eps \, g_{B_\sigma} \le g_{D_\sigma} - g_{D'_\sigma} \le \eps \, g_{B_\sigma}  . \] 
Here when $E, E'$ are $\qq$-divisors we write $E \le E'$ if $E'-E$ is effective. The inequalities of Green's functions are supposed to hold on $U_\sigma(\bb C)$. Note that the first inequality implies that the support of $D - D'$ is contained in that of $B$, and then the second inequality implies that for all $\sigma \in \Omega$ we have that the function $g_{D_\sigma} - g_{D'_\sigma}$ extends continuously over $U_\sigma(\bb C)$.

\begin{definition} \label{def:adelic}
The space of \emph{adelic divisors on $(X,B)$}, denoted by $\operatorname{Ad-Div}(X,B)$, is defined to be the completion of the space of model adelic divisors with respect to the boundary topology.

By \eqref{eq:mixed-coef} we have a natural restriction map $\operatorname{Ad-Div}(X,B) \to \operatorname{Div}(U)$.

Similarly we have a category $\on{Ad-}\!\mathcal{P}\!\on{ic}(X,B)$ of \emph{adelic line bundles on $(X,B)$} as follows. 
An object of $\on{Ad-}\!\mathcal{P}\!\on{ic}(X,B)$ is a pair $(\ca L, (X_i,\ca L_i,\ell_i)_{i \ge 1})$ consisting of
\begin{itemize}
\item[(i)] a line bundle $\ll$ on $U$;
\item[(ii)] objects $X_i$ of $\ca R(X,B)$;
\item[(iii)] hermitian $\qq$-line bundles $\ca L_i$ on $X_i$; 
\item[(iv)] isomorphisms $\ell_i \colon \ca L \isom \ca L_i|_U$ of $\bb Q$-line bundles over $U$.
\end{itemize}
These data are subject to the following \emph{Cauchy condition}. By (iv) for each $i\ge 1$ we have an isomorphism of $\qq$-line bundles $\ell_i \ell_1^{-1} \colon \ca L_1|_U \isom \ca L_i |_U$ and hence a rational map $\ell_i \ell_1^{-1} \colon \ca L_1 \dashrightarrow \ca L_i $ of $\qq$-line bundles on a common modification $Y$ of $X_1$ and $X_i$. This determines a model adelic divisor $\divisor(\ell_i \ell_1^{-1} )$ on $(X, B)$; the Cauchy condition is that the sequence $(\divisor(\ell_i \ell_1^{-1} ))_{i \ge 1}$ is a Cauchy sequence in the boundary topology. 

We refer to \cite[Section~2.5]{yz} for the definition of morphism in the category  $\on{Ad-}\!\mathcal{P}\!\on{ic}(X,B)$; all morphisms are isomorphisms. The groupoid $\on{Ad-}\!\mathcal{P}\!\on{ic}(X,B)$ is equipped with a tensor product and has dual objects. We denote by $\on{Ad-Pic}(X,B)$ the resulting group of isomorphism classes. 
\end{definition}

We have a natural surjective map 
\begin{equation} \label{eqn:taking_O}
 \oo_X \colon \operatorname{Ad-Div}(X,B) \to \on{Ad-Pic}(X,B)  . 
\end{equation}

In fact, the groupoid $\on{Ad-}\!\mathcal{P}\!\on{ic}(X,B)$ and group $\on{Ad-Pic}(X,B)$ only depend on the open part $U= X \setminus |B|$, and one may as well write $\on{Ad-}\!\mathcal{P}\!\on{ic}(U)$ and $\on{Ad-Pic}(U)$, respectively.

We refer to the setting where $\Omega$ is empty as the \emph{geometric case}. We refer to the setting where $R$ is the ring of integers in a number field and $\Omega$ is the full set of embeddings $R \to \bb C$ as the \emph{global arithmetic case}. The setting where $R = \bb C$ and $\Omega = \{\on{id}\}$ is referred to as the \emph{local arithmetic case}. These will be the main cases of interest.

\subsection{Log adelic divisors and log adelic line bundles}

We consider now the case that $X$ is a \emph{log flat algebraic stack with log structure} (see Section~\ref{sec:log_spaces_and_stacks}). We take $U \subseteq X$ to be the largest open dense set where the log structure is trivial. Then $B = X \setminus U$ is a relative effective Cartier divisor, and we take it to be our boundary divisor.  For example, $X$ could be a regular scheme with a simple normal crossings divisor $B$, and then we have a natural log smooth log structure on $X$ with $U = X \setminus B$. 

\begin{definition}  A \emph{log model adelic divisor} on $X$  is an arithmetic $(\Q,\Z)$-divisor in $\on{Div}(X',U)$ for some subdivision  $X'$ of $X$.
\end{definition}
We refer to Section~\ref{sec:subdivisions} for the notion of subdivision of an algebraic stack with log structure. 
A log model adelic divisor on $X$  is really the same thing as a log Cartier b-divisor on $X$ (see Definition~\ref{def:log_b_Cartier}). 

\begin{definition} Let $E$ be an adelic divisor on $(X, B)$. We call $E$ a \emph{log adelic divisor} on $X$ if $E$ is the limit of a Cauchy sequence, in the boundary topology, of log model adelic divisors on $X$. We denote the subgroup of log adelic divisors on $X$ by $\operatorname{Ad-Div}_{\operatorname{log}}(X)$. 
\end{definition} 

\begin{example} \label{rem:special}
For $X$  a toric variety over a field, with $U \subset X$ the dense open torus, Song introduces in \cite{song} the notion of \emph{toric adelic divisor} on $U$. It follows from \cite[Theorem~4.5]{song} that a log adelic divisor on $X$ with the property that the restriction to $U$ vanishes is the same thing as a toric adelic divisor on $U$. When $X$ is smooth, Song moreover introduces in \cite{song_asymptotic} the notion of \emph{special adelic divisor} on $U$. 
By \cite[Theorem~4.8]{song} an adelic divisor on $(X, B)$ is toric (i.e., log adelic on $X$) if and only if it is special. 
\end{example}

Analogous to the notion of adelic line bundle in Section~\ref{sec:adelic-stacks} we have a notion of \emph{log adelic line bundle} on $X$. The only difference is that in item (ii) from Definition~\ref{def:adelic} we now require the $X_i$ to be all given by subdivisions of $X$. The category of log adelic line bundles is denoted by $\on{Ad-}\!\mathcal{P}\!\on{ic}_{\on{log}}(X,B)$, and the group of isomorphism classes of log adelic line bundles is denoted by $\on{Ad-Pic}_{\on{log}}(X)$. Analogous to \eqref{eqn:taking_O} we have a surjective natural map
\[ \oo_X \colon \operatorname{Ad-Div}_{\on{log}}(X) \to \on{Ad-Pic}_{\on{log}}(X) . \]

\subsection{Continuous log b-divisors and log adelic divisors}

We continue with the assumptions and notations from the previous section. In this setting we recall the space $\logwbDiv(X)$ of log Weil-b-divisors on $X$ from Definition~\ref{def:log_b_Cartier}.

Recall that any log model adelic divisor on $X$ can be viewed as a log Cartier b-divisor on $X$. Now let $(D_n)_n$ be a Cauchy sequence of model divisors on $X, B$ in the boundary topology. Then the sequence of corresponding Cartier b-divisors on $X$ converges in the weak topology, by an argument analogous to the one used to prove~\cite[Lemma~3.9]{BK}. Thus we have a natural continuous group homomorphism 
\begin{equation}\label{eq:map-b}
b \colon \operatorname{Ad-Div}_{\on{log}}(X) \to \operatorname{log-W-b-Div}(X)  . 
\end{equation}

Write $U=X \setminus |B|$. By Lemma \ref{lem:con_injective} there is a natural injective map 
\[ \on{Div}(U) \oplus \on{Con}(X) \to \logwbDiv(X)  , \]
where $\on{Con}(X)$ is the group of conical functions on $X$. 

\begin{definition} \label{def:continuous_log_Weil_b}
We call a log Weil b-divisor on $X$  \emph{continuous} if it is in the image of $\on{Div}_X(U) \oplus \on{CCon}(X)$ under this map. Here $\on{Div}_X(U)$ is the group of divisors on $U$ whose ``Zariski closure'' extension to some subdivision of $X$ is Cartier (this is just $\on{Div}(U)$ if $X$ is log smooth), and $\on{CCon}(X)$ is the group of continuous conical functions on $X$.  The group of continuous log Weil b-divisors on $X$ is denoted by $\logwbDiv^{\on{cont}}(X)$. 

\end{definition}

The group of continuous log Weil b-divisors injects into the group of log adelic divisors. In the log smooth case, the two groups are even isomorphic, as the next result states.

\begin{proposition} \label{prop:map_p} There is a natural map $ a \colon \logwbDiv^{\on{cont}}(X) \to \operatorname{Ad-Div}_{\on{log}}(X)$ with the property that $b \circ a = \id$. In particular the map $a$ is injective.  When $X$ is log smooth, the map $b$ has image in $ \logwbDiv^{\on{cont}}(X)$. We then have $a \circ b = \id$ as well, so that $\operatorname{Ad-Div}_{\on{log}}(X)$ and $\logwbDiv^{\on{cont}}(X)$ are naturally isomorphic.
\end{proposition}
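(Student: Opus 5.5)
We construct $a$ separately on the two summands of $\logwbDiv^{\on{cont}}(X)$, show that $b\circ a=\id$, and in the log smooth case prove that the image of $b$ consists of continuous b-divisors.

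\emph{The divisor summand.} For $D\in\on{Div}_X(U)$ we choose a subdivision $X'$ on which the Zariski closure $\bar D$ is Cartier. We equip $\bar D$ with arbitrary Green's functions at the places $\sigma\in\Omega$. The result is a log model adelic divisor, and we set $a(D)$ to be this element. A different choice of Green's functions changes it by a model divisor supported on the archimedean part. To make $a$ well defined we fix a convention, for instance a continuous choice compatible with pullback, or we work modulo this ambiguity. This matches the fact that $b$ only records the non-archimedean data. In any case, $b(a(D))$ is the closure of $D$ on every model, which is the image of $D$ in $\logwbDiv(X)$.

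\emph{The conical summand.} For $c\in\on{CCon}(X)$ we use Lemma~\ref{lem:PL_approx} to choose PL functions $f_n$ converging uniformly to $c$. Each $\divisor f_n$ is a log Cartier b-divisor, hence a log model adelic divisor with Green's functions given by the pullback of $f_n$ along the tropicalisation map. The key estimate is the following. If $|f|\le\eps$ uniformly on the sets $H_i$ of Definition~\ref{def:conical_top}, then $-\eps\,\bar B\le \divisor f\le \eps\,\bar B$. This holds because $\divisor B$ corresponds to the sPL function $\phi_B$, which equals $1$ on the first lattice points of the rays, so $\phi_B$ dominates every conical function bounded by $1$ on the $H_i$. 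This inequality can be checked on a common subdivision where both functions are strict. Consequently $(\divisor f_n)$ is Cauchy in the boundary topology, and we define $a(c)$ to be its limit. The limit is independent of the approximating sequence by the same estimate.

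By continuity of $b$, the element $b(a(c))$ is the weak limit of the $\divisor f_n$. On each unimodular subdivision this limit has coefficients $\lim f_n(p_\rho)=c(p_\rho)$, so it equals $\divisor c$. Together with the divisor summand this gives $b\circ a=\id$, and hence injectivity of $a$.

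\emph{The log smooth case.} Let $E$ be a limit of log model divisors $E_n$. By Lemma~\ref{lem:con_injective}, each $b(E_n)$ decomposes as $D_n+\divisor\phi_n$. The Cauchy condition $-\eps \bar B\le E_n-E_m\le\eps\bar B$ forces two things. First, $D_n$ is eventually constant, say equal to $D$, since divisors meeting $U$ have integral coefficients. Second, $\phi_n$ is a uniformly Cauchy sequence of PL functions on the $H_i$, so it converges to a continuous conical function $c$. Then $b(E)=D+\divisor c$ is continuous.

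Finally, $a(b(E))$ and $E$ are both limits of the same Cauchy sequence, up to a sequence tending to zero, so $a\circ b=\id$.

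The main obstacle is the comparison between the boundary topology and uniform convergence of conical functions, in both directions, together with handling the archimedean Green's function components. I would first try to reduce to the atomic toric case, comparing against $\phi_B$ directly on cones, and then patch over a finite atomic cover.
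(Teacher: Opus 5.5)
Your overall route is the paper's: define $a$ summand by summand, approximate $c\in\on{CCon}(X)$ by PL functions via Lemma~\ref{lem:PL_approx}, and in the log smooth case pass to the PL functions of Remark~\ref{rem:phi_D} and use uniform convergence on the sets $H_i$ of Definition~\ref{def:conical_top}. Your comparison $|f|\le\eps\,\phi_B$, obtained from $\phi_B\ge 1$ on $H_i$ and homogeneity and used in both directions, is exactly the estimate the paper leaves implicit, and that part is fine.

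\textbf{The gap is in the divisor summand.} Suppose $X'$ is merely some subdivision on which $\bar D$ is Cartier. Then the log Cartier b-divisor it defines has, on a finer subdivision $X''$, the \emph{pullback} of $\bar D$ as its incarnation. That pullback is the closure of $D$ in $X''$ plus boundary components over every log stratum of $X'$ contained in $\bar D$.

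For example, take $X=\mathbb{A}^2$ with toric log structure and $D=\{x=y\}\cap\mathbb{G}_m^2$. Then $\bar D$ is Cartier on $X$. On the blowup of the origin, which is a subdivision, its pullback is the strict transform plus the exceptional divisor. The discrepancy is $\divisor$ of the PL function $\min(u,v)$. So your claim that $b(a(D))$ is the closure of $D$ on every model is false, and $a(D)$ as you define it depends on the choice of $X'$.

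\textbf{The fix.} Choose $X'$ fine enough that the closure of $D$ contains no log stratum. Locally you can do this by refining along the tropicalisation of a local equation, which is a Newton polytope argument. Equivalently, show that the Zariski-closure b-divisor of $D$ is itself a log Cartier b-divisor. Only then is $a(D)$ well defined with $b(a(D))$ equal to the image of $D$. The paper's one-line treatment of this summand hides the same point.

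You in fact need this fact twice. In your log smooth step you assert that the $\phi_n$ in $b(E_n)=D+\divisor\phi_n$ are PL. The Cauchy condition only gives that $\phi_n-\phi_m$ is PL with $|\phi_n-\phi_m|\le\eps\,\phi_B$. So continuity of the limit $c$ reduces to continuity of a single $\phi_{n_0}$, and that is exactly this fact.

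\textbf{The archimedean hedge.} This cannot be completed into a proof of $a\circ b=\id$. The map $b$ forgets Green's functions, so when $\Omega\neq\emptyset$ a model divisor $(0,(g_\sigma))$ with continuous $g_\sigma$ lies in the kernel of $b$. Then $a\circ b=\id$ fails whatever convention you fix. The statement should be read in the geometric case, or for the non-archimedean data only, which is how the paper's proof tacitly treats it.
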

\begin{proof}
We define the map $a$ on elements of $\on{Div}_X(U) \oplus \on{CCon}(X)$.  It is clear how to map an element of $\on{Div}_X(U)$ to a Cartier divisor on some model, and the results on different models will be compatible under pushforward. It remains to define the map on $\on{CCon}(X)$, and to prove $b \circ a = \id$. 

Let $f$ be a continuous conical function, and let $(f_i)_{i \in \bb N}$ be a sequence of PL functions converging uniformly to $f$; this exists by Lemma \ref{lem:PL_approx}. For each $i$ let $X_i \to X$ be a subdivision on which $f_i$ is strict. Then the sequence $\on{div}_{X_i}(f_i)$ is a sequence of log model adelic divisors. This is a Cauchy sequence in the boundary topology, and their limit log adelic divisor maps via the map $b$ to the unique log Weil b-divisor with support outside $U$ whose conical function is~$f$. 

To see the last statements, note that it suffices to check the property $a \circ b=\id$  on log adelic divisors $E$ with $E|_U =0$. 
 So let $E$ be a log adelic divisor on $X$ with $E|_U=0$, and set $\E = b(E)$. Let $E_i$ be a Cauchy sequence of log model adelic divisors with $E_i|_U = 0$ converging to $E$ with respect to the boundary norm, and let $ \phi_{E_i}$ be the corresponding PL functions on $X$ from Remark \ref{rem:phi_D} (here we use log smoothness). The Cartier condition for the $E_i$ implies that the functions $\phi_{E_i}$ converge pointwise to some conical function $\phi$, and this $\phi$ is sent by $a$ to $E$. On the other hand, after choosing an atomic cover of $X$ the convergence is uniform on the compact sets $H_i$ from Definition \ref{def:conical_top}, so that $\phi$ is a uniform limit of continuous functions and hence continuous. 
\end{proof}

\begin{definition} \label{def:assoc_Weil}
Let $D \in \on{Div}_X(U)$ with Zariski closure $\overline{D}$ on $X$. Let $c \in \on{CCon}(X)$. We denote the log adelic divisor coming from the pair $(D,c)$ via the map $a$ by $\overline{D} + \divisor c$.
\end{definition}

\begin{remark}\label{rem:log_ad_is_b}
The property that $a \circ b = \id$ might fail without the ``log smooth'' assumption, even in the atomic case. Indeed, a ray in the cone corresponds to some divisor supported on the boundary, but in general this need not be irreducible, and different irreducible components may have different multiplicities in a log adelic divisor. 
\end{remark}

\begin{remark}
Let $X$ be a smooth toric variety with dense open torus $U$. We may think of a log adelic divisor on $X$ whose restriction to $U$ vanishes as a special adelic divisor in the sense of Song \cite{song_asymptotic} (see Remark~\ref{rem:special}). Such a special adelic divisor has an associated \emph{skeletal function}. This skeletal function is, up to a sign,  the associated conical function.
\end{remark}

\subsection{Log b-line bundles and log adelic line bundles}

We continue with the assumptions and notations from the previous sections. We discuss here a line bundle analogue of the comparison between continuous log b-divisors and log adelic divisors that we saw in Proposition~\ref{prop:map_p}.

We recall from Section~\ref{sec:b-line-bundles} that a \emph{log b-line bundle} on $X$ is a pair $(\llb L, c)$ where $\llb L$ is a log line bundle on $X$ and $c$ is a conical section of the tropicalisation of $\llb L$. 
We denote by $\on{b-Pic}^{\on{cont}}(X)$ the group of isomorphism classes of log b-line bundles on $X$ where the conical section $c$ is \emph{continuous}. 
Applying Remark~\ref{rem:Pic_subdivisions} and the map \eqref{eqn:con_to_b-pic} we obtain a natural surjection 
\begin{equation} \label{eqn:map_from_Weil_b_to_b-Pic} 
\oo_X \colon \logwbDiv^{\on{cont}}(X)  \to  \on{b-Pic}^{\on{cont}}(X) . 
\end{equation}
\begin{proposition} \label{prop:commutative_p_bar} 
We have a commutative diagram with natural maps  \begin{equation}
\begin{tikzcd}
 \logwbDiv^{\on{cont}}(X)  \ar[d, "\oo_X"] \arrow[r, "a"] &  \operatorname{Ad-Div}_{\on{log}}(X) \ar[d, "\oo_X"]\\
 \on{b-Pic}^{\on{cont}}(X)     \arrow[r, "\overline{a}"] &  \on{Ad-Pic}_{\on{log}}(X),
\end{tikzcd}
\end{equation}
with $\overline{a}$ injective, and both vertical maps surjective. When $X$ is log smooth the map $\overline{a}$ is an isomorphism.
\end{proposition}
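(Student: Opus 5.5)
I will construct $\overline{a}$ directly on log b-line bundles with continuous conical section, modelled on the proof of Proposition~\ref{prop:map_p}. Let $(\llb L, c)$ be a log b-line bundle with $c$ a continuous conical section of $\llb L^\trop$. The first step is to approximate $c$ by PL sections. By Lemma~\ref{lem:PL_approx_bundle} there is a sequence of PL sections $c_i$ of $\llb L^\trop$ converging uniformly to $c$ in the topology of Definition~\ref{def:conical_top}. For each $i$ I choose a subdivision $X_i \to X$ and an integer $n_i$ such that $n_i c_i$ is strict on $X_i$. Definition~\ref{def:line_bundle_for_adm} then gives a line bundle $\lb L_{n_i c_i}$ on $X_i$. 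I take $\ca L_i$ to be the $\bb Q$-line bundle $\lb L_{n_ic_i}^{\otimes 1/n_i}$ on $X_i$, with the trivial metric data in the geometric case; in the arithmetic case the hermitian data are carried along unchanged. Each $\ca L_i$ comes with a canonical identification $\ell_i$ with $\ca L \coloneqq \llb L|_U$, because the log structure is trivial on $U$.

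The second step is to check the Cauchy condition and well-definedness. The rational map $\ell_i\ell_1^{-1}$ is, by construction, the divisor $\on{div}(c_i - c_1)$ of the PL function $c_i - c_1$ on a common subdivision. The difference $c_i - c_j$ is a PL function with uniformly small sup-norm on the compact sets $H_k$. By conicity this gives $-\eps B \le \on{div}(c_i-c_j) \le \eps B$ for some $\eps$ tending to $0$, where $B$ is the boundary; this is exactly the Cauchy estimate already used for $a$ in Proposition~\ref{prop:map_p}. I then define $\overline{a}(\llb L, c)$ to be the class of $(\ca L,(X_i,\ca L_i,\ell_i))$. Independence of the choice of the $c_i$ holds because two approximating sequences interleave to a Cauchy sequence. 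Isomorphisms of log b-line bundles induce isomorphisms of the data, and tensor products correspond to sums of sections, so $\overline{a}$ is a homomorphism.

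Commutativity of the square is then immediate. For $\bb D = (D,f) \in \on{Div}_X(U)\oplus\on{CCon}(X)$, $\oo_X(\bb D)$ is $\oo(\overline D)^\LOG \otimes (\oo^\LOG,f)$. Approximating $f$ by $f_i$ gives the model line bundles $\oo(\overline D + \on{div} f_i)$, which are precisely the $\oo_X$ of the model divisors defining $a(\bb D)$. The vertical maps are surjective: the left one by construction, and the right one by the earlier remark after \eqref{eqn:taking_O}.

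The step I expect to be the main obstacle is injectivity of $\overline{a}$. Suppose $\overline{a}(\llb L,c)$ is trivial. By surjectivity of the left vertical map, write $(\llb L,c)=\oo_X(\bb D)$ with $\bb D = (D,f)$. Then $\oo_X(a(\bb D))$ is trivial, so $a(\bb D) = \on{div}(h)$ for some rational function $h$ on $U$, with a constant in the arithmetic case. I would show that $\on{div}(h)$ is itself of the form $a(\overline{\on{div}h|_U},\phi)$, where $\phi$ is an sPL function built from the orders of $h$ along boundary strata. Injectivity of $a$, which holds since $b\circ a=\id$, then gives $\bb D = \on{div} h$ as log b-divisors, and hence $\oo_X(\bb D)$ is trivial in $\on{b-Pic}$. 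The delicate point is matching isomorphisms rather than only classes, and I would handle it through the identification of $\oo_X^\times$-torsors in Definition~\ref{def:line_bundle_for_adm}.

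Finally, in the log smooth case surjectivity of $\overline{a}$ follows from Proposition~\ref{prop:map_p}: there $a$ is an isomorphism, and the right vertical map is surjective. Chasing the commutative square, every log adelic line bundle lies in the image of $\overline{a}$.
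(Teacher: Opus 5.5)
Your construction of $\overline a$, the Cauchy estimate, the commutativity of the square and the diagram chase for surjectivity in the log smooth case are the paper's argument. Your $n_i$ and the $\bb Q$-line bundles $\lb L_{n_ic_i}^{\otimes 1/n_i}$ fix a point the paper glosses over: a PL section need not be strict on any subdivision until you clear denominators.

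The gap is in injectivity, at the step ``$\mathbb D=\divisor h$ as log b-divisors, hence $\oo_X(\mathbb D)$ is trivial in $\on{b-Pic}$''. This is not a matter of matching isomorphisms. An isomorphism in $\on{b-Pic}$ is given by a section of $\M_X^\gp$. An isomorphism of log adelic line bundles is given by an arbitrary unit on $U$, i.e.\ a section of $j_*\ca O_U^\times$. To trivialise $\oo_X(\divisor h)$, take a subdivision $X'$ on which $\overline{\divisor h|_U}$ is Cartier, with local equation $g$. You then need $h/g$ to be a section of $\M^\gp_{X'}$ whose tropicalisation is the conical part $\phi$; note that $\phi$ is in general only PL on $X$, not sPL as you claim. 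This holds when $\M^\gp_{X'}=j_*\ca O_U^\times$, which is true for $X'$ log regular. Indeed, Kato's theorem gives $\M_{X'}=\ca O_{X'}\cap j_*\ca O_U^\times$. Multiplying a unit on $U$ by a high power of an element of $\M_{X'}$ lying in the interior of $\ghost_{X'}$ then gives a regular function, by normality.

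For merely log flat $X$ this step fails, and so does the injectivity claim. Take $X=\bb A^1_k$ with the log structure induced by $\bb N\to k[x]$, $1\mapsto x^2$. It is log flat, since the strict map to $\bb A^1$ given by $t\mapsto x^2$ is flat, but it is not log smooth; here $U=\{x\neq 0\}$.
\begin{itemize}
\item Consider $(\ca O^\LOG,f)$ with $f=t/2$ on the cone $\bb R_{\ge 0}$ at the origin. It is non-trivial in $\on{b-Pic}(X)$. An isomorphism with $(\ca O^\LOG,0)$ is multiplication by some $m\in\Gamma(X,\M^\gp_X)$, which shifts $f$ by $\bar m$; near the origin $\bar m$ lies in $\bb Z t$.
\item Yet $\overline a(\ca O^\LOG,f)$ is the model $\bb Q$-line bundle $\ca O(\tfrac12\divisor(x^2))=\ca O(\divisor x)$. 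It is trivialised by the unit $x\in\ca O(U)^\times$, and $x\notin\M_X^\gp$. In your notation, $\phi=t/2$.
\end{itemize}

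So you can only prove injectivity for log smooth $X$. That is all that is used later, e.g.\ on $N_{g,1}^{\widetilde\Sigma}$. There your route works once you supply the identification $\M^\gp=j_*\ca O_U^\times$ on regular subdivisions.

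The paper's ``immediate from the definitions'' rests on the same fact, used more directly. A trivialisation of $\overline a(\llb L,c)$ is a unit section $s$ of $\llb L|_U$. It extends to a section of $\llb L$ precisely because $\M^\gp=j_*\ca O_U^\times$. The convergence condition then forces its tropicalisation to equal $c$.
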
 
\begin{proof} We construct a map $\overline{a} \colon  \on{b-Pic}^{\on{cont}}(X) \to  \on{Ad-Pic}_{\on{log}}(X)$ by taking our cue from the proof of Proposition~\ref{prop:map_p}. Let $(\llb L, c)$ be a b-line bundle on $X$ with $c$ continuous. By Lemma~\ref{lem:PL_approx_bundle} we can approximate $c$ uniformly by a sequence of PL sections $c_i$ of $\llb L$. For each $i$ we pick a model $X_i \to X$ on which $c_i$ is strict. 

On the model $X_i \coloneqq X_{\Sigma_i}$ the section $c_i$ of $\llb L|_{X_i}$ trivialises $\llb L^\trop|_{X_i}$ and hence induces a line bundle $\lb L_i$ on $X_i$ (see Definition~\ref{def:line_bundle_for_adm}). Moreover, these line bundles are all canonically identified on the dense open $U$ of $X_i$ where the log structure is trivial; we write $\lb L$ for the resulting line bundle on $U$ and $\ell_i$ for the corresponding rational section of $\ca L_i$. The sequence $(\lb L, (X_i, \lb L_i, \ell_i)_{i\ge 1})$ is the data for a log adelic line bundle on $X$. This gives the map $\overline{a}$. The commutativity of the diagram and the injectivity statement follow immediately from the definitions. The last statement follows from the fact that in the log smooth case the map $a$ is an isomorphism, as stated in Proposition~\ref{prop:map_p}.
\end{proof}

\begin{remark} \label{rem:b-inverse-to-p} When $X$ is log smooth the inverse of the isomorphism $ \overline{a}$ is the map
\[ \overline{b} \colon \on{Ad-Pic}_{\on{log}}(X) \to \on{b-Pic}^{\on{cont}}(X) \]
given by sending the isomorphism class of a log adelic line bundle $(\ca L, (X_i,\ca L_i,\ell_i))$ to the pair $(\ca L_1^\LOG, c)$ where $c$ is the conical function obtained by taking the pointwise limit of the PL functions corresponding to the quotients $\ell_i  \ell_1^{-1}$. 
\end{remark}

\begin{definition} \label{def:assoc_adelic}
Let $\ca L$ be a line bundle on $X$. The log b-line bundle $(\ca L^{\LOG},0)$ is mapped via $\overline{a}$ to the model log adelic line bundle determined by $\ca L$, and we simply denote it again by $\ca L$. Let $c \in \on{CCon}(X)$. We denote the image log adelic line bundle of the log b-line bundle $(\ca L^{\LOG},0) \otimes (\oo, c)$ under the map $\overline{a}$ by $\ca L \otimes \oo(\divisor c)$ or $\ca L \otimes \oo(c)$ or even $\ca L(c)$. 
\end{definition}

\begin{example} Let $D \in \on{Div}_X(U)$ with Zariski closure $\overline{D}$ on $X$. Let $c \in \on{CCon}(X)$. The continuous log b-line bundle determined by the pair $(D,c)$ is $(\oo_X(\overline{D})^{\LOG},0) \otimes (\ca O,c)$, so that the associated log adelic line bundle is denoted $\oo_X(\overline{D}) \otimes \oo(\divisor c)$. Recall from Definition~\ref{def:assoc_Weil} that the adelic divisor associated to the pair $(D,c)$ is denoted by $\overline{D} + \divisor c$. We obtain the convenient canonical identification of log adelic line bundles $\oo_X(\overline{D} + \divisor c) = \oo_X(\overline{D}) \otimes \oo(\divisor c)$.
\end{example}

\subsection{Pure log adelic line bundle for abelian schemes}\label{sec:yz-invariant-adelic}

Let $L$ be the universal rigidified line bundle on $N_{g,1}$. The technique of proof of Theorem~\ref{thm:YZ_invariant} can be applied in the setting of log flat log stacks, and readily gives the following.
\begin{theorem} \label{thm:YZ_invariant_N} There exists a unique adelic line bundle $\overline{L}$ on $N_{g,1}$ extending $L$ and satisfying $[2]^*\overline{L} \cong \overline{L}^{\otimes 4}$. For all $m \in \zz$ we have $[m]^*\overline{L} \cong \overline{L}^{\otimes m^2}$.
\end{theorem}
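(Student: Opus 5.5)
The plan is to run Tate's limiting argument, as in the proof of Theorem~\ref{thm:YZ_invariant} in \cite[Theorem~6.1.3]{yz}, directly on a log smooth proper compactification. First fix a model. Take $X = N_{g,1}^{\mathrm{mDV}}$ over $N_g^{\on{Del}}$ (Definition~\ref{def:comp_N}). This is log smooth and proper, with boundary divisor $B$ the complement of $N_{g,1}$, and it carries the model line bundle $\lb L_{\phi}$ for $\phi = \theta^{\mathrm{pl}}$ (Definition~\ref{def:line_bundle_for_adm_bis} and Proposition~\ref{prop:decomposition-over}). By the lemma on \eqref{eq:mult_by_N}, $[2]$ extends to a morphism $[2]\colon N_{g,1}^{\widetilde\Sigma_2}\to N_{g,1}^{\widetilde\Sigma}$ from a subdivision. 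By Lemma~\ref{lem:log_biext}, $[2]^*\lb L_\phi\otimes \tau^*\lb L_\phi^{\otimes -4}$ is the model line bundle $\oo(\divisor f)$, where
\[ f = 4\phi-[2]^*\phi = 4\phi - \phi\circ[2] . \]
This $f$ is PL and invariant under the translations $l\mapsto l+b_a(w,-)$, because $\phi$ and $\theta^{\on{sm}}$ have compatible cocycles; it is therefore a genuine PL function on $X$, trivial on $N_{g,1}$. In the arithmetic case we also choose Green's functions on $\lb L_\phi$ and absorb their $[2]$-defect into the same correction term.

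Next comes the Tate sequence. Set $\lb L_n = 4^{-n}([2^n]^*\lb L_\phi)$, regarded as a $\qq$-line bundle on the subdivision $X_n$ on which $[2^n]$ is defined. It is identified with $L$ over $N_{g,1}$ via the rigidified isomorphisms $[2^n]^*L\cong L^{\otimes 4^n}$. Consecutive terms differ by $\oo(\divisor f_n)$ with $f_n = 4^{-n-1}f\circ[2^n]$, a PL function on $X$. The key estimate is that $f$ is bounded on the compact sets $H_i$ of Definition~\ref{def:conical_top}. This holds because $f$ is continuous, conical and $W$-invariant, and Proposition~\ref{prop:trop_theta_new} reduces it fibrewise to a function on a real torus, which is compact. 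In the arithmetic case, boundedness of the Green's function defect follows from the same fibrewise compactness. Since $[2^n]$ preserves $a$, we get $\sup|f_n|\le 4^{-n-1}\sup|f|$, so $|\divisor f_n|\le C4^{-n}B$. The sequence is therefore Cauchy in the boundary topology, and its limit $\overline{L}$ is a log adelic line bundle, hence an adelic line bundle via Proposition~\ref{prop:commutative_p_bar}.

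Invariance follows by telescoping: $[2]^*\lb L_n = 4\lb L_{n+1}$, so $[2]^*\overline{L}\cong\overline{L}^{\otimes 4}$. For uniqueness, suppose $\overline{L}'$ is another extension of $L$ with $[2]^*\overline{L}'\cong\overline{L}'^{\otimes4}$, compatible with the rigidification. Then $\overline{M}=\overline{L}'\otimes\overline{L}^{-1}$ is trivial on $N_{g,1}$ and satisfies $[2]^*\overline{M}=\overline{M}^{\otimes 4}$. Its associated conical datum $c$ (Remark~\ref{rem:b-inverse-to-p}) is bounded, and the relation gives $c = 4^{-n}c\circ[2^n]$ up to the rigidification. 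Hence $\|c\|\le 4^{-n}\|c\|$, so $c=0$. The same argument applies to the Green's functions. The relation $[m]^*\overline{L}\cong\overline{L}^{\otimes m^2}$ then follows from uniqueness: $[m]^*\overline{L}$ and $\overline{L}^{\otimes m^2}$ both extend $L^{\otimes m^2}$ and are $[2]$-pure, because $[m]$ commutes with $[2]$. Alternatively, one can compare with Theorem~\ref{thm:main} via the map $\overline a$.

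The main obstacle is making the boundary estimate rigorous on the stack. Concretely, one needs $f$ to descend to a bounded function on the noetherian log stack $X$, with the bound independent of the subdivisions $X_n$. I would try to obtain this from the explicit cocycle computation in the proof of Theorem~\ref{thm:main}, which shows $[2]^*\phi-4\phi$ is $W$-invariant. That invariance, together with the finitely many $\on{Aut}(W)$-orbits of cones, bounds $f$ uniformly.
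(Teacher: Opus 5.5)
Your strategy is what the paper intends: run Tate's limiting argument, following Yuan--Zhang, on the log model $N_{g,1}^{\mathrm{mDV}}$ and its subdivisions. The paper's proof is only the remark that the technique of \cite[Theorem~6.1.3]{yz} carries over to log flat log stacks. Two steps of your write-up need repair, however.

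\textbf{First, $f=4\phi-\phi\circ[2]$ is not translation-invariant once $\rho\neq 0$.} From Definition~\ref{def:Ng-adm-pola}(5) and $a(2w)-4a(w)=b_a(\rho,w)$ one gets $f(a,l+b_a(w,-))=f(a,l)+b_a(\rho,w)$. So $f(a,-)$ contains the unbounded linear term $l(\rho)$. It therefore does not descend to the real torus or to $X$, and your estimate $\sup|f_n|\le 4^{-n-1}\sup|f|$ fails as stated. The cocycle computation in the proof of Theorem~\ref{thm:main}, which you cite for your ``main obstacle'', shows exactly this non-trivial cocycle $b_a(w,\rho)$ for $m=2$, not invariance.

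What you need is the function representing the canonical rational section of $[2]^*\mathcal L_\phi\otimes\tau^*\mathcal L_\phi^{\otimes -4}$ under the trivialisation of Lemma~\ref{lem:log_biext}. By Theorem~\ref{thm:main} this is $4c-c\circ[2]$, with $c=\phi-\theta^{\mathrm{sm}}$ as in Theorem~\ref{thm:theta-eq-cocycle-new}. It differs from your $f$ by $4\theta^{\mathrm{sm}}-\theta^{\mathrm{sm}}\circ[2]=l(\rho)-\tfrac38 q_a(\rho)$. Compatible cocycles of $\phi$ and $\theta^{\mathrm{sm}}$ make $c$ invariant, but $\theta^{\mathrm{sm}}$ is not $[2]$-homogeneous of degree $4$ when $\rho\neq0$.

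In fact no formula is needed, and your ``main obstacle'' disappears with the right choice of boundary divisor. Take it to be $\pi^*B_0$ for a boundary Cartier divisor $B_0$ on $N_g^{\mathrm{Del}}$, with Green's function pulled back from the base in the arithmetic case.
\begin{itemize}
\item The divisor $D$ of the above rational section is Cartier on the quasi-compact model and supported in $\pi^{-1}(\partial N_g^{\mathrm{Del}})$. Hence $-C\pi^*B_0\le D\le C\pi^*B_0$ for some $C$, and likewise for the Green's function defect by compactness.
\item Since $\pi\circ[2]=\pi$, every $[2^n]^*$ fixes $\pi^*B_0$. This gives the uniform bound $4^{-n-1}C\,\pi^*B_0$ on the $n$-th difference.
\end{itemize}
This is the mechanism of Tate's argument over a base.

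\textbf{Second, your uniqueness argument only covers log adelic extensions.} It passes through the conical datum of Remark~\ref{rem:b-inverse-to-p}, which exists only for log adelic line bundles. The statement asserts uniqueness among all adelic line bundles, with models ranging over $\mathcal R(X,B)$ and not only over subdivisions.

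Argue directly with adelic divisors instead.
\begin{itemize}
\item Let $E$ be the adelic divisor of the section of $\overline M=\overline L'\otimes\overline L^{-1}$ given by the identification over $N_{g,1}$. Then $E|_{N_{g,1}}=0$.
\item We have $[2]^*E=4E$ exactly. The discrepancy is the divisor of a global unit on $N_{g,1}$. Since $\pi_*\mathcal O_{N_{g,1}}=\mathcal O_{N_g}$, that unit is $\pm1$ on each component by Remark~\ref{rem:N_g_comments}(v); alternatively, impose compatibility with the rigidification.
\item $E$ lies within $\varepsilon$ of a boundary-supported model divisor, so $-C\pi^*\overline B_0\le E\le C\pi^*\overline B_0$.
\item Applying $[2^n]^*$, which fixes $\pi^*\overline B_0$, gives $-4^{-n}C\pi^*\overline B_0\le E\le 4^{-n}C\pi^*\overline B_0$ for all $n$. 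Hence $E=0$.
\end{itemize}
This handles the finite and archimedean parts at once. Your deduction of $[m]^*\overline L\cong\overline L^{\otimes m^2}$ from uniqueness then goes through as written.
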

We call the adelic line bundle $\overline{L}$ from Theorem~\ref{thm:YZ_invariant_N} the \emph{pure adelic extension} of $L$ over $N_{g,1}$. 

Our work done so far allows a description of $\overline{L}$ in terms of log b-line bundles. Recall the smooth function $\theta^{\on{sm}}$ from Definition~\ref{def:eq_theta}. Let $\llb L$ be the universal  log line bundle on $N_{g,1}^\LOG$. Take any $N_{g,1}$-admissible cone decomposition $\widetilde{\Sigma}$. 

\begin{theorem} \label{thm:comparison_adelic_b_AV}  Under the map $\overline{a}$, the continuous log b-line bundle $(\llb L, -\theta^{\on{sm}}) $ on $N_{g,1}^{\widetilde{\Sigma}}$ is mapped to the log adelic line bundle $\overline{L}$ on $N_{g,1}^{\widetilde{\Sigma}}$. 
\end{theorem}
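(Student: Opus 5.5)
The plan is to use the uniqueness part of Theorem~\ref{thm:YZ_invariant_N}: it suffices to show that the log adelic line bundle $\overline{a}(\llb L, -\theta^{\on{sm}})$ on $N_{g,1}^{\widetilde{\Sigma}}$ extends $L$ and satisfies $[2]^*\overline{a}(\llb L,-\theta^{\on{sm}}) \cong \overline{a}(\llb L,-\theta^{\on{sm}})^{\otimes 4}$ as adelic line bundles, compatibly with the rigidification.

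\emph{Step 1: the image is well defined.} By Lemma~\ref{lem:cocycle_smooth}, $-\theta^{\on{sm}}$ is a conical section of $\llb L^\trop$. It is continuous, indeed smooth on each cone. Hence $(\llb L,-\theta^{\on{sm}})$ lies in $\on{b-Pic}^{\on{cont}}$, and Proposition~\ref{prop:commutative_p_bar} applies.

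\emph{Step 2: it extends $L$.} Recall that $\overline{a}$ is built by approximating $-\theta^{\on{sm}}$ uniformly by PL sections $c_i$ (Lemma~\ref{lem:PL_approx_bundle}) and taking the line bundles $\lb L_{c_i}$ on subdivisions. On the open locus $U=N_{g,1}$ the log structure is trivial, so each $\lb L_{c_i}|_U$ is canonically $\llb L|_U = L$. The rigidification $\epsilon$ of $\llb L$ along $e$ also carries over: $\theta^{\on{sm}}$ restricted along the zero section, where $l=0$, is $-\frac18 q_a(\rho)$, which is linear in $a$. So it is an sPL function pulled back from the base and changes nothing up to a base twist. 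I would check that this matches the normalisation used by Yuan--Zhang. If it does not, I would correct by the pullback from $N_g$ of this sPL function, which is harmless for the uniqueness argument since the statement is about $\overline{L}$ on $N_{g,1}^{\widetilde\Sigma}$ relative to the base.

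\emph{Step 3: purity transfers.} Theorem~\ref{thm:main} gives an isomorphism of log b-line bundles $[m]^*(\llb L,-\theta^{\on{sm}}) \cong (\llb L,-\theta^{\on{sm}})^{\otimes m^2}$, compatible with $[m]^*L\cong L^{\otimes m^2}$. The key technical point, and the main obstacle, is that $\overline{a}$ commutes with pullback along $[m]\colon N_{g,1}^{\widetilde\Sigma_m}\to N_{g,1}^{\widetilde\Sigma}$ and with the identification of log adelic bundles on $N_{g,1}^{\widetilde\Sigma_m}$ and $N_{g,1}^{\widetilde\Sigma}$ (via the subdivision $\tau$).

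\emph{Step 4: check compatibility of $\overline{a}$ with pullback.} For this I would:
\begin{enumerate}
\item Verify that if $c_i\to c$ uniformly, then $[m]^*c_i\to[m]^*c$ uniformly. The map $[m]$ on cones is linear, $(a,l)\mapsto(a,ml)$, so it sends the compact sets $H_i$ of Definition~\ref{def:conical_top} into bounded multiples of such sets, and uniform convergence on conical functions is preserved.
\item Verify that $[m]^*\lb L_{c_i}=\lb L_{[m]^*c_i}$, exactly as in the lemma on extending $[m]$.
\item Verify that pullback along a subdivision induces the identity on $\on{Ad-Pic}_{\on{log}}$. This holds because subdivisions are cofinal in the defining colimits.
\end{enumerate}

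With Step 4 in place, $\overline{a}$ turns the isomorphism of Theorem~\ref{thm:main} for $m=2$ into $[2]^*\overline{a}(\cdot)\cong\overline{a}(\cdot)^{\otimes 4}$. Uniqueness in Theorem~\ref{thm:YZ_invariant_N} then identifies $\overline{a}(\llb L,-\theta^{\on{sm}})$ with $\overline{L}$.
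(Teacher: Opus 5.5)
\textbf{Same route as the paper, but Step~2 is a genuine gap.} Your skeleton matches the paper's proof: purity of $(\llb L,-\theta^{\on{sm}})$ from Theorem~\ref{thm:main}, transported by $\overline a$, then uniqueness in Theorem~\ref{thm:YZ_invariant_N}. Your Step~4 spells out what the paper asserts in one sentence. Your proposed repair in Step~2 fails, for three reasons.
\begin{enumerate}
\item A base twist is not harmless. Since $\pi\circ[m]=\pi$, twisting by $\pi^*\overline N$ changes $[m]^*(\cdot)$ by $\pi^*\overline N$ but changes $(\cdot)^{\otimes m^2}$ by $\pi^*\overline N^{\otimes m^2}$. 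So no nontrivial base twist is compatible with purity.
\item The theorem asserts an equality with $\overline L$, not an equality up to base twist.
\item $\tfrac18 q_a(\rho)$ is not an sPL function, because it is not integral. For $W=\zz$, $\rho=1$, $a(w)=\binom w2$ (the primitive point of an odd-characteristic ray), its value is $\tfrac18$.
\end{enumerate}

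\textbf{Your Step~2 computation actually collides with Step~3.} Take an isomorphism $[m]^*(\llb L,c)\cong(\llb L,c)^{\otimes m^2}$ compatible with the rigidified isomorphism on $N_{g,1}$, and pull it back along $e$, using $[m]\circ e=e$. This gives $e^*c=m^2\,e^*c$, so every pure extension is trivial along $e$ in the $\epsilon$-trivialisation.

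The proof of Theorem~\ref{thm:main} uses the normalisation ``vanishing at $l=0$ means the zero section''. In that normalisation, $e^*(-\theta^{\on{sm}})=\tfrac18 q_a(\rho)\neq 0$. Any other admissible normalisation changes this only by an integral base function, which cannot cancel a value $\tfrac18$.

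Consistently, expanding directly gives
\[
[m]^*(-\theta^{\on{sm}})-m^2(-\theta^{\on{sm}})=\tfrac{m^2-m}{2}\,l(\rho)-\tfrac{m^2-1}{8}\,q_a(\rho).
\]
The constant term is $(1-m^2)$ times your Step~2 value, and the displayed computation in the proof of Theorem~\ref{thm:main} omits it.

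So the purity you import in Step~3 holds for
\[
(\llb L,\tfrac12 q_a^*(l)-\tfrac12 l(\rho))=(\llb L,-\theta^{\on{sm}})\otimes\pi^*(\oo,-\tfrac18 q_a(\rho)),
\]
and this is what your uniqueness argument identifies with $\overline L$. A direct genus-one check confirms it: for the Tate curve with odd characteristic, the canonical metric in the rigidified chart is $\tfrac{l^2}{2\ell}-\tfrac l2$, not $-\theta^{\on{sm}}$.

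The statement as written differs from this by $\pi^*$ of the base b-divisor of $\tfrac18 q_a(\rho)$, for example $\tfrac18\delta^{\on{ntr}}$ in rank one. A correct argument has to account for this constant. It cannot be absorbed by a base twist or by renormalising the chart.
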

\begin{proof} We recall from Theorem~\ref{thm:main} that for each $m \in \zz$ the isomorphism $[m]^* L \isom L^{\otimes m^2}$ of rigidified line bundles over $N_{g,1}$ extends into an isomorphism of log b-line bundles $[m]^*(\llb L, -\theta^{\on{sm}})  \isom m^2 (\llb L, -\theta^{\on{sm}})$ over $N_{g,1}^{\widetilde{\Sigma}}$. The same invariance property with respect to multiplication-by-$m$ then holds for the image of $(\llb L,- \theta^{\on{sm}})$ under the homomorphism $\overline{a}$. The theorem then follows by the uniqueness statement in Theorem~\ref{thm:YZ_invariant_N}.
\end{proof}
Now take any admissible principal polarisation function $\phi$ on $\widetilde{\Sigma}$, and consider the continuous conical function $c=\phi-\theta^{\on{sm}}$ on ${N_{g,1}^{\widetilde \Sigma}}$. By Theorem~\ref{thm:theta-eq-cocycle-new} we have an isomorphism of log b-line bundles $(\llb L, -\theta^{\on{sm}})   \cong (\lb L_{-\phi}^{\LOG},0) \otimes (\oo,c)$ over $N_{g,1}^{\widetilde \Sigma}$. Using the notation from Definition~\ref{def:assoc_adelic} we then obtain the following corollary.

\begin{corollary} We have an identification of log adelic line bundles $\ca L_{-\phi}(c) \cong \overline{L}$ on $N_{g,1}^{\widetilde{\Sigma}}$.
\end{corollary}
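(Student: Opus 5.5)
The corollary follows by combining Theorem~\ref{thm:comparison_adelic_b_AV} with Theorem~\ref{thm:theta-eq-cocycle-new} and then reading the result through the map $\overline{a}$ of Proposition~\ref{prop:commutative_p_bar}.

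First, Theorem~\ref{thm:theta-eq-cocycle-new} shows that $c=\phi-\theta^{\on{sm}}$ is invariant under the translations $l\mapsto l+b_a(w,-)$. Hence it descends to a conical function on $N_{g,1}^{\widetilde\Sigma}$. It is continuous because $\phi$ is continuous (indeed PL) and $\theta^{\on{sm}}$ is continuous (indeed smooth) on $\tilde C_\rho(W)$, and continuity is a local condition on the cones. So $c\in\on{CCon}(N_{g,1}^{\widetilde\Sigma})$.

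Next, the same theorem gives an isomorphism of log b-line bundles
\[
(\llb L,-\theta^{\on{sm}})\cong(\lb L_{-\phi}^{\LOG},0)\otimes(\oo,c).
\]
Both sides have continuous conical part, so both lie in $\on{b-Pic}^{\on{cont}}$. We apply the group homomorphism $\overline{a}$ to this isomorphism. By Theorem~\ref{thm:comparison_adelic_b_AV}, the left side maps to $\overline{L}$. By Definition~\ref{def:assoc_adelic}, with $\lb L_{-\phi}$ an honest line bundle on $N_{g,1}^{\widetilde\Sigma}$ (Definition~\ref{def:line_bundle_for_adm_bis}), the right side maps to $\lb L_{-\phi}(c)$. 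This gives the claimed identification $\lb L_{-\phi}(c)\cong\overline{L}$.

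The only point needing care is matching the sign conventions for $\lb L_{\pm\phi}$. Definition~\ref{def:line_bundle_for_adm} attaches $\lb L_f$ to a section $f$ of $\llb L^\trop$, and by Lemma~\ref{lem:tropical-section} the relevant section is $-\phi$. So the model bundle appearing is exactly the one used in Theorem~\ref{thm:theta-eq-cocycle-new}, and no further adjustment is needed.
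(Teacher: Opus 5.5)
Your proposal is correct and follows the paper's argument: apply $\overline{a}$ to the isomorphism $(\llb L,-\theta^{\on{sm}})\cong(\lb L_{-\phi}^{\LOG},0)\otimes(\oo,c)$ from Theorem~\ref{thm:theta-eq-cocycle-new}, identify the left side with $\overline{L}$ by Theorem~\ref{thm:comparison_adelic_b_AV}, and read the right side as $\lb L_{-\phi}(c)$ via Definition~\ref{def:assoc_adelic}. (A small imprecision: $\theta^{\on{sm}}$ is not continuous on all of $\tilde C_\rho(W)$ in its subspace topology; in rank one, $-l^2/2\lambda$ is unbounded along $(\lambda,l)=(t^3,t)$. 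It is only continuous on each closed cone of $\widetilde\Sigma$, but that cone-wise continuity is all the definition of a continuous conical function requires, and it is what Theorem~\ref{thm:theta-eq-cocycle-new} provides.)
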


As a particular case, we can take for $\widetilde \Sigma$ the mixed Delaunay--Voronoi cone decomposition together with the tropical theta function $\theta^{\mathrm{pl}}$ with characteristics, see Proposition~\ref{prop:decomposition-over}. Recall that we have the continuous conical function 
\[ \theta^{\on{inv}} = \theta^{\mathrm{pl}} - \theta^{\on{sm}} . \] 
This then gives the following.

\begin{corollary} \label{cor:specialize_Riemann}
We have an identification of log adelic line bundles $\ca L_{-\theta^{\mathrm{pl}}}(\theta^{\mathrm{inv}}) \cong \overline{L}$ on $N_{g,1}^{\mathrm{mDV}}$.
\end{corollary}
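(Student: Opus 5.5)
This is the special case of the previous corollary for the mixed Delaunay--Voronoi decomposition, so the plan is to verify that its hypotheses hold for this choice. By Proposition~\ref{prop:decomposition-over}, $\widetilde\Sigma^{\mathrm{mDV}}$ is an $N_{g,1}$-admissible cone decomposition over $\Sigma^{\mathrm{Del}}$, and $\theta^{\mathrm{pl}}$ is an $N_{g,1}$-admissible principal polarisation function with respect to it. By Definition~\ref{def:comp_N}, $N_{g,1}^{\mathrm{mDV}} = N_{g,1}^{\widetilde\Sigma^{\mathrm{mDV}}}$. We may therefore take $\widetilde\Sigma = \widetilde\Sigma^{\mathrm{mDV}}$ and $\phi = \theta^{\mathrm{pl}}$ in the preceding corollary.

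With these choices, the continuous conical function is $c = \phi - \theta^{\on{sm}} = \theta^{\mathrm{pl}} - \theta^{\on{sm}}$, which is $\theta^{\mathrm{inv}}$ by Definition~\ref{def:inv_theta}. Its translation invariance, and hence its descent to a continuous conical function on $N_{g,1}^{\mathrm{mDV}}$, is supplied by Theorem~\ref{thm:theta-eq-cocycle-new}. That theorem gives the isomorphism of log b-line bundles
\[
(\llb L, -\theta^{\on{sm}}) \cong (\lb L_{-\theta^{\mathrm{pl}}}^{\LOG},0)\otimes(\oo,\theta^{\mathrm{inv}}).
\]
We then apply $\overline a$. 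On the right-hand side, Definition~\ref{def:assoc_adelic} identifies the image with $\ca L_{-\theta^{\mathrm{pl}}}(\theta^{\mathrm{inv}})$. On the left-hand side, Theorem~\ref{thm:comparison_adelic_b_AV} identifies the image with $\overline L$.

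The only step that needs checking is continuity of $\theta^{\mathrm{inv}}$, because $\overline a$ is defined only on $\on{b-Pic}^{\on{cont}}$. This is immediate: $\theta^{\mathrm{pl}}$ is continuous and piecewise linear, and $\theta^{\on{sm}}$ is continuous, as is clear from Proposition~\ref{prop:trop_theta_new}, where $\theta^{\mathrm{inv}}$ is a minimum of continuous quadratic expressions that is locally a finite minimum.

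A secondary point is the sign convention $\lb L_{-\phi}$ versus $\lb L_\phi$. Lemma~\ref{lem:tropical-section} says that $-\theta^{\mathrm{pl}}$ is the section of $\llb L^\trop$, and the notation is kept consistent with Theorem~\ref{thm:theta-eq-cocycle-new}. So no real obstacle is expected.
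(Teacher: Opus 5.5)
Your proposal is correct and is exactly the paper's argument. The paper specialises the preceding corollary to $\widetilde\Sigma=\widetilde\Sigma^{\mathrm{mDV}}$ and $\phi=\theta^{\mathrm{pl}}$, which is admissible by Proposition~\ref{prop:decomposition-over}, so that $c=\theta^{\mathrm{pl}}-\theta^{\on{sm}}=\theta^{\mathrm{inv}}$.

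One caution: your extra continuity check is not right as stated. On $\tilde C_\rho(W)$ itself, $\theta^{\on{sm}}$ is not continuous near degenerate $a$. For $W=\zz$, $\rho=0$, $b_a(x,y)=\epsilon xy$ one gets $\theta^{\on{sm}}_0(a,l)=-l^2/(2\epsilon)$, which is unbounded as $(a,l)\to(0,0)$. Likewise, the minimum in Proposition~\ref{prop:trop_theta_new} is not locally finite there. What is actually needed is continuity on each closed cone of $\widetilde\Sigma^{\mathrm{mDV}}$. That is already part of the statement of Theorem~\ref{thm:theta-eq-cocycle-new}, which you cite, so you can rely on it instead of re-deriving it.
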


\section{Abelian schemes with theta divisor} \label{sec:AV_theta}

Let $g$ be a positive integer. 
By Remark \ref{rem:N_g_comments}, we can think of the stack $N_g$ as classifying pairs $(p \colon A \to S, \Theta)$ consisting of an abelian scheme $p \colon A \to S$ of relative dimension~$g$ and a symmetric relatively ample effective relative Cartier divisor $\Theta$ on $A/S$ representing a principal polarisation. Thus $N_g$ becomes the stack called $\widetilde{\ca A}_g$ in \cite{mb_fonct}. It follows that we have a universal symmetric relative effective Cartier divisor on $N_{g,1}$ over $N_g$, which we also denote by $\Theta$. It gives rise to a line bundle $\oo(\Theta)$ over~$N_{g,1}$.

The purpose of this section is to recall the ``key formula'' over $N_{g,1}$ due to Moret-Bailly \cites{mb, mb_fonct}, and to discuss an extension of this formula to the space of \emph{adelic} line bundles over $N_{g,1}$, see  Theorem~\ref{thm:adelic_key_restate}.

\subsection{The key formula} \label{subsec:key}

Let $\pi \colon N_{g,1} \to N_g$ denote the structure map and let $e \colon N_g \to N_{g,1}$ denote the zero section. We set
\[ \omega := e^*(\Omega^g_{N_{g,1}/N_g}) = \pi_* (\Omega^g_{N_{g,1}/N_g}) , \]
and we call $\omega$ the \emph{Hodge line bundle} on $N_g$.

Recall that we have the universal symmetric, relatively ample and rigidified line bundle $L$ on $N_{g,1}$. The pushforward $\pi_*L$ is a line bundle on $N_g$. The line bundles $L$ and $\oo(\Theta)$ are isomorphic on all fibers of the map $\pi \colon N_{g,1} \to N_g$ and since $\pi$ is proper and $L$ is rigidified we have a canonical identification
\[ L = \oo(\Theta) \otimes \pi^* e^* \oo(-\Theta) \] 
of line bundles on $N_{g,1}$. The key formula ``computes'' the difference $e^*\oo(-\Theta)$. 

As a first step, applying the projection formula, and using the fact that $\pi_* \oo(\Theta)$ is trivialised by the canonical section $1_\Theta$, we find a canonical identification
\[  e^* \oo(-\Theta) = \pi_*L  .  \]
The key formula is then the following.
\begin{theorem}[{\cite[Th\'eor\`eme~0.2]{mb_fonct}, \cite[Theorem I.5.1, p.25]{fc}}] \label{thm:key}  \label{thm:key_cor}  \label{thm:key_cor_direct_image}\leavevmode
\begin{itemize}
\item[(i)] The line bundle $\left( \pi_* L \right)^{\otimes 8} \otimes \omega^{\otimes 4}$
is trivial over $N_g$.  
\item[(ii)] The line bundle $\left( L \otimes  \oo(-\Theta) \right)^{\otimes 8} \otimes \pi^* \omega^{\otimes 4}$
is trivial over $N_{g,1}$.
\end{itemize}
\end{theorem}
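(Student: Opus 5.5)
The plan is to reduce part (ii) to part (i), and to prove part (i) by the classical argument of Moret-Bailly, as reproduced in Faltings--Chai. Since the statement is cited from \cite{mb_fonct} and \cite{fc}, the goal is to organise the proof so that each step rests on standard inputs.

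\emph{Reduction of (ii) to (i).} We use the canonical identification $L = \oo(\Theta)\otimes \pi^*e^*\oo(-\Theta)$ together with $e^*\oo(-\Theta) = \pi_*L$, both recalled just above. These give
\[ L\otimes\oo(-\Theta) \cong \pi^*\pi_*L. \]
Hence
\[ \left(L\otimes\oo(-\Theta)\right)^{\otimes 8}\otimes\pi^*\omega^{\otimes 4} \cong \pi^*\left((\pi_*L)^{\otimes 8}\otimes\omega^{\otimes 4}\right), \]
and triviality on $N_g$ pulls back to triviality on $N_{g,1}$. So everything reduces to (i).

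\emph{Proof of (i).} I would proceed in three steps.

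First, I would show that $\Delta := (\pi_*L)^{\otimes 8}\otimes\omega^{\otimes 4}$ is torsion in $\on{Pic}(N_g)$. The classical route is the theorem of the cube for the abelian scheme, combined with the Grothendieck--Riemann--Roch / Mumford-type computation $\det \pi_*(L^{\otimes n})$ for varying $n$. One uses $[n]^*L\cong L^{\otimes n^2}$ and compares $\pi_*([n]^*L)$ with $\pi_*L$ via the isogeny $[n]$. This yields a relation of the form
\[ \det\pi_*(L^{\otimes n^2})\cong (\pi_*L)^{\otimes n^{2g}}\otimes\omega^{\otimes c(n)} \]
up to torsion. Comparing with Mumford's formula for $\det\pi_*(L^{\otimes k})$, which is linear in $\omega$ with coefficient involving $k^g$, then forces $(\pi_*L)^{\otimes 2}\otimes\omega$ to be torsion.

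Second, I would pin down the torsion. This uses that $\Delta$ is invariant under the symplectic/theta group action, i.e. it descends compatibly under the $A_{g,1}[2]$-torsor $N_g\to A_g$. Explicitly the torsion can be bounded by $8$ via the Heisenberg group of $L^{\otimes 2}$ and theta structures of level $(2,4)$, or equivalently via the eighth-root-of-unity ambiguity in the theta transformation formula.

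Third, I would produce a global nowhere-vanishing section. Over a cover by a level stack, $\pi_*L$ is trivialised by the theta constant $\theta[\rho](0)$, and $\omega$ by $dz_1\wedge\dots\wedge dz_g$. The transformation formula shows that $\theta^8\otimes(dz)^{\otimes 4}$ is invariant, so it descends to a trivialisation of $\Delta$ over $N_g$. To make this work over $\zz$, including characteristic $2$, I would use that $N_g$ is flat with fibres of the two components geometrically connected and with global units $\{\pm1\}$ (Remark~\ref{rem:N_g_comments}(v)). That lets us extend from the generic fibre by checking that the divisor of the section, being supported in fibres, is zero.

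The main obstacle is the arithmetic integrality in characteristic $2$, i.e. showing the exponent $8$ works over $\zz$ and not just over $\zz[1/2]$. For this I would follow Moret-Bailly's descent via the stack $\widetilde{\ca A}_g = N_g$ directly, rather than through analytic theta functions.
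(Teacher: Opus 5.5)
Your reduction of (ii) to (i) is correct and matches the paper's setup. The identifications $L=\oo(\Theta)\otimes\pi^*e^*\oo(-\Theta)$ and $e^*\oo(-\Theta)=\pi_*L$ give $L\otimes\oo(-\Theta)\cong\pi^*\pi_*L$, so (ii) is $\pi^*$ of (i), and (i) is $e^*$ of (ii). This is why the paper can fix compatible trivialisations. For (i) the paper gives no proof at all and cites Moret-Bailly and Faltings--Chai, so your sketch has to stand on its own. As written it has a concrete error in Step 3: theta constants do not trivialise $\pi_*L$.

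\begin{itemize}
\item On the odd component of $N_g$ (present for every $g\ge1$) one has $\vartheta[\rho](-z,\tau)=-\vartheta[\rho](z,\tau)$, so $\vartheta[\rho](0,\tau)\equiv0$.
\item On the even component, $\vartheta[\rho](0,\tau)$ vanishes along the theta-null divisor once $g\ge2$.
\item Intrinsically, under $\pi_*L\cong e^*\oo(-\Theta)$ the theta constant is the evaluation-at-origin functional, i.e.\ the canonical section $e^*1_\Theta$ of $(\pi_*L)^\vee$. It vanishes exactly where $e\in\Theta$.
\end{itemize}

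What generates the rank-one bundle $\pi_*L$ is the theta function $z\mapsto\vartheta[\rho](z,\tau)$ itself, read in the factor-of-automorphy trivialisation of $L$ over $\mathbb C^g\times\hh_g$ normalised along $z=0$ by the rigidification. Under the stabiliser of $\rho$ in $\mathrm{Sp}_{2g}(\zz)$ its multiplier is an eighth root of unity times $\det(C\tau+D)^{1/2}$. Since $dz_1\wedge\dots\wedge dz_g$ picks up $\det(C\tau+D)^{-1}$, this gives invariance of $\vartheta^{\otimes8}\otimes(dz)^{\otimes4}$. You would still have to show the descended analytic section is algebraic, via the Koecher principle or $q$-expansions.

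A sanity check: for $g=1$ and $\Theta$ the origin (odd), $\pi_*L=\omega$, so (i) says $\omega^{\otimes12}$ is trivial. It is trivialised by the discriminant, $\vartheta_{11}'(0)^8\propto\eta^{24}$, not by a theta constant.

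Steps 1 and 2 do not give an independent argument.
\begin{itemize}
\item For $n$ invertible, the comparison via $[n]$ gives $\det\pi_*(L^{\otimes n^2})\cong(\pi_*L)^{\otimes n^{2g}}\otimes\bigotimes_{x\in A[n]}(x^*L)^\vee$ with each $x^*L$ torsion. No $\omega$ appears at all.
\item All the content therefore sits in your ``Mumford formula'' $\det\pi_*(L^{\otimes k})\equiv\omega^{\otimes -k^g/2}$. At $k=1$ that is already the torsion statement. It comes either from Grothendieck--Riemann--Roch over a field, or from the key formula itself, which would be circular.
\item ``Bounding the torsion by $8$'' via Heisenberg groups is not an argument.
\end{itemize}

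The passage to $\zz$ also does not work as described. Knowing the divisor of the section is supported in fibres does not make it zero.
\begin{itemize}
\item First you must descend the section from $\mathbb C$ to $\mathbb Q$ (e.g.\ by Hilbert 90).
\item Over $\zz[1/2]$ you can then kill the divisor by rescaling by $\pm\prod p^{k_p}$, separately on each component. This works because each component is smooth with irreducible fibres there.
\item The units $\{\pm1\}$ give uniqueness of the trivialisation, not existence.
\item At $p=2$ this breaks down. The two components meet in characteristic $2$, $N_g$ is not normal there, and independent rescalings need not glue.
\end{itemize}
So a genuine $2$-adic computation is needed, which is the non-classical content of Moret-Bailly's Th\'eor\`eme~0.2. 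You explicitly defer it, so for (i) you are ultimately relying on the citation, as the paper does.
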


From now on, we fix compatible trivialisations for (i) and (ii) above. As is observed in \cite[Section~0.3]{mb_fonct}, such trivialisations are unique up to a sign on both irreducible components of $N_g$.

\subsection{Moret-Bailly models and the key formula} \label{sec:MB_models}

We make a slight variation of the material in~\cite[Section~4.3]{bost_duke}. 
Assume we are given a discrete valuation ring $R$ with fraction field $F$, an abelian variety $A$ over $F$, and a rigidified, symmetric, ample line bundle $L$ on $A$ defining a principal polarisation. We denote by $K( L^{\otimes 2})$ the kernel of the polarisation $\lambda_{L^{\otimes 2}} \colon A \to A^\vee$ associated to $L^{\otimes 2}$. Write $S = \Spec R$.
\begin{definition} A \emph{Moret-Bailly model} of $(A, L)$ consists of a smooth group-scheme model $f \colon \aa \to S$ of $A$ whose fibrewise connected component of identity is semiabelian,  together with a cubical symmetric relatively ample line bundle $\ca L_{\ca A}$ on $\aa$ extending $ L$, such that the group scheme $K(L^{\otimes 2})$ extends as a finite flat subgroup scheme of $\aa$ over~$S$.
\end{definition}
A Moret-Bailly model of $(A,L)$ exists after a finite base change of $R$. See for instance \cite[Theorem~4.10(i)]{bost_duke} and its proof. 

When $f \colon \ca A \to S$ is a smooth group scheme with zero section $e \colon S \to \ca A$ we set
\[ \omega_{\aa/S} := e^*(\Omega^g_{\aa/S}) = f_* (\Omega^g_{\aa/S}) . \]

\begin{theorem} \label{thm:key_semi_abelian} 
Let $(f \colon \aa \to S, \ca L_{\ca A})$ be a Moret-Bailly model of $(A,L)$. Then $f_*{\ca L_{\ca A}}$ is a line bundle on $S$. Moreover, there exists a trivialisation of $(f_*{\ca L_{\ca A}})^{\otimes 8} \otimes \omega_{\aa/S}^{\otimes 4}$ compatible with the trivialisation of $H^0(A,L)^{\otimes 8} \otimes H^0(A,\Omega^g_{A/F})^{\otimes 4}$ obtained from the trivialisation fixed at the end of Section~\ref{subsec:key}.
\end{theorem}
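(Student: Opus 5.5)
We reduce the statement to the key formula over $N_g$ (Theorem~\ref{thm:key}(i)) by using the moduli map and the extension properties of Moret-Bailly models.

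\textbf{Step 1: $f_*\ca L_{\ca A}$ is a line bundle.} Since $\ca L_{\ca A}$ is relatively ample and cubical on the smooth group scheme $\aa/S$, and $S$ is the spectrum of a DVR, $f_*\ca L_{\ca A}$ is torsion free. It is therefore free, and its rank equals $h^0(A,L)=1$ provided it is finitely generated. Finiteness is not automatic, because $\aa$ need not be proper. Here we use the finite flat extension of $K(L^{\otimes 2})$ together with Mumford/Faltings--Chai theory of degenerations. After passing to the completion and a finite base change, $(\aa,\ca L_{\ca A})$ is given by Mumford's construction from a degeneration datum. Global sections of $\ca L_{\ca A}$ are then Fourier series $\sum_{y\in Y}\sigma_y$, whose coefficients are constrained by the cubical structure. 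This identifies $f_*\ca L_{\ca A}$ with a rank one lattice in $H^0(A,L)$, exactly as in \cite[Theorem~4.10]{bost_duke}. We follow Bost's argument, replacing his level structure by our symmetric rigidified $L$.

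\textbf{Step 2: comparing trivialisations.} The trivialisation of $H^0(A,L)^{\otimes 8}\otimes H^0(A,\Omega^g)^{\otimes 4}$ comes from pulling back the fixed trivialisation over $N_g$ along the classifying map $\Spec F\to N_g$. It extends over $S$ precisely when its valuation is zero. To compute this valuation we extend the classifying map, after a finite base change which is harmless since valuations scale, to a log map $\Spec R\to N_g^{\Sigma}$ for a suitable admissible decomposition.

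Two points must then be checked. First, $\omega_{\aa/S}$ agrees with the pullback of the extended Hodge bundle $\bar\omega_g$, since the identity component is the semiabelian part $G$. Second, $f_*\ca L_{\ca A}$ agrees with the pullback of $\pi_*\bar L$ for the Mumford extension. Since $\ca L_{\ca A}$ is cubical and symmetric it is unique up to twisting by $f^*$ of a line bundle on $S$, and that twist is fixed by the rigidification.

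The remaining task is to show that $(\pi_*L)^{\otimes 8}\otimes\omega^{\otimes 4}$, with the section given by Theorem~\ref{thm:key}(i), extends as a \emph{nowhere vanishing} section over the boundary of $N_g^{\Sigma}$. Because $N_g$ has only the units $\pm1$ on each component (Remark~\ref{rem:N_g_comments}(v)), this section is canonical up to sign. Its behaviour at the boundary is governed by a $q$-expansion: the theta constant $\sum_{w\in W} q^{a(w)}$ against $dq/q$-type generators of $\omega$. The minimum of $a$ over $W$ is $0$, attained at $w=0$ since $a(0)=0$ and $a\ge 0$ on the relevant cone. The leading coefficient is therefore a unit, so the order of vanishing of the theta null is $0$.

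\textbf{Main obstacle.} The hardest step is the last one: showing that the canonical trivialisation has no zero or pole along boundary divisors. That requires an explicit $q$-expansion of $\pi_*\bar L$ and of $\bar\omega_g$ over the toroidal chart, following Moret-Bailly's proof of the key formula in \cite{mb_fonct} and the Fourier--Jacobi expansion in \cite[Ch.~V]{fc}.

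In characteristic two there is an additional difficulty: the two components meet, and $\rho$-shifts in $Q_\rho(W)$ may produce leading terms with coefficient $2$. We would handle this by checking that the minimum of $a+l$ (the tropical theta function $\theta^{\mathrm{pl}}$ at $l=0$) is attained by a unique coset representative, up to the symmetry $w\mapsto\rho-w$ which identifies the two terms.
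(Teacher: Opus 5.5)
\paragraph{A genuine gap in Step~2.} Step~2 rests on two claims, and both are false. The case $g=1$ with $\Theta$ the origin (the component $Y_1(1)$ of $N_1$) already shows this.

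There $\pi_*L=e^*\ca O(-\Theta)=\omega$, so $(\pi_*L)^{\otimes 8}\otimes\omega^{\otimes 4}=\omega^{\otimes 12}$, and the key-formula trivialisation is $\pm\Delta$. Over the cusp the universal curve on $\bar N_{1,1}$ is a nodal cubic, which is irreducible. Hence every line bundle $\bar L$ extending $L$ is the rigidified extension twisted by $\pi^*\ca O(k\,\partial)$ for some $k\in\bb Z$, where $\partial$ is the cusp, and $\pi_*\bar L=\bar\omega(k\,\partial)$. Since $\Delta=q\prod_{n\ge1}(1-q^n)^{24}$, the section has divisor $(1+8k)\,\partial\neq 0$. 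So no extension makes it nowhere vanishing along the boundary of $N_g^{\Sigma}$.

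Your $q$-expansion heuristic cannot see this. Knowing the trivialisation up to sign on $N_g$ says nothing about its Fourier--Jacobi expansion, and that expansion is not the naive series $\sum_w q^{a(w)}$ against a generator of $\bar\omega$. Here the theta-null vanishes identically, and what actually enters is $\vartheta_{11}'(0)^8\propto\eta^{24}$. Separately, the claim $\min_W a=0$ fails for general representatives $\rho$: for $\rho=3$ in rank one, $\min a=-\ell$.

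Dually, $f_*\ca L_{\ca A}$ is not $j^*\pi_*\bar L$. Uniqueness of the cubical extension only pins down $\ca L_{\ca A}$ on the identity component $G$, where it does agree with the rigidified Mumford extension. But $f_*\ca L_{\ca A}$ is cut out by regularity along \emph{all} components of the special fibre of $\aa$. The model $\aa$ must have non-identity components, because the closure of $K(L^{\otimes 2})$ has to be finite flat.

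\paragraph{The Tate-curve computation.} Take a Tate curve with $v(q)=\ell$, where $8\mid\ell$ after base change. The component through $\sqrt q$ is such a non-identity component. On it the cubical structure gives $s=1_\Theta\otimes\omega_0$ a pole of order $\ell/8$; equivalently $-\log\sup_{\Sk(A)}\|s\|_L=-\ell/8$, as in Proposition~\ref{prop:order_vanishing_MB}. Thus, with $\varpi$ a uniformiser,
\[ f_*\ca L_{\ca A}=\varpi^{\ell/8}\,\omega_{G/S}, \qquad j^*\pi_*\bar L=\varpi^{-k\ell}\,\omega_{G/S}, \]
and these differ for every $k\in\bb Z$.

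The theorem holds precisely because this shrinking of the lattice compensates $\ord_v\Delta_{\min}=\ell$. Your two errors cancel, but neither intermediate step can be proved. Establishing the cancellation in general is the whole content of the statement: one must weigh the effect of the non-identity components of $\aa$ against the true boundary divisor of Moret-Bailly's trivialisation.

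\paragraph{What the paper does.} The paper does not attempt this. It quotes Moret-Bailly's semiabelian key formula \cite[Th\'eor\`eme~VIII.1.3(i)]{mb}, where the finite flat extension of $K(L^{\otimes 2})$ is exactly the hypothesis that makes the bookkeeping work. Be aware that the same example bears on the identification $g_*\ca L_{\ca P}\cong f_*\ca L_{\ca A}$ used later in the proof of Theorem~\ref{thm:mumford}. That identification is subject to the same discrepancy, so you should not rely on it either.
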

\begin{proof} This follows from \cite[Th\'eor\`eme~VIII.1.3(i)]{mb}.
\end{proof}
Assume from now on that $R$ is \emph{complete}. Let $\mathbb{F}$ be the completion of an algebraic closure of the fraction field~$F$. Following Section~\ref{sec:Berk_anal} we have functorially associated to $A$ its Berkovich analytification $A^\an$ over $\mathbb{F}$. 
The rigidified line bundle~$L$ has a functorial analytification $L^\an$ over $A^\an$. 

As is explained in for example \cite[Example~3.7]{gubler_can_meas}, there exists a unique continuous metric $\|\cdot\|_L$ on $L^\an$ compatible with the isomorphisms $[m]^*L \isom L^{\otimes m^2}$ for all $m \in \zz$. We call the metric $\|\cdot\|_L$  the \emph{canonical metric} on $L^\an$. We note for future reference that the pair $(L^\an, \|\cdot\|_L)$ is the \emph{analytification} of Yuan--Zhang's invariant adelic line bundle $\overline{L}$ over $A$, in the sense of \cite[Section~3.4]{yz}.

Let $\Sk(A)$ be the canonical skeleton of $A^\an$ (see Section~\ref{sec:Berk_anal}).

\begin{proposition} \label{prop:order_vanishing_MB}
Let $(f \colon \aa \to S, \ca L_{\ca A})$ be a Moret-Bailly model of $(A,L)$. Let $s \in H^0(A,L)$ be a non-zero global section. As $H^0(A,L)$ is the generic fiber of the line bundle $f_* {\ca L_{\ca A}}$ over $S$, we may view $s$ as a rational section of $f_* {\ca L_{\ca A}}$. Let $v$ denote the closed point of $S$. The equality
\[   - \log \sup_{x \in \Sk(A)} \|s(x)\|_L  = \ord_{v,f_*\ca L_{\ca A}}(s)  \]
holds.
\end{proposition}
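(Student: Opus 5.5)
The plan is to compare two norms on the one-dimensional $F$-vector space $H^0(A,L)$. The first is the lattice norm coming from the $R$-lattice $H^0(\aa,\ca L_{\ca A}) = (f_*\ca L_{\ca A})_v$, which measures $\ord_{v,f_*\ca L_{\ca A}}(s)$. The second is the sup norm of the canonical metric over the skeleton. Replacing $s$ by $\varpi^{-n}s$ with $n=\ord_{v,f_*\ca L_{\ca A}}(s)$ changes both sides by $n$, since $\|\cdot\|_L$ is a metric and the sup scales by $|\varpi|^n$. We may therefore assume $s$ generates the lattice, and must show $\sup_{x\in \Sk(A)}\|s(x)\|_L = 1$.

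\textbf{Step 1: the model metric is the canonical metric.} First I identify the canonical metric with the model metric $\|\cdot\|_{\ca L_{\ca A}}$ defined by $\ca L_{\ca A}$. The model $\aa$ is not proper, so it cannot be used directly. I would pass to a proper model in one of two ways.
\begin{itemize}
\item Use the compactification by Mumford's construction, i.e.\ an Alexeev--Nakamura type model $\bar{\ca P}$ with relatively ample $\bar{\ca L}$ extending $\ca L_{\ca A}$, as in \cite[\S4]{bost_duke}. The cubical symmetric structure, together with the condition that $K(L^{\otimes 2})$ extends finitely, implies that on the Raynaud uniformisation the model metric is given by a PL function with the tropical cocycle of the polarisation. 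This is the bridge to $\theta^{\mathrm{pl}}$-type functions.
\item Alternatively, use the standard fact that for a cubical model the model metric restricted to the preimage $\val^{-1}$ of the skeleton differs from the canonical metric by a continuous $Y$-periodic function.
\end{itemize}
Rather than proving full equality, I only need the sup statement. Concretely, I would take the Shilov points of $\bar{\ca P}$ over the components of the special fibre, and show they lie on $\Sk(A)$ and that the canonical metric agrees with the model metric there up to a correction $\theta^{\mathrm{inv}}$-type function which vanishes at the relevant points. This vanishing is the content of Proposition~\ref{prop:trop_theta_new}.

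\textbf{Step 2: the sup equals one.} Since $s$ generates $H^0(\aa,\ca L_{\ca A})$ and this equals $H^0$ of the proper model (with $f_*$ compatible), $s$ is integral on the model, so $\|s\|_{\mathrm{model}}\le 1$ everywhere. Moreover $s$ is nonvanishing on some component of the special fibre, because its reduction is a nonzero section. At the corresponding Shilov point $\xi$ the model norm is exactly $1$. This gives $\sup_{A^\an}\|s\|_{\mathrm{model}}=1$. Shilov points lie on $\Sk(A)$, since components of the Mumford model's special fibre correspond to vertices of the Delaunay-type decomposition of the real torus $\Hom(X,\bb R)/Y$. Finally, the sup of $\|s\|_L$ over $A^\an$ equals the sup over $\Sk(A)$: I would show $\|s\|_L\le \|s\|_L\circ(\text{retraction})$, using that theta functions are maximised on the skeleton via the Fourier expansion $s=\sum_y c_y\,\chi_y$ and the ultrametric inequality.

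\textbf{Main obstacle.} The hardest step is the comparison between $\|\cdot\|_L$ and the model metric on the skeleton, i.e.\ showing their ratio is $\le 1$ everywhere and $=1$ at a point where $s$ is maximal. Here I would follow \cite{djs_canonical}. There, the canonical metric on the skeleton is written as $-\log\|s\|_L = \min_y\{\text{affine}\} - \text{quadratic}$, which is exactly the $\theta^{\mathrm{pl}}-\theta^{\mathrm{sm}}$ shape. The Moret-Bailly condition normalises the characteristic so that the minimum of $\theta^{\mathrm{inv}}$ is $0$, attained at $b_a(\rho/2,-)$, and that point is a vertex corresponding to a Shilov point.
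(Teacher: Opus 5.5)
Your argument has a genuine gap at its central step. (The paper gives no argument of its own here; it cites \cite[Corollary~10.7]{djs_faltings}.)

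You need a single proper model $\bar{\ca P}$ with two properties:
(a) $H^0(\bar{\ca P},\bar{\ca L})=H^0(\ca A,\ca L_{\ca A})$;
(b) $\|s\|_L\le\|s\|_{\bar{\ca L}}$ on $\Sk(A)$, with equality at a maximiser of $\|s\|_L$.
Neither model you have in mind gives both.

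\emph{The Alexeev--Nakamura model.} The model of Section~\ref{sec:AN_cle} satisfies (b). There $-\log\|s\|_{\bar{\ca L}}=c$ is constant on $\Sk(A)$, and $-\log\|s\|_L=c+\theta^{\mathrm{inv}}_\rho(a,-)\ge c$, by Propositions~\ref{prop:vanishing_AN} and~\ref{prop:comparison_canonical_model_metric}. But this model does not contain $\ca A$, and its bundle does not extend $\ca L_{\ca A}$. For example, take an elliptic curve with $I_\ell$ reduction and $L=\ca O(O)$. The model has a single component, with Shilov point $0\in\rr/\ell\zz$. There $\theta^{\mathrm{inv}}=\ell/8\neq 0$, whereas $\|s\|_L$ is maximal at $\kappa=\ell/2$. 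Moreover, (a) for this model is exactly the content of Theorem~\ref{thm:mumford}, which the paper derives from the present proposition, so you cannot assume it.

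\emph{A compactification of $\ca A$.} Suppose instead $\bar{\ca P}$ compactifies $\ca A$ without adding components, with $\bar{\ca L}|_{\ca A}=\ca L_{\ca A}$; you do not justify that such a model exists. Then (a) is plausible, and at the Shilov points the two metrics agree exactly by the cubical structure, with no $\theta^{\mathrm{inv}}$ correction. Between those points, however, the model metric is an affine interpolation of the concave quadratic $\theta^{\mathrm{sm}}$, which lies below it. Hence $\|s\|_L\ge\|s\|_{\bar{\ca L}}$ on $\Sk(A)$. The inequality you need for the upper bound goes the wrong way, and you only obtain $\sup_{\Sk(A)}\|s\|_L\ge 1$.

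What is missing is the actual role of the Moret-Bailly hypothesis. It does not normalise $\theta^{\mathrm{inv}}$; its minimum is $0$ for every $\rho$ (Proposition~\ref{prop:trop_theta_new}). Rather, $K(L^{\otimes 2})=A[2]$ extends to a finite flat subgroup of $\ca A$. So every $2$-torsion point of $A$ extends to $\ca A$ after a finite extension of $R$. Let $\Phi\subset\Sk(A)$ be the finite subgroup formed by the points $\xi_C$ attached to the components $C$ of $\ca A_v$. Then $\Phi$ contains $\val(A[2])=\Sk(A)[2]$, and in particular $\kappa$.

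With this, no compactification is needed:
\begin{enumerate}
\item $\ca A$ is normal, so $\ord_{v,f_*\ca L_{\ca A}}(s)=\min_C\ord_C(s)=-\log\max_C\|s(\xi_C)\|_{\ca L_{\ca A}}$.
\item The cubical structure, via Tate's limit argument on the compact group $\val^{-1}(\Phi)$, gives $\|\cdot\|_{\ca L_{\ca A}}=\|\cdot\|_L$ there.
\item The formula $-\log\|s\|_L=c+\theta^{\mathrm{inv}}_\rho(a,-)$ then gives $\max_C\|s\|_L(\xi_C)=\|s\|_L(\kappa)=\sup_{\Sk(A)}\|s\|_L$.
\end{enumerate}

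Your closing assertion that $b_a(\rho/2,-)$ is ``a vertex corresponding to a Shilov point'' is precisely this crux, but you assert it without proof. For the Alexeev--Nakamura model you name it is false: there $b_a(\rho/2,-)$ has the unique nearest lattice point $w=0$, so it lies in the interior of a maximal cell.
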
 
\begin{proof} See \cite[Corollary~10.7]{djs_faltings}.
\end{proof}

\subsection{Alexeev--Nakamura models and the key formula} \label{sec:AN_cle}

Let $R$ be a complete discrete valuation ring and write $S=\Spec R$. Let $F$ be the fraction field of $R$, let $A$ be an abelian variety of dimension~$g$ with semiabelian reduction over~$F$, and let $L$ be a symmetric, rigidified, ample line bundle on $A$ defining a principal polarisation of $A$. 

In \cite{djs_canonical} one finds a construction of so-called \emph{tautological models} $(\ca P/S, \ca L_{\ca P})$ of $(A,L)$ based on the \emph{tropicalisation} of a \emph{non-archimedean theta function} for $L$. The model $\ca P$ is integral, and the structure map $g \colon \ca P \to S$ is projective and flat. 

Let $G/S$ be the connected component of the N\'eron model of $A$ and let $T$ be the torus of the special fiber of $G$ which is a semiabelian variety. We write $X=\Hom(T,\bb G_m)$ as usual; following the constructions in Section~\ref{sec:LAV} applied to our setting we moreover have a free abelian group $Y$ together with an isomorphism $Y \isom X$, as well as a quadratic function $a \colon X \to \zz$ with associated symmetric bilinear form $b_a \colon X \times X \to \bb Z$, resulting in a lattice embedding $Y \to \Hom(X,\bb R)$. 

As we have seen, the symmetric line bundle $L$ determines a class $\kappa = \overline{\rho}$ in $X/2X$ for some $\rho \in X$. Let $X \to \Hom(X,\rr)$ be the map given by $w \mapsto b_a(w,-)$, and let  $\Hom(X,\rr) \to \Hom(X,\rr)/Y$ be the projection. Following $\rho$ through the composition of maps $X \to \Hom(X,\rr) \to \Hom(X,\rr)/Y$, we obtain a $2$-torsion point $\kappa$ in the real torus $\Hom(X,\bb R)/Y$. In Section~\ref{sec:Berk_anal} we have canonically identified $\Hom(X,\bb R)/Y$ with the canonical skeleton $\Sk(A)$ of the Berkovich analytification $A^\an$ of $A$. The element $\kappa \in \Sk(A)$ coincides with what is called the \emph{tropical theta characteristic} of the pair $(A,L)$ in \cite{djs_canonical}. We denote by $\val \colon A^\an \to \Sk(A)$ the tropicalisation homomorphism from Section~\ref{sec:Berk_anal}.

The results in \cite[Section~4]{djs_canonical} give that the tropicalisation of any non-archimedean theta function for $(A,L)$ is equal to the map $\theta_\rho^{\mathrm{pl}}(a,-) \colon \Hom(X,\bb R) \to \bb R$, up to an additive constant. A canonical choice of tautological model $(\ca P, \ca L)$ of $(A,L)$ is therefore obtained by taking $\theta_\rho^{\mathrm{pl}}(a,-)$ itself. Let $\theta_\rho^{\mathrm{inv}}(a,-) \colon \Sk(A) \to \bb R$ be the invariant tropical theta function with characteristic $\rho$ from \eqref{def:inv-tropi-theta}, restricted to $\Sk(A) = \Hom(X,\bb R)/Y$.

We obtain a direct connection with the log moduli stacks constructed in Section~\ref{sec:tropical_RT}, as follows. Consider the $N_g$-admissible cone decomposition $\Sigma^{\mathrm{Del}}$, the compatible $N_{g,1}$-admissible cone decomposition ${\widetilde \Sigma}^{\mathrm{mDV}}$, and the principal polarisation function $\theta^{\mathrm{pl}}$. Recall the corresponding algebraic stacks with log structure $N_g^{\mathrm{Del}}$ and $N_{g,1}^{\mathrm{mDV}}$, with structure map $\pi \colon  N_{g,1}^{\mathrm{mDV}} \to N_g^{\mathrm{Del}}$ from Definition~\ref{def:comp_N}. 

The construction in Definition~\ref{def:line_bundle_for_adm} produces a line bundle $\ca L_{\theta^{-\mathrm{pl}}}$ on $N_{g,1}^{\mathrm{mDV}}$. It coincides with the universal rigidified line bundle $ L$ over $N_{g,1}$. 
By the valuative criterion of properness, up to replacing $S$ by a finite ramified cover we have a moduli map $j \colon S \to N_g^{\mathrm{Del}}$ associated to the pair $(A,L)$.  
Our tautological model $g \colon \ca P \to S$ is obtained as a fiber product
 \begin{equation} \label{eqn:taut_model}
\begin{tikzcd}
\ca P \ar[d,"g"] \arrow[r] &  N_{g,1}^{\mathrm{mDV}}  \ar[d, "\pi"] \\
 S  \arrow[r, "j"] &   N_g^{\mathrm{Del}}
\end{tikzcd}
\end{equation}
in the category of log stacks, with the line bundle $\ca L_{\ca P}$ on $\ca P$  being equal to the pullback of the line bundle $\ca L_{-\theta^{\mathrm{pl}}}$. 

Let $L^\an$ be the analytification of $L$ over $A^\an$. Following for example \cite[Section~4.1]{djs_faltings} or \cite[Section A.5.1]{yz}, the model $\ca L_{\ca P}$ of $L$ naturally defines a \emph{model metric} $\|\cdot\|_{\ca L_{\ca P}}$ on $L^\an$.  Let $\|\cdot\|_L$ be the canonical metric on $L^\an$ (see Section~\ref{sec:MB_models}).

Let $s \in H^0(A,L)$ be a non-zero global section, and write $D=\divisor_L(s)$. As before let $a \colon X \to \zz$ be the quadratic function associated to our pair $(A,L)$.
\begin{proposition} \label{prop:comparison_canonical_model_metric}
 For every $x \in A^\an \setminus |D|$ we have an equality
\begin{equation} 
 -\log \|s \|_L(x) = -\log\|s\|_{\ca L_{\ca P}}(x) + \theta_\rho^{\mathrm{inv}} (a,\val(x)) 
\end{equation}
in $\bb R$.
\end{proposition}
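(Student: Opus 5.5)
The plan is to deduce the formula from Corollary~\ref{cor:specialize_Riemann} by pulling back along the moduli map $j$ and taking analytifications, so that everything reduces to one local statement comparing a model metric twisted by a conical function with the conical function itself along the skeleton. The basic observation is that both sides of the claimed equality change by the same amount $-\log|\lambda|$ when $s$ is replaced by $\lambda s$. Hence the statement is equivalent to the equality of metrics
\[ \|\cdot\|_L = \|\cdot\|_{\ca L_{\ca P}} \cdot \exp\bigl(-\theta_\rho^{\mathrm{inv}}(a,\val(\cdot))\bigr) \]
on $L^\an$. In particular it does not depend on $s$, and it suffices to identify the metric on the right-hand side with the canonical metric.

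First step: pull back along $j$. By Corollary~\ref{cor:specialize_Riemann} we have $\ca L_{-\theta^{\mathrm{pl}}}(\theta^{\mathrm{inv}}) \cong \overline{L}$ as log adelic line bundles on $N_{g,1}^{\mathrm{mDV}}$. Pulling back along the top arrow of \eqref{eqn:taut_model}, and using that the analytification of Yuan--Zhang's $\overline{L}$ over $A$ is $(L^\an,\|\cdot\|_L)$ (Section~\ref{sec:MB_models}), we reduce to computing the analytification of the pulled-back log adelic line bundle $\ca L_{\ca P}(\theta^{\mathrm{inv}})$ on $\ca P$. Its model part $\ca L_{\ca P}$ contributes precisely the model metric $\|\cdot\|_{\ca L_{\ca P}}$. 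It therefore remains to show that the analytification of $\oo(\divisor c)$, for a continuous conical function $c$, is the trivial bundle with the metric satisfying $-\log\|1\|(x) = c(\val(x))$. Here $c(\val(x))$ is evaluated through the identification of $\Sk(A) = \Hom(X,\bb R)/Y$ with the fibre of the tropicalisation of $N_{g,1}^{\mathrm{mDV}}$ over the point $a$ (cf.\ Lemma~\ref{torus_from_log_AV} and Section~\ref{sec:Berk_anal}).

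Second step: the local computation, first for PL functions and then in general. If $c_i$ is PL and strict on some subdivision, then $\divisor c_i$ is a Cartier divisor supported on the boundary. In a toric chart, the model metric of its canonical section $1$ at a point $x$ is $-\log\|1\|(x) = c_i(\trop(x))$, where $\trop$ denotes the standard valuation map to the cone. This is the usual toric/skeleton dictionary, with our sign convention in Definition~\ref{def:line_bundle_for_sPL} that non-negative functions give effective divisors. For general continuous $c$, approximate it uniformly by PL functions $c_i$ (Lemma~\ref{lem:PL_approx}). The corresponding model metrics converge uniformly, and so does $c_i\circ\val \to c\circ\val$. The formula passes to the limit, which yields the claimed equality, with $c = \theta^{\mathrm{inv}}$ invariant under $Y$ by Theorem~\ref{thm:theta-eq-cocycle-new}.

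Main obstacle: verifying that the tropicalisation of $N_{g,1}^{\mathrm{mDV}}$, restricted over the log point defined by $j$, is compatible with $\val\colon A^\an \to \Sk(A)$. Concretely, one must show that for $x \in A^\an$ with reduction in the toric chart indexed by a mixed Delaunay--Voronoi cone, the induced point of that cone is $(a,\val(x))$ up to the scaling by the ramification index of the base change. I would handle this by working on the Raynaud uniformisation $G^\an \to A^\an$, where the toroidal charts of $\ca P$ are explicit, as in \cite{djs_canonical}. There the PL section $-\theta_\rho^{\mathrm{pl}}(a,-)$ visibly gives the model metric, consistent with the results of \cite[Section~4]{djs_canonical} recalled above. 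A secondary issue is that the log fibre product \eqref{eqn:taut_model} may differ from the schematic one. Since only the generic fibre and the Shilov/skeleton points matter for the metric, I expect the integral model $\ca P$ to suffice, possibly after passing to the root stack base change mentioned in the introduction.
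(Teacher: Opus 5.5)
Your proposal is correct and follows the paper's proof. The paper likewise pulls back the identification $\ca L_{-\theta^{\mathrm{pl}}}(\theta^{\mathrm{inv}}) \cong \overline{L}$ of Corollary~\ref{cor:specialize_Riemann} to $A$ and analytifies both sides, obtaining the canonical metric on one side and the model metric twisted by $\theta_\rho^{\mathrm{inv}}(a,\val(\cdot))$ on the other. Your PL-approximation argument and your check that the tropicalisation of $N_{g,1}^{\mathrm{mDV}}$ is compatible with $\val$ spell out steps the paper asserts without comment; as you suspect, the log-versus-schematic fibre product issue is harmless here, since the model metric only depends on the map to $N_{g,1}^{\mathrm{mDV}}$.
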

\begin{proof} Applying Theorem~\ref{cor:specialize_Riemann} we find  a canonical isomorphism $\ca L_{-\theta^{\mathrm{pl}}}(\theta_\rho^{\mathrm{inv}}(a,-)) \cong \overline{ L}$ of adelic line bundles over $A$. Upon applying the analytification functor to the adelic line bundle $\ca L_{-\theta^{\mathrm{pl}}}(\theta_\rho^{\mathrm{inv}}(a,-))$ we obtain the line bundle $L^\an$ on $A^\an$ with minus log of the norm of $s$ given by $x \mapsto -\log\|s\|_{\ca L_{\ca P}}(x) + \theta_\rho^{\mathrm{inv}} (a,\val(x))$. On the other hand, upon applying the analytification functor to the adelic line bundle $\overline{ L}$ we obtain the line bundle $L^\an$ on $A^\an$ with minus log of the norm of $s$ given by $x \mapsto -\log\|s\|_L(x)$. The required equality follows.
\end{proof}
\begin{remark} Proposition~\ref{prop:comparison_canonical_model_metric} can also be obtained from \cite[Corollary~5.8]{djs_canonical}. 
\end{remark}

Note that $g_*\ca L_{\ca P}$ is a line bundle over $S$. As $H^0(A,L)$ is the generic fiber of the line bundle $g_* \ca L_{\ca P}$ over $S$, we may view $s$ as a rational section of $g_*\ca L_{\ca P}$.
\begin{proposition} \label{prop:vanishing_AN} 
\begin{itemize}
\item[(i)] The function $\|s\|_{\ca L_{\ca P}}$ is constant over the canonical skeleton $\Sk(A)$. 
\end{itemize}
Denote the constant from (i) by $c$.
\begin{itemize}
\item[(ii)] The multiplicity of $s$ in the line bundle~$\ca L_{\ca P}$ along the irreducible components of the special fiber of $\pp$ over $S$ is constant, equal to $c$. 
\item[(iii)] Let $v$ denote the closed point of $S$. We have $ \ord_{v,g_*\ll_{\ca P}}(s) = c $, where on the left hand side we view $s$ as a rational section of the line bundle $g_*\ca L_{\ca P}$.  
\end{itemize}
Let $\ca D_{\ca P}$ be the flat extension in $\ca P$ over $S$ of the effective divisor $D$.
\begin{itemize}
\item[(iv)] The divisor $\ca D_{\ca P}$ is a Cartier divisor. 
\item[(v)] We have a canonical isomorphism of line bundles $\ca L_{\ca P} \cong g^*g_*\ca L_{\ca P} \otimes \ca O_{\ca P}(\ca D_{\ca P})$ over~$\ca P$.
\end{itemize}
\end{proposition}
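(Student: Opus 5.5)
The plan is to compare the canonical metric with the model metric on the skeleton, and to use the fact that the Shilov points of $\ca P$ lie on $\Sk(A)$.

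For (i), apply Proposition~\ref{prop:comparison_canonical_model_metric} at points $x \in \Sk(A)$. On the skeleton $\val(x)=x$, so
\[
-\log\|s\|_{\ca L_{\ca P}}(x) = -\log\|s\|_L(x) - \theta^{\mathrm{inv}}_\rho(a,x).
\]
It therefore suffices to know that $-\log\|s\|_L$ restricted to $\Sk(A)$ equals $\theta^{\mathrm{inv}}_\rho(a,-)$ plus a constant. This is the statement that the tropicalisation of the non-archimedean theta function $s$ with respect to the canonical metric is the invariant tropical theta function; I would cite \cite[Section~4]{djs_canonical} for it. Alternatively, one can argue directly. The tropicalisation of $s$ is $\theta^{\mathrm{pl}}_\rho(a,-)$ up to a constant, and the model metric of $\ca L_{-\theta^{\mathrm{pl}}}$ on the skeleton is given by the section $-\theta^{\mathrm{pl}}$ itself. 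These two cancel, so $\|s\|_{\ca L_{\ca P}}$ is constant on $\Sk(A)$. I expect this identification of the model metric on the skeleton with $-\theta^{\mathrm{pl}}$, coming from Definition~\ref{def:line_bundle_for_adm}, to be the main point to verify carefully.

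For (ii), note that each irreducible component of the special fibre of $\ca P$ has a Shilov point. Because $\ca P$ comes from the toroidal mDV decomposition, these Shilov points are the vertices of the Voronoi decomposition, and in particular they lie in $\Sk(A)$. The multiplicity of $s$ along a component equals $-\log\|s\|_{\ca L_{\ca P}}$ at its Shilov point, so by (i) all these multiplicities equal $c$.

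For (iii), use that $\ca P$ is integral and flat with reduced special fibre; reducedness holds after the root-stack base change arranged in Section~\ref{sec:AN_cle}. A section of $g_*\ca L_{\ca P}$ has order $\ge k$ exactly when $\varpi^{-k}s$ extends, which happens exactly when every component multiplicity is $\ge k$. Hence $\ord_v(s)=\min(\text{multiplicities})=c$.

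For (iv) and (v), replace $s$ by $\varpi^{-c}s$. This section now has multiplicity $0$ along every component of the special fibre, so its divisor on $\ca P$ has no vertical part. That divisor is therefore exactly the flat closure $\ca D_{\ca P}$, and it is Cartier because it is the divisor of a section of a line bundle. Finally, $\varpi^{-c}s$ generates $g_*\ca L_{\ca P}$ by (iii). The map $g^*g_*\ca L_{\ca P}\otimes\oo(\ca D_{\ca P})\to\ca L_{\ca P}$ sending the generator times $1_{\ca D}$ to $\varpi^{-c}s$ is then an isomorphism. This gives (v), and canonicity follows since the isomorphism is independent of the choice of $s$ and of the uniformiser.
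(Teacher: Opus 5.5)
Your proposal is correct and follows essentially the paper's route. Part (i) rests on de Jong--Shokrieh, as in the paper, which cites \cite[Corollary~5.7]{djs_canonical} directly, so your detour through Proposition~\ref{prop:comparison_canonical_model_metric} is harmless but unnecessary. Part (ii) uses that the Shilov points of the components lie on $\Sk(A)$, and (iii)--(v) are the paper's arguments spelled out, with your generator-based isomorphism in (v) equivalent to the paper's projection-formula argument. The only slip is the reference for reducedness: the root stack $\bar N_g^n$ is introduced in Section~\ref{sec:proof_theorem}. Section~\ref{sec:AN_cle} instead passes to a finite ramified cover of $S$, which serves the same purpose.
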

\begin{proof} (i) follows from  \cite[Corollary~5.7]{djs_canonical}. (ii) As is recalled in \cite[Proposition~5.9]{djs_canonical} and its proof, the Shilov points in $A^\an$ of the irreducible components of the special fiber of $\ca P$ (see Section~\ref{sec:Berk_anal} for the definition of Shilov points) are contained in $\Sk(A)$. (iii) is clear from (ii). As to (iv), note that we have an equality of Weil divisors $\divisor_{\ca L_{\ca P}}(s) = c \cdot \ca P_0 + \ca D_{\ca P}$ on $\ca P$, where $\ca P_0$ is the special fiber. As $\ca P_0$ and $\divisor_{\ca L_{\ca P}}(s)$ are Cartier we find that $\ca D_{\ca P}$ is Cartier. As to (v), note that we obtain a canonical isomorphism $\ca L_{\ca P} \cong g^*\ca M \otimes \ca O_{\ca P}(\ca D_{\ca P})$ over $\ca P$ for some line bundle $\ca M$ over~$S$ with trivialisation over $F$. By the projection formula and using that $g_*\ca O_{\ca P}(\ca D_{\ca P})$ is canonically trivialised we compute that $\ca M \cong g_*\ca L_{\ca P}$. 
\end{proof}
\begin{remark} The pair $(\ca P,\ca D_{\ca P})$ constructed in Proposition~\ref{prop:vanishing_AN} coincides with the model of $(A,D)$ constructed by Alexeev--Nakamura \cite{an}. 
\end{remark}

\begin{theorem} \label{thm:mumford}  Let $G/S$ be the connected component of the N\'eron model of $A$. There exists a trivialisation of $(g_*\ll)^{\otimes 8} \otimes \omega_{G/S}^{\otimes 4}$ compatible with the trivialisation of $H^0(A,L)^{\otimes 8} \otimes H^0(A, \Omega^g_{A/F})^{\otimes 4}$ obtained from the trivialisation of $(\pi_*L)^{\otimes 8} \otimes \omega^{\otimes 4}$ over $N_g$ fixed in Section~\ref{subsec:key}. Also, there exists a trivialisation of
\[ \left( \ll_{\ca P} \otimes \oo_\pp(-\ca D_{\ca P}) \right)^{\otimes 8} \otimes g^* \omega_{G/S}^{\otimes 4} \]
compatible with the trivialisation of 
\[ ( L \otimes \oo_A(-D) )^{\otimes 8} \otimes_F  H^0(A,\Omega^g_{A/F})^{\otimes 4} \]
over $A$ obtained from the trivialisation of 
\[ \left( L \otimes  \oo(-\Theta) \right)^{\otimes 8} \otimes \pi^* \omega^{\otimes 4} \]
over $N_{g,1}$ fixed at the end of Section~\ref{subsec:key}.
\end{theorem}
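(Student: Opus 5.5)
The first assertion compares the tautological (Alexeev--Nakamura) model with a Moret-Bailly model through the canonical metric. Proposition~\ref{prop:order_vanishing_MB} and Proposition~\ref{prop:vanishing_AN}(iii) both compute $\ord_v(s)$ in terms of norms of $s$ on the skeleton $\Sk(A)$.

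After a finite base change of $R$ (harmless, since valuations scale uniformly and the trivialisations in question are rational sections over $F$), we may fix a Moret-Bailly model $(f\colon \aa \to S, \ca L_{\ca A})$ of $(A,L)$. Its identity component is semiabelian and smooth, so it coincides with $G$, and $\omega_{\aa/S} = \omega_{G/S}$. Take a nonzero section $s \in H^0(A,L)$. The key equality to establish is
\[ \ord_{v, g_*\ca L_{\ca P}}(s) = \ord_{v, f_*\ca L_{\ca A}}(s). \]
By Proposition~\ref{prop:order_vanishing_MB}, the right side equals $-\log \sup_{\Sk(A)} \|s\|_L$. By Proposition~\ref{prop:vanishing_AN}(i),(iii), the left side equals the constant value $c = -\log\|s\|_{\ca L_{\ca P}}$ on $\Sk(A)$. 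Proposition~\ref{prop:comparison_canonical_model_metric} gives $-\log\|s\|_L = c + \theta^{\mathrm{inv}}_\rho(a,\val(\cdot))$ on $\Sk(A)$, and Proposition~\ref{prop:trop_theta_new} shows that $\theta^{\mathrm{inv}}_\rho(a,-)$ has minimum $0$, attained at $b_a(\rho/2,-)$. Taking the infimum of $-\log\|s\|_L$ over $\Sk(A)$ therefore gives exactly $c$, which proves the equality. Consequently $g_*\ca L_{\ca P}$ and $f_*\ca L_{\ca A}$ are the same lattice inside the one-dimensional $F$-vector space $H^0(A,L)$. Theorem~\ref{thm:key_semi_abelian} now transports to a trivialisation of $(g_*\ca L_{\ca P})^{\otimes 8}\otimes\omega_{G/S}^{\otimes 4}$ compatible with the fixed generic one. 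One point needs care: $\Sk(A)$ is compact and the metric is continuous away from $|D|$, so we must check that the supremum is attained and that the sup of $\|s\|_L$ is $e^{-c}$. This follows because both functions are continuous on $\Sk(A)$, with the model norm constant there.

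For the second assertion, Proposition~\ref{prop:vanishing_AN}(v) gives $\ca L_{\ca P}\otimes\oo_{\ca P}(-\ca D_{\ca P}) \cong g^*g_*\ca L_{\ca P}$. Tensoring its eighth power with $g^*\omega_{G/S}^{\otimes 4}$ yields $g^*$ of the bundle trivialised in the first part. We then pull back that trivialisation along $g$. It remains to check compatibility with the generic trivialisation coming from Theorem~\ref{thm:key}(ii). On $N_{g,1}$ the identification $L\otimes\oo(-\Theta)\cong \pi^*\pi_*L$ arises from the canonical section $1_\Theta$ and the projection formula, exactly as in the proof of (v) with $\ca D_{\ca P}$ restricting to $D$. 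Since the fixed trivialisations (i) and (ii) in Section~\ref{subsec:key} were chosen compatibly, the two agree over $A$.

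I expect the main obstacle to be verifying that the isomorphism in Proposition~\ref{prop:vanishing_AN}(v) restricts generically to the same canonical isomorphism used in the key formula, including normalisations of the rigidification. This requires tracking the trivialisation of $g_*\oo(\ca D_{\ca P})$ by $1_{D}$ carefully. Everything else is formal.
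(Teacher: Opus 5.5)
Your proposal is correct and follows the paper's proof essentially step for step. You pass to a finite extension so that a Moret-Bailly model exists, and prove $\ord_{v,g_*\ca L_{\ca P}}(s)=\ord_{v,f_*\ca L_{\ca A}}(s)$ by combining Propositions~\ref{prop:comparison_canonical_model_metric}, \ref{prop:vanishing_AN} and \ref{prop:order_vanishing_MB} with the fact that $\theta^{\mathrm{inv}}_\rho(a,-)$ has minimum $0$; you then invoke Theorem~\ref{thm:key_semi_abelian} and pull back along $g$ via Proposition~\ref{prop:vanishing_AN}(v). The one justification to sharpen is the base-change reduction. It rests on the formation of $g_*\ca L_{\ca P}$ and $\omega_{G/S}$ commuting with finite base change, as the paper states, not merely on valuations scaling uniformly.
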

\begin{proof} 
It is enough to show the result is true after passing to a finite extension of $R$, as the formation of both $g_*\ll_{\ca P}$  and of $\omega_{G/S}$ is compatible with base change. Hence without loss of generality we may assume that $(A,L)$ allows a Moret-Bailly model $(f \colon \aa \to S,\ca L_{\ca A})$ over $S$ as in Section~\ref{sec:MB_models}. Note that
\[ \omega_{G/S} = \omega_{\aa/S} . \]
By Theorem~\ref{thm:key_semi_abelian}, in order to arrive at the first statement it suffices to show the existence of an  isomorphism
\[ g_*\ll_{\ca P} \cong f_*\ca L_{\ca A}\]
of line bundles over $S$ extending the identity map on $H^0(A,L)$.

Let $s$ be a non-zero element of $H^0(A,L)$, and write $D=\divisor_L(s)$. Let $v$ denote the closed point of $S$. We are reduced to showing that the multiplicities 
$\ord_{v,g_*\ll_{\ca P}}(s)$ (resp. $ \ord_{v,f_*\ca L_{\ca A}}(s)$)
in  $g_*\ll_{\ca P}$ (resp. $f_* \ca L_{\ca A}$)
 are equal. 
  
Let $x \in A^\an \setminus |D|$. By Proposition~\ref{prop:comparison_canonical_model_metric} we have
  \[ -\log \|s(x) \|_L = - \log \|s (x) \|_{\ll_{\ca P}} + \theta_\rho^{\mathrm{inv}}(a,\val(x)) . \]
  In particular, for all $x \in \Sk(A)$ we have
 \[   -\log \|s(x)\|_L   = - \log \|s(x)\|_{\ll_{\ca P}} + \theta_\rho^{\mathrm{inv}}(a,x) . \]
By Proposition~\ref{prop:vanishing_AN}(ii),  the multiplicity of $s$ in $\ll_{\ca P}$ along the irreducible components of the special fiber of $\pp$ over $S$ is constant. Let $c$ denote this constant.
We find  for all $x \in \Sk(A)$ that
 \[  -\log \|s(x)\|_L = c + \theta_\rho^{\mathrm{inv}}(a,x) . \]

 By Proposition~\ref{prop:trop_theta_new}, the function $\theta_\rho^{\mathrm{inv}}(a,-)$ has minimum value equal to zero over $\Sk(A)$.  It follows that 
\begin{equation} \label{eqn:proof_cle_II}
 c = -\log \sup_{x \in \Sk(A)} \| s(x) \|_L  . 
\end{equation}
 By Proposition~\ref{prop:order_vanishing_MB} we have
\begin{equation} \label{eqn:proof_cle_III}
   - \log \sup_{x \in \Sk(A)} \|s(x)\|_L  = \ord_{v,f_*\ca L_{\ca A}}(s) . 
\end{equation}

We conclude that
\begin{equation} \label{eq:first_c}
  \ord_{v,f_*\ca L_{\ca A}}(s) = c . 
\end{equation}
By Proposition~\ref{prop:vanishing_AN}(iii) we have
 \begin{equation} \label{eqn:proof_cle_I}
  \ord_{v,g_*\ll_{\ca P}}(s) = c  . 
 \end{equation}
Combining \eqref{eq:first_c} and \eqref{eqn:proof_cle_I} we find
$ \ord_{v,g_*\ll_{\ca P}}(s) =  \ord_{v,f_*\ca L_{\ca A}}(s)$,
as required.

We now consider the second statement. By Proposition~\ref{prop:vanishing_AN}(v) we have a canonical isomorphism $\ca L_{\ca P} \cong g^* g_*\ca L_{\ca P} \otimes \ca O_{\ca P}(\ca D_{\ca P})$ of line bundles on $\ca P$. Hence we have a canonical isomorphism
\[   \left( \ll_{\ca P} \otimes \oo_\pp(-\ca D_{\ca P}) \right)^{\otimes 8} \otimes g^* \omega_{G/S}^{\otimes 4} \cong
g^*  \left( \left( g_*\ll_{\ca P} \right)^{\otimes 8} \otimes  \omega_{G/S}^{\otimes 4} \right)  \]
of line bundles on $\ca P$. 
Now use the trivialisation of $(g_*\ll_{\ca P})^{\otimes 8} \otimes \omega_{G/S}^{\otimes 4}$ that we have just obtained and pull it back along $g$.
\end{proof} 

\subsection{Pure weight 2 theta divisor and the key formula}

Let $G$ be the unique semiabelian scheme over $N_g^{\mathrm{Del}}$ extending the universal abelian scheme $N_{g,1}$ over $N_g$ and denote by $\omega_{G/N_g^{\mathrm{Del}}}$ the associated Hodge line bundle over $ N_g^{\mathrm{Del}}$. The key formula in Theorem~\ref{thm:key} gives rise to a global section of the line bundle $L^{\otimes 8} \otimes \pi^* \omega^{\otimes 4}$ over $N_{g,1}$, with divisor $8\,\Theta$. We obtain from this a rational section $f$ of the line bundle
\begin{equation}\label{eq:bundle}
 \ca L_{-\theta^{\mathrm{pl}}}^{\otimes 8} \otimes \pi^*\omega_{G/N_g^{\mathrm{Del}}}^{\otimes 4} 
\end{equation}
over $N_{g,1}^{\mathrm{mDV}}$. Write $\divisor f$ for the divisor of $f$. 
Let $\bar\Theta$ be the Zariski closure in $N_{g,1}^{\mathrm{mDV}}$ of the theta divisor $\Theta$ over $N_{g,1}$.  It is a Weil divisor on $N_{g,1}^{\mathrm{mDV}}$. 

\begin{theorem}\label{thm:bar_theta_def_equivalence}
 The equality of Weil divisors 
\begin{equation*}
\on{div} f = 8 \,\bar \Theta
\end{equation*}
holds on $N_{g,1}^{\mathrm{mDV}}$. 
\end{theorem}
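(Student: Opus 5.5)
Since $f$ restricts on $N_{g,1}$ to the section with divisor $8\,\Theta$, the divisor $\on{div} f$ and $8\,\bar\Theta$ agree on the interior. The difference $\on{div} f - 8\,\bar\Theta$ is therefore a Weil divisor supported on the boundary $N_{g,1}^{\mathrm{mDV}} \setminus N_{g,1}$. The plan is to show that $f$ has order of vanishing zero along every irreducible boundary component $E$ of $N_{g,1}^{\mathrm{mDV}}$. Orders of vanishing are local at generic points, so it suffices to treat one component $E$ with generic point $\eta_E$.

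\textbf{Reduction to a test curve.} First I reduce to a test-curve computation. Choose a complete discrete valuation ring $R$ and a map $\Spec R \to N_{g,1}^{\mathrm{mDV}}$ sending the generic point into $N_{g,1}$ and the closed point to $\eta_E$, transversal enough that the order of $f$ along $E$ can be read off from the valuation of the pulled-back section. One can take the completed local ring at $\eta_E$, after a finite ramified extension so that the underlying abelian variety has semiabelian reduction and the moduli map to $N_g^{\mathrm{Del}}$ extends. Composing with $\pi$ gives $j\colon S=\Spec R\to N_g^{\mathrm{Del}}$, and the log fibre product \eqref{eqn:taut_model} produces the tautological (Alexeev--Nakamura) model $g\colon\ca P\to S$ with $\ca L_{\ca P}$ the pullback of $\ca L_{-\theta^{\mathrm{pl}}}$. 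The point of $E$ then corresponds to a generic point of an irreducible component of the special fibre $\ca P_0$. Here the issue flagged in the introduction enters: the log fibre product need not be the schematic one when $\pi$ is not flat. I would handle this by passing to the root stack $\bar N_g^n$ mentioned in the introduction, over which $\pi$ becomes flat, so that log and schematic base change agree and components of $\ca P_0$ correspond (with controlled ramification indices) to boundary components of $N_{g,1}^{\mathrm{mDV}}$ dominating the relevant boundary stratum. Vertical components over other strata are handled by choosing $j$ to hit the generic point of the image of $E$.

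\textbf{Computing the order along a special-fibre component.} Pulled back to $\ca P$, $f$ is a rational section of $\ca L_{\ca P}^{\otimes 8}\otimes g^*\omega_{G/S}^{\otimes 4}$ whose divisor on the generic fibre is $8D$. By Theorem~\ref{thm:mumford}, the bundle $(\ca L_{\ca P}\otimes\oo_{\ca P}(-\ca D_{\ca P}))^{\otimes 8}\otimes g^*\omega_{G/S}^{\otimes 4}$ is trivial. This trivialisation is compatible with the one fixed from the key formula, which is exactly what defines $f$. Hence, as rational sections, $f$ corresponds to the canonical section of $\oo_{\ca P}(8\,\ca D_{\ca P})$. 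Its divisor is $8\,\ca D_{\ca P}$, the flat closure of $8D$, which is Cartier by Proposition~\ref{prop:vanishing_AN}(iv), and it has multiplicity $0$ along every component of $\ca P_0$. Transporting back, $\ord_E(f)=0$ for every boundary component $E$. Therefore $\on{div} f$ is the closure of its interior part, namely $8\,\bar\Theta$.

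\textbf{Main obstacle.} The delicate step is the reduction. I need to justify that every boundary component of $N_{g,1}^{\mathrm{mDV}}$ is detected by a component of the special fibre of some tautological model, with the order of $f$ along $E$ equal to a positive multiple of its order along that component. This requires flatness of the base change (via the root stack) and normality of $N_{g,1}^{\mathrm{mDV}}$ near $\eta_E$, so that orders are valuations. Away from characteristic $2$ this is fine. In characteristic $2$ I would argue only on generic points of boundary components, which lie over generic points of $\bar N_g$-strata, where I expect log smoothness still to give a normal local ring.
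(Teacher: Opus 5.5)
Your argument is correct, but it is organised differently from the paper's.

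\textbf{The paper's route.} The paper never computes $\ord_E f$ on $N_{g,1}^{\mathrm{mDV}}$ directly. Instead it proceeds as follows.
\begin{enumerate}
\item It passes to the root stack $\bar N_g^n$, over which $\pi^n$ is integral and saturated.
\item It uses cohomology and base change, with the Alexeev--Nakamura vanishing, to make $\pi^n_*\ca L$ a line bundle compatible with base change.
\item From local generators it builds a global Cartier divisor $\widetilde\Theta$, and proves it is flat over $\bar N_g^n$, hence equal to the Zariski closure of $\Theta$ there.
\item It compares $8\,\widetilde\Theta$ with $\on{div} f$ on test curves via the \emph{first}, direct-image, statement of Theorem~\ref{thm:mumford}.
\item It descends along $\widetilde N^n_{g,1}\to\widetilde N_{g,1}$, which is flat in codimension one.
\end{enumerate}

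\textbf{Your route.} You bypass $\widetilde\Theta$ and global pushforwards entirely. You use the \emph{second} statement of Theorem~\ref{thm:mumford}, which already packages Proposition~\ref{prop:vanishing_AN}(iv),(v), to get $\on{div}(h^*f)=8\,\ca D_{\ca P}$ on one tautological model per boundary component. Here $h\colon\ca P\to N_{g,1}^{\mathrm{mDV}}$ is the projection.

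\textbf{What each buys.} The paper's route gives more structure: a flat relative theta divisor and a locally free $\pi^n_*\ca L^d$ over the whole root stack, i.e.\ a global Alexeev--Nakamura family. Yours is shorter and purely codimension-one.

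\textbf{Your ``main obstacle''.} Your route does not actually need the root stack or flatness of $\pi$, so the step you flag is easy.
\begin{itemize}
\item Your test curve lifts to a section $\sigma\colon S\to\ca P$ of the log fibre product.
\item $\sigma(s)$ is a closed, not a generic, point of $\ca P_0$. It nevertheless avoids $\ca D_{\ca P}$, because $\ca D_{\ca P}\subseteq h^{-1}(\bar\Theta)$ while $h(\sigma(s))=\eta_E\notin\bar\Theta$.
\item Hence $h^*f$ is a generator near $\sigma(s)$. For a uniformiser $t$ at $\eta_E$ this gives $0=\ord_s\sigma^*h^*f=\ord_E(f)\cdot\ord_s\bigl((h\sigma)^*t\bigr)$, and the last factor is positive.
\item Equivalently, every component $C$ of $\ca P_0$ through $\sigma(s)$ has generic point mapping to $\eta_E$, since its image is a boundary point specialising to $\eta_E$.
\end{itemize}
Normality at $\eta_E$ is also not an issue in characteristic $2$, since boundary components have their generic points in characteristic $0$.
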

That is to say, eight times the Zariski closure $\bar \Theta$ of the theta divisor over $N_{g,1}^{\mathrm{mDV}}$ has a canonical structure of Cartier  divisor. We prove Theorem~\ref{thm:bar_theta_def_equivalence} in Section~\ref{sec:proof_theorem}. 

\begin{definition} (The pure weight $2$ theta divisor)
Over $N_{g,1}^{\mathrm{mDV}}$ we define $\Theta^{\mathrm{pure}}$ to be the continuous log Weil b-divisor associated to the pair $(\frac{1}{8}\divisor(f) ,\theta^{\on{inv}})$. 

We also use the notation $\Theta^{\mathrm{pure}}$ to denote the associated log adelic divisor. 
\end{definition}

Note that $\Theta^{\mathrm{pure}}$ is effective in the sense that $\divisor f$ is an effective Weil divisor by Theorem~\ref{thm:bar_theta_def_equivalence}, and the continuous function $\theta^{\on{inv}}$ is non-negative by Proposition~\ref{prop:trop_theta_new}.

We are now able to state our extension of the key formula.
Let $\overline{\omega}_{N_{g,1}/N_g}$ be the \emph{log adelic Hodge line bundle}, i.e., the model log adelic line bundle on $N_g$ determined by the line bundle $\omega_{G/\bar N_g}$. See also \cite[Section~5.5]{yz} where this is discussed in the context of usual adelic line bundles.
\begin{theorem}  \label{thm:adelic_key_restate}
Over $N_{g,1}$ we have an isomorphism of log adelic line bundles
\[  \overline{L}^{\otimes 8} \cong \oo(8 \, \Theta^\mathrm{pure}) \otimes  \pi^* \overline{\omega}_{N_{g,1}/N_g}^{\otimes -4}   \]
extending the isomorphism of line bundles
\[   L^{\otimes 8} \cong  \oo(8 \, \Theta)  \otimes \pi^* \omega_{N_{g,1}/N_g}^{\otimes -4} \]
over $N_{g,1}$ obtained from the key formula. 

\end{theorem}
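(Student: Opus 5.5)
Taking Theorem~\ref{thm:bar_theta_def_equivalence} as given, the claimed isomorphism reduces to an identity of log b-line bundles on $N_{g,1}^{\mathrm{mDV}}$, which we then transport through $\overline{a}$ (Proposition~\ref{prop:commutative_p_bar}). Let $f$ be the rational section of $\ca L_{-\theta^{\mathrm{pl}}}^{\otimes 8} \otimes \pi^*\omega_{G/N_g^{\mathrm{Del}}}^{\otimes 4}$ introduced above. By Theorem~\ref{thm:bar_theta_def_equivalence}, $\divisor f = 8\,\bar\Theta$ is Cartier, and $f$ induces an isomorphism of honest line bundles on $N_{g,1}^{\mathrm{mDV}}$:
\[ \ca L_{-\theta^{\mathrm{pl}}}^{\otimes 8} \cong \oo(8\,\bar\Theta) \otimes \pi^*\omega_{G/N_g^{\mathrm{Del}}}^{\otimes -4}. \]
Over $N_{g,1}$ this restricts to the key formula isomorphism $L^{\otimes 8}\cong \oo(8\Theta)\otimes\pi^*\omega^{\otimes -4}$, since $f$ was defined from the fixed trivialisation of Theorem~\ref{thm:key}.

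We view this as an isomorphism of model log adelic line bundles via $\mathrm{Pic}\to\mathrm{b\text{-}Pic}\to\mathrm{Ad\text{-}Pic}_{\log}$. We then tensor both sides with the model-independent object $\oo(8\,\divisor\theta^{\mathrm{inv}})=(\oo,8\theta^{\mathrm{inv}})^{\otimes}$. On the left, Corollary~\ref{cor:specialize_Riemann} gives $\ca L_{-\theta^{\mathrm{pl}}}(\theta^{\mathrm{inv}})\cong\overline{L}$, so its eighth power is $\overline{L}^{\otimes 8}$. On the right, the example following Definition~\ref{def:assoc_adelic} gives $\oo(\overline{D})\otimes\oo(\divisor c)=\oo(\overline{D}+\divisor c)$. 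With $\overline{D}=8\bar\Theta=\divisor f$ and $c=8\theta^{\mathrm{inv}}$, this is $\oo(8\,\Theta^{\mathrm{pure}})$ by the definition of $\Theta^{\mathrm{pure}}$. The Hodge factor is by definition $\pi^*\overline{\omega}_{N_{g,1}/N_g}^{\otimes -4}$. Because the twist by $\divisor\theta^{\mathrm{inv}}$ is supported on the boundary, it does not change anything on $N_{g,1}$. The resulting isomorphism therefore still extends the key formula.

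The only points needing care are bookkeeping. First, the identification in Corollary~\ref{cor:specialize_Riemann} must be compatible with the rigidification on $N_{g,1}$. This holds because $\overline{a}$ restricted to $U$ is the identity and Theorem~\ref{thm:theta-eq-cocycle-new} is an identity of pairs restricting to the identity on $L$. Second, the continuous log b-divisor attached to $(\tfrac18\divisor f,\theta^{\mathrm{inv}})$ must have $\oo$ equal to $(\oo(\divisor f)^{\LOG},0)\otimes(\oo,\theta^{\mathrm{inv}})$. This is exactly the definition of the map \eqref{eqn:map_from_Weil_b_to_b-Pic}, together with commutativity of the square in Proposition~\ref{prop:commutative_p_bar}.

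The genuine obstacle is Theorem~\ref{thm:bar_theta_def_equivalence}, namely that $\divisor f$ has no boundary components beyond $8\bar\Theta$. We take it as given here; it would be proved by log base change to Alexeev--Nakamura models and applying Theorem~\ref{thm:mumford}.
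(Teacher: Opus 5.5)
Your proposal is correct and follows essentially the same route as the paper. Both arguments write the log b-line bundle attached to $(\divisor f, 8\,\theta^{\mathrm{inv}})$ as $(\oo(\divisor f)^{\LOG},0)\otimes(\oo,8\,\theta^{\mathrm{inv}})$, use the rational section $f$ to identify $\oo(\divisor f)$ with $\ca L_{-\theta^{\mathrm{pl}}}^{\otimes 8}\otimes\pi^*\omega_{G/N_g^{\mathrm{Del}}}^{\otimes 4}$, and conclude via Corollary~\ref{cor:specialize_Riemann} and the commutative square of Proposition~\ref{prop:commutative_p_bar}. Your appeal to Theorem~\ref{thm:bar_theta_def_equivalence}, to read $\divisor f$ as the Zariski closure $8\,\bar\Theta$ in the definition of $\Theta^{\mathrm{pure}}$, plays the same role as in the paper.
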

\begin{proof} The continuous log b-line bundle determined by the pair $(\divisor f,8 \, \theta^{\on{inv}})$ is equal to 
$(\oo (\divisor f)^{\LOG},0) \otimes (\oo,8 \, \theta^{\on{inv}}) $.
We can rewrite this as 
\[  (\ca L_{-\theta^{\mathrm{pl}}}^{\otimes 8,\LOG},0) \otimes (\pi^*\omega_{G/N_g^{\mathrm{Del}}}^{\otimes 4,\LOG} ,0) \otimes (\oo,8 \, \theta^{\on{inv}}) = 8\, (\ca L_{-\theta^{\mathrm{pl}}}^{\LOG},\theta^{\on{inv}}) \otimes 4 (\pi^*\omega_{G/N_g^{\mathrm{Del}}}^{\LOG} ,0) . \]
By Corollary~\ref{cor:specialize_Riemann} the continuous log b-line bundle $(\ca L_{-\theta^{\mathrm{pl}}}^{\LOG},\theta^{\on{inv}})$ has associated log adelic line bundle $\overline{L}$. The continuous log b-line bundle $(\pi^*\omega_{G/N_g^{\mathrm{Del}}}^{\LOG} ,0)$ has associated log adelic line bundle $  \pi^* \overline{\omega}_{N_{g,1}/N_g} $. The theorem follows by the commutative diagram in Proposition~\ref{prop:commutative_p_bar}.
\end{proof}
\begin{corollary}  \label{thm:adelic_key_restate_bis}
Over $N_{g}$ we have an isomorphism of log adelic line bundles
\[   e^*\oo(8 \, \Theta^\mathrm{pure}) \cong \overline{\omega}_{N_{g,1}/N_g}^{\otimes 4}   \]
extending the isomorphism of line bundles
\[    e^*\oo(8 \, \Theta) \cong  \omega_{N_{g,1}/N_g}^{\otimes 4} \]
over $N_{g}$ obtained from the key formula. 
\end{corollary}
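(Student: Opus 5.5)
The plan is to pull back the isomorphism of Theorem~\ref{thm:adelic_key_restate} along the zero section $e\colon N_g \to N_{g,1}$, which extends to a section $e\colon N_g^{\mathrm{Del}} \to N_{g,1}^{\mathrm{mDV}}$ by the modular interpretation (the zero point of a log abelian variety is compatible with $\widetilde\Sigma^{\mathrm{mDV}}$, since its tropicalisation is $l=0$). Pullback along a log morphism sends log b-line bundles to log b-line bundles: pull back the log line bundle and restrict the conical section. Hence, by Proposition~\ref{prop:commutative_p_bar}, it also sends log adelic line bundles to log adelic line bundles. Since $\pi\circ e=\id$, we get $e^*\pi^*\overline{\omega}=\overline{\omega}$. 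It therefore suffices to show that $e^*\overline{L}$ is canonically trivial as a log adelic line bundle on $N_g^{\mathrm{Del}}$, compatibly with the rigidification $e^*L\cong \oo$ over $N_g$. Then $e^*$ of the isomorphism of Theorem~\ref{thm:adelic_key_restate} gives
\[ \oo \cong e^*\oo(8\,\Theta^{\mathrm{pure}})\otimes \overline{\omega}_{N_{g,1}/N_g}^{\otimes -4}, \]
and this restricts over $N_g$ to $e^*$ of the classical key formula in Theorem~\ref{thm:key}(ii).

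For the triviality of $e^*\overline{L}$, I would use Theorem~\ref{thm:comparison_adelic_b_AV}, which says $\overline{L}$ is the image of $(\llb L,-\theta^{\on{sm}})$. The rigidification $\epsilon\colon e^*\llb L\isom \bb G_\LOG$ from Definition~\ref{def:Nlog} trivialises the log line bundle, so it remains to check that under this trivialisation the restricted conical section $e^*(-\theta^{\on{sm}})$ is zero. On the cone $\tilde C_\rho(W)$, the zero section corresponds to $l=0$. There the trivialisation of $\llb L^\trop$ induced by $\epsilon$ is the normalised one, in which the cocycle $a(w)+l(w)$ is read off at $l=0$.

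One then computes $-\theta^{\on{sm}}_\rho(a,0)=\tfrac18 q_a(\rho)$. This is not obviously zero, and it is the main obstacle: we must identify exactly which trivialisation of $e^*\llb L^\trop$ the rigidification gives. I would try the following. The sPL section $-\theta^{\mathrm{pl}}$ defines $\ca L_{-\theta^{\mathrm{pl}}}$, which restricts along $e$ to the model line bundle $e^*\ca L_{-\theta^{\mathrm{pl}}}$. This bundle is trivialised by $\epsilon$ up to the sPL function $e^*\theta^{\mathrm{pl}}(a,0)=\min_w a(w)$. One should then check, via the global linear change of cocycle in the proof of Lemma~\ref{lem:tropical-section} (shifting by $\tfrac12 b_a(\rho,-)$), that the normalisation compatible with $\epsilon$ makes the $\rho$-dependent constants cancel. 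If a nonzero conical function $\tfrac18 q_a(\rho)$ genuinely survives, it must be absorbed on the right-hand side: pulling back $\theta^{\on{inv}}$ gives $e^*\theta^{\on{inv}}=\theta^{\mathrm{pl}}_\rho(a,0)-\theta^{\on{sm}}_\rho(a,0)$, so the divisorial part of $e^*\Theta^{\mathrm{pure}}$ shifts by the same amount.

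Under either outcome, the cleanest route is to avoid the normalisation issue by working directly with Corollary~\ref{cor:specialize_Riemann} together with the form of the key formula from the proof of Theorem~\ref{thm:adelic_key_restate}. There the identity is $8(\ca L_{-\theta^{\mathrm{pl}}}^\LOG,\theta^{\on{inv}})\otimes 4(\pi^*\omega^\LOG,0)=(\oo(\divisor f)^\LOG,8\theta^{\on{inv}})$, an equality of b-line bundles that holds on the nose. Pulling this back along $e$ and using Theorem~\ref{thm:mumford} to identify $e^*\ca L_{-\theta^{\mathrm{pl}}}$ with the rigidified trivialisation of $e^*$ over $N_g$ then yields the corollary. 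Finally, compatibility with the classical isomorphism over $N_g$ is automatic, because every identification used restricts over the interior to the fixed trivialisations of Section~\ref{subsec:key}.
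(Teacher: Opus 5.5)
You have the intended reduction. The paper treats this corollary as an immediate consequence of Theorem~\ref{thm:adelic_key_restate}: pull back along $e$, use $e^*\pi^*=\mathrm{id}$, and use that $e^*\overline L\cong\oo$ compatibly with the rigidification. The gap is that the one step with content, the triviality of $e^*\overline L$, is never established. You correctly compute $e^*(-\theta^{\on{sm}})=\tfrac18 q_a(\rho)$ at $l=0$, then leave the question open, and neither fallback closes it.

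\emph{Absorbing the term on the right} is not available. If $e^*\overline L\cong(\oo,c)$ with $c\neq 0$, pulling back Theorem~\ref{thm:adelic_key_restate} gives $e^*\oo(8\Theta^{\mathrm{pure}})\cong\overline\omega^{\otimes 4}\otimes\oo(8c)$, which is a different statement.

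\emph{The ``cleanest route''} is circular. After pulling back $(\oo(\divisor f)^{\LOG},8\theta^{\on{inv}})=8(\ca L^{\LOG}_{-\theta^{\mathrm{pl}}},\theta^{\on{inv}})\otimes 4(\pi^*\omega^{\LOG},0)$ along $e$, you still need $e^*(\ca L^{\LOG}_{-\theta^{\mathrm{pl}}},\theta^{\on{inv}})$ to be trivial. By Theorem~\ref{thm:theta-eq-cocycle-new} this is the same b-line bundle $e^*(\llb L,-\theta^{\on{sm}})$ you were stuck on. Even granting $e^*\ca L_{-\theta^{\mathrm{pl}}}\cong\oo$, you would be left with a factor $\oo(8\,e^*\theta^{\on{inv}})$. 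By Proposition~\ref{prop:trop_theta_new}, $e^*\theta^{\on{inv}}=\tfrac12\min_{w\in W}q_a(w-\rho/2)$, which is nonzero whenever $\rho\notin 2W$; the paper itself computes the value $1/8$ on the odd rank-one ray. Also, Theorem~\ref{thm:mumford} says nothing about $e^*\ca L_{-\theta^{\mathrm{pl}}}$; it concerns $(g_*\ca L_{\ca P})^{\otimes 8}\otimes\omega_{G/S}^{\otimes 4}$ over a trait.

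The missing idea is to obtain the rigidification of $\overline L$ intrinsically, at the adelic level where Theorem~\ref{thm:adelic_key_restate} is stated, rather than on a b-representative.
\begin{itemize}
\item Theorem~\ref{thm:YZ_invariant_N} with $m=0$ gives $[0]^*\overline L\cong\oo$, extending $[0]^*L=\pi^*e^*L\cong\oo$. Since $[0]=e\circ\pi$, applying $e^*$ gives $e^*\overline L\cong\oo$, extending the rigidification.
\item Alternatively, $[2]\circ e=e$ yields $e^*\overline L\cong(e^*\overline L)^{\otimes 4}$, extending the identity over $N_g$. Writing $e^*\overline L=\oo(D)$ with $D$ the boundary adelic divisor of the rigidifying section, one gets $3D=0$, hence $D=0$.
\end{itemize}
With this, the pullback is formal, and compatibility with the classical isomorphism over $N_g$ is automatic.

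Your obstacle is not imaginary. The $l$-independent term $\tfrac18(1-m^2)q_a(\rho)$ in $[m]^*(-\theta^{\on{sm}})-m^2(-\theta^{\on{sm}})$ is not tracked in the proof of Theorem~\ref{thm:main}. Since $\tfrac18 q_a(\rho)$ need not be integral (it is $\ell/8$ for $g=1$, $\rho=1$), an integral change of trivialisation cannot remove it. That is one more reason to argue with $\overline L$ itself rather than through $(\llb L,-\theta^{\on{sm}})$.
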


\begin{remark}
Over $\bb Z[1/2]$ the space $N_{g,1}^{\mathrm{mDV}}$ is normal. Theorem~\ref{thm:bar_theta_def_equivalence} then implies that over $\bb Z[1/2]$, the Weil divisor $8 \,\bar \Theta$ comes from a \emph{unique} Cartier divisor. 

Thus, away from characteristic $2$ we might as well define $\Theta^{\mathrm{pure}}$ as the continuous log Weil b-divisor (or log adelic divisor) associated to the pair $(\bar \Theta, \theta^{\on{inv}})$, as we did in the Introduction. 

In characteristic $2$ the stack $N^{\mathrm{mDV}}_{g,1}$ is not normal, and so the Weil divisor $8 \,\bar \Theta$ may not \emph{uniquely} determine a Cartier divisor, and we need the rational section~$f$ to single out the one we care about. 
\end{remark}

\subsection{Zariski closure of the theta divisor} \label{sec:proof_theorem}

It remains to give our proof of Theorem~\ref{thm:bar_theta_def_equivalence}. 
To simplify the notation, for the remainder of this section we write $\bar N_g$ for $N_g^{\mathrm{Del}}$ and $\bar N_{g, 1}$ for $N_{g,1}^{\mathrm{mDV}}$.  To prove Theorem \ref{thm:bar_theta_def_equivalence}, we construct a suitable logarithmic base change of $\pi\colon \bar N_{g,1} \to \bar N_{g}$. Because there are finitely many cone orbits, there exists a positive integer $n_g$ such that if $\sigma \to \tau$ is a surjection from an mDV cone in $\tilde C_\rho(W)_\bb Q$ to a Delaunay cone in $C_\rho(W)_\bb Q$, the image of the lattice in $\sigma$ is contained in $n_g$ times the lattice in $\tau$. We fix such an $n=n_g$, and note that it can be taken to be $1$ if~$g \le 4$. 

We write $\bar N_{g}^{n}\to \bar N_g$ for the root stack induced by multiplication-by-$n$ on the lattice $C_\rho(W)$. This map is an isomorphism over $N_g$, and is a log monomorphism. We write $\bar N_{g,1}^n$ for the logarithmic fibre product $\bar N_{g,1} \times_{\bar N_g} \bar N_g^n$. 

\begin{lemma}\label{lem:n_flatness}
The natural map $\pi^n\colon \bar N_{g,1}^n \to \bar N_g^n$ is integral and saturated, hence is flat with reduced geometric fibres. 
\end{lemma}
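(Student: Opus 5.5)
The plan is to reduce the lemma to the standard combinatorial criteria for integrality and saturatedness of morphisms of fs monoids, using Kato's and Tsuji's results: an integral log smooth morphism is flat, and a saturated one (integral, with all base changes to fs log schemes remaining fs) has reduced geometric fibres in the log smooth situation. Since $\pi\colon \bar N_{g,1}\to \bar N_g$ is log smooth, and log smoothness is stable under fs logarithmic base change, the map $\pi^n$ is log smooth. The only things left to check are integrality and saturatedness, and both are local statements about the maps of monoids at geometric points.

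First I would make the local charts explicit. At a geometric point of $\bar N_{g,1}^n$, the map on ghost monoids is dual to a map of cones $\sigma' \to \tau'$. Here $\tau'$ is a Delaunay cone $\tau$ of $\Sigma^{\mathrm{Del}}$ equipped with the refined lattice $\frac1n$ times the original one, which is the effect of the root stack. The cone $\sigma'$ is the corresponding mDV cone $\sigma$ in the fine saturated fibre product, so its lattice is the lattice of $\sigma$ fibred over the refined lattice of $\tau$. By Proposition~\ref{rem:fiber-voronoi}, $\sigma \to \tau$ is surjective, and it maps the relative interior onto the relative interior. Integrality of a log smooth map of toric monoids $P\to Q$ is equivalent to the dual cone map being ``combinatorially flat'': every cone maps \emph{onto} a cone, i.e.\ faces of $\sigma$ map onto faces of $\tau$. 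I would verify this from the mixed Delaunay--Voronoi structure. A face of $\widetilde\sigma(D)$ is again a cone $\widetilde{\sigma_0}(D_0)$ over a face $\sigma_0$ of $\tau$, and its projection is all of $\sigma_0$, because the Voronoi cells vary continuously with $a$ and are nonempty for every $a$. This property is unchanged by the root-stack base change, since that only alters lattices and not cones.

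For saturatedness, I would use the criterion that an integral map $P\to Q$ of toric monoids is saturated exactly when the lattice map $N_\sigma \to N_\tau$ is surjective on every cone (equivalently, there are no multiplicities along the fibres). This is where the choice of $n=n_g$ enters. Before the base change, the image of the lattice of $\sigma$ lies in $n$ times the lattice of $\tau$... but the base change replaces the lattice of $\tau$ with a finer one and replaces $\sigma$'s lattice with the fibre product. The fibre-product lattice surjects onto the image sublattice, so I need the defining property of $n_g$, taken in the correct direction: the image lattice must contain the refined lattice so that the projection is surjective. I would then check that after saturation the fibre product lattice maps onto the refined lattice on each face, using that faces of mDV cones are again mDV cones and that $n_g$ was chosen uniformly over all cone pairs.

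The main obstacle is this lattice bookkeeping. I would need to pin down precisely which inclusion the constant $n_g$ controls, and then show that the fs fibre product really yields surjective lattice maps on all cones simultaneously, including faces lying in the boundary. Once integrality and saturatedness hold, flatness and reducedness of geometric fibres follow from the cited theorems of Kato and Tsuji.
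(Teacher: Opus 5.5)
Your overall structure matches the paper's proof. You use log smoothness of $\pi^n$, then a combinatorial criterion on the map of cone complexes to get integral and saturated (the paper cites Molcho's Theorem~2.1.4), then Kato--Tsuji (Ogus IV.4.3.5--6) for flatness and reduced geometric fibres.

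However, the way you split the criterion contains a false step. Integrality is \emph{not} equivalent to every cone mapping onto a cone when the target cone is not unimodular; for unimodular targets it is, by miracle flatness. Here is a counterexample:
\begin{itemize}
\item Take $P=\{(a,b)\in\mathbb{N}^2 : a+b \text{ even}\}\subset Q=\mathbb{N}^2$. The dual map of cones is the identity of $\mathbb{R}_{\ge0}^2$, so faces go onto faces, and $P\to Q$ is log \'etale away from $2$.
\item But $\operatorname{Spec}\mathbb{Z}[Q]\to\operatorname{Spec}\mathbb{Z}[P]$ is $\mathbb{A}^2\to\mathbb{A}^2/\{\pm1\}$. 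Its fibre over the singular point is $k[x,y]/(x^2,xy,y^2)$, which has length $3\neq 2$. So it is not flat, and $P\to Q$ is not integral.
\item The defect is exactly that $N_\sigma=\mathbb{Z}^2\to N_\tau=\mathbb{Z}^2+\mathbb{Z}(\tfrac12,\tfrac12)$ is not surjective.
\end{itemize}
So your claim that integrality is unchanged by the root-stack base change ``since that only alters lattices'' is wrong. If it held, $\pi$ itself would already be integral and hence flat. That contradicts the paper's remark that flatness of $\pi$ is not known for $g\ge5$, where Delaunay cones are non-unimodular. The repair is to use the joint criterion: cones onto cones \emph{together with} lattice surjectivity on every cone gives integral and saturated at once. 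Since you check both conditions anyway, your conclusion survives. But the root stack is what buys integrality too, not just saturatedness.

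You also leave the lattice bookkeeping open and have its direction reversed. The root stack adjoins $n$-th roots to the ghost monoids of $\bar N_g$. So on the tropical side $\operatorname{Hom}(\ghost,\mathbb{N})$ shrinks, and $\tau$ acquires the \emph{coarser} lattice $nN_\tau$, not $\tfrac1n N_\tau$. The fs fibre product then corresponds to the same real cone $\sigma$ with lattice $N_\sigma\cap\phi^{-1}(nN_\tau)$, where $\phi$ is the projection $(a,l)\mapsto a$. This maps onto $nN_\tau$ if and only if $\phi(N_\sigma)\supseteq nN_\tau$, and that is the property $n_g$ must have. The sentence defining $n_g$ should be read with ``contains''; literal containment in $n_gN_\tau$ fails already for top-dimensional cones, where $\phi(N_\sigma)=N_\tau$. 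Faces of mDV cones are mDV cones surjecting onto Delaunay cones, and the root-stack lattice of a face $\tau_0$ is $nN_{\tau_0}$. Hence the single choice $n=n_g$ gives lattice surjectivity on every face simultaneously, which is the content of the paper's ``surjective on lattice points for every cone''.
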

\begin{proof}
The map $\pi^n$ is log smooth. By construction, the map on fans is equidimensional, and is surjective on lattice points for every cone. By, for instance, \cite[Theorem 2.1.4]{molcho2021universal} the map is integral and saturated. It is then flat and has reduced geometric fibres by \cite[Theorems IV.4.3.5 and IV.4.3.6]{oguslogbook}.
\end{proof}

We write $\lb L$ for the pullback of $\lb L_{-\theta^{\mathrm{pl}}}$ to $\bar N_{g,1}^n$. 

\begin{lemma}\label{lem:cohomology_polarisation}
For every positive integer $d$, the sheaf $\pi^n_*\lb L^d$ is a vector bundle of rank $d^g$ on $\bar N_g^n$, and all higher derived pushforwards vanish. Its formation commutes with arbitrary schematic base change. 
\end{lemma}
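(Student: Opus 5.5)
The strategy is to use cohomology and base change for the flat proper morphism $\pi^n$, and to reduce everything to fibrewise vanishing. By Lemma~\ref{lem:n_flatness}, $\pi^n$ is flat with reduced geometric fibres. It is proper, since $\bar N_{g,1}\to\bar N_g$ is proper and log fibre products of proper maps along the root stack stay proper. The line bundle $\lb L^d$ is flat over $\bar N_g^n$ because it is a line bundle on a flat family. By the standard cohomology and base change theorem (Grothendieck's complex), it suffices to prove two things for every geometric point $s\to\bar N_g^n$ with fibre $P_s$: first, $H^i(P_s,\lb L^d|_{P_s})=0$ for all $i>0$; second, $\dim H^0(P_s,\lb L^d|_{P_s})=d^g$. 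Once these hold, $\pi^n_*\lb L^d$ is locally free of rank $d^g$, the higher direct images vanish, and the formation commutes with arbitrary base change. Since $\bar N_g^n$ is reduced, the constancy of $h^0$ together with the vanishing of $h^i$ is enough.

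Over the interior $N_g$ the fibre is a ppav and $L^d$ represents $d$ times a principal polarisation. So the vanishing of higher cohomology and the equality $h^0=d^g$ are classical consequences of Riemann--Roch and Mumford's vanishing theorem for ample bundles on abelian varieties. For a boundary point, I would reduce to a test curve. Choose a complete DVR $R$ and a log map $S=\Spec R\to\bar N_g^n$ whose closed point maps to $s$ and whose generic point lies in $N_g$; this is possible because $N_g$ is dense and the map is log. Because $\pi^n$ is integral and saturated, log base change and schematic base change agree. Hence the pullback of $\bar N_{g,1}^n$ to $S$ is exactly the tautological (Alexeev--Nakamura type) model $\ca P\to S$ of \eqref{eqn:taut_model}, with $\lb L^d$ restricting to $\ca L_{\ca P}^d$. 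The special fibre of $\ca P$ is then the fibre $P_s$ (after base field extension). So it suffices to prove that $H^i(\ca P_0,\ca L_{\ca P}^d)=0$ for $i>0$ on the special fibre of such a degeneration.

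For the special fibre I would use Mumford's construction. The fibre $\ca P_0$ is a quotient by $Y$ of a locally finite union of toric-over-abelian bundles indexed by the cells of the mixed Delaunay--Voronoi decomposition. The line bundle $\ca L_{\ca P}^d$ is ample and is given by the concave PL function $d\,\theta^{\mathrm{pl}}_\rho(a,-)$. I expect this step to be the main obstacle. The route I would try first is the standard one: Faltings--Chai show in \cite[Ch.~III, VI]{fc} that for such Mumford models the higher cohomology of powers of the ample sheaf vanishes on the special fibre. Their argument uses the theorem on formal functions and the Fourier expansion of theta functions, and gives a basis of $H^0$ indexed by $Y/dY$ (twisted by sections on the abelian part), so the count is $d^g$. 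Alternatively, the vanishing follows from a Kempf-type result on stable toric/reduced semiabelic pairs, as in Alexeev's work on stable pairs. The possible worry is that the mixed Delaunay--Voronoi subdivision refines the Delaunay cells, which changes the fibre by toric blowups. But these are rational resolutions with reduced fibres, so cohomology of pullbacks is unchanged. Alternatively, I would compute $\chi$ by flatness, since $\chi(P_s,\lb L^d)=\chi(A_\eta,L^d)=d^g$, so only the vanishing of higher cohomology needs a real argument.
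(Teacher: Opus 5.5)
Your proposal is correct and follows essentially the paper's argument. You apply cohomology and base change using the flatness from Lemma~\ref{lem:n_flatness}, reduce to showing $h^0=d^g$ and $h^{i}=0$ for $i>0$ on every geometric fibre, and then use test curves with generic point in $N_g$ (where log and schematic base change agree) to reduce to the special fibre of a one-parameter degeneration.

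The only differences are the following:
\begin{itemize}
\item The paper cites \cite[Theorem~4.4]{an} directly for the special-fibre cohomology, whereas you point to Faltings--Chai or Alexeev's Kempf-type vanishing.
\item Your worry about toric blowups is unnecessary. Pulling back the mixed Delaunay--Voronoi fan along such a test curve gives exactly the Alexeev--Nakamura model, so no refinement of the Delaunay cells occurs.
\end{itemize}
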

\begin{proof}
Lemma \ref{lem:n_flatness} allows us to apply Cohomology and Base Change. It is then enough to show that on each geometric point of $\bar N^n_{g}$, we have $h^0(\lb L^d) = d^g$, and $h^i(\lb L^d) = 0$ for $i>0$. By restricting to test curves in $\bar N^n_g$ whose generic points lie in the interior, we may reduce to considering a log abelian variety over a complete dvr with divisorial log structure, pulling back the mixed Delaunay--Voronoi cone decomposition. The necessary cohomology computations are then found in \cite[Theorem~4.4]{an}. 
\end{proof}

We now define a Cartier divisor $\widetilde \Theta$ on $\bar N_{g,1}^n$ by locally choosing a generating section of $\pi^n_*\lb L$; the divisor is independent of the choice. 

\begin{lemma}
The Cartier divisor $\widetilde \Theta$ is flat over $\bar N_g^n$. 
\end{lemma}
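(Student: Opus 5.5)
We will use the local criterion of flatness for relative effective Cartier divisors. The morphism $\pi^n\colon \bar N_{g,1}^n \to \bar N_g^n$ is flat by Lemma~\ref{lem:n_flatness}. Hence a Cartier divisor $\widetilde\Theta\subset \bar N_{g,1}^n$, locally cut out by a single equation $s$, is flat over $\bar N_g^n$ as soon as, for every geometric point $t$ of $\bar N_g^n$, the restriction of $s$ to the fibre $(\bar N_{g,1}^n)_t$ is a non-zero-divisor. This is the standard ``slicing'' criterion (EGA IV 11.3.7, or Stacks Project 00MF) for a flat family and a section that is regular on fibres. So the plan is to reduce to a fibrewise statement: on each geometric fibre, the generating section $s_t$ of $H^0$ vanishes on no associated point of the fibre.

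First, we identify $s_t$. By Lemma~\ref{lem:cohomology_polarisation}, $\pi^n_*\lb L$ is a line bundle whose formation commutes with base change. A local generator $s$ of $\pi^n_*\lb L$ therefore restricts on each geometric fibre to a non-zero generator of the one-dimensional space $H^0((\bar N_{g,1}^n)_t,\lb L_t)$. In particular $s_t\neq 0$.

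Second, we need to know that $s_t$ is a non-zero-divisor on the fibre. By Lemma~\ref{lem:n_flatness}, the geometric fibres are reduced. On a reduced scheme, a section of a line bundle is a non-zero-divisor if and only if it does not vanish identically on any irreducible component. So it suffices to show that $s_t$ is non-zero on each irreducible component of the fibre.

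This is the main obstacle. We would handle it as in the proof of Lemma~\ref{lem:cohomology_polarisation}. Choose a test curve, i.e. a complete dvr $S\to\bar N_g^n$ whose generic point lies in $N_g$ and whose closed point maps to $t$, with divisorial log structure. By Lemma~\ref{lem:n_flatness}, logarithmic and schematic base change agree, so the pullback of $\bar N_{g,1}^n$ is the tautological (Alexeev--Nakamura) model $\ca P$ of \eqref{eqn:taut_model}, with $\lb L$ pulling back to $\ca L_{\ca P}$.

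Proposition~\ref{prop:vanishing_AN}(ii) says that a section $s$ of $\ca L_{\ca P}$ has constant multiplicity $c$ along all components of the special fibre. Part (iii) says that this constant equals $\ord_v$ of $s$ viewed in $g_*\ca L_{\ca P}$. Taking $s$ to be the generator of $g_*\ca L_{\ca P}$ gives $c=0$, so $s$ vanishes on no component of the special fibre. Since every point of $\bar N_g^n$ is a specialisation of the dense interior, such test curves exist through every $t$.

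One check remains: every irreducible component of the fibre over $t$ is actually seen by such a curve. Components of the fibre correspond to maximal cells of the (Delaunay/Voronoi) fan over the cone of $t$. A test curve whose image hits the interior of the cone of $t$ sees exactly this fan, and the fibre of $\ca P$ over the closed point equals $(\bar N_{g,1}^n)_t$ by the base change compatibility. Thus all components are accounted for, $s_t$ is a non-zero-divisor, and $\widetilde\Theta$ is flat over $\bar N_g^n$.
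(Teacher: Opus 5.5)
Your proposal is correct and follows essentially the same route as the paper: you reduce, via the fibrewise criterion for a Cartier divisor in a flat family (the paper cites \cite[Lemma~9.3.4]{fga_explained}), to showing that the local generator $s$ of $\pi^n_*\lb L$ vanishes on no component of the reduced fibres, and you check this on test traits where logarithmic and schematic pullback agree, using the constant multiplicity from Proposition~\ref{prop:vanishing_AN}. The differences are cosmetic. You get $c=0$ from part (iii) together with the generator property, whereas the paper uses the existence of a point of the special fibre where $s$ is non-zero. You also make explicit why the special fibre of the test curve is the whole fibre over $t$, a point the paper leaves implicit in its appeal to base change.
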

\begin{proof}
This can checked locally over $\bar N_g^n$. We consider an open subset $U$ of  $\bar N_g^n$ where the line bundle $\pi^n_*  \lb L$ has a trivialising section $s$. We take a trait $S$ together with a map $j \colon S \to U$ such that the generic point maps into $N_g$. As in Section~\ref{sec:AN_cle}, taking logarithmic pullback along $j$ we obtain a projective flat map $g \colon \ca P \to S$ with generic fiber an abelian variety, equipped with a line bundle $\lb L_{\ca P}$ obtained by base change from $\lb L$. Because $\pi^n$ is integral and saturated, this logarithmic pullback coincides with the corresponding schematic pullback. The generating section $s$ over $U$ pulls back to a generating section of $\ca L_{\ca P}$. 

As the section $s$ is generating in $\ca L_{\ca P}$, it is non-zero as a section of $\ca L_{\ca P}$ in the special fiber of $\ca P \to S$, and hence there exists a point $x$ in the special fiber where $s$ is not zero. By Proposition~\ref{prop:vanishing_AN} we have that the order of vanishing of $s$ is constant along the irreducible components of the special fiber of $g$. By the existence of a point $x$ where $s$ is not zero, we see that $s$ does not vanish at each of the irreducible components of the special fiber of $g$. 

By using again the fact that the formation of $\pi_* \lb L$ commutes with arbitrary base change, we conclude that $s$ is non-vanishing at all irreducible components of all fibers of the map $\bar N_{g,1}^n|_U \to U$. As the fibers of $\pi^n$ are reduced we conclude that $s$ is not a zero divisor in all fibers of the map $\bar N_{g,1}^n|_U \to U$. Applying \cite[Lemma~9.3.4]{fga_explained} we conclude that $\divisor s$ is flat over $U$.
\end{proof}
\begin{corollary}\label{lem:closure_on_root}
The Cartier divisor $\widetilde \Theta$ is equal to the Zariski closure of $\Theta$ on the root stack $\bar N_{g,1}^n$. 
\end{corollary}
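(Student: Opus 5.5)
The plan is to compare the two divisors directly. Both $\widetilde\Theta$ and the Zariski closure $\overline{\Theta}^n$ of $\Theta$ in $\bar N_{g,1}^n$ are closed subschemes restricting to $\Theta$ over the dense open $N_{g,1} \subseteq \bar N_{g,1}^n$, so it suffices to show two things: first, that $\widetilde\Theta$ contains no component supported in the boundary, and second, that $\widetilde\Theta$ has no embedded points there.

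First we identify the generic part. Over $N_g$ the map $\bar N_g^n \to \bar N_g$ is an isomorphism, and $\pi^n$ restricts to the universal abelian variety with $\lb L$ restricting to $L$. By Remark~\ref{rem:N_g_comments}(ii), $\pi_*L$ is invertible and a local generator cuts out $\Theta$. Lemma~\ref{lem:cohomology_polarisation} says the formation of $\pi^n_*\lb L$ commutes with base change, so a local generator of $\pi^n_*\lb L$ restricts to a generator of $\pi_*L$ on $N_g$. Hence $\widetilde\Theta|_{N_{g,1}} = \Theta$.

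Next we exclude boundary components. The boundary of $\bar N_{g,1}^n$ is the preimage of the boundary of $\bar N_g^n$, because $\pi^n$ is strict over the interior and $N_{g,1}$ is exactly the preimage of $N_g$. By the preceding lemma, $\widetilde\Theta$ is a relative effective Cartier divisor flat over $\bar N_g^n$. Flat morphisms send generic points of components to generic points of components of the base, and the boundary of $\bar N_g^n$ is nowhere dense. So every associated point of $\widetilde\Theta$ maps to a generic point of $\bar N_g^n$, and these lie in $N_g$.

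It follows that every associated point of $\widetilde\Theta$ lies over $N_g$. Equivalently, $\oo_{\widetilde\Theta}$ has no sections supported on the boundary, since those would be killed by a power of a local equation of the boundary pulled back from the base, and that equation is a nonzerodivisor on $\oo_{\widetilde\Theta}$ by flatness. A closed subscheme all of whose associated points lie in a dense open set $V$ equals the scheme-theoretic closure of its restriction to $V$. Applying this with $V = N_{g,1}$ and using the first step, $\widetilde\Theta$ is the Zariski closure of $\Theta$.

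The main subtlety is this last step. One must make sure that flatness over the possibly non-reduced or non-normal base in characteristic $2$ still forces the associated points to lie over generic points of the base. I would handle this by checking it smooth-locally on the root stack: there the base is flat over $\bb Z$, its boundary is locally cut out by a nonzerodivisor, and the pullback of a nonzerodivisor on the base is a nonzerodivisor on any flat module, which is all the argument needs.
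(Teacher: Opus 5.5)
Your proof is correct and is essentially the paper's: the paper states this corollary with no separate proof, as an immediate consequence of the preceding flatness lemma. Flatness of $\widetilde\Theta$ over $\bar N_g^n$ rules out components (and, as you note, embedded points) lying over the boundary, and over $N_g$ the divisor $\widetilde\Theta$ is $\Theta$ by construction. Your nonzerodivisor argument yields a slightly stronger scheme-theoretic equality; the paper only needs equality as Weil divisors.
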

We need one more lemma.
\begin{lemma}\label{lem:div_on_root} The Cartier divisors
$8 \, \widetilde \Theta$ and the pullback of $\on{div}f$ from $\bar N_{g,1}$ to $\bar N_{g,1}^n$ are equal as Weil divisors. 
\end{lemma}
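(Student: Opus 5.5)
Both divisors agree over the interior $N_{g,1}$, where each equals $8\,\Theta$: the pullback of $\on{div} f$ restricts there to the divisor of the key-formula section, and $8\,\widetilde\Theta$ restricts to $8\,\Theta$ by Corollary~\ref{lem:closure_on_root}. So it suffices to compare multiplicities along irreducible components of the boundary of $\bar N_{g,1}^n$, i.e. along prime divisors lying over the boundary of $\bar N_g^n$. Since $\pi^n$ is flat with reduced geometric fibres (Lemma~\ref{lem:n_flatness}), every such boundary prime divisor of $\bar N_{g,1}^n$ dominates a boundary prime divisor $E$ of $\bar N_g^n$, and its generic point lies in the generic fibre of $\pi^n$ over the generic point of $E$. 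Taking the local ring of $\bar N_g^n$ at the generic point of $E$ (a DVR after normalisation; passing to its completion and a finite extension if needed) gives a trait $S\to\bar N_g^n$ whose generic point maps to $N_g$. Because $\pi^n$ is integral and saturated, the logarithmic pullback coincides with the schematic pullback. Base change then yields exactly the tautological model $g\colon\ca P\to S$ of Section~\ref{sec:AN_cle}, with $\ca L_{\ca P}$ the pullback of $\lb L$.

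On $\ca P$ I compare the two divisors. First, $\widetilde\Theta$ pulls back to the divisor of a generating section $s$ of $g_*\ca L_{\ca P}$, by cohomology and base change (Lemma~\ref{lem:cohomology_polarisation}). By Proposition~\ref{prop:vanishing_AN}(v) this divisor is exactly $\ca D_{\ca P}$, with no vertical component. Second, $f$ pulls back to a rational section of $\ca L_{\ca P}^{\otimes 8}\otimes g^*\omega_{G/S}^{\otimes 4}$ which generically is the key-formula section. Theorem~\ref{thm:mumford} says that the trivialisation of $(\ca L_{\ca P}\otimes\oo(-\ca D_{\ca P}))^{\otimes 8}\otimes g^*\omega_{G/S}^{\otimes 4}$ extends over $S$. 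This means precisely that $\on{div} f$ pulls back to $8\,\ca D_{\ca P}$, so there are no vertical multiplicities along the special fibre components. Since the generic point of each boundary prime lies over the generic point of $E$ and base change along the flat map is compatible with taking multiplicities, both divisors have multiplicity zero along every boundary component. Hence $8\,\widetilde\Theta$ and the pullback of $\on{div} f$ agree as Weil divisors.

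The main obstacle is justifying this reduction to traits. Specifically, I must check that $\omega_{G}$ restricted to $S$ is $\omega_{G/S}$, and that the rational section $f$, pulled back from $\bar N_{g,1}$ to $\bar N_{g,1}^n$ and then to $\ca P$, is the section whose trivialisation Theorem~\ref{thm:mumford} compares. Both follow from compatibility of the key-formula trivialisations with base change, which was fixed in Section~\ref{subsec:key}. In addition, the possible non-normality in characteristic $2$ forces me to work at generic points of components, where the local rings are DVRs after normalisation, rather than with any global normality.
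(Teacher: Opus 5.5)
Your proposal is correct and is essentially the paper's argument: both base-change to a trait $S\to\bar N_g^n$ whose closed point hits the generic point of the boundary component below $D$, use that log and schematic base change agree because $\pi^n$ is integral and saturated, identify $\widetilde\Theta|_{A_S}$ via cohomology and base change, and control $f|_{A_S}$ via Theorem~\ref{thm:mumford}. The differences are cosmetic. The paper takes a trait through the generic point of $D$ in $\bar N_{g,1}^n$ and composes down, whereas you use flatness of $\pi^n$ to take the local ring of $\bar N_g^n$ at the image $E$. Note that this local ring is already a DVR, since the generic point of $E$ lies in characteristic $0$ where $\bar N_g^n$ is normal. This is worth saying, because your multiplicity comparison needs $S\to\bar N_g^n$ to be flat, and passing to a nontrivial normalisation would destroy that.
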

\begin{proof}
It suffices to show that for all prime Weil divisors $D$ supported over the boundary of $N_{g,1}$ in $\bar N_{g,1}^n$,  the Cartier divisors $\on{div} f$ and $8\,\widetilde \Theta$ have equal multiplicities along $D$. Let $D$ be such a prime divisor. Let $S$ be a trait mapping to $\bar N^n_{g,1}$ with generic point landing in $N_{g,1}$ and closed point mapping to the generic point of $D$. After replacing $S$ by a ramified cover, the map $S \to \bar N^n_{g,1}$ extends uniquely to a logarithmic map where $S$ has the standard divisorial log structure. Composition yields a logarithmic map $S \to \bar N^n_g$. 

We now form the log fibre product $$A_S \coloneqq  S \times_{\bar N_g^n} \bar N_{g,1}^n$$
with projection map $\pi_S\colon A_S \to S$. It is enough to show that the divisors $8 \, \widetilde \Theta$ and $\on{div}f$ agree after pullback to $A_S$, since this implies that their multiplicities along $D$ agree. 

This is now done as follows. Since forming $\pi^n_*\lb L$ commutes with schematic base change, we know that $\widetilde \Theta|_{A_S}$ is the divisor of a generating section of the invertible sheaf $(\pi_S)_*\lb L|_{A_S}$. 
By Theorem \ref{thm:mumford}, the pullback of $f$ to $S$ is actually a \emph{generating} section of $(\pi_S)_*\lb L^{\otimes 8} \otimes \omega_{G/S}^{\otimes 4}$. Hence its associated divisor is equal to $8\, \widetilde \Theta|_{A_S}$, as required. 
\end{proof}

\begin{proof}[Proof of Theorem \ref{thm:bar_theta_def_equivalence}]
Let $\widetilde N_{g,1}$ be the open of $\bar N_{g,1}$ obtained by deleting all strata of codimension $\ge 2$; on the tropical level, this means that we keep only the rays of the cone complex. We similarly define $\widetilde N_{g,1}^n$. It suffices to check the theorem on $\widetilde N_{g,1}$. As the map $\widetilde N^n_{g,1} \to \widetilde N_{g,1}$ is flat,  forming Zariski closure commutes with pullback along this map. It therefore suffices to check the result on $\widetilde N^n_{g,1}$. Here it follows from Corollary~\ref{lem:closure_on_root} and Lemma~\ref{lem:div_on_root}. 
\end{proof}

\subsection{Example: rank one degenerations} In this section we illustrate Theorem~\ref{thm:adelic_key_restate} by restricting to the log substack of rank one degenerations of abelian varieties. Let $N_g^{\le 1}$ be the partial compactification of $N_g$ given by those moduli points of $N_g^{\on{Del}}$ where the torus ranks of the fibers of the underlying semiabelian scheme are $\le 1$. Let $N_{g,1}^{\le 1}$ denote the pullback of $N_{g,1}^{\on{mDV}}$ over $N_g^{\le 1}$, and denote by $\pi \colon N_{g,1}^{\le 1} \to N_g^{\le 1}$ the projection map. 

 The fibers of $\pi \colon N_{g,1}^{\le 1} \to N_g^{\le 1}$ are irreducible, and the zero section $e \colon N_g \to N_{g,1}$ extends to a ``zero'' section $\overline{e} \colon N_g^{\le 1} \to N_{g,1}^{\le 1}$. We can realise $\overline{\Theta}\big|_{N_{g,1}^{\le 1}}$ as a Cartier divisor on $N_{g,1}^{\le 1}$, and this gives canonical isomorphisms of line bundles
\[ \overline{L} \big|_{N_{g,1}^{\le 1}} = \oo (\overline{\Theta}\big|_{N_{g,1}^{\le 1}}) \otimes \pi^* \overline{e}^*\oo (-\overline{\Theta}\big|_{N_{g,1}^{\le 1}}) \]
and
\begin{equation} \label{eqn:rank_one_theta} \pi_*\left( \overline{L} \big|_{N_{g,1}^{\le 1}} \right) =  \overline{e}^*\oo (-\overline{\Theta}\big|_{N_{g,1}^{\le 1}}) , 
\end{equation}
compare with Section~\ref{subsec:key}.

The real tori associated to the boundary points of $N_g$ in $N_g^{\le 1}$ are circles. At a generic point of the boundary divisor $\delta$, the tropical theta characteristic of the tropicalisation of the associated polarising log line bundle is either the trivial, or the non-trivial $2$-torsion point of a circle. This gives rise to a natural decomposition $\delta = \delta^{\on{tr}} + \delta^{\on{ntr}}$ of the boundary divisor of $N_g$ in $N_g^{\le 1}$. 

The formulas in Proposition~\ref{prop:trop_theta_new} show that at the primitive vector $v$ of a ray corresponding to the fiber over a generic point of $\delta^{\on{tr}}$ we have $\theta^{\on{inv}}(v) = \theta_{\rho=0}^{\on{inv}}(0) = 0$. Similarly, at the primitive vector of a ray corresponding to the fiber over a generic point of $\delta^{\on{ntr}}$ we have $\theta^{\on{inv}}(v) = \theta_{\rho=1}^{\on{inv}}(0) = 1/8$. This gives the equality of Cartier divisors
\begin{equation} \label{eqn:boundary_rank_one}
 8 \, \divisor (\theta^{\on{inv}}\big|_{N_{g,1}^{\le 1}}) = \pi^* \delta^{\on{ntr}} 
\end{equation}
on $N_{g,1}^{\le 1}$.

Let $G^{\le 1}$ denote the universal semiabelian scheme over $N_g^{\le 1}$.
By restricting the canonical isomorphism of adelic line bundles from Theorem~\ref{thm:adelic_key_restate} to the log stack $N_{g,1}^{\le1}$ we obtain a canonical isomorphism of line bundles
\[ \overline{L} \big|_{N_{g,1}^{\le 1}}^{\otimes 8} \cong \oo (8\, \overline{\Theta}\big|_{N_{g,1}^{\le 1}}) \otimes \oo(8 \, \divisor (\theta^{\on{inv}}\big|_{N_{g,1}^{\le 1}}) ) \otimes \pi^* \omega_{G^{\le1}/N_g^{\le 1}}^{\otimes -4}  \]
over $N_{g,1}^{\le 1}$. Pulling back along $\overline{e}$ and using \eqref{eqn:rank_one_theta} and \eqref{eqn:boundary_rank_one} gives  canonical isomorphisms of line bundles
\[ 8 \, \pi_*\left( \overline{L} \big|_{N_{g,1}^{\le 1}} \right) \cong  8 \, \overline{e}^*\oo (-\overline{\Theta}\big|_{N_{g,1}^{\le 1}})
\cong \oo( \delta^{\on{ntr}} ) \otimes \omega_{G^{\le1}/N_g^{\le 1}}^{\otimes -4}  \]
over $N_g^{\le 1}$. This refines a formula in Chow groups with $\qq$-coefficients due to Van der Geer in~\cite[Lemma~4.8]{vdg}.

\section{The arithmetic cases} \label{sec:arith_cases}

In this final section we discuss some aspects of the arithmetic side of our theory. First of all, we establish a connection between the function $\theta^{\mathrm{inv}}$ and the degeneration of the natural hermitian metric on $\oo(\Theta)$ over the complex numbers (see Corollary~\ref{cor:degeneration_theta}).
 Second, we derive a ``universal'' formula for the N\'eron--Tate height of a point on an abelian variety over a number field (see Theorem~\ref{univ_NT_formula}). 
 
 Let $g$ be a positive integer.

\subsection{Hermitian structures} \label{sec:hermitian}

We start by working in the \emph{local arithmetic case}, that is, we consider the map of orbifolds $N_{g,1}(\cc) \to N_g(\cc)$, and aim to endow (the sets of $\cc$-valued points of) the line bundles $L$, $\oo(\Theta)$ over $N_{g,1}$ and $\omega$ over $N_g$ with natural hermitian metrics. To ease notation we will just write $N_g$, $N_{g,1}$, $L$, etc.\ when we mean their sets of $\cc$-valued points.

For the line bundles $\oo(\Theta)$ and $\omega$ such natural hermitian metrics are discussed in \cite[Section~3]{mb_fonct}. We denote them by $\|\cdot\|_\Theta$ and $\|\cdot\|_{\mathrm{Hdg}}$, respectively.

The metric $\|\cdot\|_\Theta$ is characterised as a smooth hermitian metric on $\oo(\Theta)$ by the following two properties: (i) the curvature form $c_1(\oo(\Theta),\|\cdot\|_\Theta)$ is translation invariant in all fibers of the structure map $\pi \colon N_{g,1} \to N_g$; (ii) in each fiber of the structure map $N_{g,1} \to N_g$, the function $\|1_\Theta\|_\Theta^2$ has integral $2^{-g/2}$ against the Haar measure with volume one. Below we consider a rescaled metric $\| \cdot \|'_\Theta$ on $\oo(\Theta)$ defined by $\| \cdot \|_\Theta' = (2\pi)^{g/2} \| \cdot \|_\Theta$.

The metric $\|\cdot\|_{\mathrm{Hdg}}$ on the Hodge line bundle $\omega$ is also extensively discussed in \cite[Section~5.5]{yz} (where it is called the \emph{Faltings metric}). The discussion in loc.\ cit.\ essentially gives rise to the construction of a canonical adelic Hodge line bundle $\overline{\omega}$ over the arithmetic stack $A_g$ over $\zz$ (see \cite[Theorem 5.5.2]{yz}).

Finally, consider the invariant adelic line bundle $\overline{L}$ over the arithmetic stack $N_{g,1}$ over $\zz$, constructed in \cite[Chapter~6]{yz} (see also Section \ref{sec:yz-invariant-adelic}). The line bundle $L$ has a natural underlying ``pure'' hermitian metric which we denote by $\|\cdot\|_L$. It can be characterised as the unique smooth hermitian metric on $L$ such that for all $m \in \zz$ the isomorphism $[m]^*L \cong L^{\otimes m^2}$ of rigidified line bundles is an isometry. The curvature form $c_1(L,\|\cdot\|_L)$ is translation invariant in all fibers of the structure map $\pi \colon N_{g,1} \to N_g$, as follows for example from \cite[Proposition~II.2.1]{mb}.

\begin{theorem} \label{thm:metric_isom} (Moret-Bailly)
With the above hermitian metrics $\|\cdot\|_L$, $\|\cdot\|_\Hdg$ and $\|\cdot\|'_\Theta$ on the line bundles $L$, $\omega$ and $\oo(\Theta)$, respectively, the trivialising section of
the line bundle
\[ \left( L \otimes  \oo(-\Theta) \right)^{\otimes 8} \otimes \pi^* \omega^{\otimes 4} \]
fixed in Section~\ref{subsec:key} has constant norm equal to one over $N_{g,1}$.
\end{theorem}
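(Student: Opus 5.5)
To prove Theorem~\ref{thm:metric_isom}, I would write down the norm of the trivialising section $\tau$ explicitly on the universal cover and show it is constant, first on every fibre of $\pi$ and then along the base.

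The first step is to show constancy on each fibre of $\pi\colon N_{g,1}\to N_g$. Fix a point of $N_g$ and consider the fibre $A$. On $A$, $\tau$ is a nowhere vanishing section of a line bundle which, after fixing a generator of $\omega$ at that point, is identified with $(L\otimes\oo(-\Theta))^{\otimes 8}$; this is canonically trivial along the fibre, since $L=\oo(\Theta)\otimes\pi^*e^*\oo(-\Theta)$. The two metrics $\|\cdot\|_L$ and $\|\cdot\|'_\Theta$ both have translation-invariant curvature forms representing the same class. Hence $\log\|\tau\|$ is pluriharmonic on the compact fibre, and therefore constant. So $\|\tau\|$ factors through a smooth function $h$ on $N_g$.

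The second step is to compute $h$ in the standard uniformisation $N_g(\cc)=\Gamma\backslash(\mathfrak H_g\times\text{characteristics})$. At $\Omega\in\mathfrak H_g$, the generator $dz_1\wedge\dots\wedge dz_g$ of $\omega$ has Faltings norm $\|\cdot\|_\Hdg^2=\det\Im\Omega$, up to the normalisation convention of \cite[\S5.5]{yz}. The section $1_\Theta$ corresponds to the Riemann theta function with characteristic $\theta[\rho](z,\Omega)$. The pure metric on $L$ is the cubical metric: for a section $s$ of $L$ represented by a function $f(z)$,
\[
\|s\|_L(z)=|f(z)|\exp\!\left(-\pi\,{}^t y(\Im\Omega)^{-1}y\right),\qquad y=\Im z,
\]
normalised so that the rigidification at $0$ is an isometry; this normalisation forces exactly the isometry property for $[m]$. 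The metric $\|\cdot\|_\Theta$ is the same expression multiplied by $(\det\Im\Omega)^{1/4}$, with the constant fixed by the $L^2$ condition $\int\|1_\Theta\|^2_\Theta=2^{-g/2}$, using $\int|\theta|^2e^{-2\pi{}^ty Y^{-1}y}=2^{-g/2}(\det Y)^{-1/2}$. Comparing the two through $L\otimes\oo(-\Theta)=\pi^*e^*\oo(-\Theta)$, I get
\[
h^{1/8}\;\propto\;(\det\Im\Omega)^{-1/4}\cdot(2\pi)^{-g/2}\cdot(\det\Im\Omega)^{1/2\cdot 1/2}\cdot|\eta(\Omega)|,
\]
where $\eta=\tau/(\text{standard frame})$ is holomorphic on $\mathfrak H_g$. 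The powers of $\det\Im\Omega$ cancel, so $\log h$ is pluriharmonic on $N_g(\cc)$. The factor $(2\pi)^{g/2}$ in $\|\cdot\|'_\Theta$ is designed precisely to absorb the Faltings-metric constant.

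The third step is to use rigidity. The transformation law of $\theta$-constants under $\Gamma$ shows that $\eta$ is a $\Gamma$-invariant holomorphic function of weight zero; equivalently, $\tau$ is algebraic and defined over $\zz$. By the Koecher principle for $g\ge2$, or by boundedness at the cusp for $g=1$, $\eta$ is constant. So $h$ is a constant $c$. Since $\tau$ is defined over $\zz$ and unique up to sign on each component by Remark~\ref{rem:N_g_comments}(v), we get $c=|{\pm}\,\text{integer unit}|$. To see that $c=1$ rather than some other constant, I would evaluate at a product of elliptic curves with $\Omega=iI$ (or at the cusp), where the theta-constant and the Faltings normalisation are computed explicitly.

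The main obstacle is this bookkeeping of constants. The fibrewise and pluriharmonicity steps are formal, but getting exactly $1$ requires matching the $2^{-g/2}$, $(2\pi)^{g/2}$ and Faltings conventions. I would follow \cite[\S3]{mb_fonct}, where these normalisations are fixed precisely so that this holds, and check the case $g=1$ against the classical identity $\Delta=(2\pi)^{12}\eta^{24}$ together with the Jacobi formula $\theta_2\theta_3\theta_4=2\eta^3$.
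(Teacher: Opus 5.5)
Your Step~1 is exactly the paper's first step, and for the value of the constant you ultimately defer to \cite[\S3]{mb_fonct}, which is all the paper does. Once the curvature vanishes on the fibres of $\pi$, the paper pulls back along $e$, where $(L,\|\cdot\|_L)$ is isometrically trivial by the rigidification. It then quotes \cite[Th\'eor\`eme~3.3]{mb_fonct}: every trivialising section of $e^*\oo(-8\Theta)\otimes\omega^{\otimes 4}$ has constant norm $(2\pi)^{4g}$ for $\|\cdot\|_\Theta$. Since $\bigl((2\pi)^{g/2}\bigr)^{8}=(2\pi)^{4g}$, the rescaled metric $\|\cdot\|'_\Theta$ gives norm~$1$. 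That theorem already contains both the constancy along $N_g$ and the value, so your Steps~2--3 are redundant. As an independent substitute they also do not work as written.

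\emph{The integrality claim.} Saying that $c$ is the absolute value of an integral unit because $\tau$ is defined over $\zz$ and unique up to sign is a non sequitur. The number $c$ is measured with transcendentally normalised metrics. The same reasoning applied to $\|\cdot\|_\Theta$ would give~$1$, whereas the true value is $(2\pi)^{4g}$.

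\emph{Constancy for $g=1$.} Here constancy of $\eta$ is not automatic. The even component of $N_1(\cc)$ is $Y_1(2)=\Gamma_0(2)\backslash\mathfrak H$, which carries non-constant modular units such as $\Delta(2\Omega)/\Delta(\Omega)$. So ``boundedness at the cusp'' already requires knowing $\tau$. For $g\ge 2$ your Koecher argument is fine.

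\emph{Evaluating at $\Omega=iI$.} This presupposes that you know the coefficient $\eta$ of the abstractly defined algebraic section $\tau$ in your analytic frame, i.e.\ its exact analytic expression. That knowledge comes from comparing $dz_1\wedge\dots\wedge dz_g$ with the integral generator $(2\pi i)^{-g}$-rescaled, i.e.\ via $q$-expansions over $\zz$, which is where $(2\pi)^{4g}$ originates. This is precisely the content of Th\'eor\`eme~3.3. Jacobi's formula does settle the odd genus-one component: there $e^*\oo(-\Theta)\cong\omega$ canonically and the section is $\pm\Delta\,(2\pi i\,dz)^{\otimes 12}$. It does not settle the even one, where the normalisation is not visible analytically.

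So the clean argument is the paper's: after your Step~1, restrict to the zero section and cite \cite[Th\'eor\`eme~3.3]{mb_fonct}.
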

\begin{proof} By our discussion above, the curvature form of the line bundle from the theorem vanishes in all fibers of the map $\pi \colon N_{g,1} \to N_g$. We would be done if we find a trivial metrised line bundle upon pulling back along the zero section $e \colon N_g \to N_{g,1}$. But \cite[Th\'eor\`eme~3.3]{mb_fonct} yields that any trivialising section of the line bundle $e^* \oo(-8\Theta) \otimes \omega^{\otimes 4}$ over $N_g$ has norm $(2\pi)^{4g}$, if $\oo(\Theta)$ is equipped with the norm $\|\cdot\|_\Theta$. This gives what we want as we work with the rescaled metric $\| \cdot \|_\Theta' = (2\pi)^{g/2} \| \cdot \|_\Theta$ on $\oo(\Theta)$. 
\end{proof}
Together with Theorem~\ref{thm:adelic_key_restate} we now arrive at the following.
\begin{corollary} \label{cor:global_adelic_isom} View $N_{g,1}$ as an arithmetic stack over $\Z$. Then we have an isomorphism of hermitian adelic line bundles
\[ \overline{L}^{\otimes 8} \cong \oo(8 \,\Theta^{\mathrm{pure}}) \otimes \pi^* \overline{\omega}^{\otimes - 4}   \]
over $N_{g,1}$ over $\zz$. 
\end{corollary}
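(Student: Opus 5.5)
The plan is to upgrade the algebraic isomorphism of Theorem~\ref{thm:adelic_key_restate} to a \emph{hermitian} one by supplying the archimedean component, which Theorem~\ref{thm:metric_isom} provides. Concretely, I work in the global arithmetic case: $R=\zz$ and $\Omega$ the single embedding. On the model $N_{g,1}^{\mathrm{mDV}}$ I take the log b-line bundles appearing in the proof of Theorem~\ref{thm:adelic_key_restate}, and equip the underlying model line bundles with the metrics of Section~\ref{sec:hermitian}:
\begin{itemize}
\item $\|\cdot\|_L$ on $L$ (which is the archimedean part of Yuan--Zhang's $\overline{L}$);
\item $\|\cdot\|'_\Theta$ on $\oo(\Theta)$;
\item $\|\cdot\|_\Hdg$ on $\omega$ (the Faltings metric, giving $\overline{\omega}$ as in \cite[Section~5.5]{yz}).
\end{itemize}
The divisor $8\,\Theta^{\mathrm{pure}}$ then becomes an arithmetic adelic divisor: its Green's function at infinity is $-\log\|1_\Theta\|'^{\,2\cdot 8}_\Theta$, i.e.\ $-8\log\|1_\Theta\|'^2_\Theta$, and its finite part is the log adelic divisor $(\frac18\divisor f,\theta^{\on{inv}})$ scaled by $8$.

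The first step is to check that the arithmetic analogues of the maps $a$, $\overline a$ of Propositions~\ref{prop:map_p} and \ref{prop:commutative_p_bar} commute with attaching a fixed Green's function on $U$. This holds because the boundary topology only constrains differences, and those differences are supported on the boundary with continuous extension of Green's function differences. The only nontrivial point at infinity is that each side must be a genuine adelic line bundle, i.e.\ that the archimedean metrics have the right growth near the boundary of the compactification. For $\overline{L}$ and $\overline{\omega}$ this is part of Yuan--Zhang's construction (\cite[Theorem~6.1.3, Theorem~5.5.2]{yz}). For $\oo(8\,\Theta^{\mathrm{pure}})$ I will simply \emph{define} the hermitian adelic structure as the one transported via the isomorphism, and check compatibility.

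The second step is to compare the two sides. By Theorem~\ref{thm:adelic_key_restate} the finite (geometric) parts agree, compatibly with the key formula trivialisation fixed in Section~\ref{subsec:key}. By Theorem~\ref{thm:metric_isom}, that same trivialising section of $(L\otimes\oo(-\Theta))^{\otimes8}\otimes\pi^*\omega^{\otimes4}$ has norm one for the chosen metrics. Hence the isomorphism over $N_{g,1}$ is an isometry at the archimedean place. Since an isomorphism of hermitian adelic line bundles is determined by its restriction to $U$ together with agreement of the finite adelic structure and of the metrics on $U(\cc)$, the two hermitian adelic line bundles are isomorphic.

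The main obstacle I expect is the boundary behaviour at infinity: verifying that the metrised model $\oo(8\,\Theta^{\mathrm{pure}})$ is a Cauchy limit in the arithmetic boundary topology, with Green's function $-8\log\|1_\Theta\|'^2$ degenerating as the $\theta^{\on{inv}}$-correction predicts. I would sidestep this by transport of structure, as above: everything on the right except $\oo(8\Theta^{\mathrm{pure}})$ is already known to be hermitian adelic, so the isometric identification forces $\oo(8\Theta^{\mathrm{pure}})$ to be one as well. Making this rigorous is precisely what later yields Corollary~\ref{cor:degeneration_theta}.
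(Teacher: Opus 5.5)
Your proposal is correct and follows the paper's own argument. The paper deduces the corollary in one line by combining two results. Theorem~\ref{thm:adelic_key_restate} gives the isomorphism of log adelic line bundles extending the key-formula trivialisation, which handles all non-archimedean places. Moret-Bailly's Theorem~\ref{thm:metric_isom} says the same trivialising section has norm one for $\|\cdot\|_L$, $\|\cdot\|_{\mathrm{Hdg}}$ and $\|\cdot\|'_\Theta$, which handles the archimedean place.

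Your ``first step'' is not needed and is not quite right as phrased. A metric such as $\|\cdot\|_L$ or $\|\cdot\|'_\Theta$ on $U(\cc)$ has non-logarithmic growth at the boundary. So it is not a Green's function for any model divisor and cannot literally be attached to the approximating model divisors. Your transport-of-structure argument, resting on Yuan--Zhang's results for $\overline{L}$ and $\overline{\omega}$, already does the required work.
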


\subsection{Explicit formula for the theta metric} \label{sec:explicit}

The aim of this section is to make the metric $\|\cdot\|_\Theta$ more explicit.
Let $\hh_g$ denote Siegel's upper half space consisting of complex symmetric $g$-by-$g$ matrices with positive definite imaginary part. This is a moduli space of complex principally polarised abelian varieties $A$ together with a symplectic basis of $H_1(A,\zz)$.

We have the Riemann theta function
\begin{equation} \label{eqn:Riemann_theta} \vartheta(z,\tau) = \sum_{n \in \zz^g} \exp\left(2\pi i\, \left( \frac{1}{2}\, {}^t n \cdot \tau \cdot n +  {}^t n \cdot z \right) \right) , \quad z \in \cc^g , \tau \in \hh_g  . 
\end{equation}
For $\tau \in \hh_g$ write $A_\tau = \cc^g/(\zz^g + \tau \zz^g)$. We have a symmetric effective Cartier divisor $\divisor \vartheta$ on $A_\tau$ defining the canonical principal polarisation $\lambda_\tau$ of $A_\tau$, i.e., the polarisation given by the Hermitian form $(z,w) \mapsto {}^tz \cdot \mathrm{Im}(\tau)^{-1} \cdot \overline{w}$.

Next we have the normalised Riemann theta function
\begin{equation} \label{eqn:Riemann_theta_norm}
 \|\vartheta\|(z,\tau) = (\det \Im \tau)^{1/4} \exp(-\pi\, {}^t (\Im z) \cdot (\Im  \tau)^{-1} \cdot (\Im z))  |\vartheta(z,\tau)|   
 \end{equation} 
 for $ z \in \cc^g$, $ \tau \in \hh_g$.
For fixed $\tau \in \hh_g$, this is invariant under translations by the lattice $\zz^g + \tau \zz^g$, and hence defines a map $\|\vartheta\|(-,\tau) \colon A_\tau \to \rr$.

The discussion in \cite[Section 3.2]{mb_fonct} implies the following.
\begin{proposition} \label{prop:archimedean_two_torsion} Let $A$ be a complex abelian variety of dimension~$g$, and let $L$ be a symmetric rigidified ample line bundle on $A$ defining a principal polarisation~$\lambda$. Fix  an isomorphism of ppav $\alpha \colon (A,\lambda) \isom (A_\tau,\lambda_\tau)$ where $\tau \in \hh_g$. Let $D$ be the symmetric effective divisor on $A$ determined by any global section of~$L$. There exists a unique $2$-torsion point $\kappa \in A_\tau$ such that under the isomorphism $\alpha$, the divisor $D$ on $A$ corresponds to the translate of the divisor $\divisor \vartheta$ on $A_\tau$ by $\kappa$. Furthermore, let $x \in A$. Then the equality 
\[ \|1_\Theta\|_\Theta(A,L,x) = \|\vartheta\|(\alpha(x)+\kappa,\tau) \]
holds.
\end{proposition}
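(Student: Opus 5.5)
We prove the two assertions in turn: first the existence of the $2$-torsion point $\kappa$, then the equality of norms, which we reduce to the characterisation of $\|\cdot\|_\Theta$ by properties (i) and (ii).

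\emph{Existence of $\kappa$.} Transport everything to $A_\tau$ via $\alpha$. Then $D$ and $\divisor\vartheta$ are both symmetric effective divisors representing the principal polarisation $\lambda_\tau$. By \eqref{eq:representing}, the set of such theta divisors is a torsor under $A_\tau[2]$: two of them are algebraically equivalent, so they differ by a translate. Symmetry of both forces the translating point to be $2$-torsion. The translation action is free because a principal polarisation has trivial kernel, which gives the uniqueness of $\kappa$. That $\divisor\vartheta$ is indeed symmetric follows from the classical identity $\vartheta(-z,\tau)=\vartheta(z,\tau)$, obtained by $n\mapsto -n$ in \eqref{eqn:Riemann_theta}.

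\emph{Equality of norms, setup.} Set
\[ h(z)\coloneqq \|\vartheta\|(z+\kappa,\tau), \]
viewed on $A_\tau$, hence on $A$ via $\alpha$. We show that $h$ has the two properties characterising $\|1_\Theta\|_\Theta$.

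\emph{Step 1: $h$ is the norm of $1_\Theta$ for a smooth metric.} The translated function $z\mapsto\vartheta(z+\kappa,\tau)$ is, up to a nowhere-vanishing exponential factor, a holomorphic section with divisor $D$. The factor $\exp(-\pi\,{}^t(\Im z)(\Im\tau)^{-1}(\Im z))$ is exactly the one making $h$ invariant under $\zz^g+\tau\zz^g$. Hence $h$ is the norm of the canonical section $1_D$ for a smooth hermitian metric on $\oo(D)$.

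\emph{Step 2: the curvature is translation invariant.} Compute $-dd^c\log h^2$ away from $D$. The holomorphic part $\log|\vartheta|^2$ is pluriharmonic there, so the curvature equals the $dd^c$ of the quadratic term. This is the constant form given by the hermitian form $(\Im\tau)^{-1}$, so property (i) holds. Since this recipe is uniform in $\tau$ and $\kappa$ is locally constant in families, the construction is compatible with variation of $(A,L)$ over $N_g$.

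\emph{Step 3: the normalisation of the integral.} Compute $\int_{A_\tau}h^2\,d\mu$ for the Haar measure of volume one. The translation by $\kappa$ is harmless, since $\mu$ is translation invariant and $h(\cdot-\kappa)=\|\vartheta\|(\cdot,\tau)$. Now:
\begin{itemize}
\item write $z=x+\tau y$ with $x,y\in[0,1]^g$;
\item expand $|\vartheta|^2$ as a double sum and integrate over $x$, which kills the cross terms by orthogonality of characters;
\item unfold the remaining sum over $n$ against the $y$-integral into a single Gaussian integral over $\rr^g$.
\end{itemize}
This Gaussian should produce $(\det\Im\tau)^{-1/2}2^{-g/2}$, and the prefactor $(\det\Im\tau)^{1/2}$ from \eqref{eqn:Riemann_theta_norm} cancels it, leaving $2^{-g/2}$. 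This matches property (ii), and the uniqueness in the characterisation then gives the claimed equality.

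\textbf{Expected main obstacle.} Step 3 is where I expect the difficulty: getting all the constants right ($2\pi$ conventions, the factor $2^{-g/2}$, the determinant exponent) in the Gaussian unfolding. A secondary point is checking that the characterisation of $\|\cdot\|_\Theta$ by (i) and (ii) determines the metric fibrewise, with no extra dependence on the base. This holds because (i) fixes the metric up to a positive constant on each fibre, and (ii) fixes that constant.
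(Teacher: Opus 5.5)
Your proposal is correct. It takes a more self-contained route than the paper, which gives no argument of its own and simply defers to the discussion in \cite[\S 3.2]{mb_fonct}. You instead derive the formula directly from the characterisation of $\|\cdot\|_\Theta$ by properties (i) and (ii) stated in Section~\ref{sec:hermitian}.

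The step you flagged as the main obstacle does come out right. With $Y=\Im\tau$ and $z=x+\tau y$, the $x$-integration leaves $\sum_{n}\exp(-2\pi\,{}^t(n+y)Y(n+y))$. This unfolds to $\int_{\rr^g}\exp(-2\pi\,{}^tuYu)\,du=2^{-g/2}(\det Y)^{-1/2}$, and the squared prefactor $(\det Y)^{1/2}$ leaves exactly $2^{-g/2}$, as required by (ii).

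As you note at the end, (i) and (ii) already determine the metric on each single fibre. A translation-invariant exact $(1,1)$-form on a complex torus vanishes, so two such metrics differ by a pluriharmonic, hence constant, function. Consequently the family-compatibility remark in Step~2 is not needed for the comparison.

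In the existence of $\kappa$, the step ``algebraically equivalent, hence differ by a translate'' tacitly uses two facts: surjectivity of $\phi_L$, and $h^0(L)=1$ (uniqueness of the effective divisor in the class). Both are standard for a principal polarisation.

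Compared with the citation, your route explains where the normalising factor $(\det\Im\tau)^{1/4}$ and the constant $2^{-g/2}$ in (ii) come from, using only the characterisation as stated in the paper. The citation is shorter, but it leaves the reader to match Moret-Bailly's conventions with the ones used here.
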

We call the $2$-torsion point $\kappa \in A_\tau$ as in the above proposition the \emph{theta characteristic} of the line bundle~$L$ along the isomorphism $\alpha$. 

\subsection{Degeneration of the Riemann theta function} \label{sec:degenerate_Riemann_theta}

Our results yield information about the degeneration behavior of the classical Riemann theta function \eqref{eqn:Riemann_theta} and its normalised counterpart \eqref{eqn:Riemann_theta_norm} over the orbifold~$N_{g,1}$. The aim of this section is to provide some details. 

As above let $N_g^{\mathrm{Del}}$ and $N_{g,1}^{\mathrm{mDV}}$ be the log compactifications corresponding to the Delaunay and mixed Delaunay--Voronoi decomposition, respectively. In the work done in \cite[Chapter~5]{chern-weil} and \cite{siegel-jacobi} we study the (often singular) extensions of hermitian metrics on line bundles along the boundary of a compactification. In particular, we show  that if the metric extends as a \emph{plurisubharmonic (psh)}  hermitian metric, then one can define an associated toroidal $b$-line bundle encoding the so-called Lelong numbers of the psh metric as one varies over smooth log modifications. 

The results in loc.\ cit.\ are stated and proved for smooth complex varieties. However, one can extend these results to the line bundles $L$, $\omega$ and $\mathcal O(\Theta)$ (endowed with their canonical metrics) on the orbifolds $N_g^{\mathrm{Del}}$ and $N_{g,1}^{\mathrm{mDV}}$ by defining psh metrics, multiplier ideal sheaves, Lelong numbers, etc.\ by pulling back to local uniformizing charts, see for instance \cite[Section 6]{DK-orbifolds}. Concretely, similar to the results in \cite[Chapter 6]{chern-weil} one obtains the following:
\begin{itemize}
\item the universal line bundle $L$ together with its smooth invariant metric $\|\cdot\|_L$ has a natural extension (the ``Lear extension'') as a $\qq$-line bundle with a psh (orbi)metric over the orbifold $N_{g,1}^{\mathrm{mDV}}$ (see \cite[Section 6.1]{chern-weil});
\item the Hodge line bundle $\omega$ together with its smooth Hodge metric $\|\cdot\|_{\mathrm{Hdg}}$ has a natural extension (the ``Mumford extension'') as a line bundle with good and psh (orbi)metric over the orbifold $N_g^{\mathrm{Del}}$ (see \cite[Lemmas 7.18 and 7.19]{hodge} for the smooth case and \cite{hodge-orbifold} for the extension to orbifolds).
\end{itemize}
Hence, invoking Theorem \ref{thm:metric_isom} we conclude that 
\begin{itemize}
\item the line bundle $\oo(\Theta)$ together with its smooth hermitian metric $\|\cdot\|_\Theta$   has a natural extension as a $\qq$-line bundle with psh (orbi)metric over the orbifold $N_{g,1}^{\mathrm{mDV}}$.  
\end{itemize}

Now, let $X$ be an snc log scheme over $\cc$ with boundary divisor $D$ and complement $U = X \setminus |D|$. Let $(M,\|\cdot\|)$ be a continuously metrised line bundle over $U$ and assume that $(M,\|\cdot\|)$ extends as a psh $\qq$-line bundle over the orbifold $X$. 
Following the constructions in \cite[Chapter~5]{chern-weil}, one defines a log Weil b-divisor
\[
\mathbb{D}(M, \| \cdot \|, s) \coloneqq \on{div}(s) - \sum_P \nu(\| \cdot \|, \eta_P),
\] 
where $s$ is a rational section of $M$, $P$ ranges over all prime divisors on all log modifications of $X$, and $\nu(\| \cdot \|, \eta_P)$ denotes the Lelong number of (any local potential of) the metric $\| \cdot \|$ at the generic point $\eta_P$ of $P$. Assuming that the log Weil b-divisor $\mathbb{D}(M, \| \cdot \|, s)$ is continuous in the sense of Definition~\ref{def:continuous_log_Weil_b}, we obtain a continuous log b-line bundle, and hence a log adelic line bundle over $X$. We denote it by $\overline{(M,\|\cdot\|)}$.

\begin{proposition} \label{prop:is_continuous} For any non-zero rational section $s$ of the universal line bundle $L$ over $N_{g,1}$, the log Weil b-divisor $\mathbb{D}(L, \| \cdot \|_L, s)$ is continuous. Also, for any non-zero rational section $t$ of the Hodge line bundle $\omega$ over $N_g$, the log Weil b-divisor $\mathbb{D}(\omega, \| \cdot \|_{\mathrm{Hdg}}, t)$ is continuous.  
\end{proposition}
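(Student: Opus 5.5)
The plan is to compute the Lelong numbers of the canonical metrics explicitly, in terms of the tropical data, and to check that on the boundary part they are given by a continuous conical function; the part on $U$ is just $\on{div}(s)$, which is harmless.

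For $(L,\|\cdot\|_L)$, I would first reduce to the section $1_\Theta$. Any two rational sections differ by a rational function, whose divisor is a model (Cartier) divisor, so continuity does not depend on $s$. Up to the pullback of a model line bundle from the base, I may work with $\oo(\Theta)$ and $\|\cdot\|_\Theta$. By Theorem~\ref{thm:metric_isom}, $L$ and $\oo(\Theta)\otimes\pi^*\omega^{-1/2}$ are isometric up to a constant, so the case of $L$ follows from the cases of $\omega$ and of $\oo(\Theta)$.

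For $\oo(\Theta)$, I would use Proposition~\ref{prop:archimedean_two_torsion} and work locally near a boundary point of $N_{g,1}^{\mathrm{mDV}}$. In a toroidal chart, the degeneration of $\tau$ is written as $\Im\tau \sim -\frac{1}{2\pi}\sum_i \log|t_i|\, B_i$ plus a bounded term, where $t_i$ are local coordinates and the $B_i$ span the Delaunay cone. Similarly $\Im z$ degenerates with slopes $l$. Along a ray $(a,l)$ corresponding to a prime divisor $P$ on a log modification, I would compute $-\log$ of the normalised theta function. The dominant term of the theta series $\sum_n \exp(2\pi i(\cdots))$ has valuation $\min_w\{a(w)+l(w)\} = \theta^{\mathrm{pl}}(a,l)$, with the characteristic $\kappa$ supplying the shift by $\rho$. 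The prefactor $\exp(-\pi\,{}^t\Im z\,(\Im\tau)^{-1}\Im z)$ contributes $-\frac12 q_a^*(l)$, and the determinant term contributes only $\log\log$ growth, which has zero Lelong number. Putting these together, the Lelong number at $\eta_P$ should be $\theta^{\mathrm{pl}}-\theta^{\mathrm{sm}} = \theta^{\mathrm{inv}}$ evaluated at the primitive vector of the ray, up to a linear term coming from the model. Since $\theta^{\mathrm{inv}}$ is continuous and conical (Theorem~\ref{thm:theta-eq-cocycle-new}), the b-divisor lies in the image of $\on{Div}(U)\oplus\on{CCon}(X)$ and is therefore continuous.

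For $\omega$ with the Hodge metric, $\|dz_1\wedge\cdots\wedge dz_g\|^2 \sim \det\Im\tau$, which grows only like a polynomial in $\log|t_i|$. Hence all Lelong numbers vanish on every modification. The b-divisor is then $\on{div}(t)$ on the Mumford extension, a model Cartier divisor, so it is continuous.

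The main obstacle is making the Lelong number computation for the theta series rigorous: that is, showing that the minimal term genuinely dominates uniformly near the generic point of each $P$ and that no cancellation occurs among terms. I would handle this by reducing to a one-parameter degeneration along a test curve hitting $\eta_P$ transversally, where the estimate is classical and matches the non-archimedean statement of Proposition~\ref{prop:comparison_canonical_model_metric}. Alternatively, one can bound the sum above and below by its minimal terms using the positive-definiteness of $b_a$ on the quotient by its radical.
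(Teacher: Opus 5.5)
Your proposal is correct in outline. For $\omega$ it is the paper's argument: the norm of a local generating section of Mumford's extension grows only logarithmically, so all Lelong numbers vanish on every modification and the b-divisor is a model divisor.

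For $L$ you take a genuinely different route. The paper never looks at the theta series. It quotes the asymptotic description of the invariant metric $\|\cdot\|_L$ from \cite[Theorem~1.1]{bghdj_sing}, whose part~3 gives continuity of the conical part of $\mathbb{D}(L,\|\cdot\|_L,s)$ on each closed mixed Delaunay--Voronoi cone. It then concludes because these closed cones form a closed cover.

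You instead trade $L$ for $\oo(\Theta)\otimes\pi^*\omega^{-1/2}$ via Theorem~\ref{thm:metric_isom}. You then write $\|1_\Theta\|_\Theta$ as a normalised theta function with characteristic via Proposition~\ref{prop:archimedean_two_torsion}, and compute its generic vanishing order along each boundary prime divisor $P$ of each log modification. This forces you to do an analytic estimate that the paper outsources. In exchange it yields more than continuity: the conical part comes out explicitly as $\theta^{\mathrm{inv}}$ (for $s=1_\Theta$ with no extra linear term). That is essentially Corollary~\ref{cor:degeneration_theta}, obtained directly rather than via Yuan--Zhang uniqueness and the extended key formula.

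One correction for when you write out the estimate. Positive definiteness of $b_a$ only controls the tail of the series; it does not give the lower bound. What rules out cancellation among the finitely many minimal terms, indexed by the Delaunay cell at $v_P$, is the following. After dividing by the common leading power of a local equation of $P$, these terms are monomials in the torus coordinates whose pairwise ratios are nontrivial characters that are units along $P$. Their coefficients are nonzero theta functions of the abelian part and do not depend on those torus coordinates. Hence the terms are linearly independent on the open stratum of $P$, and their sum is nonzero at its generic point. This is the archimedean counterpart of the Gauss-norm argument behind \cite{djs_canonical}. Since Lelong numbers along $P$ are generic, you need no control near $\bar\Theta\cap P$.
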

\begin{proof}  In \cite[Theorem~1.1]{bghdj_sing} the asymptotic behavior of the metric $\|\cdot\|_L$ is given. It follows from part 3 of this theorem that the log Weil b-divisor $\mathbb{D}(L, \| \cdot \|_L, s)$ has continuous conical part over any closed cone in the mixed Delaunay--Voronoi decomposition. Since the latter has a closed covering by closed cones it follows that the log Weil b-divisor $\mathbb{D}(L, \| \cdot \|_L, s)$ has continuous conical part everywhere. As to the second statement, we claim that the incarnation of $\mathbb{D}(\omega, \| \cdot \|_{\mathrm{Hdg}}, t)$ on any smooth toroidal compactification of $N_g$ is given by the canonical extension of $\omega$ defined by Mumford \cite{mu}. Indeed, the latter is uniquely determined by the property that the norms of local generating sections have at most logarithmic growth in any local coordinate system. We see that $\mathbb{D}(\omega, \| \cdot \|_{\mathrm{Hdg}}, t)$ lies in the image of $\on{Div}(N_g)$ in $\on{log-W-b-Div}(N_g^{\mathrm{Del}})$.
\end{proof}

From the above proposition we obtain natural log adelic line bundles 
\[ \overline{(L, \|\cdot\|_L)}  \qquad \textrm{and} \qquad \overline{\pi^* (\omega, \|\cdot \|_\Hdg)}  \]
over the log compactification $N_{g,1}^{\mathrm{mDV}}$. 
By Theorem~\ref{thm:metric_isom} we obtain a log adelic line bundle
\[  \overline{(\oo(\Theta), \|\cdot\|_\Theta)} \cong \overline{(L, \|\cdot\|_L)} \otimes \overline{\pi^*(\omega, \|\cdot \|_\Hdg)}^{\otimes 1/2}   \]
over $N_{g,1}^{\mathrm{mDV}}$. 

Let $G$ be the unique semiabelian scheme over $N_g^{\mathrm{Del}}$ extending the abelian scheme $N_{g,1}$ over $N_g$. 
\begin{proposition}  The log adelic line bundle $\overline{(L, \|\cdot\|_L)}$ is identified with the pure adelic line bundle $\overline{L}$.

The log adelic line bundle $\overline{(\omega, \|\cdot \|_\Hdg)}$ is identified with the adelic Hodge line bundle, i.e., the model adelic line bundle determined by the line bundle $\omega_{G/N_g^{\mathrm{Del}}} $.
\end{proposition}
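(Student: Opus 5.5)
The plan is to prove both identifications by uniqueness: it suffices to show that each log adelic line bundle restricts correctly over the interior and satisfies the characterising property of the target object. We also use the fact that, in the log smooth setting, log adelic line bundles are the same as continuous log b-line bundles (Proposition~\ref{prop:commutative_p_bar}). This reduces everything to comparing conical functions, i.e.\ Lelong numbers, on the rays of the relevant cone complexes.

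For the first statement, note that $(L,\|\cdot\|_L)$ is invariant, in the sense that $[m]^*L\cong L^{\otimes m^2}$ is an isometry for every $m$. The Lear extension and the Lelong numbers are compatible with pullback along the toroidal maps $[m]\colon N_{g,1}^{\widetilde\Sigma_m}\to N_{g,1}^{\widetilde\Sigma}$. On the b-divisor side, the Lelong number along a divisor $P$ on a modification transforms under a log étale map by scaling with the multiplicity of $P$ over its image; this is exactly the behaviour of conical functions under pullback. Hence $[m]^*\mathbb D(L,\|\cdot\|_L,s)=\mathbb D(L,\|\cdot\|_L,[m]^*s)$. Combining this with the isometry gives an isomorphism $[m]^*\overline{(L,\|\cdot\|_L)}\cong\overline{(L,\|\cdot\|_L)}^{\otimes m^2}$, compatible with the rigidification, of log adelic line bundles extending $L$. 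Taking $m=2$, the uniqueness statement of Theorem~\ref{thm:YZ_invariant_N} identifies it with $\overline L$. As a cross-check one can compute directly: by \cite[Theorem~1.1]{bghdj_sing}, the asymptotic of $-\log\|s\|_L$ near a boundary point of type $(a,l)$ is $-\theta^{\mathrm{sm}}(a,l)$ times the log of the coordinate. So the conical part of $\mathbb D$ is $-\theta^{\mathrm{sm}}$ relative to $\llb L$, which is Theorem~\ref{thm:comparison_adelic_b_AV}.

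For the second statement, the proof of Proposition~\ref{prop:is_continuous} already shows that on every smooth toroidal compactification the incarnation of $\mathbb D(\omega,\|\cdot\|_{\Hdg},t)$ is $\divisor t$ computed in Mumford's canonical extension of $\omega$. It therefore remains to identify Mumford's extension with $\omega_{G/N_g^{\mathrm{Del}}}$ and its pullbacks to subdivisions. The key input is that a local generator of $e^*\Omega^g_{G/N_g^{\mathrm{Del}}}$ has Hodge norm with at most logarithmic growth. This is the classical computation: near the boundary, the period matrix is $\tau=\frac{1}{2\pi i}\sum_j B_j\log q_j+(\text{holomorphic})$, and the translation-invariant $g$-form coming from the torus part has $\|\cdot\|_{\Hdg}^2\sim\det\Im\tau$, which is polynomial in $-\log|q_j|$. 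Since the semiabelian $G$ and its Hodge bundle pull back along log modifications, the conical part vanishes, and the b-divisor is the model one.

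I expect the main obstacle to be in the first part: checking that Lelong numbers (equivalently, conical functions of $\mathbb D$) are functorial under the non-flat toroidal maps $[m]$ on orbifold charts, so that uniqueness can legitimately be applied. If this functoriality becomes delicate, the fallback is the direct asymptotic comparison with \cite[Theorem~1.1]{bghdj_sing} described above.
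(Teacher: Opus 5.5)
Your proposal is correct and follows essentially the same route as the paper. For $L$, the paper also uses functoriality of the isometries $[m]^*L\cong L^{\otimes m^2}$ to get invariance of $\overline{(L,\|\cdot\|_L)}$ and then applies the uniqueness in Theorem~\ref{thm:YZ_invariant_N}. For $\omega$, the paper likewise identifies the incarnation of $\mathbb D(\omega,\|\cdot\|_{\mathrm{Hdg}},t)$ with Mumford's canonical extension. The only difference is in the last step: where you sketch the logarithmic-growth computation for a generator of $e^*\Omega^g_{G/N_g^{\mathrm{Del}}}$, the paper simply cites \cite{fc} (p.~225) for the equality of Mumford's extension with $\omega_{G/N_g^{\mathrm{Del}}}$.
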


\begin{proof} Recall that $\|\cdot\|_L$ can be characterised as the unique smooth hermitian metric on $L$ compatible with the rigidification and with the property that for all $m \in \zz$ the isomorphism $[m]^*L \cong L^{\otimes m^2}$ of rigidified line bundles is an isometry. This implies by functoriality that for the associated continuous log b-line bundle $\overline{(L, \|\cdot\|)}$ dictated by the metric $\|\cdot\|_L$ on $L$ we have for all $m \in \zz$ an isomorphism $[m]^* \overline{(L, \|\cdot\|)} \cong \overline{(L, \|\cdot\|)}^{\otimes m}$. 

By Proposition~\ref{prop:commutative_p_bar} we have a natural injective homomorphism 
\[ \bar{a} \colon \on{b-Pic}^{\on{cont}}(N_{g,1}^{\mathrm{mDV}}) \to  \on{Ad-Pic}_{\on{log}}(N_{g,1}^{\mathrm{mDV}}). \]
If we write $\overline{M}$ for the image of the continuous log b-line bundle $\overline{(L, \|\cdot\|)}$ we have for all $m \in \zz$ an isomorphism $[m]^* \overline{M} \cong \overline{M}^{\otimes m}$. The adelic line bundle~$\overline{M}$ extends the line bundle~$L$. Theorem~\ref{thm:YZ_invariant_N} gives that $\overline{M}$ equals Yuan--Zhang's pure adelic line bundle $\overline{L}$. 

Finally, we recall from the proof of Proposition~\ref{prop:is_continuous} that the incarnation of the log Weil b-divisor $\mathbb{D}(\omega, \| \cdot \|_{\mathrm{Hdg}}, t)$ on any smooth toroidal compactification of $N_g$ is given by the canonical extension of $\omega$ defined by Mumford \cite{mu}. As follows from \cite[p.~225]{fc}, Mumford's canonical extension over $N_g^{\mathrm{Del}}$ equals $\omega_{G/N_g^{\mathrm{Del}}}$. This gives the second statement.
\end{proof}
Applying Theorem~\ref{thm:adelic_key_restate} now immediately gives the following.
\begin{corollary} \label{cor:degeneration_theta} The log adelic line bundle $ \overline{(\oo(\Theta), \|\cdot\|_\Theta)}$ is equal to $\oo(\Theta^{\mathrm{pure}})$. 
\end{corollary}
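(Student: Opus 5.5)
The plan is to combine the isometry of Theorem~\ref{thm:metric_isom} with the adelic key formula of Theorem~\ref{thm:adelic_key_restate}. By construction (just before the statement) we have an isomorphism of log adelic line bundles
\[ \overline{(\oo(\Theta), \|\cdot\|_\Theta)} \cong \overline{(L, \|\cdot\|_L)} \otimes \overline{\pi^*(\omega, \|\cdot \|_\Hdg)}^{\otimes 1/2}, \]
obtained from Theorem~\ref{thm:metric_isom} by taking eighth roots of the isometry. The trivialising section has constant norm one, so it contributes neither a divisor nor Lelong numbers. Hence it induces an isomorphism of the associated continuous log b-divisors $\mathbb{D}(\cdot)$, and therefore of the associated log adelic line bundles.

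Next I identify each factor on the right using the proposition just proved. The factor $\overline{(L, \|\cdot\|_L)}$ is Yuan--Zhang's pure bundle $\overline{L}$. The factor $\overline{(\omega, \|\cdot\|_\Hdg)}$ is the model adelic Hodge bundle $\overline{\omega}_{N_{g,1}/N_g}$ determined by $\omega_{G/N_g^{\mathrm{Del}}}$. I will check that pullback along $\pi$ commutes with the construction $\mathbb{D}$. This holds because Lelong numbers of a pulled-back metric along the log smooth map $\pi$ are computed by pulling back the conical part, and $\pi$ is toroidal. Thus $\overline{\pi^*(\omega,\|\cdot\|_\Hdg)} = \pi^*\overline{\omega}_{N_{g,1}/N_g}$.

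Finally I apply Theorem~\ref{thm:adelic_key_restate}, which gives $\oo(8\,\Theta^{\mathrm{pure}}) \cong \overline{L}^{\otimes 8}\otimes \pi^*\overline{\omega}_{N_{g,1}/N_g}^{\otimes 4}$. Taking the $\bb Q$-eighth root yields $\oo(\Theta^{\mathrm{pure}}) \cong \overline{L}\otimes\pi^*\overline{\omega}^{\otimes 1/2}$, which matches the right-hand side above. Both isomorphisms restrict over $N_{g,1}$ to the isomorphism coming from the fixed key-formula trivialisation, so the identification is canonical and compatible with the section $1_\Theta$. The one subtlety here is the normalisation: $\|\cdot\|_\Theta$ versus $\|\cdot\|'_\Theta$ differ by the constant $(2\pi)^{g/2}$. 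A constant rescaling of the metric does not change Lelong numbers or the log b-divisor, so it is harmless at the level of the geometric (log) adelic bundles over $N_{g,1}^{\mathrm{mDV}}$.

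The main obstacle is the compatibility step: that the metric-to-adelic construction $\mathbb{D}$ is multiplicative, commutes with $\pi^*$, and is insensitive to norm-one isometries. I would handle it through the explicit formula $\mathbb{D} = \divisor(s) - \sum_P \nu(\|\cdot\|,\eta_P)$. Lelong numbers are additive under tensor products of psh metrics, vanish for metrics with bounded potentials, and transform by the toroidal pullback of divisors under the log smooth map $\pi$.
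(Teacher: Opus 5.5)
Your proposal is correct and follows the same route as the paper, which likewise combines the norm-one trivialisation of Theorem~\ref{thm:metric_isom}, the preceding proposition identifying $\overline{(L,\|\cdot\|_L)}$ with $\overline{L}$ and $\overline{(\omega,\|\cdot\|_{\Hdg})}$ with the model Hodge bundle, and the adelic key formula of Theorem~\ref{thm:adelic_key_restate}. The compatibilities you flag (additivity of $\mathbb{D}$, compatibility with $\pi^*$, and insensitivity to the constant $(2\pi)^{g/2}$ relating $\|\cdot\|_\Theta$ and $\|\cdot\|'_\Theta$) are left implicit in the paper, so your write-up is, if anything, more careful than the original.
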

Corollary \ref{cor:degeneration_theta} is really a statement about the degeneration behavior of the function $\|\vartheta\|$ from \eqref{eqn:Riemann_theta_norm} on all log smooth compactifications of $N_{g,1}$ over $\cc$ corresponding to representable subdivisions of $\widetilde{\Sigma}^{\textrm{mDV}}$.

\subsection{N\'eron--Tate height of a point}

Our results imply (and were in fact partially inspired by finding) a ``universal'' formula for the N\'eron--Tate height of a point on a principally polarised abelian variety defined over the field of algebraic numbers. The aim of this section is to provide some details. The formula is given in Theorem~\ref{univ_NT_formula} below.

Let $K$ be a number field, and write $S=\Spec O_K$. Let $A$ be an abelian variety over $K$, and assume $A$ has split semiabelian reduction over $S$. Let $L$ be a symmetric ample rigidified line bundle over $A$ defining a principal polarisation~$\lambda$. Let $D$ be the effective symmetric theta divisor on $A$ determined by $L$. Write $g=\dim A$. We assume as usual that $g \ge 1$.

For every $x \in A(K)$ the \emph{N\'eron--Tate height} of $x$ with respect to~$L$ is given by the Arakelov degree
\[ \h_L(x) =  \frac{1}{[K:\qq]} \, \widehat{\deg} \, x^* \overline{L} . \]
Here $\overline{L}$ is the adelic line bundle over $A/K$ given by $L$ together with its canonical pure adelic structure. We recall that $\h_L(x)$ only depends on the polarisation~$\lambda$.  

Let $|S|$ denote the set of closed points of $S$. For every $v \in |S|$ we let $\Sk(A_v)$ denote the canonical skeleton of the Berkovich analytification $A_v^\an$ of $A$ at $v$ (see Section~\ref{sec:Berk_anal}). This is a real torus. We denote by $\val_v \colon A_v^\an \to \Sk(A_v)$ the associated tropicalisation homomorphism. We have canonical free finitely generated abelian groups $X_v$ and $Y_v$ of rank~$\le g$ at~$v$ and an identification of real tori $\Sk(A_v) \cong X_{v,\rr}^\lor/Y_v$. 

Let $\theta_{0,v}^{\mathrm{inv}} \colon \Sk(A_v) \to \bb R$ be the invariant tropical Riemann theta function at~$v$ with vanishing characteristic as  in Example~\ref{ex:inv_rho_vanish}, and let $\kappa_v \in \Sk(A_v)$ be the tropical theta characteristic of the pair $(A,L)$ at $v$ (see Section~\ref{sec:AN_cle}).

Let $\hh_g$ denote Siegel's upper half space in degree~$g$.
For every complex embedding $\sigma \colon K \hookrightarrow  \cc$ we fix an element $\tau_\sigma \in \hh_g$ and an isomorphism of complex ppav $\alpha_\sigma \colon (A_\sigma,\lambda_\sigma) \isom (A_{\tau_\sigma},\mu_{\tau_\sigma})$. We let $\kappa_\sigma \in A_{\tau_\sigma}$ denote the theta characteristic of $L_\sigma$ determined by $\alpha_\sigma$, as discussed in Section~\ref{sec:explicit}.

Let $\ca N$ denote the N\'eron model of $A$ over $S$. Given $x \in A(K)$ we write $\overline{x}_{\ca N}$ for the extension of $x$ over $\ca N$. We write $\ca D_{\ca N}$ for the flat extension (or ``thickening'') of the effective divisor $D$ over $\ca N$. As $\ca N$ is regular we have that $\ca D_{\ca N}$ is an effective relative Cartier divisor on $\ca N$.  For every $v \in |S|$ and every $x \in A(K) \setminus  |D|$ we finally write $i_v(x,\ca N,D)$ for the intersection multiplicity $\ord_v \overline{x}_{\ca N}^*\ca D_{\ca N}$. 

Similarly, for every $v \in |S|$ we let $(\ca P_v, \ca D_{v})$ denote the Alexeev--Nakamura model of $(A,L)$ over the completion of~$O_K$ at~$v$ (see Section~\ref{sec:AN_cle}). Recall that $\ca D_{v}$ is a relative effective Cartier divisor on $\ca P_v$. Let $x \in A(K)$. As the model $\pp_v $ is proper, we have a unique section $\overline{x}_{v}$ of $\pp_v$ extending~$x_v$. For $x \in A(K) \setminus |D|$ we denote by $i(x,\ca P_v,D)$ the intersection multiplicity $\ord_v \overline{x}_{v}^*\ca D_{v}$. 

\begin{proposition} \label{prop:inters_mult} Assume that $x \in A(K) \setminus |D|$. We have an equality of intersection multiplicities $i(x,\ca P_v,D) = i_v(x,\ca N,D)$.
\end{proposition}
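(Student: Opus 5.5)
The plan is to reduce both intersection multiplicities to a common third quantity: a local height of $x$ at $v$ computed from the canonical metric on $L$ and the function $\theta^{\mathrm{inv}}_\rho$. Everything is local at $v$, so we may work over the completion $R_v$ of $O_K$ at $v$ with fraction field $K_v$. We may also pass to a finite extension, since both sides scale by the ramification index. After such an extension the Alexeev--Nakamura model $\ca P_v$ exists, as does a Moret-Bailly model; moreover $x$ extends to $\ca P_v$ by properness and to $\ca N$ by the N\'eron property.

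On the Alexeev--Nakamura side, fix a generator $s$ of $g_*\ca L_{\ca P_v}$. By Proposition~\ref{prop:vanishing_AN}(v) its divisor on $\ca P_v$ is exactly $\ca D_v$, with no vertical components. Hence
\[ i(x,\ca P_v,D) = -\log\|s(x)\|_{\ca L_{\ca P_v}}, \]
where $x$ is viewed as a point of $A_v^\an$. Proposition~\ref{prop:comparison_canonical_model_metric} then gives
\[ i(x,\ca P_v,D) = -\log\|s(x)\|_L - \theta^{\mathrm{inv}}_\rho(a,\val_v(x)). \]
Combining Proposition~\ref{prop:vanishing_AN}(iii) with the proof of Theorem~\ref{thm:mumford}, the normalisation of $s$ is $\sup_{\Sk(A)}\|s\|_L = 1$.

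On the N\'eron side, the key structural fact is that $\ca D_{\ca N}$ is the closure of $D$ in the regular model $\ca N$. Its restriction to the component of $\ca N_v$ containing the reduction of $\overline{x}_{\ca N}$ is governed by the same theta function, translated by that component. Concretely, let $\ca N^0_v$ be the identity component of the special fibre, let $\Phi_v$ be the component group, and recall the identification of components with the points of $\Sk(A_v)$ coming from $Y_v\backslash\Hom(X_v,\bb Q)$. I would argue that $\overline{x}_{\ca N}^*\ca D_{\ca N}$ equals $-\log\|s(x)\|$ minus the order of $s$ along the component containing $\overline{x}_{\ca N}(v)$. That order equals the value at $\val_v(x)$ of the function $\theta^{\mathrm{pl}}_\rho(a,-)$ normalised by $\theta^{\mathrm{sm}}$, that is, $\theta^{\mathrm{inv}}_\rho(a,\val_v(x))$. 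To see this, observe that $s$ is a regular section of the model line bundle on $\ca N$ obtained by extending $L$. Its vertical order along the component indexed by $t\in\Phi_v$ is read off from the Shilov point of that component, which lies on $\Sk(A_v)$ at position $t$. The identity $-\log\|s\|_L = c + \theta^{\mathrm{inv}}_\rho$ on $\Sk(A)$, established in the proof of Theorem~\ref{thm:mumford}, together with $c=0$ for our normalisation, then identifies that order as $\theta^{\mathrm{inv}}_\rho(a,t)$.

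Comparing the two expressions proves $i(x,\ca P_v,D)=i_v(x,\ca N,D)$. The main obstacle is the N\'eron-side claim. One must check that the divisor of $s$ on $\ca N$ (with respect to $\ca L$ extended as a cubical rigidified bundle, as in \cite{mb}) decomposes exactly as $\ca D_{\ca N}$ plus vertical components with the stated multiplicities. It is also necessary to handle the fact that $\val_v(x)$ need only lie at a rational point of the skeleton. To address both issues I would first pass to a further ramified extension so that $\val_v(x)$ corresponds to a component of $\ca N_v$, and then invoke the Shilov-point argument of Proposition~\ref{prop:vanishing_AN}(ii) componentwise on $\ca N$ rather than on $\ca P_v$.
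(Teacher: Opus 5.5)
Your Alexeev--Nakamura side is correct. For a generator $s$ of $g_*\ca L_{\ca P_v}$ you get $i(x,\ca P_v,D) = -\log\|s(x)\|_L - \theta^{\mathrm{inv}}_\rho(a,\val_v(x))$, with $\sup_{\Sk(A)}\|s\|_L=1$. The gap is the N\'eron-side identity $i_v(x,\ca N,D) = -\log\|s(x)\|_L - \theta^{\mathrm{inv}}_\rho(a,t)$, where $t$ is the component containing $\bar x_{\ca N}(v)$. For any ($\qq$-)line bundle $\ca M$ on $\ca N$ extending $L$, write $\divisor_{\ca M}(s)=\ca D_{\ca N}+\sum_t m_t C_t$. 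Since $\ca N_v$ is smooth, you have tautologically
\[ i_v(x,\ca N,D) = -\log\|s(x)\|_{\ca M} - m_t = -\log\|s(x)\|_{\ca M} + \log\|s(\xi_t)\|_{\ca M}, \]
with $\xi_t$ the Shilov point of $C_t$, and this holds for every choice of $\ca M$. So what must be proved is that $\log\|\cdot\|_{\ca M}-\log\|\cdot\|_L$ takes the same value at $x$ and at $\xi_t$. In other words, on the tube $\val_v^{-1}(t)$ the canonical metric must be, up to a constant, a model metric coming from $\ca N$. Your Shilov-point argument only looks at $\xi_t$, so at best it identifies that constant; it says nothing about the value at $x$. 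In Proposition~\ref{prop:vanishing_AN}(ii) the analogous step worked only because Proposition~\ref{prop:comparison_canonical_model_metric} already compares $\|\cdot\|_{\ca L_{\ca P}}$ with $\|\cdot\|_L$ at every point of $A^\an$. Nothing in the paper does this for $\ca N$. Note also that $\theta^{\mathrm{inv}}_\rho(a,t)$ is usually not an integer; in the elliptic example it equals $\frac{1}{2\ell_v}(\mathrm{sp}_v(x)-\ell_v/2)^2$. So $\ca M$ must be a $\qq$-line bundle, and Moret-Bailly's cubical extension exists integrally only on $\ca N^0$ or on a Moret-Bailly model.

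What is missing is N\'eron's theorem: the canonical local height differs from $i_v(\cdot,\ca N,D)$ by a function of the component only. It can be supplied here in two steps.
\begin{enumerate}
\item On the compact subgroup $\val_v^{-1}(0)$, the cubical extension of $L$ to $\ca N^0$ gives a model metric $\|\cdot\|'$ compatible with $[m]^*L\cong L^{\otimes m^2}$. Hence $f=\log\|\cdot\|_L-\log\|\cdot\|'$ is bounded with $f\circ[m]=m^2f$, so $f=0$.
\item For general $t$, translate by a point $y$ with $\val_v(y)=t$. Use $T_y^*L\cong L\otimes P_{\lambda(y)}$ and the extension of the Poincar\'e biextension to $\ca N^0\times\ca N^\vee$, with the same limit argument.
\end{enumerate}
Granting this, your evaluation at $\xi_t\in\Sk(A)$, where $\ca D_{\ca N}$ contributes nothing, fixes the constant as $\theta^{\mathrm{inv}}_\rho(a,t)$, and your comparison goes through.

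Alternatively, you can avoid metrics on $\ca N$ altogether. After a base change, refine the Voronoi decomposition $Y$-periodically to one with vertex set $\Hom(X,\zz)$. The resulting Mumford model $\ca P'\to\ca P$ has $\ca N$ as its smooth locus. The pullback of $\ca D_{\ca P}$ to $\ca P'$ has no vertical components, because all Shilov points of $\ca P'$ lie on $\Sk(A)$, where $\|s\|_{\ca L_{\ca P}}\equiv 1$ by Proposition~\ref{prop:vanishing_AN}(i). It therefore restricts to $\ca D_{\ca N}$, and the two intersection numbers agree.

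Two smaller points. First, $\val_v(x)$ already lies in $\Hom(X_v,\zz)/Y_v=\Phi_v$ for $x\in A(K)$, so no extra extension is needed for that. Second, the claim that the N\'eron side scales by the ramification index requires $\ca N\otimes R'\to\ca N_{R'}$ to be an open immersion. This is true for semiabelian reduction but should be stated.
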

\begin{proof} This follows from the proof of \cite[Theorem~6.1]{djs_canonical}.
\end{proof} 

The \emph{Faltings height} $\h_F(A)$ of $A$ is given as the Arakelov degree
\[ \h_F(A) = \frac{1}{[K:\qq]} \, \widehat{\deg} \, \overline{\omega}_{A/S}, \]
with  $\overline{\omega}_{A/S}$ the adelic Hodge line bundle over $S$. For $v \in |S|$ we denote by $Nv$ the cardinality of the residue field at $v$. The following is our ``universal'' formula for the N\'eron--Tate height of a point on a principally polarised abelian variety.

\begin{theorem} \label{univ_NT_formula} Assume that $x \in A(K) \setminus |D|$. The identity
\[ \begin{split}   \h_L(x)  & =   -\frac{1}{2} \h_F(A) + \frac{1}{[K:\qq]} \left( \sum_{v \in |S|} \left( i_v(x,\ca N,D) + \theta^{\mathrm{inv}}_{0,v}(\val_v(x_v)+\kappa_v) \right)\log Nv + \right. \\
& \hspace{1cm}  \left. + \sum_{\sigma \colon K \hookrightarrow \cc} - \log \left( (2\pi)^{g/2}  \|\vartheta\|(\alpha_\sigma(x_\sigma)+\kappa_\sigma, \tau_\sigma) \right) \right) \end{split} \]
holds. 
\end{theorem}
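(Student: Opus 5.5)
The plan is to evaluate the Arakelov degree of $x^*$ on both sides of the adelic key formula, Corollary~\ref{cor:global_adelic_isom}. Let $j\colon S\to N_g^{\mathrm{Del}}$ be the moduli map of $(A,L)$, possibly after passing to a finite extension; the split semiabelian reduction hypothesis, together with properness and the fact that both sides of the formula are invariant under base change, normalised by $1/[K:\qq]$, lets us work over $S$ directly. The point $x$ gives a section $\tilde x\colon \Spec K\to N_{g,1}$ over $j$. Pulling back the isomorphism
\[ \overline{L}^{\otimes 8}\cong \oo(8\,\Theta^{\mathrm{pure}})\otimes\pi^*\overline{\omega}^{\otimes -4} \]
along $\tilde x$ and taking $\frac{1}{[K:\qq]}\widehat{\deg}$, the left side gives $8\,\h_L(x)$. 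The Hodge term gives $-4\,\h_F(A)$, because $\pi^*\overline\omega$ pulled back along $\tilde x$ is $j^*\overline\omega=\overline{\omega}_{A/S}$, the model adelic bundle attached to $\omega_{G/S}$ with the Faltings metric. Dividing by $8$ produces the term $-\frac12\h_F(A)$.

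The remaining work is to compute $\widehat{\deg}\,\tilde x^*\oo(\Theta^{\mathrm{pure}})$ using the canonical section $1_\Theta$. Since $x\notin|D|$, this section is nonvanishing at $x$, so the degree splits as a sum of local contributions.

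At a finite place $v$, I would use the description $\Theta^{\mathrm{pure}}=\bar\Theta+\divisor\theta^{\mathrm{inv}}$. The logarithmic base change along $S_v\to N_g^{\mathrm{Del}}$ gives the Alexeev--Nakamura model $\ca P_v$, as in \eqref{eqn:taut_model}. By Corollary~\ref{lem:closure_on_root} and Theorem~\ref{thm:bar_theta_def_equivalence}, the restriction of $\bar\Theta$ is $\ca D_v$, so the model part contributes $i(x,\ca P_v,D)\log Nv$. By Proposition~\ref{prop:inters_mult} this equals $i_v(x,\ca N,D)\log Nv$. The conical part contributes $\theta^{\mathrm{inv}}_\rho(a_v,\val_v(x_v))\log Nv$. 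The justification is that for a continuous log adelic divisor, the local multiplicity at a point with tropicalisation $t$ is the value of the conical function at $t$. This is the same mechanism as in the proof of Proposition~\ref{prop:comparison_canonical_model_metric}, and the uniform PL approximation of Proposition~\ref{prop:map_p} passes it to the limit. Proposition~\ref{prop:trop_theta_new} then converts $\theta^{\mathrm{inv}}_\rho(a,l)$ into $\theta^{\mathrm{inv}}_0(a,\cdot)$ evaluated at the point translated by $b_a(\rho/2,-)$. Modulo $Y$ this translate is the tropical theta characteristic $\kappa_v$, which gives $\theta^{\mathrm{inv}}_{0,v}(\val_v(x_v)+\kappa_v)$. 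The sign conventions still need to be checked here, namely that translating by $+\kappa_v$ or $-\kappa_v$ gives the same result because $\kappa_v$ is $2$-torsion.

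At an archimedean place $\sigma$, the metric on $\oo(\Theta^{\mathrm{pure}})$ is $\|\cdot\|'_\Theta=(2\pi)^{g/2}\|\cdot\|_\Theta$, by Theorem~\ref{thm:metric_isom} and Corollary~\ref{cor:global_adelic_isom}. Proposition~\ref{prop:archimedean_two_torsion} gives $-\log\|1_\Theta\|'_\Theta(x_\sigma)=-\log\bigl((2\pi)^{g/2}\|\vartheta\|(\alpha_\sigma(x_\sigma)+\kappa_\sigma,\tau_\sigma)\bigr)$.

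Summing all local contributions and dividing gives the stated formula. I expect the main obstacle to be the finite-place step: identifying the local intersection number of $\tilde x$ with the b-divisor $\divisor\theta^{\mathrm{inv}}$ as the value of $\theta^{\mathrm{inv}}$ at $\val_v(x_v)$, and keeping the normalisation by the ramification index consistent after base change to achieve split semiabelian reduction. I would first handle this after a base change that makes $\tilde x$ extend to a log point of a subdivision on which a PL approximant is strict, and then take limits.
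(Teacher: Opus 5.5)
Your proposal is correct and follows essentially the same route as the paper: pull back Corollary~\ref{cor:global_adelic_isom} along the moduli point $(A,L,x)$, take Arakelov degrees and divide by eight, then rewrite the local terms using Proposition~\ref{prop:inters_mult} (finite places, model part), Proposition~\ref{prop:trop_theta_new} (finite places, conical part) and Proposition~\ref{prop:archimedean_two_torsion} (archimedean places). You justify the finite-place step in more detail than the paper, which asserts that the local multiplicity of $\divisor\theta^{\mathrm{inv}}$ is the value of $\theta^{\mathrm{inv}}$ at $\val_v(x_v)$ without argument, and your sign worry is harmless: $\theta^{\mathrm{inv}}_0(a,-)$ is invariant under translation by $b_a(W,-)$, and $b_a(\rho,-)\in b_a(W,-)$.
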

Note that the non-archimedean and archimedean theta characteristics play similar roles in the above formula.
\begin{proof}[Proof of Theorem~\ref{univ_NT_formula}]  We take the isomorphism of hermitian adelic line bundles from Corollary~\ref{cor:global_adelic_isom}, pull it back along the moduli point $(A,L,x)$ of $N_{g,1}$, take Arakelov degrees on both sides, and divide by eight. The result is that
\[ \begin{split}   \h_L(x)  & =   -\frac{1}{2} \h_F(A) + \frac{1}{[K:\qq]} \left( \sum_{v \in |S|} \left( i(x,\ca P_v,D) + \theta^{\mathrm{inv}}_{\rho_v}(\val_v(x_v)) \right)\log Nv + \right. \\
& \hspace{1cm}  \left. \sum_{\sigma \colon K \hookrightarrow \cc} - \log \left(   \|1_\Theta\|'(A_\sigma,L_\sigma,x_\sigma) \right) \right) . \end{split} \]
Here we denote by $\rho_v$ any element of the lattice $X_v$ such that $\rho_v/2$ represents the tropical theta characteristic of~$L_v$.

We use Proposition~\ref{prop:inters_mult} to replace $i(x,\ca P_v,D)$ by  $i_v(x,\ca N,D)$. 
We use Proposition~\ref{prop:trop_theta_new} to replace $\theta^{\mathrm{inv}}_{\rho_v}(\val_v(x_v))$ by $\theta^{\mathrm{inv}}_{0,v}(\val_v(x_v)+\kappa_v) $. We use Proposition~\ref{prop:archimedean_two_torsion} to replace $\|1_\Theta\|'(A_\sigma,L_\sigma,x_\sigma) $ by $(2\pi)^{g/2}  \|\vartheta\|(\alpha_\sigma(x_\sigma)+\kappa_\sigma, \tau_\sigma)$. The formula from the theorem follows.
\end{proof}

\begin{example} We illustrate Theorem~\ref{univ_NT_formula} by considering the case of elliptic curves. Let $E$ be an elliptic curve over the number field $K$. Assume that $E$ has split semistable reduction over $K$. We take the origin $O$ as theta divisor on $E$. Write $L$ for the associated rigidified line bundle. Write $S=\Spec O_K$ and let $|S|$ be the set of closed points of $S$. Let $x \in E(K) \setminus \{O\}$.

Let $v \in |S|$. We put
\[ \widehat{\lambda}_v(x) = i_v(x,\ca N,O) \]
if $E$ has good reduction at $v$. Assume that $E$ has bad reduction at~$v$. We let $\Delta_v$ denote the minimal discriminant at~$v$, and we set $\ell_v = \ord_v \Delta_v$. Let $\Phi_v$ denote the group of components of the special fiber $\ca N_v$ of the N\'eron model $\ca N$ of $E$ at $v$. We fix a group isomorphism $\Phi_v \cong \zz/\ell_v\zz$, and we label the components of $\ca N_v$ accordingly as $\{0,1,\ldots,\ell_v-1\}$. Let $\mathrm{sp}_v(x) \in \{ 0, 1, 2, \ldots, \ell_v-1 \}$ be the label of the component of $\ca N_v$ where our given rational point $x$ specialises. We have $\Sk(E_v) = \rr/\ell_v \zz$ and $\mathrm{sp}_v(x) \bmod \ell_v\zz = \val_v(x_v)$ in $\Sk(E_v)$. Let $B_2(T)= T^2-T+1/6$ be the second Bernoulli polynomial.  Still assuming that $E$ has bad reduction at $v$, we put
\begin{equation} \label{eqn:pre_lambda_finite}
 \widehat{\lambda}_v(x) = i_v(x,\ca N,O) +  \frac{\ell_v}{2} B_2\left( \frac{\mathrm{sp}_v(x) }{\ell_v}\right) .
 \end{equation}
Now let $\sigma \colon K \hookrightarrow \cc$ be a complex embedding of $K$, and fix an isomorphism $\alpha_\sigma \colon E_\sigma \isom A_{\tau_\sigma}$ of complex elliptic curves with $\tau_\sigma \in \hh_1$. Let $\Delta=\Delta(\tau)$ be the discriminant modular form
\[ \Delta = q \prod_{n=1}^\infty (1-q^n)^{24} , \quad q = \exp(2\pi i \tau) \]
on the Siegel upper half plane $\hh_1$.  We put
\begin{equation} \label{eqn:pre_lambda_complex} \widehat{\lambda}_\sigma(x) =  -\log  \|\vartheta\|(\alpha_\sigma(x_\sigma) + (1+\tau_\sigma)/2,\tau_\sigma) +   \frac{1}{24} \log \left( (\Im \tau_\sigma)^6 |\Delta(\tau_\sigma)| \right) , 
\end{equation}

where $\|\vartheta\|$ is the normalised Riemann theta function \eqref{eqn:Riemann_theta_norm}.

A result due to Tate (see~\cite[Theorem~VI.4.2]{sil_adv} or~\cite[Section~6.5]{serre_MW}) gives us that
\begin{equation} \label{eqn:Tate} \h_L(x) = \frac{1}{[K:\qq]}  \left( \sum_{v \in |S|} \widehat{\lambda}_v(x) \log Nv + 
\sum_{\sigma \colon K \hookrightarrow \cc} \widehat{\lambda}_\sigma(x) \right) . 
\end{equation}
We will now see how this can be rewritten to give the formula in Theorem~\ref{univ_NT_formula}. 

First of all let $v \in |S|$ and assume $E$ has bad reduction at $v$. We have $ X_v \cong \zz $ with bilinear form $(x,y) \mapsto \ell_v xy$ for $x, y \in \zz $, and $\theta_0^{\on{inv}}(T) = \frac{1}{2\ell_v}T^2$ for $T \in [-\ell_v/2,\ell_v/2]$. Further, the domains of linearity of any tropical theta function associated to $L_v$ are given by the intervals $\ell_v[m,m+1]$ with $m \in \zz$. This implies that the tropical theta characteristic $\kappa_v$ associated to $L_v$ is the unique non-trivial $2$-torsion point $\ell_v/2 \bmod \ell_v\zz$ of the circle $\Sk(E_v) = \rr/\ell_v \zz$. 

Still assuming that $E$ has bad reduction at $v$, we find from \eqref{eqn:pre_lambda_finite} the equalities
\begin{equation} \label{eqn:finite_lambda} \begin{split} \widehat{\lambda}_v(x) & = i_v(x,\ca N,O) + \frac{\ell_v}{2} \left( \left( \frac{\mathrm{sp}_v(x)}{\ell_v} \right)^2 - \frac{\mathrm{sp}_v(x)}{\ell_v} \right) + \frac{1}{12}\ell_v \\
& = i_v(x,\ca N,O) + \frac{1}{2\ell_v} \left( \mathrm{sp}_v(x) - \frac{\ell_v}{2} \right)^2 - \frac{1}{24}\ell_v  \\
& =  i_v(x,\ca N,O) + \theta^{\mathrm{inv}}_{0,v}(\val_v(x_v)+\kappa_v) - \frac{1}{24}\ell_v. \end{split}
 \end{equation}
Next let $\sigma \colon K \hookrightarrow \cc$ be a complex embedding of $K$. The function $\|\vartheta\|$ vanishes on $(1+\tau_\sigma)/2$, hence we have $\kappa_\sigma = (1+\tau_\sigma)/2$ for the theta characteristic associated to $(L_\sigma,\alpha_\sigma)$. We obtain from \eqref{eqn:pre_lambda_complex} the equality
\begin{equation} \label{eqn:complex_lambda} \widehat{\lambda}_\sigma(x) =  -\log  \|\vartheta\|(\alpha_\sigma(x) + \kappa_\sigma,\tau_\sigma) +   \frac{1}{24} \log \left(  (\Im \tau_\sigma)^6 |\Delta(\tau_\sigma)| \right)  . 
\end{equation}
Now by \cite[Theorem~7]{fa} and  \cite[Proposition~1.1]{sil} we have the formula
\begin{equation} \label{eqn:Faltings_height} \h_F(E) = \frac{1}{[K:\qq]} \frac{1}{12}  \left( \sum_{v \in |S|} \ell_v \log Nv - \sum_{\sigma \colon K \hookrightarrow \cc} \log \left( (2\pi)^{12}  (\Im \tau_\sigma)^6 |\Delta(\tau_\sigma)| \right) \right) . 
\end{equation}
Putting together \eqref{eqn:Tate}, \eqref{eqn:finite_lambda}, \eqref{eqn:complex_lambda} and \eqref{eqn:Faltings_height} we obtain the formula in Theorem~\ref{univ_NT_formula} in the special case of elliptic curves.
\end{example}

\begin{remark} The assumption in Theorem~\ref{univ_NT_formula} that $x \notin |D|$  is easy to avoid in practice. Indeed, the linear system $\frak g$ on $A$ corresponding to twice the principal polarisation $\lambda$ is base-point free. The divisors $2D$ where $D$ runs through all symmetric effective theta divisors for $\lambda$ are all in $\frak g$ and span it. It follows that, given $x \in A(\overline{K})$, there is at least one symmetric theta divisor $D$  associated to $\lambda$, and defined over $\overline{K}$, with the property that $x \notin |D|$.
\end{remark}

\printbibliography

\end{document}